\newtheorem{thm}{Theorem}[section]
\newcommand{\bt}{\begin{thm}}
\newcommand{\et}{\end{thm}}
\newtheorem{ex}[thm]{Example}
\newtheorem{cor}[thm]{Corollary}   
\newcommand{\bc}{\begin{cor}}
\newcommand{\ec}{\end{cor}}
\newtheorem{lem}[thm]{Lemma}   
\newcommand{\bl}{\begin{lem}}
\newcommand{\el}{\end{lem}}
\newtheorem{prop}[thm]{Proposition}
\newcommand{\bp}{\begin{prop}}
\newcommand{\ep}{\end{prop}}
\newtheorem{defn}[thm]{Definition}
\newcommand{\bd}{\begin{defn}}    
\newcommand{\ed}{\end{defn}}
\newtheorem{rmrk}[thm]{Remark}   
\newcommand{\br}{\begin{rmrk}}
\newcommand{\er}{\end{rmrk}}
\newcommand{\weaklyto}{\to}
\newtheorem{example}[thm]{Example}
\newcommand{\GHto}{\stackrel { \textrm{GH}}{\longrightarrow} }
\newcommand{\Fto}{\stackrel {{\tiny{\mathcal{F}}}}{\longrightarrow} }
\newcommand{\sgn}{\operatorname{sgn}}
\newcommand{\be}{\begin{equation}}
 \newcommand{\ee}{\end{equation}}
\newcommand{\injrad}{{\textrm{injrad}}}
\newcommand{\N}{\mathbb{N}}
\newcommand{\R}{\mathbb{R}}
\newcommand{\One}{{\bf \rm{1}}}
\newcommand{\E}{\mathbb{E}}
\newcommand{\Z}{\mathbb{Z}}
\newcommand{\diam}{\operatorname{Diam}}
\newcommand{\Sphere}{\operatorname{Sphere}}
\newcommand{\Slice}{\operatorname{Slice}}
\newcommand{\Fm}{{\mathcal F}}
\newcommand{\lm}{{\mathcal L}}
\newcommand{\set}{{\rm{set}}}
\newcommand{\disjointunion}{\sqcup}
\newcommand{\Lip}{\operatorname{Lip}}
\newcommand{\mass}{{\mathbf M}}
\newcommand{\SF}{{\mathbf {SF}}}
\newcommand{\IFV}{{\mathbf{IFV}_\epsilon}}
\newcommand{\SIF}{{\mathbf{SIF}_\epsilon}}
\newcommand{\curr}{{\mathbf M}}         
\newcommand{\rectcurr}{{\mathcal R}}    
\newcommand{\intrectcurr}{{\mathcal I}} 
\newcommand{\intcurr}{{\mathbf I}}      
\newcommand{\ess}{\operatorname{ess}}
\newcommand{\vol}{\operatorname{Vol}}
\newcommand{\nmass}{\mathbf N}
\newcommand{\fillvol}{{\operatorname{FillVol}}}
\newcommand{\rstr}{\:\mbox{\rule{0.1ex}{1.2ex}\rule{1.1ex}{0.1ex}}\:}
\newcommand{\bdry}{\partial}
\newcommand{\spt}{\operatorname{spt}}
\begin{document}

\title{Properties of the Intrinsic Flat Distance}

\author{J. Portegies}
\thanks{Portegies partially supported by Max Planck Institute for Mathematics in the Sciences
\\ \indent and by Sormani's NSF grant: DMS 1309360.}
\address{Max Planck Institute for Math. in the Sciences,
Inselstra{\ss}e 22, 
04103 Leipzig,
Germany}
\email{jacobus.portegies@mis.mpg.de}

\author{C. Sormani}
\thanks{Sormani partially supported by NSF DMS 1006059 and a PSC CUNY Research Grant.}
\address{CUNY Graduate Center and Lehman College,
365 Fifth Avenue, NY NY 10016, USA}
\email{sormanic@gmail.com}

\date{June 2015}

\keywords{}



\begin{abstract}
Here we explore a variety of properties of intrinsic flat convergence.
We introduce the sliced filling volume and interval sliced filling volume
 and explore the relationship between these notions, the tetrahedral property 
 and the disappearance of points under intrinsic flat convergence. 
We prove two new Gromov-Hausdorff
and intrinsic flat compactness theorems including the
Tetrahedral Compactness Theorem.   
Much of the work in this paper builds upon Ambrosio-Kirchheim's
 Slicing Theorem combined with an adapted version Gromov's Filling Volume.
\end{abstract}

\maketitle

\newpage

\tableofcontents

\newpage

\section{Introduction}

The intrinsic flat convergence of Riemannian manifolds has
been applied to study stability of the Positive Mass Theorem,
rectifiability of Gromov-Hausdorff limits of Riemannian manifolds,
and smooth convergence away from singular sets.     
Applications of intrinsic flat convergence
to Riemannian General Relativity appear in joint work of the second author with Lan-Hsuan Huang, Dan Lee, and Philippe LeFloch  \cite{LeeSormani1}\cite{LeFlochSormani1}\cite{HLS}. 
Sajjad Lakzian has applied intrinsic flat convergence to study
smooth convergence away from singular sets obtaining results about the 
limits of Kahler manifolds in \cite{Lakzian-diameter}
and Ricci flow through singularities in \cite{Lakzian-Ricci-flow}. 
Other potential applications of intrinsic flat convergence have 
been suggested by Misha Gromov in \cite{Gromov-Plateau}.

The initial notions of
the intrinsic flat distance and integral current spaces appeared in joint 
work of the second author with Wenger \cite{SorWen2}  building upon Ambrosio-Kirchheim's 
important work on currents in metric spaces \cite{AK}. 
Here we explore new properties and their relationship with
intrinsic flat convergence building upon 
Ambrosio-Kirchheim's Slicing Theorem \cite{AK} (see Theorem~\ref{theorem-slicing} ) combined with a slightly adaped version of Gromov's
Filling Volume \cite{Gromov-Filling} (see Definition~\ref{defn-filling-volume}).
These ideas were intuitively applied in prior work of the author with Wenger to
prove continuity of the filling volumes of spheres under intrinsic flat convergence
and prevent the disappearance of points under intrinsic flat convergence \cite{SorWen1}.   Recall that under intrinsic flat convergence, points may
disappear in the limit.  In fact the limit space could simply be the $\bf{0}$ space
and one must try to avoid this in most applications.

In this paper, we use the full iterative strength of Ambrosio-Kirchheim's
Slicing Theorem to introduce and study the {\em Sliced Filling Volume} [Definitions~\ref{sliced-filling-vol} and~\ref{defn-SF_k}],
the {\em Interval Filling Volume} [Definition~\ref{IFV}], and the {\em Sliced Interval
Filling Volume} [Definition~\ref{SIF}].   We prove the sliced filling volume is bounded
below by constants in the {\em Tetrahedral Property} and the {\em Integral Tetrahedral Property} (see Definitions~\ref{defn-tetra} and~\ref{defn-int-tetra} and Theorem~\ref{tetra-ball}).   The three
dimensional version of the tetrahedral property appears in 
(\ref{3D-1})-(\ref{3D-2}) and is depicted in Figure 1.
Note that some of these notions were first announced by the second author in \cite{Sormani-tetra}.

We prove continuity of the Sliced Filling Volumes with respect to
intrinsic flat convergence in Theorem~\ref{SF-cont}.   We prove continuity
of the Interval Filling Volumes and Sliced Interval Filling Volumes in Theorems~\ref{IFV-cont} and~\ref{SIF-cont}.   
The first author proved semicontinuity of eigenvalues under
volume preserving intrinsic flat convergence in \cite{Portegies-evalues}.   Here we do not make any assumptions on the preservation of volume in the limit.

We then use the
notion of the sliced filling volume to explore when a point does not disappear
under intrinsic flat convergence [Theorems~\ref{SF_k-in-set}, ~\ref{B-W-fillvol}
and~\ref{B-W}].   Note that the disappearance of points was also studied in
prior work of the second author \cite{Sormani-AA}. However, in that paper, one
could not determine if a sequence of points converged to a limit point that was only in the metric completion of the limit space.   Here we are able to determine if the limit of the
points lies in the intrinsic flat limit itself.   Theorems~\ref{B-W-fillvol} and~\ref{B-W}
are Bolzano-Weierstrass type theorems, producing converging subsequences of points.

This paper culminates with two compactness theorems: the Sliced Filling Compactness Theorem
[Theorem~\ref{SF_k-compactness-2}] and the
Tetrahedral Compactness Theorem [Theorem~\ref{tetra-compactness-2}].    We state the three dimensional version of the Tetrahedral Compactness 
Theorem here (including the three dimensional Tetrahedral
Property within the statement):

\newpage

\begin{thm} \label{tetra-compactness-3D}
Given $r_0>0, \beta\in (0,1), C>0, V_0>0$.  Suppose a sequence of 
Riemannian manifolds, $M_i^3$,  satisfies the 
{\bf \em $C, \beta$ tetrahedral  property}
for all balls, $B_p(r) \subset M_i^3$, of radius $r\le r_0$
as in Figure~\ref{fig-tetra-prop}.  That is,
\be\label{3D-1}
\exists p_1, p_2\in \partial B_p(r) \textrm{ such that }
\forall t_1, t_2 \in [(1-\beta)r, (1+\beta)r] \textrm{ we have }\\
\ee
\be\label{3D-2}
\inf\{d(x,y): \, x\neq y, \, x,y \in \partial B_p(r)\cap \partial B_{p_1}(t_1)
\cap \partial B_{p_2}(t_2) \} \in [Cr, \infty).
\ee
Assume in addition each $M_i$ has
\be
\vol(M_i^3)\le V_0 \textrm{ and } \diam(M_i^3)\le D_0.
\ee
Then a subsequence of the $M_i$ converges in
the Gromov-Hausdorff and the Intrinsic Flat sense
to the same space.   In particular, the limit is countably
$\mathcal{H}^3$ rectifiable.   
\end{thm} 

\begin{figure}[htbp] 
   \centering
   \includegraphics[width=3in]{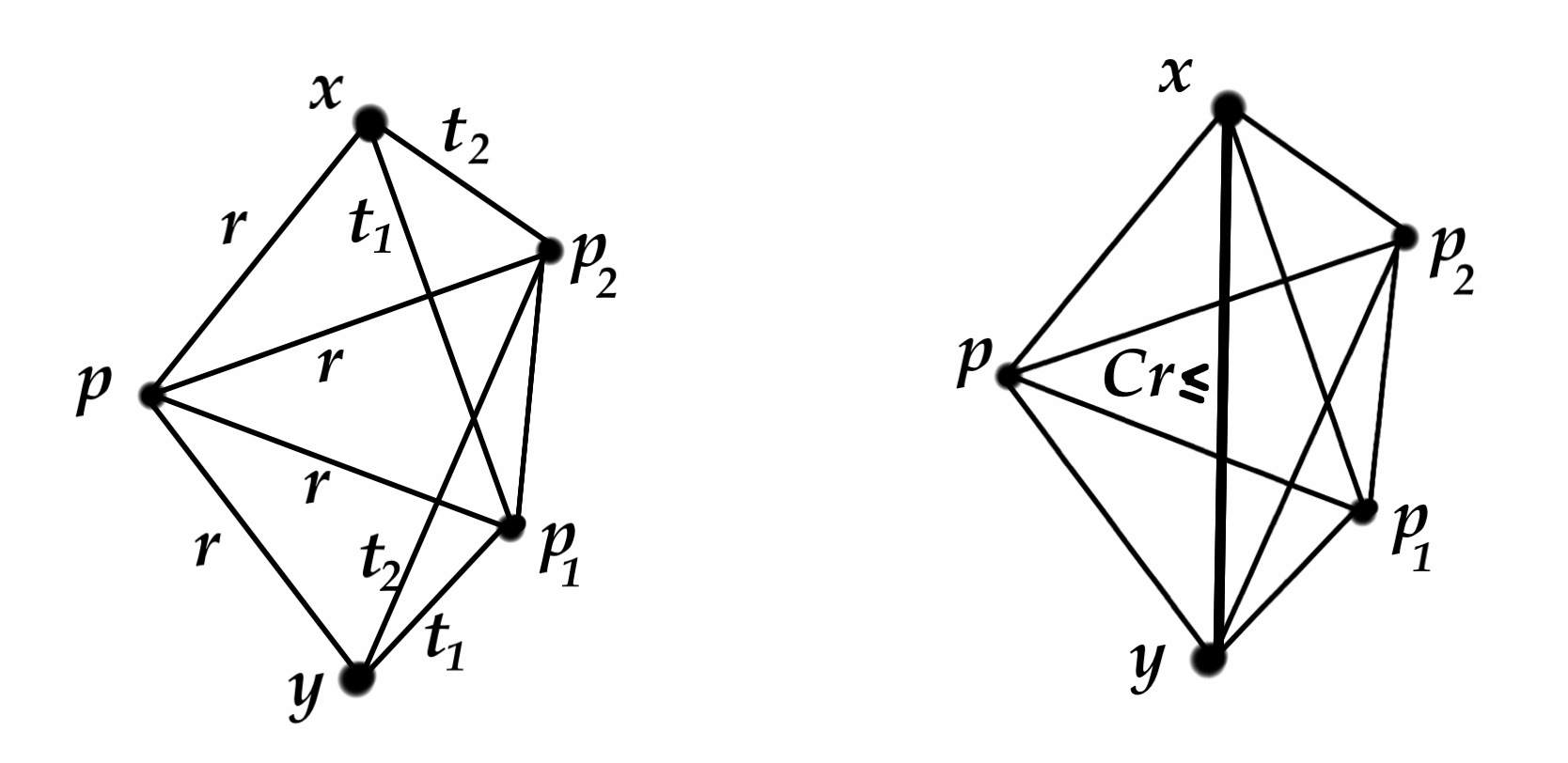} 
   \caption{Tetrahedral Property as depicted in \cite{Sormani-tetra}. }
   \label{fig-tetra-prop}
\end{figure}

One might view this compactness theorem as a higher dimensional analogue of the compactness of Alexandrov spaces.
The Sliced Filling Compactness Theorem is applied to prove this Tetrahedral Compactness Theorem.    It assumes a uniform lower bound on the sliced filling volumes of balls and draws the same conclusion.   To prove this theorem, we first prove 
Gromov-Hausdorff convergence of a subsequence [Theorem~\ref{SF_k-compactness}].   We only obtain the fact that the
intrinsic flat limit agrees with the Gromov-Hausdorff limit in the
final section of the paper by applying our theorems which avoid the disappearance of points.      Once the two notions of convergence agree, then we can
conclude the limits are noncollapsed   
countably $\mathcal{H}^m$ rectifiable metric spaces.

These theorems
were announced by the second author in \cite{Sormani-tetra}
but the rigorous proof has required the development of the full theory of sliced filling volumes developed herein.  
Prior results relating 
intrinsic flat limits to Gromov-Hausdorff limits
appear in joint work of the second author with Wenger 
concerning sequences of spaces with contractibility functions and
noncollapsing manifolds with nonnegative Ricci curvature \cite{SorWen1},
in work of Li-Perales concerning Alexandrov spaces \cite{Li-Perales},
in work of Munn concerning noncollapsing manifolds with pinched
Ricci curvature \cite{Munn-GH=IF}, and in work of Perales concerning
noncollapsing Riemannian manifolds with boundary \cite{Perales-GH=IF}.    These prior results apply powerful theorems from
Cheeger-Colding Theory and Alexandrov Geometry.   The results contained herein are built only upon the theorems in 
Ambrosio-Kirchheim's
{\em ``Currents on Metric Spaces"} \cite{AK}
and the ideas in Gromov's {\em ``Filling Riemannian Manifolds"}
 \cite{Gromov-Filling}.

Applications of the results in this paper will appear in future work of the authors.

\vspace{.4cm}
\newpage
\noindent{\bf{Recommended Reading:}}

\vspace{.2cm}

This paper attempts to be completely self contained, providing all necessary
background material within the paper.   However, students reading this 
paper are encouraged to read Burago-Burago-Ivanov's award winning textbook \cite{BBI} 
which provides a thorough background in Gromov-Hausdorff convergence 
and also to read the second author's joint paper with Wenger \cite{SorWen2} and 
the second author's recent paper \cite{Sormani-AA}.   Those who
would like to understand the Geometric Measure Theory more
deeply should read Morgan's textbook \cite{Morgan-text} or
Fanghua Lin's textbook \cite{Lin-Yang-GMT-text} and then the work of
Ambrosio-Kirchheim \cite{AK}.   

\vspace{.4cm}

\noindent{\bf{Acknowledgements:}}

\vspace{.2cm}
We are grateful to Luigi Ambrosio, Toby Colding, Jozef Dodziuk,
Carolyn Gordon, Misha Gromov, Gerhard Huisken, Tom Ilmanen, Jim Isenberg, J\"urgen Jost, Blaine Lawson, Fanghua Lin, Bill Minicozzi, Paul Yang and Shing-Tung Yau for their interest in the Intrinsic Flat distance and invitations to visit their universities and to 
attend Oberwolfach.  
We discussed many interesting applications and we hope this paper
provides the properties required to explore these possibilities. 
We appreciate Maria Gordina for early assistance with some
of the details regarding metric measure spaces.  We would especially like to thank the
Jorge Basilio, Sajjad Lakzian and Raquel Perales for 
actively participating in the CUNY Geometric Measure Theory Reading Seminar 
with us while this paper was first being written.   Their 
presentations of Ambrosio-Kirchheim's work and deep questions
lead to many interesting ideas.  
The first author would like to thank the Max Planck Institute for Mathematics in the Sciences for its hospitality.
The second author would like to thank Cavalletti, Ketterer
and Munn for the invitation to present this paper in a series of talks at the
{\em Winter School for Optimal Transport} at the Hausdorff Institute in Bonn.  This
lead to many exciting discussions with Shouhei Honda, Nicola Gigli, Yashar Memarian
and Tapio Rajala concerning potential future applications combining this work with their
results. 

\newpage

\section{Background}
In this section we review the Gromov-Hausdorff distance
introduced by Gromov in \cite{Gromov-metric}, then various
topics from Ambrosio-Kirchheim's work in \cite{AK},
then intrinsic flat convergence and integral current
spaces from prior work of the second author with Wenger
in \cite{SorWen2} and end
with a review of filling volumes which are related to
Gromov's notion from \cite{Gromov-Filling} but defined using
the work of Ambrosio-Kirchheim.

\vspace{.4cm}

\subsection{Review of the Gromov-Hausdorff Distance}

First recall that $\varphi: X \to Y$ is an isometric embedding iff
\be
d_Y(\varphi(x_1), \varphi(x_2)) = d_X(x_1, x_2) \qquad \forall x_1, x_2 \in X.
\ee
This is referred to as a metric isometric embedding in \cite{LeeSormani1}
and it should be distinguished from a Riemannian isometric embedding.

\begin{defn}[Gromov]\label{defn-GH} 
The Gromov-Hausdorff distance between two 
compact metric spaces $\left(X, d_X\right)$ and $\left(Y, d_Y\right)$
is defined as
\be \label{eqn-GH-def}
d_{GH}\left(X,Y\right) := \inf  \, d^Z_H\left(\varphi\left(X\right), \psi\left(Y\right)\right)
\ee
where $Z$ is a complete metric space, and $\varphi: X \to Z$ and $\psi:Y\to Z$ are
isometric embeddings and where the Hausdorff distance in $Z$ is defined as
\be
d_{H}^Z\left(A,B\right) = \inf\{ \epsilon>0: A \subset T_\epsilon\left(B\right) \textrm{ and } B \subset T_\epsilon\left(A\right)\}.
\ee
\end{defn}

Gromov proved that this is indeed a distance on compact metric spaces: $d_{GH}\left(X,Y\right)=0$
iff there is an isometry between $X$ and $Y$.   When studying metric
spaces which are only precompact, one may take their metric
completions before studying the Gromov-Hausdorff distance
between them.

\begin{defn}\label{defn-equibounded}
A collection of metric spaces is said to be equibounded or
uniformly bounded if there is a uniform upper bound
on the diameter of the spaces.  
\end{defn}

\begin{rmrk}\label{rmrk-NXr}
 We will write $N\left(X,r\right)$ to denote the
number of disjoint balls of radius $r$ in a space $X$.   Note that $X$ can always be covered
by $N\left(X,r\right)$ balls of radius $2r$.
\end{rmrk}

Note that Ilmanen's Example of \cite{SorWen2}
of a sequence of spheres with increasingly many splines is not
equicompact, as the number of balls centered on the tips approaches infinity.

\begin{defn} \label{defn-equicompact}
A collection of spaces is said to
be equicompact or uniformly compact if they 
are have a common upper bound $N\left(r\right)$
such that $N\left(X,r\right) \le N\left(r\right)$ for all spaces $X$ in the collection.
\end{defn}

Gromov's Compactness Theorem states that sequences of equibounded and equicompact
metric spaces have a Gromov-Hausdorff converging subsequence \cite{Gromov-metric}.  In fact,
Gromov proves a stronger version of this statement in \cite{Gromov-groups}:

\begin{thm}[Gromov's Compactness Theorem] \label{Thm-Gromov} 
If a sequence of compact metric spaces, $X_j$, is equibounded and equicompact, then a subsequence of the $X_j$ converges to a compact metric space $X_\infty$.
\end{thm}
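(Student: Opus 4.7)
The plan is to build the limit space from a diagonal sequence of finite nets and then deduce Gromov-Hausdorff convergence via a correspondence argument.

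First, for each $k \in \N$, I would use equicompactness to select a maximal $(1/k)$-separated subset $S_{j,k} \subset X_j$; by \remref{rmrk-NXr} it has at most $N(1/k)$ points, and maximality forces it to be a $(1/k)$-net. Choosing these sets greedily so that $S_{j,k} \subset S_{j,k+1}$, I get a nested family of nets in each $X_j$. For fixed $k$, $|S_{j,k}|$ takes only finitely many integer values, so I can pass to a subsequence along which $|S_{j,k}| = N_k$ is constant; labelling the points $p_{j,k,1},\ldots,p_{j,k,N_k}$ consistently with the inclusions $S_{j,k}\subset S_{j,k+1}$, the pairwise distances $d_{X_j}(p_{j,k,a},p_{j,k,b}) \in [0,D_0]$ are uniformly bounded by equiboundedness. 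A standard Cantor diagonal extraction then produces a single subsequence along which every distance $d_{X_j}(p_{j,k,a},p_{j,k,b})$ converges to a limit, which I denote $d(p_{\infty,k,a},p_{\infty,k,b})$, for all $k,a,b$ simultaneously.

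Second, these limiting distances define a pseudometric on the countable set $S_\infty = \bigcup_k \{p_{\infty,k,1},\ldots,p_{\infty,k,N_k}\}$, since symmetry, non-negativity, and the triangle inequality all pass to the limit. I then take the quotient by the relation $x\sim y \iff d(x,y)=0$ and pass to the metric completion $X_\infty$. Compactness of $X_\infty$ follows because the image of $S_{\infty,k}$ is a $(1/k)$-net of cardinality at most $N(1/k)$, yielding total boundedness, and completion gives completeness.

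Third, to verify $d_{GH}(X_j,X_\infty)\to 0$, given $\epsilon>0$ I choose $k$ with $1/k<\epsilon/6$ and then $j$ large enough that $|d_{X_j}(p_{j,k,a},p_{j,k,b}) - d_{X_\infty}(p_{\infty,k,a},p_{\infty,k,b})|<\epsilon/3$ for all $a,b$. Using $(1/k)$-density of each net in its ambient space, I assemble a correspondence $R\subset X_j\times X_\infty$ by pairing every point of either space with the $p_{*,k,a}$ index of a nearest net point; a direct estimate shows the distortion of $R$ is at most $2/k + \epsilon/3 < \epsilon$. Standard correspondence-to-embedding machinery then produces a common metric space in which the Hausdorff distance between the images of $X_j$ and $X_\infty$ is at most $\epsilon$.

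The main obstacle is orchestrating the diagonal extraction so that the nets are simultaneously nested in $k$, have stable cardinalities $N_k$ in $j$, and have convergent pairwise distances, all along one subsequence. Once this combinatorial bookkeeping is in place, the passage from net-level approximate isometries to a genuine Gromov-Hausdorff bound is the standard $\epsilon/3$ argument; the only subtlety is that the pseudometric quotient must not collapse the labelled net structure, which is automatic because each $S_{j,k}$ was genuinely $(1/k)$-separated and hence the limits $d(p_{\infty,k,a},p_{\infty,k,b})\ge 1/k$ for $a\ne b$.
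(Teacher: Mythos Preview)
The paper does not give its own proof of \thmref{Thm-Gromov}; the theorem appears in the review section and is simply attributed to Gromov \cite{Gromov-metric}, \cite{Gromov-groups}. So there is nothing to compare against.

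Your argument is the standard proof and is essentially correct. Two minor points worth tightening. First, the distortion estimate in your last paragraph is off by a factor of two: with your correspondence, if $(x,x')$ is paired via index $a$ and $(y,y')$ via index $b$, then
\[
|d_{X_j}(x,y)-d_{X_\infty}(x',y')| \le 2/k + \epsilon/3 + 2/k = 4/k + \epsilon/3,
\]
since you pick up a $2/k$ error moving to the net on \emph{each} side. Your choice $1/k<\epsilon/6$ still gives distortion $<\epsilon$, so the conclusion is unaffected. Second, the claim that the image of $S_{\infty,k}$ is a $(1/k)$-net in $X_\infty$ deserves one line: for any $p_{\infty,m,a}$ with $m>k$, since $\min_b d_{X_j}(p_{j,m,a},p_{j,k,b})\le 1/k$ for every $j$ and there are only finitely many $b$, some index $b_0$ recurs infinitely often and the limit distance inherits the bound; density then extends this to all of $X_\infty$.
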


Gromov also proved the following useful theorem:

\begin{thm}\label{Gromov-Z}
If a sequence of compact metric
spaces $X_j$ converges to a compact metric space $X_\infty$ then
$X_j$ are equibounded and equicompact.  Furthermore,
there is a compact metric space, $Z$, and isometric embeddings
$\varphi_j: X_j \to Z$ such that
\be
d_H^Z\left(\varphi_{j}(X_{j}), \varphi_\infty(X_\infty)\right)  
\le 2d_{GH}\left(X_j, X_\infty\right) \to 0.
\ee
\end{thm}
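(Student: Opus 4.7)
The plan is to split the proof into three parts: uniform bounds on the diameters, uniform covering-number bounds, and construction of a single compact metric space $Z$ containing isometric copies of every $X_j$ and of $X_\infty$.

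For equiboundedness, I would first record the auxiliary fact that if $X$ and $Y$ embed isometrically into a common space $W$ with $d_H^W(\varphi(X),\psi(Y))<\eta$, then $|\diam(X)-\diam(Y)|\le 2\eta$: any pair of points in one space may be matched to $\eta$-close points in the other, changing distances by at most $2\eta$. Taking the infimum over witnesses gives $|\diam(X_j)-\diam(X_\infty)|\le 2 d_{GH}(X_j,X_\infty)$, so the convergence $d_{GH}(X_j,X_\infty)\to 0$ forces $\sup_j\diam(X_j)<\infty$.

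For equicompactness, fix $r>0$ and choose a witness $(Z_j,\varphi_j,\psi_j)$ of the $GH$ distance with $d_H^{Z_j}(\varphi_j(X_j),\psi_j(X_\infty))<\eta$, where $\eta<r/4$ for $j$ large. The centers of $N(X_j,r)$ disjoint open $r$-balls in $X_j$ embed to points of $Z_j$ pairwise at distance $\ge 2r$; perturbing each by at most $\eta$ onto $\psi_j(X_\infty)$ produces points in $X_\infty$ pairwise at distance $>r$, so $N(X_j,r)\le N(X_\infty,r/2)$, which is finite by compactness of $X_\infty$. The finitely many small-$j$ terms are each compact and contribute a finite bound, yielding a uniform $N(r)$.

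For the common space, for each $j$ fix a witness metric space $Z_j$ with isometric embeddings $\varphi_j:X_j\to Z_j$ and $\psi_j:X_\infty\to Z_j$ satisfying $d_H^{Z_j}(\varphi_j(X_j),\psi_j(X_\infty))\le 2 d_{GH}(X_j,X_\infty)$, which exists since we have slack $d_{GH}$ above the infimum. On the disjoint union $Z_0:=X_\infty\sqcup\bigsqcup_j X_j$ define
$$d(x_j,y):=d_{Z_j}(\varphi_j(x_j),\psi_j(y)),\qquad x_j\in X_j,\ y\in X_\infty,$$
$$d(x_j,x_k):=\inf_{y\in X_\infty}\bigl[d_{Z_j}(\varphi_j(x_j),\psi_j(y))+d_{Z_k}(\psi_k(y),\varphi_k(x_k))\bigr]\qquad(j\ne k),$$
and keep the original distances inside each piece. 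Let $Z$ be the metric completion of the quotient of $Z_0$ by the equivalence relation identifying points at pseudodistance zero.

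The main obstacle will be verifying that the gluing does not shorten intra-piece distances and that $Z$ is compact. For the first, any candidate shortcut from $x,x'\in X_j$ through a point $y\in X_\infty$ or through another piece factors as a path in $Z_j$ whose length bounds $d_{Z_j}(\varphi_j(x),\varphi_j(x'))=d_{X_j}(x,x')$ from below by the triangle inequality in $Z_j$; an analogous argument handles points of $X_\infty$. For compactness, I would show total boundedness: given $\varepsilon>0$, choose $N$ so that $d_H^Z(X_j,X_\infty)<\varepsilon/2$ for $j\ge N$, take a finite $\varepsilon/2$-net of $X_\infty$, and adjoin finite $\varepsilon$-nets of the finitely many $X_j$ with $j<N$; the result is a finite $\varepsilon$-net of $Z_0$, and completeness is built into the completion step. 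Finally, the inequality $d_H^Z(\varphi_j(X_j),\varphi_\infty(X_\infty))\le 2 d_{GH}(X_j,X_\infty)$ is immediate from the definition of $d$ across pieces, since by construction the Hausdorff distance in $Z$ between these two images equals the corresponding Hausdorff distance in $Z_j$.
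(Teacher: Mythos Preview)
The paper does not give its own proof of this theorem: it appears in the review section, is attributed to Gromov, and is stated without proof as background for later use. So there is nothing in the paper to compare your argument against.

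That said, your argument is correct and is the standard one. The diameter bound and the covering-number bound are exactly right. The gluing construction of $Z$ is the usual ``glue every $X_j$ to $X_\infty$ through a near-optimal witness $Z_j$'' argument; your verification that intra-piece distances are not shortened is the key point, and although you sketch only the case of a detour through $X_\infty$, the general case of a detour through some $X_k$ reduces to it after collapsing the middle leg using the triangle inequality in $Z_k$ and the fact that $\psi_k$ is an isometry on $X_\infty$. Your total-boundedness argument is fine; in fact the quotient of $Z_0$ is already complete under your hypotheses (any Cauchy sequence wandering through infinitely many $X_j$ is asymptotically shadowed by a Cauchy sequence in $X_\infty$), so the completion step is harmless but unnecessary. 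The factor $2$ is handled correctly by allowing slack above the infimum in the choice of $Z_j$.
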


This theorem allows one to define converging sequences of
points: 

\begin{defn}\label{Gromov-points}
We say that $x_j \in X_j$ converges to $x_\infty \in X_\infty$,
if there is a common space $Z$ as in Theorem~\ref{Gromov-Z}
such that $\varphi_j(x_j) \to \varphi_\infty(x)$ as points in
$Z$.  If one discusses the limits of multiple sequences of points
then one uses a common $Z$ to determine the convergence
to avoid difficulties arising from isometries in the limit space.
Then one immediately has
\be
\lim_{j\to\infty} d_{X_j}(x_j, x_j')=d_{X_\infty}(x_\infty, x_\infty')
\ee
whenever $x_j \to x_\infty$ and $x_j'\to x_\infty'$ via
a common $Z$.
\end{defn}

Theorem~\ref{Gromov-Z} also allows one to extend the
Arzela-Ascoli Theorem:

\begin{defn}\label{equicont-def}
A collection of functions, $f_j : X_j \to X_j'$ is said to be equicontinuous
if for all $\epsilon>0$ there exists $\delta_\epsilon>0$ independent of
$j$ such that 
\be
f_j \left(B_x(\delta_\epsilon)\right)\subset B_{f_j(x)}(\epsilon) \qquad \forall x \in X_j.
\ee
\end{defn}

\begin{thm}\label{Gromov-Arz-Asc}
Suppose $X_j$ and $X_j'$ are compact metric spaces converging
in the Gromov-Hausdorff sense to compact metric spaces
$X_\infty$ and $X_\infty'$, and suppose $f_j : X_j \to X_j'$
are equicontinuous, then a subsequence $f_{j_i}$
converge to a continuous function $f_\infty: X_\infty\to X_\infty'$
such that for any sequence $x_j \to x_\infty$ via a common $Z$
we have $f_{j_i}(x_{j_i}) \to f_\infty(x_\infty)$.
\end{thm}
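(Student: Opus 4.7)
The plan is to combine Gromov's embedding theorem (Theorem~\ref{Gromov-Z}) with the classical Arzela-Ascoli argument, carried out inside fixed compact ambient spaces. First I would invoke Theorem~\ref{Gromov-Z} twice to obtain compact metric spaces $Z$ and $Z'$ together with isometric embeddings $\varphi_j : X_j \to Z$, $\varphi_\infty : X_\infty \to Z$ and $\varphi_j' : X_j' \to Z'$, $\varphi_\infty' : X_\infty' \to Z'$ such that $\varphi_j(X_j)$ Hausdorff-converges to $\varphi_\infty(X_\infty)$ in $Z$ and $\varphi_j'(X_j')$ Hausdorff-converges to $\varphi_\infty'(X_\infty')$ in $Z'$. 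Replacing $f_j$ by the conjugated map $g_j = \varphi_j' \circ f_j \circ \varphi_j^{-1}$ on $\varphi_j(X_j) \subset Z$, I reduce the problem to building a limit function on a subset of a single compact space $Z$ taking values in a single compact space $Z'$. Equicontinuity is preserved with the same modulus $\delta_\epsilon$, since $\varphi_j$ and $\varphi_j'$ are isometric embeddings.

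Next I would fix a countable dense subset $\{x_\infty^k : k \in \N\}$ of $X_\infty$ and, for each $k$, choose points $x_j^k \in X_j$ with $\varphi_j(x_j^k) \to \varphi_\infty(x_\infty^k)$ as $j \to \infty$; these exist by the Hausdorff convergence in $Z$. Since $Z'$ is compact, a standard diagonal extraction yields a subsequence (still denoted $g_j$) along which $g_j(\varphi_j(x_j^k))$ converges in $Z'$ for every $k$, to some limit $y_\infty^k \in Z'$. Because $g_j(\varphi_j(x_j^k)) \in \varphi_j'(X_j')$ and $\varphi_j'(X_j')$ Hausdorff-converges to $\varphi_\infty'(X_\infty')$ inside $Z'$, each $y_\infty^k$ lies in $\varphi_\infty'(X_\infty')$, so $y_\infty^k = \varphi_\infty'(w_\infty^k)$ for a unique $w_\infty^k \in X_\infty'$. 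I then set $f_\infty(x_\infty^k) := w_\infty^k$.

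I would then use equicontinuity to extend $f_\infty$ uniformly continuously to all of $X_\infty$. Given $\epsilon > 0$ and dense approximants with $d_{X_\infty}(x_\infty^k, x_\infty^l) < \delta_\epsilon / 2$, one has $d_{X_j}(x_j^k, x_j^l) < \delta_\epsilon$ for all sufficiently large $j$, and hence
\be
d_{Z'}\bigl(g_j(\varphi_j(x_j^k)), g_j(\varphi_j(x_j^l))\bigr) \le \epsilon.
\ee
Passing to the limit gives $d_{X_\infty'}(f_\infty(x_\infty^k), f_\infty(x_\infty^l)) \le \epsilon$, so $f_\infty$ is uniformly continuous on the dense set and admits a unique continuous extension to $X_\infty$. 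Finally, for any $x_j \to x_\infty$ via the common $Z$, I pick a dense approximant $x_\infty^k$ with $d_{X_\infty}(x_\infty^k, x_\infty)$ small, so $d_{X_j}(x_j, x_j^k) < \delta_\epsilon$ for large $j$, and combine the equicontinuity bound on $g_j(\varphi_j(x_j))$ versus $g_j(\varphi_j(x_j^k))$ with the established convergence at the dense point to conclude $g_j(\varphi_j(x_j)) \to \varphi_\infty'(f_\infty(x_\infty))$ in $Z'$, i.e.\ $f_j(x_j) \to f_\infty(x_\infty)$ in the sense of Definition~\ref{Gromov-points}.

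The main obstacle is technical rather than conceptual: one must arrange a single common ambient $Z$ (and $Z'$) that simultaneously realizes the Gromov-Hausdorff convergence for \emph{every} sequence of test points one might later consider, so that statements like ``$x_j \to x_\infty$ via a common $Z$'' and ``$f_j(x_j) \to f_\infty(x_\infty)$'' refer to the same embeddings throughout. This is precisely what Theorem~\ref{Gromov-Z} supplies; once those embeddings are fixed at the outset, the rest of the argument is a faithful transcription of the classical Arzela-Ascoli proof, with the one delicate verification being that the diagonal-limit values $y_\infty^k$ land in $\varphi_\infty'(X_\infty')$ rather than elsewhere in $Z'$, which is exactly where the Hausdorff convergence of the target embeddings is essential.
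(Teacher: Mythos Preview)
Your proposal is correct and follows the standard argument. The paper itself does not supply a proof of this statement: it appears in the review section as a known consequence of Theorem~\ref{Gromov-Z}, stated without proof. Your approach---embed domain and target sequences into fixed compact ambient spaces $Z$ and $Z'$, run a diagonal argument on a countable dense subset of $X_\infty$, extend by uniform continuity, and then verify convergence along arbitrary sequences by approximating with dense points---is exactly the approach the paper later uses to prove its intrinsic flat analogue, Theorem~\ref{Flat-Arz-Asc} (with a fixed compact target $W$ in place of varying $X_j'$). In that sense your argument is not only correct but is the template the paper itself adopts in the related setting.
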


In particular, one can define limits of curves $C_i: [0,1] \to X_i$
(parametrized proportional to arclength with a uniform upper
bound on length) to obtain curves $C_\infty:[0,1]\to X_\infty$.
So that when $X_i$ are compact length spaces whose distances
are achieved by minimizing geodesics, so are the limit spaces $X_\infty$.

One only needs uniform lower bounds on 
Ricci curvature and upper bounds on diameter to prove equicompactness
on a sequence of Riemannian manifolds.  This is a consequence of
the Bishop-Gromov Volume Comparison Theorem \cite{Gromov-metric}.
Colding and Cheeger-Colding have studied the limits of such sequences 
of spaces proving volume convergence and eigenvalue convergence
and many other interesting properties \cite{Colding-volume}
\cite{ChCo-PartI}-\cite{ChCo-PartIII}.   
 One property of particular interest here, is that 
when the sequence of manifolds is noncollapsing (i.e. is assumed to have
a uniform lower bound on volume), Cheeger-Colding prove that the limit space
is countably $\mathcal{H}^m$ rectifiable with the same dimension
as the sequence \cite{ChCo-PartI}.

Greene-Petersen have shown that conditions on contractibility
and uniform upper bounds on diameter also suffice to achieve
compactness without any assumption on Ricci curvature or
volume \cite{Greene-Petersen}.   Sormani-Wenger have shown
that if one assumes a uniform linear contractibility function on the
sequence of manifolds then the limit spaces achieved in their setting are
also countably $\mathcal{H}^m$ rectifiable with the same dimension
as the sequence.   Without the assumption of linearity, Schul-Wenger
have provided an example where the Gromov-Hausdorff limit
is not countably $\mathcal{H}^m$ rectifiable. \cite{SorWen1}.   The
proofs here involve the Intrinsic Flat Convergence.

\vspace{.4cm}
\subsection{Review of Ambrosio-Kirchheim Currents on Metric Spaces}

In \cite{AK}, Ambrosio-Kirchheim extend Federer-Fleming's notion of integral
currents using DiGeorgi's notion of tuples of functions.   Here we review
their ideas.   Here $Z$ denotes a complete metric space.

In Federer-Fleming currents were defined as linear functionals on differential
forms \cite{FF}. This approach extends naturally
to smooth manifolds but not to complete metric spaces which do not have differential
forms.  In the place of differential forms, Ambrosio-Kirchheim use
DiGeorgi's
$m+1$ tuples, $\omega\in \mathcal{D}^m(Z)$,
\be
\omega=\left(f,\pi_1 ...\pi_m\right) \in \mathcal{D}^m(Z)
\ee 
where
$f: X \to \R$ is a bounded Lipschitz function and
$\pi_i: X \to \R$ are Lipschitz.  

In \cite{AK} Definitions 2.1, 2.2, 2.6 and 3.1, an $m$ dimensional
current $T\in \curr_m(Z)$ is defined.  Here we combine them into a single
definition:

\begin{defn}
On a complete metric space, $Z$,  an
$m$ dimensional {\bf \em current}, denoted
$T\in \curr_m(Z)$, is a real valued
{\em multilinear functional} on $\mathcal{D}^m(Z)$, with the
following three required properties:
\vspace{.2cm}

{\bf i) { Locality}:}
$$
T(f, \pi_1,...\pi_m)=0 \textrm{ if }\exists i\in \{1,...m\} \textrm{ s.t. }\pi_i
\textrm{ is constant on a nbd of } \{f\neq0\}.
$$

{\bf ii) { Continuity}:}
$$
\textcolor{white}{\int}\textrm{Continuity of $T$ with respect to the ptwise convergence
of $\pi_i$ such that $\Lip(\pi_i)\le 1$.} 
$$

{\bf iii) {Finite mass}: }
$$
\textcolor{white}{\int}\exists \textrm{ finite Borel } \mu \,\,\,s.t. \,\,\,
|T(f,\pi_1,...\pi_m)| \le \prod_{i=1}^m \Lip(\pi_i)  \int_Z |f| \,d\mu \,\,\, \forall (f,\pi_1,...\pi_m)\in \mathcal{D}^m(Z).
$$
\end{defn}

In \cite{AK} Definition 2.6 Ambrosio-Kirchheim introduce
their mass measure which is distinct from the masses used
in work of Gromov \cite{Gromov-Filling} and 
Burago-Ivanov \cite{Burago-Ivanov-Vol-Tori}.   This definition
is later used to define the notion of filling volume used in this 
paper.

\begin{defn}  \label{defn-mass}  
The mass measure, $\|T\| $,
of a current, $T\in \curr_m(Z)$, is the smallest Borel measure $\mu$ such that
\be \label{def-measure-T}
  \Big|T\left(f,\pi\right)\Big|    \le     \int_X |f| d\mu
  \qquad  \forall  \, \left(f,\pi\right) \textrm{ where } \Lip\left(\pi_i\right)\le 1.
\ee  
The mass of $T$ is defined
\be \label{def-mass-from-current}
M\left(T\right) = || T || \left(Z\right) = \int_Z \, d\| T\|.
\ee
\end{defn}

In particular
\be \label{eqn-mass}
\Big| T(f,\pi_1,...\pi_m) \Big| \le \mass(T)\, |f|_\infty \Lip(\pi_1) \cdots \Lip(\pi_m).
\ee

Stronger versions of locality and continuity, as well as product and
chain rules are  proven in \cite{AK}[Theorem 3.5].  In particular they
define $T(f, \pi_1,..., \pi_m)$ for $f$ which are only Borel functions
as limits of $T(f_j, \pi_1,...,\pi_m)$ where $f_j$ are bounded Lipschitz
functions converging to $f$ in $L^1(E, ||T||)$.  They also prove
 \be
T(f, \pi_\sigma(1),...\pi_\sigma(m))= \sgn(\sigma) \,T(f, \pi_1,...\pi_m)
\ee
for any permutation, $\sigma$, of $\{1,2,...m\}$.

Ambrosio-Kirchheim then define restriction \cite{AK}[Defn 2.5]:

\begin{defn} 
The {\em restriction} $T\rstr \omega\in \curr_m(Z)$
of a current $T\in M_{m+k}(Z)$ by a $k+1$ tuple
 $\omega=(g,\tau_1,...\tau_k)\in \mathcal{D}^k(Z)$:
\be
(T\rstr\omega)(f,\pi_1,...\pi_m):=T(f\cdot g, \tau_1,...\tau_k, \pi_1,...\pi_m).
\ee
Given a Borel set, $A$, 
\be
T\rstr A := T\rstr \omega
\ee
where $\omega= \One_A \in \mathcal{D}^0(Z)$ is the indicator
function of the set.  In this case,
\be
\mass(T\rstr \omega) = ||T||(A).
\ee
\end{defn}

Ambrosio-Kirchheim then define the push forward map:

\begin{defn}
Given a Lipschitz map $\varphi:Z\to Z'$, the  push
forward of a current $T\in \curr_m(Z)$
to a current $\varphi_\# T \in \curr_m(Z')$ is given in \cite{AK}[Defn 2.4] by
\be \label{def-push-forward}
\varphi_\#T(f,\pi_1,...\pi_m):=T(f\circ \varphi, \pi_1\circ\varphi,...\pi_m\circ\varphi)
\ee
exactly as in the smooth setting. 
\end{defn}

\begin{rmrk} \label{rstr-push} 
One immediately sees that 
\be
(\varphi_\#T) \rstr (f, \pi_1,...\pi_k))= \varphi_\#(T \rstr (f\circ \varphi, \pi_1\circ\varphi,...\pi_k\circ\varphi) )
\ee
and
\be
(\varphi_\#T )\rstr A = (\varphi_\#T) \rstr (\One_A)
=\varphi_\# (T \rstr (\One_A\circ \varphi)) = \varphi_\#(T \rstr \varphi^{-1}(A)).
\ee
\end{rmrk}

In (2.4) \cite{AK}, Ambrosio-Kirchheim show that
\be  \label{mass-push}
||\varphi_\#T|| \le [\Lip(\varphi)]^m \varphi_\# ||T||,
\ee
so that when $\varphi$ is an isometric
embedding 
\be \label{lem-push-mass}
||\varphi_\#T||=\varphi_\#||T|| \textrm{ and }
\mass(T)=\mass(\varphi_\#T).
\ee

The simplest example of a current is:

\begin{example}\label{basic-current-pushed}
If one has a bi-Lipschitz map, $\varphi:\R^m \to Z$, and 
a Lebesgue function $h\in L^1(A,\Z)$ where $A\in \R^m$ is Borel,
then $\varphi_\# \Lbrack h \Rbrack \in \curr_m(Z)$ an 
$m$ dimensional current in $Z$.  Note that
\be
\varphi_\# \Lbrack h \Rbrack (f,\pi_1,...\pi_m)=
\int_{A \subset \R^m} (h\circ \varphi )(f\circ\varphi) \, 
d(\pi_1\circ \varphi) \wedge \dots \wedge d(\pi_m\circ\varphi)
\ee
where $d(\pi_i\circ\varphi)$ is well defined almost everywhere
by Rademacher's Theorem.  Here the mass measure is
\be
||\Lbrack h \Rbrack ||= h \,d\mathcal{L}_m
\ee
and the mass is
\be
\mass(\Lbrack h \Rbrack ) =\int_A h d\mathcal{L}_m.
\ee
\end{example}

In \cite{AK}[Theorem 4.6] Ambrosio-Kirchheim define a canonical set associated with any
integer rectifiable current:   

\begin{defn} \label{defn-set}
The (canonical) set of a current, $T$,
 is the collection of points in $Z$ with positive lower density:
\be \label{def-set-current}
\set\left(T\right)= \{p \in Z: \Theta_{*m}\left( \|T\|, p\right) >0\},
\ee
where the definition of lower density is
\be \label{eqn-lower-density}
\Theta_{*m}\left( \mu, p\right) =\liminf_{r\to 0} \frac{\mu(B_p(r))}{\omega_m r^m}.
\ee
\end{defn}

In \cite{AK} Definition 4.2 and Theorems 4.5-4.6, an integer rectifiable current is defined using the
Hausdorff measure, $\mathcal{H}^m$:

\begin{defn}\label{int-rect-curr}
Let $m\ge 1$.   A current, $T\in \mathcal{D}_m(Z)$, is rectifiable if $\set(T)$ is countably
$\mathcal{H}^m$ rectifiable and if $||T||(A)=0$ for any set
$A$ such that $\mathcal{H}^k(A)=0$.   We write $T\in \rectcurr_m(Z)$.

We say $T\in \rectcurr_m(Z)$ is integer rectifiable,
denoted $T\in \intrectcurr_m(Z)$, if for any $\varphi\in \Lip(Z, \R^m)$ and any open set $A\in Z$, we have 
\be
\exists\, \theta \in \mathcal{L}^1(\R^k,Z) \,\,\,s.t.\,\,\,
\varphi_{\#}(T\rstr A)=\Lbrack \theta \Rbrack.
\ee   
In fact, $T \in \intcurr_m(Z)$
iff it has a parametrization.  A parametrization $\left(\{\varphi_i\}, \{\theta_i\}\right)$ of an integer rectifiable current $T\in \intrectcurr^m\left(Z\right)$ is a collection of
bi-Lipschitz maps $\varphi_i:A_i \to Z$ with $A_i\subset\R^m$ precompact
Borel measurable and with pairwise disjoint images and
weight functions $\theta_i\in L^1\left(A_i,\N\right)$ such that
\be\label{param-representation}
T = \sum_{i=1}^\infty \varphi_{i\#} \Lbrack \theta_i \Rbrack \quad\text{and}\quad \mass\left(T\right) = \sum_{i=1}^\infty \mass\left(\varphi_{i\#}\Lbrack \theta_i \Rbrack\right).
\ee
A $0$ dimensional rectifiable current is defined by the existence of
countably many distinct points, $\{x_i\}\in Z$,  weights $\theta_i \in \R^+$
and orientation, $\sigma_i \in \{-1,+1\}$
such that 
\be \label{0-param-representation}
T(f)=\sum_h \sigma_i \theta_i f(x_i) \qquad \forall f \in \mathcal{B}^\infty(Z).
\ee
where $\mathcal{B}^\infty(Z)$ is the class of bounded Borel
functions on $Z$ and where
\be
\mass(T)=\sum_h \theta_i< \infty
\ee
If $T$ is integer rectifiable $\theta_i \in \Z^+$, so the sum must be finite.
\end{defn}

In particular, the mass measure of $T \in \intcurr_m(Z)$ satisfies
\be
||T|| = \sum_{i=1}^\infty ||\varphi_{i\#}\Lbrack \theta_i \Rbrack ||.
\ee
Theorems 4.3 and 8.8 of \cite{AK} provide necessary and sufficient
criteria for determining when a current is integer rectifiable.

Note that the current in Example~\ref{basic-current-pushed} is an integer
rectifiable current.  

\begin{ex} \label{basic-mani}
If one has a Riemannian manifold, $M^m$,
and a biLipschitz map $\varphi:M^m\to Z$, then 
$T=\varphi_\#\Lbrack\One_M\Rbrack$
is an integer rectifiable current of dimension $m$ in $Z$.  If $\varphi$
is an isometric embedding, and $Z=M$
then $\mass(T)=\vol(M^m)$.   Note further that
$\set(T)=\varphi(M)$.   

If $M$ has a conical singularity then $\set(T)=\varphi(M)$.
However, if $M$ has a cusp singularity at a point $p\in M$
then $\set(T)=\varphi(M\setminus\{p\})$.
\end{ex}

\begin{defn}\label{rmrk-def-boundary} \cite{AK}[Defn 2.3] 
The {\em boundary} of $T\in \curr_m(Z)$ is defined
\be \label{def-boundary}
\partial T(f, \pi_1, ... \pi_{m-1}):= T(1, f, \pi_1,...\pi_{m-1}) \in M_{m-1}(Z)
\ee
When $m=0$, we set $\partial T=0$.
\end{defn} 

Note that $\varphi_\#(\partial T)=\partial(\varphi_\#T)$. 
  
\begin{defn}  \cite{AK}[Defn 3.4 and 4.2]
An integer rectifiable current  $ T\in\intrectcurr_m(Z)$  is called an 
integral current, denoted $T\in \intcurr_m(Z)$,  if $\partial T$ 
defined as
\be
\partial T \left(f, \pi_1,...\pi_{m-1}\right) := T \left(1, f, \pi_1,...\pi_{m-1}\right)
\ee
has finite mass.   The total mass of an integral current is
\be
\nmass(T)=\mass(T) +\mass(\partial T).
\ee  
\end{defn}

Observe that $\partial \partial T=0$.
In \cite{AK} Theorem 8.6, Ambrosio-Kirchheim prove that 
\be
\partial: \intcurr_m(Z) \to \intcurr_{m-1}(Z)
\ee whenever $m\ge 1$.
By (\ref{mass-push}) one can see that if 
$\varphi: Z_1 \to Z_2$ is Lipschitz, then 
\be
\varphi_{\#}: \intcurr_m(Z_1) \to \intcurr_{m}(Z_2).
\ee

However, the restriction of an integral current need not be 
an integral current except in special circumstances.
For example, $T$ might be integration over $[0,1]^2$ with
the Euclidean metric and $A\subset [0,1]^2$ could have
an infinitely long boundary, so that $T\rstr A\notin \intcurr_2([0,1]^2)$
because $\partial(T\rstr A)$ has infinite mass.

\begin{rmrk}\label{bndry-1-current}
If $T $ is an $\mathcal{H}^1$ integral current then
$\partial T$ is an  $\mathcal{H}^0$ integer rectifiable current 
so
$
H=\set{\partial T} 
$
must be finite and
$\theta_{p_h}=||\partial T||(p_h)\in \Z^+$ for all $p\in H$  and
\be \label{0-param-representation-1}
\partial T(f)=\sum_{h\in H} \sigma_h \theta_h f(p_h) \qquad \forall f \in \mathcal{B}^\infty(Z).
\ee
as described above.  In addition, we have
\be 
0=T(1,1)=\partial T(1) =\sum_{h\in H} \sigma_h \theta_h.
\ee
\end{rmrk}

\begin{example} \label{sums-bad}
If $T $
is an $\mathcal{H}^1$ rectifiable current 
then 
\be
T=\sum_{i=1}^\infty \sigma_i \theta_i \varphi_{i\#}\Lbrack \chi_{A_i}\Rbrack
\ee
where $\theta_i\in \Z^+$, $\sigma_i\in \{+1, -1\}$
 and $A_i$ is an interval with $\bar{A}_i=[a_i,b_i]$
because all Borel sets are unions of intervals and all
integer valued Borel functions can be written up
to Lebesgue measure $0$ as a countable sum of 
characteristic functions of intervals.  One might like to
write:
\be 
\partial T(f)
=\sum_i \sigma_i \theta_i \left(f(\varphi_i(b_i))-f(\varphi_i(a_i))\right)
\qquad \forall f \in \mathcal{B}^\infty(Z).
\ee
This works when the sum happens to be a finite sum.
Yet if $T$ is a infinite collection
of circles based at a common point, $(0,0)\in \R^2$, 
defined with $\sigma_i=1$
$\theta_i=1$, $A_i=[0,\pi]$ and 
\begin{eqnarray}
\varphi_i(s)&=&(r_i\cos(s)+r_i, r_i\sin(s)) \textrm{ for } i \textrm{ odd and }\\
\varphi_i(s)&=&(r_i\cos(s+\pi)+r_i, r_i\sin(s+\pi)) \textrm{ for } i \textrm{ even}
\end{eqnarray}
where $r_{2i}=r_{2i-1}=1/i$ then 
\begin{eqnarray}
\varphi_i(a_i)=(2r_i, 0)
&\textrm{ and }& \varphi_i(b_i)=(0,0)
 \textrm{ for } i \textrm{ odd and }\\
 \varphi_i(a_i)=(0, 0)
&\textrm{ and }& \varphi_i(b_i)=(2r_i,0)
 \textrm{ for } i \textrm{ even. }
\end{eqnarray}
So when $f(0,0)=1$, we end up with an infinite sum
whose terms are all $+1$ and $-1$.
\end{example}

\vspace{.4cm}
\subsection{Review of Ambrosio-Kirchheim Slicing Theorems}

As in Federer-Fleming, Ambrosio-Kirchheim consider the slices of
currents:

\begin{thm} {\bf [Ambrosio-Kirchheim] }\cite{AK}[Theorems 5.6-5.7]
  \label{theorem-slicing}
Let $Z$ be a complete metric space, $T\in \intcurr_m Z$ and $f: Z \to \R$ a Lipschitz function.
Let
\be
<T,f,s> := \left(\partial T\right) \rstr f^{-1}\left(s,\infty\right)
-\partial\left( T \rstr f^{-1}\left(s,\infty\right) \right).  
\ee
Observe that
$$\set(<T,f,s>)\subset \left(\set(T)\cup \set(\partial T) \right)\cap f^{-1}(s),$$ and
\be \label{bndry-slice-1}
\partial<T,f,s> = <-\partial T, f,s>.
\ee
Furthermore $<T_1+T_2, f,s>=<T_1, f,s> + <T_2, f,s>$.
For almost every slice $s \in \R$, $<T,f,s>$ is
an integral current and we can
integrate the masses to obtain:
\be
\int_{s\in\R} \mass(<T,f,s>) \, ds = \mass(T \rstr df)
\le \Lip(f)\, \mass(T)
\ee
where
\be
(T \rstr df)(h, \pi_1,...,\pi_{m-1})=T(h, f,\pi_1,...\pi_{m-1}).
\ee
In particular, for almost every $s>0$ one has
\be
T \rstr f^{-1}(-\infty,s] \in \intcurr_{m-1}\left(Z\right).
\ee
Furthermore for all Borel sets $A$ we have
\be
<T\rstr A, f,s>=<T,f,S> \rstr A
\ee
and
\be
\int_{s\in\R} ||<T,f,s>||(A) \, ds = ||T \rstr df ||(A).
\ee
\end{thm}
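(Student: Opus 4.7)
The plan is to verify the listed assertions in three stages: the purely algebraic identities that follow directly from the definition; the support containment; and the coarea-type mass identity together with the almost-everywhere integrality of the slice.

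First, I would unwind the algebraic identities. For the boundary formula, apply $\partial$ to the defining expression
$$<T,f,s> = \partial\bigl(T\rstr f^{-1}(-\infty,s]\bigr) - (\partial T)\rstr f^{-1}(-\infty,s].$$
The first term vanishes under a second $\partial$, so $\partial<T,f,s> = -\partial\bigl((\partial T)\rstr f^{-1}(-\infty,s]\bigr)$; substituting $-\partial T$ for $T$ in the definition and using $\partial(-\partial T)=0$ produces the same expression for $<-\partial T,f,s>$. Linearity in $T$ is immediate from the linearity of $\partial$ and of restriction. The compatibility $<T\rstr A,f,s> = <T,f,s>\rstr A$ reduces, via Remark~\ref{rstr-push}, to commuting the two restrictions $\rstr A$ and $\rstr f^{-1}(-\infty,s]$, which is trivial since both are restrictions by characteristic functions.

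For the support containment $\set(<T,f,s>)\subset(\set(T)\cup\set(\partial T))\cap f^{-1}(s)$, I would test $<T,f,s>$ on tuples $(h,\pi_1,\ldots,\pi_{m-1})$ with $h$ vanishing on an open neighborhood of $f^{-1}(s)$. Decomposing $h=h_++h_-$ with $h_+$ supported in $\{f>s\}$ and $h_-$ in $\{f<s\}$, the $h_+$ piece vanishes because both terms in the definition involve a restriction to $f^{-1}(-\infty,s]$; for $h_-$ the locality and product rules of \cite{AK}[Theorem~3.5] yield a cancellation between the two summands, since on the open set $\{f<s\}$ the restriction by $f^{-1}(-\infty,s]$ is trivial, so the contribution of $\partial(T\rstr f^{-1}(-\infty,s])$ on $h_-$ equals $\partial T(1,h_-,\pi) = (\partial T)\rstr f^{-1}(-\infty,s](h_-,\pi)$. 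Containment in $\set(T)\cup\set(\partial T)$ is inherited from the constituents, as restriction never enlarges the canonical set.

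The main obstacle is the coarea identity. My plan is to interpret the slice as the density of $T\rstr df$ along the $f$-fibers. Approximate the indicator $\chi_{(-\infty,s]}$ by a smooth monotone cutoff $\phi_\varepsilon$, and use the continuity axiom (ii) in the definition of a current, together with the chain rule of \cite{AK}[Theorem~3.5] applied to $\phi_\varepsilon\circ f$, to pass to the limit and establish
$$T(g\circ f,\, f,\pi_1,\ldots,\pi_{m-1}) = \int_\R g(s)\,<T,f,s>(1,\pi_1,\ldots,\pi_{m-1})\,ds$$
for bounded Borel $g$. Specializing to $g=\chi_A$ and then passing to the mass measure yields $\int_\R \|<T,f,s>\|(B)\,ds = \|T\rstr df\|(B)$ for every Borel $B\subset Z$; the overall mass formula is the case $B=Z$, while the estimate $\mass(T\rstr df)\le \Lip(f)\mass(T)$ is immediate from the basic inequality (\ref{eqn-mass}) with $\pi_1=f$. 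The final displayed identity follows by running the same argument with $T\rstr A$ in place of $T$ and invoking the restriction compatibility proved above.

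Finally, integrality of $<T,f,s>$ for almost every $s$ splits into two claims. Integer rectifiability is obtained from the parametrization (\ref{param-representation}) of $T\in\intcurr_m(Z)$: pushing the Euclidean coarea formula through each bi-Lipschitz chart $\varphi_i$ and applying Theorem~4.3 of \cite{AK} shows that the pieces $<\varphi_{i\#}[\theta_i],f,s>$ are integer rectifiable for a.e. $s$, and summability of masses transfers the conclusion to $T$ itself. Finiteness of the boundary mass is then automatic from the boundary identity $\partial<T,f,s>=<-\partial T,f,s>$ combined with the already-established coarea bound applied to $\partial T\in\intcurr_{m-1}(Z)$:
$$\int_\R \mass(\partial<T,f,s>)\,ds \le \Lip(f)\mass(\partial T) < \infty,$$
so $\mass(\partial<T,f,s>)<\infty$ for almost every $s$. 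Hence $<T,f,s>\in\intcurr_{m-1}(Z)$ a.e., and $T\rstr f^{-1}(-\infty,s]$ inherits integral structure for the same range of $s$.
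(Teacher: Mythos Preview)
The paper does not prove this theorem; it is stated in the review section and attributed to \cite{AK}[Theorems 5.6--5.7], so there is no ``paper's own proof'' to compare against. Your outline is, in broad strokes, a recapitulation of Ambrosio--Kirchheim's original argument: derive the algebraic identities from the definition, establish the integral representation $\int g(s)\,\langle T,f,s\rangle\,ds = T\rstr (g\circ f)\,df$ via approximation and the chain rule, read off the coarea formula, and deduce integer rectifiability chart by chart from the Euclidean coarea theorem.

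There is, however, a genuine gap in your treatment of the restriction compatibility $\langle T\rstr A,f,s\rangle = \langle T,f,s\rangle\rstr A$. You claim this ``reduces to commuting the two restrictions,'' but that is false: the defining formula involves $\partial$, and $\partial(T\rstr A)$ is \emph{not} $(\partial T)\rstr A$ in general. Writing out both sides, the left-hand side contains the term $\partial\bigl((T\rstr A)\rstr f^{-1}(-\infty,s]\bigr)$ while the right-hand side contains $\bigl(\partial(T\rstr f^{-1}(-\infty,s])\bigr)\rstr A$, and no amount of commuting characteristic functions will make these agree algebraically. This is why the identity in \cite{AK} is asserted only for $\mathcal{L}^1$-almost every $s$, not for every $s$: it is \emph{not} a formal consequence of the definition but is deduced from the integral characterization $\int g(s)\,\langle T,f,s\rangle\,ds = T\rstr(g\circ f)\,df$ (which you correctly derive later), by applying that characterization once to $T$ and once to $T\rstr A$ and comparing. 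You should reorder the argument so that the restriction compatibility is proved \emph{after} the coarea-type identity, as a corollary of it.
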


\begin{rmrk} \label{push-slice-1}
Observe that for any $T\in \intcurr_m(Z')$, and
any Lipschitz functions, $\varphi: Z\to Z'$ and 
$f: Z' \to \R$ and any $s>0$, we have
\begin{eqnarray}
<\varphi_{\#}T,f,s> 
&=& \partial \left( (\varphi_{\#} T) \rstr f^{-1}\left(-\infty,s]\right) \right) - 
\left(\partial \varphi_\#T\right) \rstr f^{-1}\left(-\infty, s]\right)\\
&=& \partial \left(\varphi_{\#} (T \rstr \varphi^{-1}(f^{-1}\left(-\infty,s]\right) ) \right) - 
\left(\varphi_\#\partial T\right) \rstr f^{-1}\left(-\infty, s]\right)\\
&=& \partial \left(\varphi_{\#} (T \rstr (f \circ\varphi)^{-1}\left(-\infty,s]\right) \right) - 
\varphi_\# \left(\partial T \rstr \varphi^{-1}(f^{-1}\left(-\infty, s]\right)) \right)\\
&=& \left(\varphi_{\#} \partial (T \rstr (f \circ\varphi)^{-1}\left(-\infty,s]\right) \right) - 
\varphi_\# \left(\partial T \rstr (f\circ\varphi)^{-1}\left(-\infty, s]\right) \right)\\
&=& \varphi_{\#} <T , (f \circ\varphi), s >
 \end{eqnarray}
 \end{rmrk}
 
 \begin{rmrk}\label{rmrk-slicing}
 Ambrosio-Kirchheim then iterate this definition, $f_i: Z \to \R$,
$s_i \in \R$, to define iterated slices:
 \be \label{AK-iterated-slices}
 <T, f_1,...,f_k, s_1,...,s_k>= < <T, f_1,...,f_{k-1}, s_1,...,s_{k-1}>, f_k, s_k>,
 \ee
 so that 
 \be\label{slice-addition}
 <T_1+T_2, f_1,...,f_k, s_1,...,s_k> 
 =<T_1, f_1,...,f_k, s_1,...,s_k>+ <T_2, f_1,...,f_k, s_1,...,s_k>.
 \ee
  In \cite{AK} Lemma 5.9 they prove,
 \be \label{AK-Lem5.9}
 <T, f_1,...,f_k, s_1,...,s_k>= 
 < <T, f_1,...,f_{i}, s_1,...,s_{i}>, f_{i+1},...,f_k, s_{i+1},...,s_k>.
 \ee
 In \cite{AK} (5.9) they prove,
 \be \label{AK-5.9}
 \int_{\R^k} ||<T, f_1,...,f_k, s_1,...,s_k>||\, ds_1...ds_k=||T\rstr (1, f_1,...,f_k)||,
 \ee
where
\be
(T \rstr df)(h, \pi_1,...,\pi_{m-k})=T(h, f_1,...,f_k,\pi_1,...\pi_{m-k}),
\ee 
  so
 \be\label{Integral-Lip}
\int_{\R^k} \mass(<T,f_1,...f_k,s_1,...,s_k>) \, \mathcal{L}^k = \mass(T \rstr df)
\le \prod_{j=1}^k \Lip(f_j)\, \mass(T).
\ee
 In \cite{AK} (5.15) they prove for any Borel set $A \subset Z$
 and $\mathcal{L}^m$ almost every $(s_1,...s_k) \in \R^k$,
 \be \label{AK-5.15}
 <T\rstr A, f_1,...,f_k, s_1,...,s_k> = <T, f_1,...,f_k, s_1,...,s_k>\rstr A.
 \ee
 and
 \be
\int_{s\in\R^k} ||<T,f_1,...,f_k,s_1,...,s_k>||(A) \, ds = ||T \rstr df ||(A).
\ee
By (\ref{bndry-slice}) one can easily prove by induction that
\be \label{bndry-slice}
\partial<T, f_1,..., f_k, s_1,..., s_k>= (-1)^k <\partial T, f_1,..., f_k, s_1,..., s_k>.
\ee
In \cite{AK} Theorem 5.7 they prove 
 \be
 <T, f_1,..., f_k, s_1,..., s_k> \in \intcurr_{m-k}(Z).
\ee
 for $\mathcal{L}^k$ almost every $(s_1,...,s_k) \in \R^k$.
 By Remark~\ref{push-slice-1} one can prove inductively that
 \be \label{push-slice}
<\varphi_{\#}T,f_1,...f_k,s_1,...s_k> =
\varphi_{\#} <T , f_1 \circ\varphi,..., f_k\circ\varphi, s_1,..., s_k >.
 \ee
\end{rmrk}
 
 \vspace{.4cm}
\subsection{Review of Convergence of Currents}
 
Ambrosio Kirchheim's Compactness Theorem, which extends Federer-Fleming's Flat Norm 
Compactness Theorem, is stated in terms of weak convergence of
currents.  See Definition 3.6 in \cite{AK} which extends Federer-Fleming's notion of weak convergence except that they do not require compact support.

\begin{defn} \label{def-weak}
A sequence of integral currents $T_j \in \intcurr_m\left(Z\right)$ is said to converge weakly to
a current $T$ iff the pointwise limits satisfy
\be
\lim_{j\to \infty}  T_j\left(f, \pi_1,...\pi_m\right) = T\left(f, \pi_1,...\pi_m\right) 
\ee
for all bounded Lipschitz $f: Z \to \R$ and Lipschitz $\pi_i: Z \to \R$.
We write
\be
T_j \weaklyto T
\ee
\end{defn}

One sees immediately that $T_j \weaklyto T$ implies
\be
\partial T_j \weaklyto \partial T,   
\ee
\be
\varphi_\# T_j \weaklyto \varphi_\# T
\ee
and
\be
T_j \rstr (f, \pi_1,..., \pi_k) \weaklyto T\rstr (f, \pi_1,..., \pi_k).   
\ee
However $T_j \rstr A$ need not converge weakly to $T_j \rstr A$
as seen in the following example:

\begin{example}
Let $Z= \R^2$ with the Euclidean metric.  Let $\varphi_j: [0,1]\to Z$
be $\varphi_j(t) = (1/j, t)$ and $\varphi_\infty(t) = (0, t)$.   Let 
$S\in \intcurr_1([0,1])$ be
\be
S(f, \pi_1)= \int_0^1 f \, d\pi_.1
\ee
Let $T_j \in \intcurr_1(Z)$
be defined $T_j =\varphi_{j\#}(S)$.   Then $T_j \weaklyto T_\infty$.
Taking $A=[0,1]\times (0,1)$, we see that
$T_j \rstr A=T_j$ but $T_\infty \rstr A = 0$.
\end{example}

Immediately below the definition of weak convergence \cite{AK} Defn 3.6,
Ambrosio-Kirchheim prove 
the lower semicontinuity of mass:

\begin{rmrk} \label{rmrk-lower-mass}
If $T_j$ converges weakly to $T$, then $\liminf_{j\to\infty} \mass(T_j) \ge \mass(T)$.  
\end{rmrk}

\begin{thm}[Ambrosio-Kirchheim Compactness]\label{AK-compact}
Given any complete metric space 
$Z$, a compact set $K \subset Z$ and $A_0, V_0>0$.
Given
any sequence of integral currents  $T_j \in \intcurr_m \left(Z\right)$ satisfying
\be
\mass(T_j) \le V_0 \textrm{, } \mass(\partial T_j) \le A_0
\textrm{ and }
\set\left(T_j\right) \subset K,
\ee there exists a subsequence, $T_{j_i}$, and a limit current $T \in \intcurr_m\left(Z\right)$
such that $T_{j_i}$ converges weakly to $T$.
\end{thm}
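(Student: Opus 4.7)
The plan is to construct a limit current via diagonal extraction on mass measures and on a countable dense family of tuples, then verify integer rectifiability through Ambrosio--Kirchheim's slicing criterion. First, since $\set(T_j) \subset K$ with $K$ compact, each $\|T_j\|$ is supported in $K$ with $\|T_j\|(K) = \mass(T_j) \le V_0$. By Banach--Alaoglu applied to $C(K)^\ast$, a subsequence $\|T_{j_i}\|$ converges weakly-$\ast$ to a finite Borel measure $\mu$ concentrated on $K$.

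Second, I would extract a further subsequence along which the functionals converge pointwise on a dense subfamily. For each fixed $L$, the class of Lipschitz functions on $K$ with Lipschitz constant and supremum bounded by $L$ is relatively compact in $C(K)$ by Arzela--Ascoli, hence separable; choosing a countable set $\mathcal{F}$ dense in each such class and diagonalizing yields a subsequence (still denoted $T_{j_i}$) such that $T_{j_i}(f,\pi_1,\ldots,\pi_m)$ converges for every $(m+1)$-tuple with entries in $\mathcal{F}$. To extend to general Lipschitz tuples I would use multilinearity together with the estimate derived from (\ref{eqn-mass}):
\[ |T_j(f,\pi) - T_j(g,\pi')| \le \prod_i \Lip(\pi_i) \int |f-g|\, d\|T_j\| + |g|_\infty \sum_i \mass(T_j)\, \Lip(\pi_i - \pi'_i) \prod_{k\ne i} \max(\Lip(\pi_k),\Lip(\pi'_k)). \]
Combined with weak-$\ast$ convergence $\|T_{j_i}\| \to \mu$ and uniform Lipschitz/sup bounds for the approximating tuples, this produces a well-defined limit functional $T$ on all of $\mathcal{D}^m(Z)$. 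Multilinearity, locality, continuity under pointwise convergence of Lipschitz tuples with bounded Lipschitz constants, and the mass bound $|T(f,\pi)| \le \prod_i \Lip(\pi_i)\int|f|\,d\mu$ all pass to the limit, so $T \in \curr_m(Z)$. Since $\partial$ commutes with weak convergence, the lower semicontinuity observation of Remark~\ref{rmrk-lower-mass} applied to $\partial T_{j_i} \weaklyto \partial T$ yields $\mass(\partial T) \le A_0$.

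The main obstacle is showing that $T$ is \emph{integer rectifiable}, not merely a current of finite mass with finite-mass boundary. My strategy is to invoke Ambrosio--Kirchheim's rectifiability criterion (Theorem 8.8 of \cite{AK}), which reduces integer rectifiability to showing that, for every Lipschitz tuple $(f_1,\ldots,f_m)$ and $\mathcal{L}^m$-almost every $(s_1,\ldots,s_m)\in\R^m$, the iterated slice $\langle T, f_1,\ldots,f_m,s_1,\ldots,s_m\rangle$ is a finite integer combination of Dirac masses supported in $K$. Each $T_{j_i}$ slices into $0$-dimensional integer rectifiable currents concentrated in $K$, with $L^1$-integrable mass in the slicing parameters by (\ref{AK-5.9}); identity (\ref{bndry-slice}) and the uniform bound $\mass(\partial T_{j_i})\le A_0$ give the analogous $L^1$ control on boundary slices. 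Using weak convergence of $T_{j_i}$ together with the commutation of slicing with pushforward, boundary, and restriction (Remark~\ref{rmrk-slicing} and Remark~\ref{push-slice-1}), I would identify slice limits with slices of $T$ on a subset of $\R^m$ of full Lebesgue measure after a further diagonal extraction in the slicing parameters. The final step is that a weak limit of $0$-dimensional integer rectifiable currents in the compact set $K$ with uniformly bounded mass remains a finite integer combination of Dirac masses: the weights are integers that cannot accumulate without producing mass escaping compactness, so only finitely many points persist. This establishes $T\in\intcurr_m(Z)$ and completes the argument.
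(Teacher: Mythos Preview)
The paper does not prove this theorem. Theorem~\ref{AK-compact} appears in the review section (Section~2.4) as a statement of Ambrosio and Kirchheim's compactness theorem from \cite{AK}, cited without proof and used as a black box later in the paper (for instance in the proof of Theorem~\ref{SF_k-compactness-2}). There is therefore no ``paper's own proof'' to compare against.

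As for your sketch itself: the overall architecture---extract a weak-$\ast$ limit of mass measures, diagonalize over a countable family of tuples to define a limiting functional, then invoke the slicing criterion for integer rectifiability---is indeed the shape of the argument in \cite{AK}. But two of the steps you pass over quickly are where the real work lies. First, verifying the \emph{continuity} axiom for the limit $T$ (continuity under pointwise convergence of the $\pi_i$ with uniformly bounded Lipschitz constants, not uniform convergence) is not immediate from your estimate, which only controls uniform approximation; Ambrosio--Kirchheim handle this via an equi-continuity argument tied to the structure of normal currents. Second, your ``final step''---that slices of $T$ coincide $\mathcal{L}^m$-a.e.\ with weak limits of slices of $T_{j_i}$, and that those limits remain integer $0$-currents---is the heart of the closure theorem. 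Passing slicing through weak limits requires a BV-type compactness for the slice-mass functions $s \mapsto \mass(\langle T_{j_i}, f, s\rangle)$, and the integrality of the limiting $0$-current is itself a nontrivial lemma (the weights could in principle cancel or degenerate; one needs an argument that integer atoms persist as integers under weak convergence with mass bounds in a compact set). Your outline is a reasonable roadmap, but as written it would not constitute a proof without substantially expanding these two points.
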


\vspace{.4cm}
\subsection{Review of Integral Current Spaces}

The notion of an integral current space was introduced by
the second author and Stefan Wenger in \cite{SorWen2}:

\begin{defn} \label{defn-int-curr-space}
An $m$ dimensional metric space 
$\left(X,d,T\right)$ is called an integral current space if
it has a integral current structure $T \in \intcurr_m\left(\bar{X}\right)$
where $\bar{X}$ is the metric completion of $X$
and $\set(T)=X$.   
Given an 
integral current 
space $M=\left(X,d,T\right)$ we will use
$\set\left(M\right)$ or $X_M$ to denote $X$,  $d_M=d$ and $\Lbrack M \Rbrack =T $. 

Note that $\set\left(\partial T\right) \subset \bar{X}$.   
The boundary of $\left(X,d,T\right)$ is then the integral current space:
\be
\partial \left(X,d_X,T\right) := \left(\set\left(\partial T\right), d_{\bar{X}}, \partial T\right).
\ee
If $\partial T=0$ then we say $\left(X,d,T\right)$ is an integral current without boundary.
\end{defn}

\begin{rmrk} \label{space-param}
Note that any $m$ dimensional integral current space is countably 
$\mathcal{H}^m$ rectifiable with orientated charts, $\varphi_i$
and weights $\theta_i$ provided
as in (\ref{param-representation}).   A $0$ dimensional integral current space
is a finite collection of points with orientations, $\sigma_i$ and
weights $\theta_i$ provided as in (\ref{0-param-representation}).
If this space is the boundary
of a $1$ dimensional integral current space, then
as in Remark~\ref{bndry-1-current}, the sum of the signed weights is 0.
\end{rmrk}

\begin{example}
A compact oriented Riemannian manifold with boundary, $M^m$,
is an integral current space, where $X=M^m$, $d$ is the standard
metric on $M$ and $T$ is integration over $M$.  In this
case $\mass(M)=\vol(M)$ and $\partial M$ is the boundary manifold.
When $M$ has no boundary, $\partial M=0$.
\end{example}

\begin{defn} \label{defn-integral-current-space}
The space of $m\ge 0$ dimensional integral current spaces, 
$\mathcal{M}^m$,
consists of all metric spaces which are integral current spaces 
with currents of dimension $m$ as
in Definition~\ref{defn-int-curr-space} as well as the $\bf{0}$ spaces.
Then $\partial: \mathcal{M}^{m+1}\to \mathcal{M}^{m}$.
\end{defn}

\begin{rmrk}\label{bndry-1-space}
A $0$ dimensional integral current space, $M=(X,d,T)$,
is a finite collection of points, $\{p_1,...,p_N\}$,
with a metric $d_{i,j}=d(p_i,p_j)$ and a current structure
defined by assigning a weight, $\theta_i\in\Z^+$,
and an orientation, $\sigma_i\in \{+1,-1\}$ to each $p_i\in X$
and
\be
\mass(M)=\sum_{i=1}^N \theta_i.
\ee
If $M$ is the boundary of a $1$ dimensional integral current
space then, as in Remark~\ref{bndry-1-current}, we have
\be
\sum_{i=1}^N \sigma_i \theta_i=0
\ee
In particular $N\ge 2$ if $M\neq \bf{0}$.
\end{rmrk}

Any compact Riemannian manifold with boundary is an integral current space.   Additional examples
appear in the work of Wenger and the second author \cite{SorWen2}.

We end this subsection with an example of an
integral current space 
that is applied in this paper to justify hypothesis
of many of our results.   

\begin{example}\label{concentric-spheres}
Consider the one dimensional integral
current space $(X,d,T)$, where 
\be
X=\{0\} \cup \bigcup_{j=1}^\infty \partial B(0,1/R_j) \subset \E^2
\ee
where $(\E^2, d_{\E^2})$ is the Euclidean plane,
with the restricted metric, $d=d_{\E^2}$, where 
\be
T(\omega)=\sum_{j=1}^\infty \Lbrack \partial B(0,1/R_j) \Rbrack
\ee
is the integral current in $\bar{X}$ and in $\E^2$
and where $R_j =1/2^j$.    Observe that
for 
\be
N_r=\inf \{j:\, 1/2^j<r\} \subset [\log_2(1/r), \log_2(r)+1]
\ee 
we have
\begin{eqnarray}
||T||(B(0,r))&=& 
\sum_{j \ge N_r}^\infty \mathcal{H}^1(\partial B(0,1/R_j))\\
&=& 
\sum_{j \ge N_r}^\infty \frac{2\pi}{2^j}= \frac{4\pi}{2^{N_r}}
\in \left[ \frac{8\pi}{r},\frac{4\pi}{r} \right].
\end{eqnarray}
In this way the
total mass is finite and $\{0\} \in X=\set(T)$.   Observe that
$\partial T=0$.
\end{example}

\vspace{.4cm}
\subsection{Review of the Intrinsic Flat distance}

The Intrinsic Flat distance was defined in work of the second author and
Stefan Wenger \cite{SorWen2} as a new distance between Riemannian
manifolds based upon the work of Ambrosio-Kirchheim reviewed above.

Recall that the flat distance between $m$ dimensional integral currents 
$S,T\in\intcurr_m\left(Z\right)$ is given by 
\begin{equation} \label{eqn-Federer-Flat}
d^Z_{F}\left(S,T\right):= 
\inf\{\mass\left(U\right)+\mass\left(V\right):
S-T=U+\bdry V \}
\end{equation}
where $U\in\intcurr_m\left(Z\right)$ and $V\in\intcurr_{m+1}\left(Z\right)$.
This notion of a flat distance was first introduced by Whitney
in \cite{Whitney} and later adapted to rectifiable currents by Federer-Fleming \cite{FF}.
The flat distance between Ambrosio-Kirchheim's integral currents was
studied by Wenger in \cite{Wenger-flat}.   In particular,
Wengr proved that if $T_j \in \intcurr_m(Z)$ has
$\mass(T_j) \le V_0$ and $\mass(\partial T_j) \le A_0$ then
\be
T_j \weaklyto T \textrm{ iff } d^Z_F(T_j, T) \to 0
\ee
exactly as in Federer-Fleming.

The intrinsic flat distance between integral current spaces
was first defined in \cite{SorWen2}[Defn 1.1]:

\begin{defn} \label{def-flat1} 
 For $M_1=\left(X_1,d_1,T_1\right)$ and $M_2=\left(X_2,d_2,T_2\right)\in\mathcal M^m$ let the 
intrinsic flat distance  be defined:
 \begin{equation}\label{equation:def-abstract-flat-distance}
  d_{\Fm}\left(M_1,M_2\right):=
 \inf d_F^Z
\left(\varphi_{1\#} T_1, \varphi_{2\#} T_2 \right),
 \end{equation}
where the infimum is taken over all complete metric spaces 
$\left(Z,d\right)$ and isometric embeddings 
$\varphi_1 : \left(\bar{X}_1,d_1\right)\to \left(Z,d\right)$ and $\varphi_2: \left(\bar{X}_2,d_2\right)\to \left(Z,d\right)$
and the flat norm $d_F^Z$ is taken in $Z$.
Here $\bar{X}_i$ denotes the metric completion of $X_i$ and $d_i$ is the extension
of $d_i$ on $\bar{X}_i$, while $\phi_\# T$ denotes the push forward of $T$.
\end{defn}

In \cite{SorWen2}, it is observed that
 \be
d_{\Fm}\left(M_1,M_2\right) \le d_{\Fm}\left(M_1,0\right)+d_{\Fm}\left(0,M_2\right) \le \mass\left(M_1\right)+\mass\left(M_2\right).
\ee 
There it is also proven that $d_{\mathcal{F}}$ satisfies the
triangle inequality \cite{SorWen2}[Thm 3.2] and is a distance:

\begin{thm} \label{zero-mani}  \cite{SorWen2}[Thm 3.27]
Let $M,N$ be precompact integral current spaces and suppose that $d_{\Fm}\left(M,N\right)=0$.
Then there is a current preserving isometry from $M$ to $N$ where
an isometry $f: X_M \to X_N$ is called
a current preserving isometry between $M$ and $N$, if its
extension $\bar{f}: \bar{X}_M \to \bar{X}_N$ pushes forward the
current structure on $M$ to the current structure on $N$:
$
\bar{f}_\# T_M= T_N
$
\end{thm}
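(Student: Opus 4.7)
The plan is to reduce the theorem to the production of a single complete metric space $Z$ with isometric embeddings $\varphi_M:\bar X_M\to Z$ and $\varphi_N:\bar X_N\to Z$ satisfying $\varphi_{M\#}T_M=\varphi_{N\#}T_N$, and then to extract the current preserving isometry from this equality of currents. The bulk of the work is the construction of $Z$; once it is in hand, the rest is essentially formal.

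To construct $Z$, I would use $d_\Fm(M,N)=0$ to choose, for each $i\in\N$, a complete metric space $Z_i$ and isometric embeddings $\varphi_i^M,\varphi_i^N$ with $d_F^{Z_i}(\varphi_{i\#}^M T_M,\varphi_{i\#}^N T_N)<1/i$. Each such triple pulls back to an admissible metric $d_i^*$ on the disjoint union $W=\bar X_M\sqcup\bar X_N$ (restricting to $d_M,d_N$ on the two factors) via $d_i^*(x,y):=d_{Z_i}(\varphi_i^M(x),\varphi_i^N(y))$ on the cross-pairs. These cross-distances are $1$-Lipschitz in each variable and can be truncated at a large common constant without destroying the metric axioms or increasing the flat norm, and compactness of $\bar X_M,\bar X_N$ (from precompactness) allows Arzel\`a-Ascoli to extract a uniformly convergent subsequence $d_i^*\to d_*$. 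The metric completion of $(W,d_*)$ is the candidate $Z$. Pushing the fillings $U_i\in\intcurr_m(Z_i),V_i\in\intcurr_{m+1}(Z_i)$ witnessing the bounds into $Z$ (up to an error that vanishes with $1/i$) and invoking Ambrosio-Kirchheim Compactness (\thmref{AK-compact}) together with the lower semicontinuity of mass (\remref{rmrk-lower-mass}), one extracts a limiting filling of mass zero, hence $\varphi_{M\#}T_M=\varphi_{N\#}T_N$ in $Z$.

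Given this equality, the isometry is produced as follows. Because $\varphi_M$ is an isometric embedding, (\ref{lem-push-mass}) gives $\|\varphi_{M\#}T_M\|=\varphi_{M\#}\|T_M\|$, so lower densities are preserved and $\set(\varphi_{M\#}T_M)=\varphi_M(\set(T_M))=\varphi_M(X_M)$; symmetrically $\set(\varphi_{N\#}T_N)=\varphi_N(X_N)$. The equality of currents therefore forces $\varphi_M(X_M)=\varphi_N(X_N)$, so $f:=\varphi_N^{-1}\circ\varphi_M:X_M\to X_N$ is a bijective isometry, extending by uniform continuity to a bijective isometry $\bar f:\bar X_M\to\bar X_N$ with $\varphi_N\circ\bar f=\varphi_M$. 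Functoriality of the pushforward then yields $\varphi_{N\#}(\bar f_\# T_M)=(\varphi_N\circ\bar f)_\# T_M=\varphi_{M\#}T_M=\varphi_{N\#}T_N$; since $\varphi_{N\#}$ is injective on $\intcurr_m(\bar X_N)$ (any Lipschitz test tuple on $\bar X_N$ can be extended to one on $Z$ by McShane-Whitney, so the two candidate currents agree on all tuples in $\mathcal{D}^m(\bar X_N)$), we conclude $\bar f_\# T_M=T_N$, as required.

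The hard step is the middle one, namely transporting the fillings $V_i\in\intcurr_{m+1}(Z_i)$ from their disparate ambient spaces into a common space in which a weak subsequential limit can be taken. This requires the preliminary lemma that $d_\Fm(M,N)$ is already realized as an infimum of flat norms over admissible metrics on the fixed set $W$, so that the $V_i$ may from the start be taken to live in $W$ itself. Proving this lemma relies on careful control of how the ambient metric affects the mass of an integral current of a given rectifiable parametrization, and on the McShane-Whitney extension of Lipschitz test tuples; with it, the compactness argument above goes through cleanly.
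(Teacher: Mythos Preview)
Your overall architecture is sound, and the final paragraph extracting the current-preserving isometry from the equality $\varphi_{M\#}T_M=\varphi_{N\#}T_N$ is correct and clean. The paper itself does not prove this theorem; it is quoted from \cite{SorWen2}. There the result is deduced from \thmref{inf-dist-attained} (the infimum in the definition of $d_\Fm$ is \emph{attained}), after which the argument is exactly your last paragraph: $d_\Fm(M,N)=0$ forces $\mass(U)=\mass(V)=0$, hence $\varphi_{M\#}T_M=\varphi_{N\#}T_N$, and then one reads off $f$.

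Your proposal, by contrast, attempts to re-prove the attainment step in the special case of zero infimum, and the specific mechanism you suggest for that step has a genuine gap. You write that ``the $V_i$ may from the start be taken to live in $W$ itself,'' where $W=\bar X_M\sqcup\bar X_N$. This cannot work for dimensional reasons: $V_i\in\intcurr_{m+1}(Z_i)$ is an $(m{+}1)$-dimensional integral current, while $W$ under any admissible metric $d_i^*$ is still, as a set, the union of two countably $\mathcal H^m$-rectifiable spaces, hence supports no nontrivial $(m{+}1)$-currents. The filling $V_i$ genuinely needs ambient room outside $\varphi_i^M(\bar X_M)\cup\varphi_i^N(\bar X_N)$, and pulling back to $W$ discards exactly that room. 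Nor does the flat norm in $(W,d_i^*)$ inherit the bound $<1/i$ from $Z_i$, since flat distance can increase under restriction to a subspace.

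The correct resolution, and the one actually used in \cite{SorWen2} (cf.\ \remref{rmrk-inf-dist-attained} here), is to replace each $Z_i$ by the injective envelope of $(W,d_i^*)$. Injective spaces are $1$-Lipschitz retracts of any superspace, so the flat norm computed there is no larger than in $Z_i$; the envelopes have diameter controlled by $\diam(W,d_i^*)$, hence are uniformly compact; and as $d_i^*\to d_*$ uniformly they Gromov--Hausdorff converge to the injective envelope of $(W,d_*)$, in which Ambrosio--Kirchheim compactness and lower semicontinuity of mass finish the job. If you grant yourself \thmref{inf-dist-attained}, all of this is already packaged and your proof collapses to its final paragraph.
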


In \cite{SorWen2} Theorem 3.23 it is also proven that

\begin{thm}\label{inf-dist-attained}\label{achieved} \cite{SorWen2}[Thm 4.23]
Given a pair of precompact integral current spaces, $M^m_1=(X_1,d_1,T_1)$
and 
$M^m_2=(X_2,d_2,T_2)$, 
there exists a compact metric space, $(Z, d_Z)$,
integral
currents $U\in\intcurr_m\left(Z\right)$ and  $V\in\intcurr_{m+1}\left(Z\right)$,
and isometric embeddings
$\varphi_1 : \bar{X}_1\to Z$ and $\varphi_2:\bar{X}_2 \to Z$
with
\begin{equation} \label{eqn-Federer-Flat-3}
\varphi_\# T_1- \varphi'_\# T_2=U+\bdry V
\end{equation}
such that
\begin{equation}\label{eqn-local-defn-2}
d_{\Fm}\left(M_1,M_2\right)=\mass\left(U\right)+\mass\left(V\right).
\end{equation}
\end{thm}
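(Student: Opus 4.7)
The plan is to take a minimizing sequence of isometric embeddings $\varphi_{i,k}:\bar X_i\to Z_k$ and fillings $U_k,V_k$, transport everything into a single compact metric space, and then extract a weakly convergent subsequence via \thmref{AK-compact}, using the lower semicontinuity of mass (\remref{rmrk-lower-mass}) to see the limit is a minimizer.

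First, choose $(Z_k,\varphi_{1,k},\varphi_{2,k},U_k,V_k)$ with $\varphi_{1,k\#}T_1-\varphi_{2,k\#}T_2=U_k+\partial V_k$ and $\mass(U_k)+\mass(V_k)\le d_\Fm(M_1,M_2)+1/k$, and replace each $Z_k$ by the metric completion of $\varphi_{1,k}(\bar X_1)\cup\varphi_{2,k}(\bar X_2)\cup\spt(U_k)\cup\spt(V_k)$. The first real task is to further arrange that $\diam(Z_k)\le D$ uniformly. To achieve this I would apply the slicing theorem (\thmref{theorem-slicing}) to the $1$-Lipschitz function $f_k(z):=d_{Z_k}(z,\varphi_{1,k}(\bar X_1)\cup\varphi_{2,k}(\bar X_2))$; the coarea estimate $\int\mass(\langle V_k,f_k,s\rangle)\,ds\le\mass(V_k)$ allows me to choose a slice value $s_k\in[R,R+1]$ with $\mass(\langle V_k,f_k,s_k\rangle)\le\mass(V_k\rstr\{R\le f_k\le R+1\})$. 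Setting $V_k':=V_k\rstr\{f_k\le s_k\}$ and $U_k':=U_k\rstr\{f_k\le s_k\}-\langle V_k,f_k,s_k\rangle$ and invoking $\partial(V_k\rstr\{f_k\le s_k\})=\langle V_k,f_k,s_k\rangle+(\partial V_k)\rstr\{f_k\le s_k\}$ preserves the identity $\varphi_{1,k\#}T_1-\varphi_{2,k\#}T_2=U_k'+\partial V_k'$ while confining all supports within distance $s_k$ of the embedded images. A diagonal choice of $R$ absorbs the truncation cost into the error $1/k$ and yields $\diam(Z_k)\le D$.

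Next, the uniform mass bound combined with the density lower bound for integer rectifiable currents (see \defref{defn-set} and \defref{int-rect-curr}) controls the covering number $N(\spt(U_k)\cup\spt(V_k),r)$ uniformly in $k$, while the precompact $\bar X_i$ contribute equicompactness of their embedded images. Hence $\{Z_k\}$ is equibounded and equicompact, and by \thmref{Thm-Gromov} and \thmref{Gromov-Z} a subsequence embeds isometrically into a common compact metric space $(Z,d_Z)$. Inside $Z$, \thmref{AK-compact} produces a further subsequence with $U_k\weaklyto U\in\intcurr_m(Z)$ and $V_k\weaklyto V\in\intcurr_{m+1}(Z)$, while an Arzela-Ascoli extraction on the $1$-Lipschitz maps $\varphi_{i,k}$ yields isometric limits $\varphi_i:\bar X_i\to Z$. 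Uniform convergence of $\varphi_{i,k}$ on the compact $\bar X_i$ together with the continuity axiom for currents forces $\varphi_{i,k\#}T_i\weaklyto\varphi_{i\#}T_i$. Since weak convergence commutes with linear combinations and with the boundary operator, the identity $\varphi_{1\#}T_1-\varphi_{2\#}T_2=U+\partial V$ passes to the limit, and \remref{rmrk-lower-mass} gives $\mass(U)+\mass(V)\le\liminf_k(\mass(U_k)+\mass(V_k))=d_\Fm(M_1,M_2)$; the reverse inequality is immediate from the definition of intrinsic flat distance.

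The main obstacle is the uniform diameter bound. Truncation of $V_k$ at level $s_k$ creates a new boundary piece $\langle V_k,f_k,s_k\rangle$ whose mass must be absorbed into $U_k'$ without letting the total mass exceed $d_\Fm(M_1,M_2)+2/k$. The coarea estimate guarantees a good slice in $[R,R+1]$, but one must also argue that $R$ can be chosen bounded independent of $k$: either the $V_k$-mass outside $\{f_k\le R\}$ decays in $R$ uniformly in $k$ (so the coarea bound suffices), or one must iterate the truncation carefully to peel off successive shells. Once this uniform diameter bound is secured, the Gromov, Ambrosio-Kirchheim, and lower-semicontinuity machinery assembles the minimizer routinely.
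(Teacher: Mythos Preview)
The paper does not supply its own proof of this theorem; it is quoted from \cite{SorWen2} as a review result. The only information the paper offers about the argument is in \remref{rmrk-inf-dist-attained}: the space $Z$ is obtained as the \emph{injective envelope} of the Gromov--Hausdorff limit of a sequence of spaces $Z_n$, and this construction already delivers the uniform diameter bound $\diam(Z)\le 3\diam(X_1)+3\diam(X_2)$.

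Your overall compactness strategy---minimizing sequence, uniform diameter, Gromov compactness, Ambrosio--Kirchheim compactness, lower semicontinuity of mass---is the right skeleton and matches in spirit what happens in \cite{SorWen2}. The genuine difference is in how the diameter bound is obtained. In \cite{SorWen2} one first reduces the infimum to metrics on the disjoint union $\bar X_1\sqcup\bar X_2$, and then passes to the injective envelope; injectivity guarantees both that isometric embeddings extend and that fillings can be found inside a space whose diameter is controlled by $\diam(X_1)+\diam(X_2)$ from the outset. This sidesteps exactly the obstacle you flag.

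That obstacle is real in your version. Your truncation gives
\[
\mass(U_k')+\mass(V_k')\le \mass(U_k)+\mass(V_k)+\mass(\langle V_k,f_k,s_k\rangle)-\mass\bigl(V_k\rstr\{f_k>s_k\}\bigr),
\]
so you need to find $s_k$ in a bounded interval with $\mass(\langle V_k,f_k,s_k\rangle)\le \mass(V_k\rstr\{f_k>s_k\})$. The coarea inequality alone does not give this uniformly in $k$: making the slice small forces $s_k$ large, while keeping $s_k$ bounded leaves a slice of size roughly $\mass(V_k)/L$, which need not vanish. One can rescue this with an exponential-decay/Gronwall argument on $g(s)=\mass(V_k\rstr\{f_k>s\})$ (if no good $s$ exists on $[0,L]$ then $g'(s)\ge -g(s)$ fails everywhere, forcing $g(L)\le e^{-L}g(0)$), but you have not carried that out, and it is precisely the step the injective-envelope route avoids.

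Your equicompactness claim for $\spt(U_k)\cup\spt(V_k)$ via the lower density bound for integer rectifiable currents is correct in outline (this is how \cite{SorWen2} proceeds as well), and the Arzela--Ascoli step for the embeddings is fine. So your proposal is a viable alternative once the diameter step is completed, but as written the diameter bound is a gap rather than a routine detail.
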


\begin{rmrk}\label{rmrk-inf-dist-attained}
The metric space $Z$ in Theorem~\ref{inf-dist-attained} has
\be
\diam(Z) \le 3 \diam(X_1) + 3\diam(X_2).
\ee
This is seen by consulting the proof of Theorem 3.23 in \cite{SorWen2},
where $Z$ is constructed as the injective envelope of the Gromov-Hausdorff
limit of a sequence of spaces $Z_n$ with this same diameter bound.
\end{rmrk}

The following theorem in \cite{SorWen2} is an immediate consequence
of Gromov and Ambrosio-Kirchheim's Compactness Theorems:

\begin{thm} \label{GH-to-flat}
Given a sequence of $m$ dimensional integral current spaces $M_j=\left(X_j, d_j, T_j\right)$ such that $X_j$ are equibounded and
equicompact and with uniform upper bounds on mass and boundary mass.
A subsequence converges in the
Gromov-Hausdorff sense $\left(X_{j_i}, d_{j_i}\right) \GHto \left(Y,d_Y\right)$ and in the 
intrinsic flat sense 
$\left(X_{j_i}, d_{j_i}, T_{j_i}\right) \Fto \left(X,d,T\right)$
where either $\left(X,d,T\right)$ is an $m$ dimensional integral current space
with $X \subset Y$
or it is the ${\bf 0}$ current space.
\end{thm}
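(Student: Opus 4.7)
The plan is to combine Gromov's compactness theorem with Ambrosio--Kirchheim's compactness theorem, by first realizing all the spaces in one common ambient metric space. First I would apply Theorem~\ref{Thm-Gromov} to the sequence $(X_j,d_j)$, which by hypothesis is equibounded and equicompact, to extract a subsequence (not relabeled) converging in Gromov--Hausdorff sense to some compact metric space $(Y,d_Y)$. Then by Theorem~\ref{Gromov-Z}, I can find a compact metric space $(Z,d_Z)$ together with isometric embeddings $\varphi_j:\bar X_j\to Z$ and $\varphi_\infty:Y\to Z$ such that $\varphi_j(X_j)\to \varphi_\infty(Y)$ in the Hausdorff sense in $Z$.

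Next, I would look at the pushforwards $S_j:=\varphi_{j\#}T_j\in\intcurr_m(Z)$. By (\ref{lem-push-mass}) we have $\mass(S_j)=\mass(T_j)\le V_0$, and since $\varphi_{j\#}\partial T_j=\partial\varphi_{j\#}T_j$ we likewise have $\mass(\partial S_j)\le A_0$. Each $\set(S_j)\subset\varphi_j(X_j)$, and the latter sets all lie in a fixed compact subset of $Z$ (for instance the closed $\epsilon$-neighborhood of $\varphi_\infty(Y)$ for $j$ large, together with finitely many early terms). So Theorem~\ref{AK-compact} applies and furnishes a further subsequence such that $S_{j_i}\weaklyto T$ for some $T\in\intcurr_m(Z)$. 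By Wenger's equivalence between weak and flat convergence under uniform mass and boundary mass bounds, this weak convergence upgrades to $d_F^Z(S_{j_i},T)\to 0$.

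I now split into two cases. If $T=0$, then $d_\Fm(M_{j_i},\mathbf 0)\le d_F^Z(S_{j_i},0)\to 0$, and the intrinsic flat limit is the $\mathbf 0$ current space. If $T\neq 0$, I would define the limit integral current space by $X:=\set(T)\subset Z$, $d:=d_Z|_{X\times X}$, so that $(X,d,T)\in\mathcal M^m$ by Definition~\ref{defn-int-curr-space}. Using the isometric embedding $\varphi_\infty:(Y,d_Y)\to Z$ and the fact that $\spt(S_{j_i})\subset\varphi_{j_i}(X_{j_i})$ Hausdorff-converges into $\varphi_\infty(Y)$, one sees $\spt(T)\subset\varphi_\infty(Y)$, and since $\set(T)\subset\spt(T)$, one can identify $X$ with a subset of $Y$ (via $\varphi_\infty^{-1}$). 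The intrinsic flat convergence then follows from Definition~\ref{def-flat1}, because $Z$ furnishes a common space:
\begin{equation}
d_\Fm(M_{j_i},(X,d,T))\le d_F^Z(\varphi_{j_i\#}T_{j_i},\mathrm{id}_\#T)=d_F^Z(S_{j_i},T)\to 0.
\end{equation}

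The main technical obstacle is ensuring the hypotheses of Theorem~\ref{AK-compact} are truly satisfied, in particular that all the supports $\set(S_{j_i})$ lie in a single compact $K\subset Z$. This requires knowing that the Hausdorff convergence $\varphi_j(X_j)\to\varphi_\infty(Y)$ in the compact $Z$ produces a uniform compact containment, which is immediate from compactness of $Z$ itself. The second delicate point is the identification $X\subset Y$, which requires noting that weak (hence flat) limits of currents have support contained in the limit of the supports; this is a standard consequence of the definition of weak convergence combined with the fact that $S_j(f,\pi)=0$ whenever $\spt(f)$ is disjoint from $\varphi_j(X_j)$, passed to the limit against test tuples supported away from $\varphi_\infty(Y)$.
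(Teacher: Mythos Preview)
Your proposal is correct and follows exactly the approach the paper indicates: the paper states only that Theorem~\ref{GH-to-flat} ``is an immediate consequence of Gromov and Ambrosio-Kirchheim's Compactness Theorems'' without giving details, and you have supplied precisely those details---Gromov's theorem to get a GH-convergent subsequence in a common compact $Z$, then Ambrosio--Kirchheim compactness on the pushforwards, then Wenger's weak/flat equivalence (which the paper reviews in Section~2.6) to obtain intrinsic flat convergence. Your handling of the two technical points (uniform compact containment via compactness of $Z$, and $\spt(T)\subset\varphi_\infty(Y)$ via testing against tuples supported away from $\varphi_\infty(Y)$) is sound.
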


Immediately one notes that if $Y$ has Hausdorff dimension less than $m$,
then $(X,d,T)=\bf{0}$.   In \cite{SorWen2} Example A.7, 
there is an example where
$M_j$ are compact three 
dimensional Riemannian manifolds with positive scalar curvature that
converge in the Gromov-Hausdorff sense to a standard three sphere
but in the Intrinsic Flat sense to $\bf{0}$.   It is proven in \cite{SorWen1},
that if $(X_j,d_j, T_j)$ are compact Riemannian manifolds with nonnegative
Ricci curvature or a uniform linear contractibility function, then the
intrinsic flat and Gromov-Hausdorff limits agree.

There are many examples of sequences of Riemannian manifolds
which have no Gromov-Hausdorff limit but have an intrinsic flat limit.
The first is Ilmanen's Example of an increasingly hairy three sphere
with positive scalar curvature described in \cite{SorWen2} Example A.7.   Other
examples appear in work of the second author with Dan Lee concerning the
stability of the Positive Mass Theorem \cite{LeeSormani1} \cite{LeeSormani2} and in
work of the second author with Sajjad Lakzian concerning smooth convergence
away from singular sets \cite{Lakzian-Sormani}.

The following three theorems are proven in work of the
second author with Wenger \cite{SorWen2}.  Combining these theorems with
the work of Ambrosio-Kirchheim reviewed earlier will lead to many
of the properties of Intrinsic Flat Convergence described in this paper:

\begin{thm}\label{converge}\label{converge} \cite{SorWen2}[Thm 4.2]
If a sequence of 
integral current spaces
$M_{j}=\left(X_j, d_j, T_j\right)$ converges in the intrinsic flat sense
to an 
integral current space,
 $M_0=\left(X_0,d_0,T_0\right)$, then
there is a separable
complete metric space, $Z$, and isometric embeddings  $\varphi_j: X_j \to Z$ such that
$\varphi_{j\#}T_j$ flat converges to $\varphi_{0\#} T_0$ in $Z$
and thus converge weakly as well.
\end{thm}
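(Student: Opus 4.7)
The plan is to glue together the individual admissible metric spaces coming from the pairwise flat distances $d_{\mathcal{F}}(M_j, M_0)$ into a single separable complete metric space $Z$ that contains isometric copies of every $\bar X_j$ and of $\bar X_0$.

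First, I would invoke Theorem~\ref{inf-dist-attained} (attainment of the infimum) to obtain, for each $j \ge 1$, a compact metric space $Z_j$, isometric embeddings $\psi_{j,0}\colon \bar X_0 \to Z_j$ and $\psi_{j}\colon \bar X_j \to Z_j$, and currents $U_j \in \intcurr_m(Z_j)$, $V_j \in \intcurr_{m+1}(Z_j)$ satisfying
\[
\psi_{j\#}T_j - \psi_{j,0\#}T_0 = U_j + \partial V_j, \qquad \mass(U_j) + \mass(V_j) = d_{\mathcal F}(M_j,M_0).
\]
(If one preferred not to invoke the attainment, one could take the infimum approximately and carry an extra $1/j$ throughout.) Next, I would form the pre-space
\[
Z_{\mathrm{pre}} = \bar X_0 \;\sqcup\; \bigsqcup_{j \ge 1} \bigl(Z_j \setminus \psi_{j,0}(\bar X_0)\bigr)
\]
and define a candidate metric by keeping the metric of each $Z_j$ on its own piece (identifying $\psi_{j,0}(\bar X_0)$ with the canonical copy of $\bar X_0$) and, for $x$ in the $Z_j$-piece and $y$ in the $Z_k$-piece with $j \ne k$, setting
\[
d(x,y) := \inf_{p \in \bar X_0}\Bigl( d_{Z_j}\bigl(x,\psi_{j,0}(p)\bigr) + d_{Z_k}\bigl(\psi_{k,0}(p),y\bigr)\Bigr).
\]
I would then check that this yields a genuine metric (the triangle inequality is immediate; positive definiteness uses the isometry of the $\psi_{j,0}$). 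Finally I would let $Z$ be the metric completion; separability is inherited from the countable union of the separable $Z_j$.

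The crucial step is verifying that each natural map $Z_j \hookrightarrow Z$ is an isometric embedding, i.e.\ that shortcuts through other $Z_k$'s do not shorten distances inside a single $Z_j$. I expect this to be the main obstacle, but it follows from a finite induction: any path in $Z_{\mathrm{pre}}$ from $x,y \in Z_j$ alternates between the $Z_j$-piece, points of $\bar X_0$, and other $Z_k$-pieces. Each $Z_k$-excursion between two points $p, q \in \bar X_0$ contributes at least $d_0(p,q) = d_{Z_j}(\psi_{j,0}(p),\psi_{j,0}(q))$ by the fact that $\psi_{k,0}$ is an isometric embedding; substituting these bounds reduces the path length to the length of a corresponding path lying entirely in $Z_j$, which is at least $d_{Z_j}(x,y)$ by the triangle inequality in $Z_j$. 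Hence $Z_j$ embeds isometrically into $Z$, and in particular so do $\bar X_0$ and $\bar X_j$; call the resulting embeddings $\varphi_0$ and $\varphi_j$.

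With the isometric embeddings in hand, I would push $U_j$ and $V_j$ forward under the embedding $Z_j \hookrightarrow Z$ to obtain a filling in $Z$ for $\varphi_{j\#}T_j - \varphi_{0\#}T_0$, and use (\ref{lem-push-mass}) (mass is preserved under isometric push-forward) to conclude
\[
d^Z_F\bigl(\varphi_{j\#}T_j,\varphi_{0\#}T_0\bigr) \le \mass(U_j) + \mass(V_j) = d_{\mathcal F}(M_j,M_0) \to 0.
\]
Since the sequences $\mass(\varphi_{j\#}T_j) = \mass(T_j)$ and $\mass(\partial \varphi_{j\#}T_j) = \mass(\partial T_j)$ are uniformly bounded (the former by lower semicontinuity of mass together with a standard uniform bound obtained from the flat convergence, the latter likewise), Wenger's theorem equating flat convergence with weak convergence under uniform total mass bounds (cited above) delivers $\varphi_{j\#}T_j \weaklyto \varphi_{0\#}T_0$, completing the proof.
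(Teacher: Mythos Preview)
The paper does not itself prove this theorem; it is quoted from \cite{SorWen2} without proof. Your gluing construction---attaching the compact spaces $Z_j$ from Theorem~\ref{inf-dist-attained} along the common copy of $\bar X_0$---is the standard argument and is essentially what is done in \cite{SorWen2}. The construction of $Z$, the verification that each $Z_j$ embeds isometrically (your ``no shortcuts through other pieces'' argument), the separability, and the pushforward of $U_j, V_j$ to obtain flat convergence in $Z$ are all correct.

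There is one genuine error in the final paragraph. You claim that $\mass(T_j)$ and $\mass(\partial T_j)$ are uniformly bounded, citing lower semicontinuity of mass and ``a standard uniform bound obtained from the flat convergence.'' Neither of these gives an upper bound: lower semicontinuity only bounds $\mass(T_0)$ from above by $\liminf \mass(T_j)$, and flat convergence alone does not control mass (e.g.\ in $\R^2$ the circles of radius $1/j^2$ with integer multiplicity $j^3$ have mass $2\pi j\to\infty$ but flat norm at most $\pi/j\to 0$). So your appeal to Wenger's equivalence theorem is unjustified. Fortunately it is also unnecessary: flat convergence implies weak convergence directly, with no mass hypothesis. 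If $\varphi_{j\#}T_j-\varphi_{0\#}T_0=U_j+\partial V_j$ with $\mass(U_j)+\mass(V_j)\to 0$, then for any tuple $(f,\pi_1,\dots,\pi_m)$ one has
\[
\bigl|(\varphi_{j\#}T_j-\varphi_{0\#}T_0)(f,\pi)\bigr|
\le |U_j(f,\pi)|+|V_j(1,f,\pi)|
\le C\bigl(\mass(U_j)+\mass(V_j)\bigr)\to 0,
\]
where $C$ depends only on $|f|_\infty$, $\Lip(f)$, and the $\Lip(\pi_i)$. Replace your last paragraph with this observation and the proof is complete.
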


\begin{thm}\label{convergeto0} \cite{SorWen2}[Thm 4.3]
If a sequence of 
integral current spaces
$M_{j}=\left(X_j, d_j, T_j\right)$ converges in the
intrinsic flat sense to the 
zero integral current space, $\bf{0}$, then
we may choose points $x_j\in X_j$ and a
separable
complete metric space, $Z$, and isometric embeddings  $\varphi_j: X_j \to Z$ such that
$\varphi_j(x_j)=z_0\in Z$ and
$\varphi_{j\#}T_j$ flat converges to $0$ in $Z$ and thus converges weakly as well.
\end{thm}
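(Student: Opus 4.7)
The plan is to unpack the definition of $d_{\mathcal F}(M_j,\mathbf 0)\to 0$ for each $j$, select basepoints $x_j\in X_j$, and then wedge together the individual approximating metric spaces at those basepoints to form a common $Z$. The construction is essentially that of Theorem~\ref{converge}, but with no nontrivial limit space to glue along; instead we glue along isolated basepoints.

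First I would choose $\epsilon_j\searrow 0$ with $d_{\mathcal F}(M_j,\mathbf 0)<\epsilon_j$. By Definition~\ref{def-flat1}, for each $j$ there exist a complete metric space $Z_j$, an isometric embedding $\psi_j\colon \bar X_j\to Z_j$, and currents $U_j\in\intcurr_m(Z_j)$, $V_j\in\intcurr_{m+1}(Z_j)$ with
\begin{equation*}
\psi_{j\#}T_j = U_j + \partial V_j, \qquad \mass(U_j)+\mass(V_j) < \epsilon_j.
\end{equation*}
Replacing $Z_j$ by the closure of $\psi_j(\bar X_j)\cup\set(U_j)\cup\set(V_j)$ if necessary, we may assume each $Z_j$ is separable, since $\bar X_j$, $\set(U_j)$, and $\set(V_j)$ are all $\sigma$-compact.

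Next I would pick basepoints $x_j\in X_j$ and set $z_j:=\psi_j(x_j)\in Z_j$. Construct the common space $Z$ as the wedge sum of the $Z_j$ at these basepoints: start from the disjoint union $\bigsqcup_j Z_j$, identify all $z_j$ to a single point $z_0$, and define
\begin{equation*}
d_Z(z,z') = \begin{cases} d_{Z_j}(z,z') & z,z'\in Z_j,\\ d_{Z_j}(z,z_j)+d_{Z_k}(z_k,z') & z\in Z_j,\ z'\in Z_k,\ j\neq k.\end{cases}
\end{equation*}
The triangle inequality follows since every path between distinct components is forced through $z_0$; $Z$ is separable as a countable union of separable spaces; and $Z$ is complete because any Cauchy sequence either lies eventually inside a single $Z_j$ (where it converges) or its terms must escape to $z_0$. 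The canonical maps $\iota_j\colon Z_j\to Z$ are isometric embeddings, so $\varphi_j:=\iota_j\circ\psi_j\colon \bar X_j\to Z$ are isometric embeddings with $\varphi_j(x_j)=z_0$ as required.

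Finally, pushing the decomposition forward by $\iota_j$ and using that isometric embeddings preserve mass (equation~(\ref{lem-push-mass})),
\begin{equation*}
\varphi_{j\#}T_j = \iota_{j\#}U_j + \partial(\iota_{j\#}V_j), \qquad \mass(\iota_{j\#}U_j)+\mass(\iota_{j\#}V_j) < \epsilon_j,
\end{equation*}
so $d^Z_F(\varphi_{j\#}T_j, 0)\to 0$. Weak convergence then follows directly without any need for uniform mass bounds: for any admissible test tuple $(f,\pi_1,\ldots,\pi_m)$, inequality~(\ref{eqn-mass}) together with the identity $\partial V_j(f,\pi_1,\ldots,\pi_m) = V_j(1,f,\pi_1,\ldots,\pi_m)$ yields
\begin{equation*}
|\varphi_{j\#}T_j(f,\pi_1,\ldots,\pi_m)| \le \mass(U_j)|f|_\infty\prod_{i}\Lip(\pi_i) + \mass(V_j)\Lip(f)\prod_{i}\Lip(\pi_i) \to 0.
\end{equation*}
The principal obstacle is the careful verification that the wedge-sum $Z$ really is a separable complete metric space into which each $(Z_j,d_{Z_j})$ embeds isometrically; once the glued metric is written down this is routine, and the remaining geometric-measure-theoretic content is packaged in the mass-preservation property~(\ref{lem-push-mass}) and the definition of intrinsic flat distance.
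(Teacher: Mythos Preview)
The paper does not prove this statement: it appears in the review section (\S2.6) with citation \cite{SorWen2}[Thm~4.3] and no proof is given here. So there is no ``paper's own proof'' to compare against.

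That said, your argument is correct and is the standard one. The wedge (one-point) union of the individual $Z_j$ at the chosen basepoints is exactly the right construction when the limit space is $\mathbf{0}$: it is the degenerate case of the gluing used for Theorem~\ref{converge}, where instead of gluing each $Z_j$ along an isometric copy of the limit $\bar X_\infty$ one glues along a single point. Your verification of completeness, separability, and the isometric-embedding property of the inclusions $\iota_j$ is accurate, and the push-forward of the $(U_j,V_j)$ decomposition together with mass preservation under isometric embeddings gives flat convergence immediately. The direct estimate for weak convergence via (\ref{eqn-mass}) is a nice touch, since it avoids invoking Wenger's equivalence of flat and weak convergence under uniform mass bounds (which you do not have here).

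One small remark: when you replace $Z_j$ by the closure of $\psi_j(\bar X_j)\cup\set(U_j)\cup\set(V_j)$, what you really need is that this closed subspace contains $\spt(U_j)$, $\spt(V_j)$, and $\spt(\partial V_j)$, so that the currents can be regarded as currents on the subspace; since $\spt(\partial V_j)\subset\spt(V_j)=\overline{\set(V_j)}$ this is fine, but it is worth saying explicitly.
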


\begin{thm}\label{semi-mass}
If a sequence of integral current spaces
$M_{j}$ converges in the intrinsic flat sense
to a 
integral current space,
 $M_\infty$, then
 \be
 \liminf_{i\to\infty} \mass(M_i) \ge \mass(M_\infty)
 \ee
 \end{thm}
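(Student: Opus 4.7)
The proof plan is to reduce everything to the lower semicontinuity of mass under weak convergence (Remark~\ref{rmrk-lower-mass}) by moving to a common ambient metric space via Theorem~\ref{converge}, and then to invoke the fact that isometric embeddings preserve mass (equation~(\ref{lem-push-mass})).

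First, I would separate out the trivial case where $M_\infty = \mathbf{0}$, since then $\mass(M_\infty) = 0 \le \liminf_{j\to\infty}\mass(M_j)$ holds automatically. Assume henceforth that $M_\infty$ is a genuine integral current space $(X_\infty, d_\infty, T_\infty)$ with $T_\infty \in \intcurr_m(\bar X_\infty)$, and write each $M_j = (X_j, d_j, T_j)$ with $T_j \in \intcurr_m(\bar X_j)$.

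Next, I would apply Theorem~\ref{converge}: there exist a complete separable metric space $Z$ and isometric embeddings $\varphi_j : \bar X_j \to Z$, $\varphi_\infty : \bar X_\infty \to Z$ such that $\varphi_{j\#} T_j$ converges to $\varphi_{\infty \#} T_\infty$ in the flat sense in $Z$, and hence weakly in the sense of Definition~\ref{def-weak}. Remark~\ref{rmrk-lower-mass} then gives
\begin{equation}
\liminf_{j\to\infty}\mass(\varphi_{j\#} T_j) \;\ge\; \mass(\varphi_{\infty\#} T_\infty).
\end{equation}

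Finally, since each $\varphi_j$ (and $\varphi_\infty$) is an isometric embedding, equation~(\ref{lem-push-mass}) yields $\mass(\varphi_{j\#} T_j) = \mass(T_j) = \mass(M_j)$ and likewise $\mass(\varphi_{\infty\#} T_\infty) = \mass(M_\infty)$. Substituting into the previous inequality gives the desired bound $\liminf_{j\to\infty}\mass(M_j) \ge \mass(M_\infty)$. There is no serious obstacle here: the result is essentially a packaging of Ambrosio--Kirchheim's lower semicontinuity of mass with the Sormani--Wenger common-space theorem, and the only subtlety is remembering to handle the zero-limit case, which Theorem~\ref{convergeto0} would cover if one wanted a uniform treatment, but which is trivially fine because mass is non-negative.
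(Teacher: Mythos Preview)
Your proof is correct and follows essentially the same approach as the paper: invoke Theorem~\ref{converge} (with Theorem~\ref{convergeto0} or the trivial observation for the zero case) to pass to a common metric space, apply Ambrosio--Kirchheim's lower semicontinuity of mass under weak convergence, and use that isometric embeddings preserve mass. The paper's proof is just a one-line citation of these same ingredients.
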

 
 \begin{proof}
 This follows from Theorems~\ref{converge} and~\ref{convergeto0}
 combined with Ambrosio-Kirchheim's lower semicontinuity
 of mass [c.f. Remark~\ref{semi-mass}].
 \end{proof}

Finally there is Wenger's Compactness Theorem \cite{Wenger-compactness}:

\begin{thm}[Wenger]
Given $A_0, V_0, D_0>0$.   If $M_j=(X_j, d_j, T_j)$ are integral current
spaces such that
\be
\diam(M_j) \le D_0 \qquad \mass(M_j) \le V_0 \qquad \mass(\partial(M_j))\le A_0
\ee
then a subsequence converges in the Intrinsic Flat Sense to an
integral current space of the same dimension, possibly the $\bf{0}$ space.
\end{thm}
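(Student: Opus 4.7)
The natural strategy is to reduce the theorem to Ambrosio-Kirchheim's compactness theorem (Theorem~\ref{AK-compact}) by isometrically embedding every $\bar X_j$ into a single complete metric space $Z$ in such a way that the supports of the pushed forward currents $\varphi_{j\#}T_j$ eventually lie in a common compact subset $K\subset Z$. Once such embeddings are produced, Theorem~\ref{AK-compact} yields a subsequence with $\varphi_{j_i\#}T_{j_i}\weaklyto T\in\intcurr_m(Z)$; Wenger's equivalence of weak and flat convergence under uniform mass and boundary mass bounds then promotes this to $d^Z_F(\varphi_{j_i\#}T_{j_i},T)\to 0$, and defining $M_\infty:=(\set(T),d_Z,T)$ (or $\mathbf{0}$ if $T=0$) gives $M_{j_i}\Fto M_\infty$ by Definition~\ref{def-flat1}.

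The key obstacle is that we have no equicompactness hypothesis, so naive Kuratowski embeddings into $\ell^\infty$ only place the $\varphi_j(\bar X_j)$ in a common bounded set, not a common compact one; the supports can have arbitrarily many disjoint $\epsilon$-balls and thus need not subconverge in Gromov-Hausdorff sense. The standard way around this is to exploit the isoperimetric inequality for integral currents in metric spaces (Ambrosio-Kirchheim / Wenger): for each scale $\epsilon>0$ there exist decompositions $T_j=U_{j,\epsilon}+\partial V_{j,\epsilon}$ with $\mass(U_{j,\epsilon})+\mass(V_{j,\epsilon})\le C(m,V_0,A_0)\,\epsilon^\alpha$ and with $\spt(U_{j,\epsilon})\cup\spt(V_{j,\epsilon})$ contained in a finite $\epsilon$-net $N_j^\epsilon\subset\bar X_j$ of cardinality bounded only in terms of $m$, $V_0$, $A_0$, $D_0$, and $\epsilon$.

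Equipped with this approximation, I would proceed as follows. First, for each $k\in\N$ take $\epsilon=1/k$; the finite metric spaces $N_j^{1/k}$ have uniformly bounded cardinality and diameter, so by Gromov's Theorem~\ref{Thm-Gromov} a subsequence converges in the Gromov-Hausdorff sense to a compact set $N_\infty^{1/k}$. A diagonal argument produces a single subsequence (still denoted $M_j$) for which $N_j^{1/k}\GHto N_\infty^{1/k}$ for every $k$, together with compatible isometric embeddings, via Theorem~\ref{Gromov-Z}, of all $N_j^{1/k}$ and all $N_\infty^{1/k}$ into one complete separable metric space $Z$. Extend these to isometric embeddings $\varphi_j:\bar X_j\to Z$ using a Kuratowski-type construction into $\ell^\infty(Z)\supset Z$; within this ambient space, the closure of $\bigcup_k\bigcup_j\varphi_j(N_j^{1/k})$ together with $\bigcup_k\varphi_\infty(N_\infty^{1/k})$ is totally bounded, hence has compact closure $K$.

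The final step is to run the compactness argument at each scale. For fixed $k$, the currents $U_{j,1/k}+\partial V_{j,1/k}=T_j$ give $\varphi_{j\#}T_j=\varphi_{j\#}U_{j,1/k}+\partial\varphi_{j\#}V_{j,1/k}$; the first summand has support in $K$ with uniformly bounded $\nmass$, so Theorem~\ref{AK-compact} extracts a weak, hence flat, limit $W_k\in\intcurr_m(Z)$, while the second has total mass $\le C\cdot k^{-\alpha}$. Diagonalizing in $k$ and using the completeness of $\intcurr_m(Z)$ under the flat norm (by Wenger), the $W_k$ form a Cauchy sequence whose limit $T\in\intcurr_m(Z)$ is the flat limit of a subsequence of $\varphi_{j\#}T_j$. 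The main technical obstacle, beyond the isoperimetric decomposition itself, is verifying that the finite-net approximations can be chosen jointly compatible across scales so that a single $Z$ works for all $k$; this is handled by a standard Arzela-Ascoli / diagonal subsequence extraction using Theorem~\ref{Gromov-Z} on the countably many sequences $\{N_j^{1/k}\}_j$.
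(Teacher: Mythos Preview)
First, a point of context: the paper does \emph{not} prove this theorem. It is stated as a cited result of Wenger \cite{Wenger-compactness}, and the author explicitly remarks that Wenger's proof ``involves a thick thin decomposition, a study of filling volumes and uses the notion of an ultralimit.'' So there is no proof in the paper to compare against; I can only assess whether your outline is a plausible reconstruction of such an argument.

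Your overall architecture---decompose at each scale, get equicompact pieces, embed in a common $Z$, apply Ambrosio--Kirchheim compactness, diagonalize---is in the right spirit, but the decomposition you wrote down is wrong as stated and the argument collapses because of it. You claim
\[
T_j = U_{j,\epsilon} + \partial V_{j,\epsilon}, \qquad \mass(U_{j,\epsilon}) + \mass(V_{j,\epsilon}) \le C\epsilon^\alpha,
\]
with supports in a finite $\epsilon$-net. But this literally says $d_F(T_j,0)\le C\epsilon^\alpha$ for every $j$ and every $\epsilon$, forcing every $T_j$ to be the zero current. Correspondingly, in your final step the limits $W_k$ of the $U_{j,1/k}$ satisfy $\mass(W_k)\le Ck^{-\alpha}\to 0$, so your construction always outputs $M_\infty=\mathbf{0}$.

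What you presumably intended is a decomposition of the form $T_j = S_{j,\epsilon} + R_{j,\epsilon}$ where $S_{j,\epsilon}$ is the ``thick'' part (support equicompact, $\nmass$ uniformly bounded by $V_0+A_0$), and it is the \emph{remainder} $R_{j,\epsilon}$ that is flat-small: $R_{j,\epsilon}=U+\partial V$ with $\mass(U)+\mass(V)\le C\epsilon^\alpha$. Producing this requires the thick--thin decomposition (restricting $T_j$ to the set where $\|T_j\|(B(x,\epsilon))\ge c\epsilon^m$, which gives uniform covering bounds via the mass bound $V_0$) together with the isoperimetric inequality to fill the thin part, since the thin piece has small mass but its boundary mass is \emph{not} controlled a priori. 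You have not addressed how to bound $\mass(\partial S_{j,\epsilon})$ either, which is needed both to apply Theorem~\ref{AK-compact} and to fill the thin piece. Finally, Wenger's actual construction of the common $Z$ uses ultralimits rather than a bare diagonal argument; your sketch of ``extend to isometric embeddings $\varphi_j:\bar X_j\to Z$ using a Kuratowski-type construction'' glosses over real work, since the nets $N_j^{1/k}$ for different $k$ must embed consistently and the extension to all of $\bar X_j$ must be an isometric embedding, not merely Lipschitz.
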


Recall that this theorem applies to oriented Riemannian manifolds of
the same dimension with a uniform upper bound on volume and a uniform
upper bound on the volumes of the boundaries.   One immediately sees
that the conditions required to apply Wenger's Compactness Theorem
are far weaker than the conditions required for Gromov's Compactness
Theorem.  The only difficulty lies in determining whether the limit space is
$\bf{0}$ or not.  Wenger's proof involves a thick thin decomposition, a
study of filling volumes and uses the notion of an ultralimit.

It should be noted that Theorems~\ref{converge}-\ref{semi-mass}
and all other theorems reviewed and
proven within this paper are proven without applying Wenger's Compactness 
Theorem.  Thus one may wish to attempt alternate proofs of Wenger's
Compactness Theorem using the results in this paper.  

We end this subsection with an example of a converging
sequence of
integral current spaces 
that is applied in this paper to justify many of
our hypothesis.   Many other examples
appear in work of Wenger and the second author \cite{SorWen2}.

\begin{example}\label{many-intervals}
We will construct a particular sequence of one-dimensional integral current spaces $M_\ell$ which converges in the intrinsic flat sense to the integral current space $M$ induced by the standard one-dimensional torus of length $1$ denoted by $\mathbb{T}$.

We define a sequence $T_k\in \intcurr_1(\mathbb{T})$ as follows.
Let $A_{i,n}$  ($i = 0, \dots, 2^n - 1$) denote the dyadic interval
\be
A_{i,n} = \left[ \frac{i}{2^n}, \frac{i+1}{2^n} \right] \subset \mathbb{T},
\ee
and let $T_{i,j,n} \in \intcurr_1(\mathbb{T})$ for $0 \leq i < j \leq 2^n-1$ be defined by
\be
T_{i,j,n} = \Lbrack \chi_{A_{i,n}} \Rbrack + \Lbrack \chi_{A_{j,n}} \Rbrack,
\ee
where $\chi_A$ denotes the characteristic function of a set $A \subset Z$. 
Reindex $T_k = T_{i,j,n}$ according to $k = k(i,j,n)$ such that $k$ is one-to-one, onto $\mathbb{N}$ and $k(i_1, j_1, n_1) \leq k(i_2, j_2, n_2)$ if and only if $n_1 \leq n_2$.

Let $T = \Lbrack 1 \Rbrack \in \intcurr_1(\mathbb{T})$, let for every $k \in \mathbb{N}$, $M_{2k}$ and $M_{2k+1}$ be the one-dimensional integral current spaces associated to the currents $T - T_k$ and $T + T_k$ respectively. Note moreover that $M$ is the integral current space associated to $T$.

Then 
\begin{eqnarray}
d_{\mathcal{F}}\left(M_{2k}, M\right)
&\le&
d_F^Z\left(T - T_k, T\right)\\
&\le& \mass(T_k) \to 0. 
\end{eqnarray}
Similarly, $M_{2k+1} \Fto M$, so that $M_\ell \to M$ as $\ell \to \infty$.
\end{example}

\vspace{.4cm}
\subsection{Filling Volumes}

The notion of a filling volume was first introduced by Gromov
in \cite{Gromov-Filling}.   Wenger studied the filling volumes 
of integral currents in metric spaces in \cite{Wenger-flat}.
This was applied in joint work of the second author with
Wenger in \cite{SorWen1}.

First we discuss the Plateau Problem on complete metric spaces.
Given a integral current $T\in \intcurr_mZ$, one may define the filling
volume of $\partial T$ within $Z$ as
\be
\fillvol_Z(\partial T) = \inf \{\mass(S): \,\, S\in \intcurr_m(Z)
\,s.t.\,\,\partial S=\partial T\}.
\ee
This immediately provides an upper bound on the flat distance:
\be
d_F^Z(\partial T, {\bf{0}}) \le \fillvol_Z(\partial T)\le \mass(T) 
\ee
Ambrosio-Kirchheim proved this infimum is achieved on Banach
spaces, $Z$ \cite{AK} [Theorem10.2].

Wenger defined the
absolute filling volume of $T\in \intcurr_m Y$  to be
\be
\fillvol_\infty(\partial T)=\inf\{\mass(S): \,\, S\in \intcurr_m(Z)
\,s.t. \,\, \partial S=\varphi_\#\partial T\}
\ee
where the infimum is taken over all isometric embeddings $\varphi:Y\to Z$,
all complete metric spaces, $Z$, and all $S\in \intcurr_m(Z)$
such that $\partial S=\varphi_\#T$.   Clearly
\be
\fillvol_\infty(\partial T) \le \fillvol_Y(\partial T).
\ee

Here we will use the following notion of a filling of an integral current space:

\begin{defn} \label{defn-filling-volume}
Given an integral current space 
$M=(X,d,T)\in \mathcal{M}^m$ with $m\ge 1$ we define
\be \label{eq-filling-volume}
\fillvol(\partial M):= \inf\{ \mass(N): \, N\in \mathcal{M}^{m+1} \textrm{ and } \partial N=\partial M\}.
\ee
That is we require that there exists a current preserving 
isometry from $\partial N$ onto $\partial M$, where as usual, we
have taken the metrics on the boundary spaces to be the restrictions
of the metrics on the metric completions of $N$ and $M$ respectively.
\end{defn}

We note that for $M = (X, d, T) \in \mathcal{M}^m$, it holds that
\be
\fillvol(\partial M) = \fillvol_\infty(\partial T).
\ee
It is also easy to see that
\be \label{fill-est}
\fillvol(\partial M) \le \mass(M).
\ee
and 
\be
d_\mathcal{F}(\partial M,0) \le \fillvol(\partial M)\le \mass(M)
\ee 
for any integral current
space $M$.   

\begin{rmrk}
The infimum in the definition of the filling volume is achieved
when the space is precompact.
This may be seen 
by imitating the proof that the infimum in the definition of the
intrinsic flat norm is attained in \cite{SorWen2}.
Since the $N$ achieving the infimum has $\partial N \neq 0$,
the filling volume is positive.   
\end{rmrk}

Any integral current space, $M=(X,d,T)$, is separable and so
one can map the space into a Banach space, $Z$, via the Kuratowski
Embedding theorem, $\iota: X\to Z$.   By Ambrosio-Kirchheim's
solution to the Plateau problem on Banach spaces
\cite{AK}[Prop 10.2], 
\be \label{fill-diam}
\fillvol(\partial M)\le \fillvol_Z(\varphi_\#(\partial T) )
\le \diam(X)\,\mass(\partial T)=
\diam(M)\,\mass(\partial M).
\ee

Wenger showed that the filling volume is continuous with respect
to weak convergence (and thus also intinsic flat convergence when 
applying Theorem~\ref{converge}).  Here we provide a precise estimate
which will be needed later in the paper:

\begin{thm} \label{fillvol-cont}
For any pair of integral current spaces, $M_i$, we have
\vspace{.2cm}
\be
\label{eq:FillVolByFlat}
\fillvol(\partial M_1)  \le 
\fillvol(\partial M_2) + d_{\mathcal{F}}(M_1,M_2).
\ee
and if $M_i$ have finite diameter then
\be
\fillvol(\partial M_1)  \le 
\fillvol(\partial M_2) + (1+3\diam(M_1)+3\diam(M_2)\,)\,
d_{\mathcal{F}}(\partial M_1,\partial M_2).
\ee
\end{thm}

\begin{proof}
Let $M_k=(X_{M_k}, d_{M_k}, T_{M_k})$ for $k=1,2$.

By the definition of intrinsic flat distance
there exists integral currents
$A_i,B_i$ in $Z_j'$
and isometric embeddings, $\varphi_{i,k}: X_{M_k} \to Z_i$,
such that 
\be
\varphi_{i,1\#} T_{M_1}-\varphi_{i,2\#} T_{M_2}=\partial B_i + A_i
\ee
where
\be
d_{\mathcal{F}}(M_1, M_2 ) = \lim_{i\to \infty}\mass(A_i) +\mass(B_i)
\ee
In particular
\be
\varphi_{i,1\#} \partial T_{M_1}-\varphi_{i,2\#} \partial T_{M_2}= \partial A_i.
\ee

Now by (\ref{eq-filling-volume}), there exists $N_i=(X_{N_i}, d_{N_i}, T_{N_i}) \in \mathcal{M}^{m+1}$ such that $\partial N_i=\partial M_2$ and 
\be
\fillvol(\partial M_2)=\lim_{i\to\infty} \mass(N_i).
\ee

Applying the gluing techniques which are developed
clearly in the Appendix,
we may glue the integral current space
$(\set(A_i)\subset Z_i, d_{Z_i}, A_i)$ to
$N_i=(X_{N_i}, d_{N_i}, T_{N_i})$ along
$\partial N_i=\partial M_2$ to create an
integral current space $M$ such that
$\partial M= \partial M_1$ and 
$\mass(M)\le \mass(A_i) + \mass(N_i)$.

Then
\be
\fillvol(\partial M_1)= \fillvol(\partial M)
\le  \mass(M) \le \mass(A_i) + \mass(N_i)
\ee
and taking $i \to \infty$ we have (\ref{eq:FillVolByFlat}).

For the second half of the theorem, we observe that
there exists a new pair of integral currents $B_j,A_j$ 
and isometric embeddings, $\varphi_{i,k}: \spt(\partial T_k) \to Z'_j$,
such that 
\be
\varphi_{j,1\#} \partial T_{M_1}-\varphi_{j,2\#} \partial T_{M_2}=
\partial B_j + A_j
\ee
where
\be
d_{\mathcal{F}}(\partial M_1, \partial M_2 ) = \lim_{j \to \infty} \mass(B_j) +\mass(A_j).
\ee
Let
\be
M_{B_j}=(\set(B_j), d_{Z'_j}, B_j) \textrm{ and } M_{A_j}=(\set(A_j), d_{Z'_j}, A_j).
\ee
As in the proof of Theorem 3.23 in \cite{SorWen2} (see also Remark~\ref{rmrk-inf-dist-attained}), we may assume that
\be
\diam(M_{B_j}), \diam(M_{A_j}) \le 3 \diam(M_1) + 3 \diam(M_2).
\ee

Observe that 
\be
\partial A_j= 
\partial(\varphi_{j,1\#} \partial T_{M_1}-\varphi_{j,2\#} \partial T_{M_2})= 0.
\ee
So we can study the filling volume of $A_j$.  
In \cite{AK} Theorem 10.2, we see that
\be
\fillvol(M_{A_j}) \le \diam(M_{A_j}) \,\mass(A_j).
\ee

Let $N_j$ be integral current spaces such that
$\partial N_{j}=\partial M_2$ and
\be
\fillvol(\partial M_2) \geq \mass(N_{j})  - 1/j.
\ee
Let $N'_j$ be integral current spaces such that
$\partial N'_{j}=A_j$ and
\be
\fillvol(A_j) \geq \mass(N'_{j}) - 1/j.
\ee

We glue $N_j$ to $M_{B_j}$ along $\partial N_j=\partial M_2$
and we also glue $N_j'$ to $M_{B_j}$ along $\partial N_j'=M_{A_j}$.
The glued space $M'_j$ will have $\partial M'_j= \partial M_1$
and 
\be
\mass(M'_j)\le \mass(N_j) +\mass(M_{B_j})+ \mass(N_j').
\ee
Thus
\be
 \fillvol(\partial M_1)= \fillvol(\partial M'_j) \le \mass(M'_j).
 \ee
 Combining these equations we have
 \begin{eqnarray}
 \fillvol(\partial M_1)  - \tfrac{2}{j}
 &\le&
  \fillvol(\partial M_2) + \mass(M_{B_j}) + \fillvol(M_{A_j}) \\
&\le &\fillvol(\partial M_2) + \mass(M_{B_j}) + \diam(M_{A_j}) \mass(M_{A_j})\\
&\le &\fillvol(\partial M_2) + \left(\diam(M_{A_j})+1\right)
\left(\mass(M_{B_j})+ \mass(M_{A_j})\right)
\end{eqnarray}
and taking $j \to \infty$ we have our second claim.
\end{proof}

\begin{rmrk}
Gromov's Filling Volume in \cite{Gromov-Filling} is defined as
in (\ref{eq-filling-volume}) where the infimum is taken over 
$N^{n+1}$ that are Riemannian
manifolds.  Thus it is conceivable that the filling volume in
Definition~\ref{defn-filling-volume} might
have a smaller value both because integral current spaces
have integer weight and because we have a wider class
of metrics to choose from, including metrics which are not
length metrics.   
\end{rmrk}

\begin{rmrk}
Note also that the mass used in  
Definition~\ref{defn-filling-volume} is Ambrosio-Kirchheim's
mass \cite{AK} Definition 2.6 stated as Definition~\ref{defn-mass}
here.   Even when the weight is $1$ and one has a Finsler manifold, 
the Ambrosio-Kirchheim mass has a different value than 
any of Gromov's masses  \cite{Gromov-Filling} and 
the masses used by Burago-Ivanov \cite{Burago-Ivanov-Vol-Tori}.
We need Ambrosio-Kirchheim's mass to have continuity of the
filling volumes under intrinsic flat convergence [Theorem~\ref{fillvol-cont}]
which is an essential tool in this paper.
\end{rmrk}   

\newpage
\section{Metric Properties of Integral Current Spaces} \label{subsect-metric-prop}
 
In this section prove a number of properties of integral current 
spaces as well as a new Gromov-Hausdorff Compactness Theorem.
After describing the natural notions of balls, isometric products, slices, spheres and filling volumes in the first three subsections,
we move on to key new notions.

We introduce the Sliced Filling Volume [Definition~\ref{sliced-filling-vol}]
and $\SF_k(p,r)$ [Definition~\ref{defn-SF_k}].  Then we prove a new
Gromov-Hausdorff Compactness Theorem 
[Theorem~\ref{SF_k-compactness}].   

We explore the filling volumes of $0$ dimensional
spaces, apply them to bound the volumes of balls, and then
introduce the Tetrahedral Property [Definition~\ref{defn-tetra}] and
the Integral Tetrahedral Property [Definition~\ref{defn-int-tetra}].   

We close this section with the notion of
interval filling volumes in Definition~\ref{IFV}
and Sliced Interval Filling Volumes in Definition~\ref{SIF}.  
 
Those studying the
proof of the Tetrahedral Compactness Theorem need to read all
Sections except 3.2 and 3.12 before continuing to Section 4.   
Those studying the Bolzano-Weierstrass
and Arzela-Ascoli Theorems need only read Sections 3.1 and 3.3-3.6
before continuing to Section 4.

\vspace{.4cm}
\subsection{Balls}

Many theorems in Riemannian geometry involve balls,
\be
B(p,r)= \{ x \in X : \, d_X(x,p)<r\} \qquad 
\bar{B}(p,r)= \{ x \in X : \, d_X(x,p)\le r\}.  
\ee  
In this subsection we quickly review key lemmas about balls proven
in the background of the second author's recent paper \cite{Sormani-AA}.

\begin{lem}\label{lem-ball-current}
A ball in an integral current space, $M=\left(X,d,T\right)$,
with the current restricted from the current structure of the Riemannian manifold is an integral current space itself, 
\be
S\left(p,r\right)=\left(\set(T\rstr B(p,r)),d,T\rstr B\left(p,r\right)\right)
\ee
for almost every $r > 0$.   Furthermore,
\be\label{ball-in-ball}
B(p,r) \subset \set(S(p,r))\subset \bar{B}(p,r)\subset X.
\ee
\end{lem}

One may imagine that it is possible that a ball is cusp shaped
when we are not in a length space and that some points in the
closure of the ball that lie in $X$ do not lie in the set of $S(p,r)$.
In a manifold, the set of $S(p,r)$ is a closed ball:

\begin{lem}
When $M$ is a Riemannian manifold with boundary
\be
S\left(p,r\right)=\left(\bar{B}\left(p,r\right),d,T\rstr B\left(p,r\right)\right)
\ee
is an integral current space for all $r > 0$.
\end{lem}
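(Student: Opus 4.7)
The plan is to use the Riemannian structure to upgrade the previous lemma in two ways: first, to show that $T\rstr B(p,r)$ has finite boundary mass for \emph{every} $r>0$ (not just almost every), and second, to show that every point of $\bar B(p,r)$ has positive lower density of $\|T\rstr B(p,r)\|$, so the canonical set is all of $\bar B(p,r)$.

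\textbf{Finite boundary mass for every $r$.} As in the previous proof,
\[
\partial(T\rstr B(p,r)) = \langle T,\rho_p,r\rangle + (\partial T)\rstr B(p,r),
\]
where $\rho_p(\cdot) = d(p,\cdot)$. The mass of the second term is bounded by $\mass(\partial T)=\vol(\partial M)<\infty$. The content of this step is to bound $\mass(\langle T,\rho_p,r\rangle)$ for \emph{every} $r>0$. Since $\set(\langle T,\rho_p,r\rangle)\subset \rho_p^{-1}(r)$ and $T$ has multiplicity one on $M$, it suffices to bound $\mathcal{H}^{m-1}$ of the metric sphere. This is a standard fact in Riemannian geometry: $\rho_p$ is $1$-Lipschitz and semi-concave away from $p$, so by Rademacher's theorem combined with the coarea formula applied in local coordinate charts, $\rho_p^{-1}(r)$ is a countably $\mathcal{H}^{m-1}$-rectifiable set of finite $(m-1)$-measure for every $r>0$.

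\textbf{Full density on $\bar B(p,r)$.} The previous lemma already gives $B(p,r)\subset \set(T\rstr B(p,r))\subset \bar B(p,r)$, so it remains to verify that for every $x\in\bar B(p,r)$,
\[
\Theta_{*m}(\|T\rstr B(p,r)\|,x) \;=\; \liminf_{s\to 0}\frac{\vol(B(p,r)\cap B(x,s))}{\omega_m s^m} \;>\; 0.
\]
If $d(p,x)<r$, then $B(x,s)\subset B(p,r)$ for small $s$, so the liminf equals the ordinary Riemannian density at $x$, which is $1$ at interior points and $\tfrac12$ at boundary points. If $d(p,x)=r$, I would choose a sequence of curves $\gamma_n$ from $x$ to $p$, parametrized by arclength, with length $\le r+1/n$. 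For small $t>0$ and $n$ large, the point $y=\gamma_n(t)$ satisfies $d(x,y)\le t$ and $d(p,y)\le r-t+1/n \le r-t/2$. Hence $B(y,t/4)\subset B(p,r)\cap B(x,2t)$, and by local Euclidean volume comparison (halved at boundary points), $\vol(B(y,t/4))\ge c\,t^m$ for a constant $c=c(M,x)>0$. Setting $s=2t$ and letting $t\to 0$ gives a uniform positive lower bound on the density ratio.

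\textbf{Main obstacle.} The technical heart of the proof is the first step: Ambrosio-Kirchheim's slicing theorem produces an integral current only for $\mathcal{L}^1$-almost every $r$, and the upgrade to \emph{every} $r>0$ rests on the regularity of the Riemannian distance function. Without such regularity (e.g.\ in a general integral current space) the analogous statement can fail, which is exactly why the previous lemma was formulated only for almost every $r$ and needed the closure inclusion $\set(S(p,r))\subset\bar B(p,r)$ rather than equality. The second step is then elementary given the local Euclidean structure of $M$ and the existence of almost-minimizing curves to $p$.
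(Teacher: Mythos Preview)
Your proposal is correct and, for the density argument, essentially identical to the paper's: the paper also picks a length-minimizing curve $\gamma$ from $x$ to $p$, observes $B(\gamma(s/2),s/2)\subset B(x,s)\cap B(p,r)$, and invokes the half-plane chart at boundary points to get the $1/2^{m+1}$ lower bound. Your use of almost-minimizing curves in place of an exact minimizer is a harmless (and slightly more robust) variant.

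For finite boundary mass the paper takes a more direct route than you do. Rather than decomposing $\partial(T\rstr B(p,r))$ via the slice $\langle T,\rho_p,r\rangle$ and then arguing that the slice has finite mass for \emph{every} $r$ by appealing to regularity of $\rho_p$, the paper simply computes
\[
\partial(T\rstr B(p,r))(f,\pi_1,\dots,\pi_{m-1})
=\int_{B(p,r)} df\wedge d\pi_1\wedge\cdots\wedge d\pi_{m-1}
=\int_{\partial B(p,r)} f\,d\pi_1\wedge\cdots\wedge d\pi_{m-1}
\]
via Stokes, so that $\mass(\partial(T\rstr B(p,r)))=\vol_{m-1}(\partial B_p(r))<\infty$ directly. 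Both arguments ultimately rest on the same Riemannian fact (finite $(m-1)$-volume of metric spheres for every $r$), but the paper's Stokes computation sidesteps the issue of whether the Ambrosio--Kirchheim slice is well-behaved at the exceptional radii, whereas your route requires you to justify that the slice current coincides with integration over $\rho_p^{-1}(r)$ for every $r$ --- which is true here, but is essentially the Stokes computation in disguise.
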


\begin{example}
See \cite{Sormani-AA} for an example of an integral
current space with a ball that is not an integral current
space because it's boundary has infinite mass.  
\end{example}

\begin{rmrk}\label{outside-balls}
Note that
the outside of the ball, $(M\setminus B(p,r), d, T-S(p,r))$, is
also an integral current space for almost every $r>0$.
\end{rmrk}

\vspace{.4cm}
\subsection{Isometric Products}

One of the most useful notions in Riemannian geometry is
that of an isometric product $M\times I$
of a Riemannian manifold $M$ with an interval, $I$.
endowed with the metric
\be\label{isom-product}
d_{M\times I}((p_1,t_1), (p_2,t_2))=\sqrt{ d_M(p_1,p_2)^2 + |t_1-t_2|^2 }.
\ee
We need to define the isometric product of
an integral current space with an interval:

\begin{defn} \label{times-I}
The product of an integral current space, $M^m=(X,d_X,T)$, 
with an interval $I_\epsilon$, denoted 
\be
M\times I=(X\times I_\epsilon, d_{X\times I_\epsilon},T\times I_\epsilon)
\ee
where $d_{X\times I_\epsilon}$ is defined as in (\ref{isom-product})
and
\be\label{eq-isom-product}
(T\times I_\epsilon)(f, \pi_1,..., \pi_{m+1})
=\sum_{i=1}^{m+1} (-1)^{i+1} 
\int_0^\epsilon T\left(f_t \frac{\partial \pi_{i}}{\partial t}, \pi_{1t},..., \hat{\pi}_{it},..., \pi_{(m+1)t}\right) \, dt
\ee
where $h_t: \bar{X}\to \R$ is defined $h_t(x)=h(x,t)$ for
any $h:\bar{X}\times I_\epsilon \to \R$ and where
\be
(\pi_{1t},..., \hat{\pi}_{it},..., \pi_{(m+1)t})= (\pi_{1t}, \pi_{2t}, ...,\pi_{(i-1)t},\pi_{(i+1)t},...,\pi_{(m+1)t}).
\ee
\end{defn}

We prove this defines an integral current space in 
Proposition~\ref{prop-times-I} below.

\begin{rmrk}This is closely related 
to the cone construction in Defn 10.1 of \cite{AK}, however
our ambient metric space changes after taking the product
and we do not contract to a point.
Ambrosio-Kirchheim observe that (\ref{eq-isom-product}) 
is well defined because 
for $\mathcal{L}^1$ almost every $t\in I_\epsilon$
the partial derivatives are defined for $||T||$ almost every
$x\in X$.   This is also true in our setting.   The proof that
their cone construction defines a current \cite{AK} Theorem 10.2, 
however, does not extend to
our setting because our construction does not close up at
a point as theirs does and our construction depends on $\epsilon$
but not on the size of a bounding ball.
\end{rmrk}

\begin{prop}\label{prop-times-I}
Given an integral current space $M=(X,d,T)$,
the isometric product $M\times I_\epsilon$
is an integral current space such that
\be\label{isom-mass}
\mass(M\times I_\epsilon)=\epsilon \mass(M)
\ee
and such that
\be\label{bndbnd}
\partial(T\times I_\epsilon)= - (\partial T) \times I_\epsilon 
+ T \times \partial I_\epsilon.
\ee
where
\be
T\times \partial I_\epsilon := \psi_{\epsilon \#} T - \psi_{0\#} T
\ee
where $\psi_t: \bar{X}\to \bar{X}\times I_\epsilon$
is the isometric embedding $\psi_t(x)=(x,t)$.  
\end{prop}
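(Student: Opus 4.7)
The plan is to proceed in four stages. First, verify that (\ref{eq-isom-product}) defines an $(m+1)$-dimensional current on $\bar X\times I_\epsilon$ satisfying the three Ambrosio-Kirchheim axioms. Second, exhibit an explicit parametrization of $T\times I_\epsilon$ built from a parametrization of $T$, which yields both integer rectifiability and the mass identity (\ref{isom-mass}). Third, establish the boundary formula (\ref{bndbnd}) by direct computation combined with the product rule in \cite{AK}[Thm 3.5] and the fundamental theorem of calculus in $t$. Fourth, conclude from the finite masses of both sides of (\ref{bndbnd}) that $T\times I_\epsilon$ is an integral current, making $(X\times I_\epsilon,d_{X\times I_\epsilon},T\times I_\epsilon)$ an integral current space.

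For the first stage, multilinearity is immediate from multilinearity of $T$ in each slot. For locality, if some $\pi_j$ is constant on a neighborhood $U$ of $\{f\neq 0\}$, then for $\mathcal{L}^1$-a.e.\ $t$ the slice $\pi_{j,t}$ is constant on the corresponding neighborhood in $\bar X$ and $\partial_t\pi_j=0$ there, so both the $i=j$ summand (where $\partial_t\pi_j=0$) and the $i\neq j$ summands (where $\pi_{j,t}$ appears as a form argument locally constant on $\{f_t\neq 0\}$) vanish by locality of $T$. Continuity under pointwise convergence with $\Lip\le 1$ follows from continuity of $T$ together with dominated convergence, using $\Lip(\pi_{j,t})\le\Lip(\pi_j)$ and $|\partial_t\pi_i|\le\Lip(\pi_i)$ a.e.\ in $t$. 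Finite mass follows from (\ref{eqn-mass}): each of the $m+1$ summands in (\ref{eq-isom-product}) is bounded in absolute value by $\epsilon\,\mass(T)\,\|f\|_\infty\prod_j\Lip(\pi_j)$, so $\|T\times I_\epsilon\|$ is a finite Borel measure on $\bar X\times I_\epsilon$.

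For the parametrization step, choose $T=\sum_k\varphi_{k\#}\lbrack\theta_k\rbrack$ as in (\ref{param-representation}), with $\varphi_k:A_k\to\bar X$ biLipschitz, $A_k\subset\R^m$ Borel with pairwise disjoint images, and $\theta_k\in L^1(A_k,\N)$, and define $\tilde\varphi_k(a,t)=(\varphi_k(a),t)$ on $A_k\times[0,\epsilon]$. The map $\tilde\varphi_k$ is biLipschitz for the Pythagorean product metric (\ref{isom-product}) because $\varphi_k$ is and the $t$-direction is isometric. The claim is that $T\times I_\epsilon=\sum_k\tilde\varphi_{k\#}\lbrack\theta_k(a)\rbrack$, where $\theta_k$ is viewed as a function on $A_k\times[0,\epsilon]$ depending only on $a$. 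To verify, expand both sides on a test tuple using Example~\ref{basic-current-pushed}: the $(m+1)$-form $d(\pi_1\circ\tilde\varphi_k)\wedge\cdots\wedge d(\pi_{m+1}\circ\tilde\varphi_k)$ on $A_k\times[0,\epsilon]\subset\R^{m+1}$ expands via Leibniz into precisely the $m+1$ summands carrying exactly one $dt$ factor, each matching the $i$-th summand of (\ref{eq-isom-product}) with its sign $(-1)^{i+1}$. This exhibits a parametrization, so $T\times I_\epsilon\in\intrectcurr_{m+1}(\bar X\times I_\epsilon)$, and (\ref{param-representation}) yields $\mass(T\times I_\epsilon)=\sum_k\mass(\tilde\varphi_{k\#}\lbrack\theta_k\rbrack)=\sum_k\epsilon\,\mass(\varphi_{k\#}\lbrack\theta_k\rbrack)=\epsilon\,\mass(T)$, since the area factor of $\tilde\varphi_k$ factors as that of $\varphi_k$ in the $A_k$-directions and $1$ in the $t$-direction.

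For the boundary formula, expand $\partial(T\times I_\epsilon)(f,\pi_1,\ldots,\pi_m)=(T\times I_\epsilon)(1,f,\pi_1,\ldots,\pi_m)$ via (\ref{eq-isom-product}) to get one integral $\int_0^\epsilon T(\partial_t f,\pi_{1,t},\ldots,\pi_{m,t})\,dt$ plus $m$ integrals of the form $\int_0^\epsilon T(\partial_t\pi_k,f_t,\pi_{1,t},\ldots,\hat\pi_{k,t},\ldots,\pi_{m,t})\,dt$ with alternating signs. The Leibniz identity $T(g,f,\omega)=\partial T(fg,\omega)-T(f,g,\omega)$ from \cite{AK}[Thm 3.5] rewrites each such integrand as a $\partial T$-piece plus a $T$-piece; antisymmetry of $T$ in its form arguments then moves $\partial_t\pi_k$ into its correct form slot. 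The $T$-pieces combine with the ``$\partial_tf$'' integral into $\int_0^\epsilon\frac{d}{dt}T(f_t,\pi_{1,t},\ldots,\pi_{m,t})\,dt$, which by the fundamental theorem of calculus equals $T(f_\epsilon,\pi_{1,\epsilon},\ldots)-T(f_0,\pi_{1,0},\ldots)=(\psi_{\epsilon\#}T-\psi_{0\#}T)(f,\pi_1,\ldots,\pi_m)=(T\times\partial I_\epsilon)(f,\pi_1,\ldots,\pi_m)$; the $\partial T$-pieces reassemble, via (\ref{eq-isom-product}) applied to the $(m-1)$-dimensional current $\partial T$, into $((\partial T)\times I_\epsilon)(f,\pi_1,\ldots,\pi_m)$, with the convention $\partial T=0$ handling the $m=0$ case. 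The main obstacle will be the sign bookkeeping in this last step: reconciling the alternating $(-1)^{i+1}$ of (\ref{eq-isom-product}), the antisymmetry signs from reordering form slots, and the minus sign in the Leibniz identity so that the two summands in (\ref{bndbnd}) emerge with the correct signs. The parametrization step also requires care that the Pythagorean metric makes the metric differential of $\tilde\varphi_k$ factor cleanly into the area factor of $\varphi_k$ times one, but this is routine since the $t$-coordinate contributes isometrically.
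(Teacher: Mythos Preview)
Your approach is essentially the same as the paper's: both verify the current axioms, build a parametrization of $T\times I_\epsilon$ from one of $T$ via $\tilde\varphi_k(a,t)=(\varphi_k(a),t)$ to obtain integer rectifiability and the mass formula, then derive (\ref{bndbnd}) by combining the chain/product rule from \cite{AK}[Thm~3.5] with the fundamental theorem of calculus in $t$ (the paper isolates the latter as a separate lemma, your Leibniz reorganization is the same content), and finally bound $\mass(\partial(T\times I_\epsilon))$ from (\ref{bndbnd}).

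There is one genuine omission. To conclude that $M\times I_\epsilon=(X\times I_\epsilon,d_{X\times I_\epsilon},T\times I_\epsilon)$ is an integral current \emph{space} in the sense of Definition~\ref{defn-int-curr-space}, you must verify that $\set(T\times I_\epsilon)=X\times I_\epsilon$, not merely that $T\times I_\epsilon\in\intcurr_{m+1}(\bar X\times I_\epsilon)$. The paper does this explicitly: from the parametrization one reads off $\|T\times I_\epsilon\|=\|T\|\times\mathcal L^1$, and then for $(p,t)\in\bar X\times I_\epsilon$ one compares $\|T\times I_\epsilon\|(B_{(p,t)}(r))$ to $2r\,\|T\|(B_p(r))$ (up to constants, using that product balls and metric balls are comparable) to see that the lower density at $(p,t)$ is positive iff the lower density of $\|T\|$ at $p$ is positive, i.e.\ iff $p\in\set(T)=X$. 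Your stage four jumps directly from ``integral current'' to ``integral current space'' without this check; add it and the argument is complete.
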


\begin{proof}
First we must show $T\times I_\epsilon$ satisfies the three
conditions of a current:

Multilinearity follows from the multilinearity of $T$ and
the use of the alternating sum in the definition of $T\times I$.

To see locality we suppose 
there is a $\pi_i$ which is constant on a neighborhood
of $\{f\neq 0\}$.  Then
$\partial \pi_i/\partial t =0$ on a neighborhood of $\{f \neq 0\}$
so the $i^{th}$ term in the sum is $0$.  Since 
for all $t\in I_\epsilon$,
$\pi_{it}$ is constant on a neighborhood of $\{f_t \neq 0\}$
the rest of the terms are $0$ as well by the locality of $T$.

To prove continuity and finite mass, we will use the
fact that $T$ is integer rectifiable.  In particular
there exists a parametrization as $\varphi_i: A_i\subset \R^m \to \bar{X}$
and weight functions $\theta_i\in L^1(A_i, \N)$
such that
\be
T=\sum_{k=1}^\infty \varphi_{k\#} \Lbrack \theta_k \Rbrack.
\ee
So $(T\times I_\epsilon)(f, \pi_1,..., \pi_{m+1})=$
\begin{eqnarray*}
&=&\sum_{i=1}^{m+1} (-1)^{i+1} 
\int_0^\epsilon 
\sum_{k=1}^\infty \varphi_{k\#} \Lbrack \theta_k \Rbrack
\left(f_t \frac{\partial \pi_{it}}{\partial t}, , \pi_{1t},..., \hat{\pi}_{it},..., \pi_{(m+1)t}\right) \, dt
 \\
 &=&\sum_{i=1}^{m+1} (-1)^{i+1} \sum_{k=1}^\infty 
\int_{t=0}^\epsilon \int_{A_k} \theta_k\,
f_t\circ \varphi_k \,\frac{\partial \pi_{it}}{\partial t}\circ \varphi_k 
\,d(\pi_{1t}\circ\varphi_k)\wedge\cdots\wedge d\hat{\pi}_{it}
\wedge \cdots \wedge d( \pi_{(m+1)t}\circ\varphi_k) \, dt\\
  &=&  \sum_{k=1}^\infty \int_{A_k}  
\int_{t=0}^\epsilon \theta_k(x) f( \varphi_k(x), t)  \, \left(
\sum_{i=1}^{m+1} (-1)^{i+1} 
\frac{\partial \pi_{it}}{\partial t}\circ \varphi_k 
d(\pi_{1t}\circ\varphi_k)\wedge\cdots\wedge d\hat{\pi}_{it}
\wedge \cdots \wedge d( \pi_{(m+1)t}\circ\varphi_k)\right) \, dt\\
  &=&  \sum_{k=1}^\infty \int_{A_k\times I_\epsilon}  
 \theta_k(x) f( \varphi_k(x), t)  
\,d(\pi_1\circ \varphi) \wedge \cdots \wedge d(\pi_{m+1}\circ \varphi).
 \end{eqnarray*}
Thus
\be
T\times I=\sum_{k=1}^\infty \varphi'_{k\#} \Lbrack \theta'_k \Rbrack.
\ee
where 
\be
\varphi'_k: A_k \times I_\epsilon \to \bar{X}\times I_\epsilon
\textrm{ satisfies } \varphi'_k(x,t)=(\varphi_k(x), t)
\ee
and $\theta'_k\in L^1(A_k\times I_\epsilon, \N)$ 
satisfies $\theta'_k(x,t)=\theta_k(x)$.   Observe that
the images of these charts are disjoint and that 
\begin{eqnarray}
\mass(T \times I_\epsilon)&=&\sum_{k=1}^\infty
\mass(\varphi'_{k\#}\Lbrack \theta'_k \Rbrack )\\
&=&\sum_{k=1}^\infty \int_{A_k\times I_\epsilon} 
|\theta'_k| \mathcal{L}^{m+1}\\
&=&\sum_{k=1}^\infty \epsilon \int_{A_k} |\theta_k| \mathcal{L}^{m}\\
&=&\sum_{k=1}^\infty\epsilon
\mass(\varphi_{k\#}\Lbrack \theta_k \Rbrack )= \epsilon\mass(T).
\end{eqnarray}
The continuity of $T\times I_\epsilon$ now follows because
all integer rectifiable currents defined by parametrizations
are currents.

Observe also that if $A\subset \bar{X}$ and $(a_1, a_2)\subset I$
then
\begin{eqnarray}
||T \times I_\epsilon|| (A\times (a_1,a_2))
&=& \mass( (T \times I_\epsilon) \rstr (A \times (a_1,a_2)) )\\
&=&\sum_{k=1}^\infty
\mass((\varphi'_{k\#}\Lbrack \theta'_k \Rbrack )
\rstr (A \times (a_1,a_2)) )\\
&=&\sum_{k=1}^\infty \int_{(A \cap A_k)\times (a_1,a_2)} 
|\theta'_k| \mathcal{L}^{m+1}\\
&=&\sum_{k=1}^\infty (a_2-a_1) \int_{A\cap A_k} |\theta_k| \mathcal{L}^{m}\\
&=&\sum_{k=1}^\infty (a_2-a_1)
\mass(\varphi_{k\#}\Lbrack \theta_k \Rbrack  \rstr A)\\
&=& (a_2-a_1)\mass(T\rstr A)=(a_2-a_1) ||T||(A).
\end{eqnarray}
Thus $||T\times I_\epsilon||=||T||\times \mathcal{L}^1$.

To prove that $T\times I_\epsilon$ is an integral current, we
need only verify that the current $\partial (T \times I_\epsilon)$
has finite mass. 

Let $\tau_1, \dots, \tau_m \in \Lip(X \times I_\epsilon)$, such that $\partial \tau_i/\partial t$ is Lipschitz as well for $i = 1, \dots, m$. 
Applying the Chain Rule \cite{AK}Thm 3.5
and Lemma~\ref{deriv-in} (proven below), we have
$$
\partial (T \times I_\epsilon) (f, \tau_1,...,\tau_m)
+((\partial T) \times I_\epsilon) (f, \tau_1,...,\tau_m)=\qquad\qquad\qquad
\qquad\qquad\qquad
$$
\begin{eqnarray*}
&=& (T \times I_\epsilon) (1, f, \tau_1,...,\tau_m)+
\sum_{i=1}^{m} (-1)^{i+1}  \int_0^\epsilon 
\partial T\left(f_t \frac{\partial \tau_{i}}{\partial t}, \tau_{1t},...,\hat\tau_{it},...,\tau_{mt} \right) \, dt  \\
&=& \int_0^\epsilon 
T\left(\frac{\partial f}{\partial t}, \tau_{1t},...,\tau_{mt}
\right) \, dt -
\sum_{i=1}^{m} (-1)^{i+1}  \int_0^\epsilon 
T\left(\frac{\partial \tau_{i}}{\partial t}, f_t, \tau_{1t},...,\hat\tau_{it},...,\tau_{mt}
\right) \, dt  \\
&& \qquad +
\sum_{i=1}^{m} (-1)^{i+1}  \int_0^\epsilon 
T\left(1, f_t \frac{\partial \tau_{i}}{\partial t}, \tau_{1t},...,\hat\tau_{it},...,\tau_{mt} \right) \, dt  \\
&=& \int_0^\epsilon 
T\left(\frac{\partial f}{\partial t}, \tau_{1t},...,\tau_{mt}
\right) \, dt +
\sum_{i=1}^{m} (-1)^{i+1}  \int_0^\epsilon 
T\left(f_t, \frac{\partial \tau_{i}}{\partial t}, \tau_{1t},...,\hat\tau_{it},...,\tau_{mt}
\right) \, dt  \\
&=& \int_0^\epsilon 
T\left(\frac{\partial f}{\partial t}, \tau_{1t},...,\tau_{mt}
\right) \, dt +
\sum_{i=1}^{m}  \int_0^\epsilon 
T\left(f_t, \tau_{1t},...,\tau_{(i-1)t},\frac{\partial \tau_{i}}{\partial t}, 
\tau_{(i+1)t},...,\tau_{mt}
\right) \, dt  \\
&=& \int_0^\epsilon 
\frac{\partial}{\partial t} T\left(f_t, \tau_{1t},...,\tau_{mt}
\right) \, dt \\
&=& T (f_\epsilon, \tau_{1\epsilon},...,\tau_{m\epsilon})
-T (f_0, \tau_{10},...,\tau_{m0})\\
&=&\psi_{\epsilon \#} T (f, \tau_1,...,\tau_m)
- \psi_{0\#} T (f, \tau_1,...,\tau_m)\\
&=&T\times\partial I(f, \tau_1,...,\tau_m).
\end{eqnarray*}
By mollification in the $t$-variable and by using the continuity properties of currents, we conclude that for arbitrary $\tau_1, \dots, \tau_m \in \Lip(X \times I_\epsilon)$, it still holds that
\be
\partial (T \times I_\epsilon) (f, \tau_1,...,\tau_m)
+((\partial T) \times I_\epsilon) (f, \tau_1,...,\tau_m)=T\times\partial I(f, \tau_1,...,\tau_m).
\ee
Thus we have (\ref{bndbnd}).

Observe that $T\times \partial I$ is an integral current
because it is the sum of push forwards of integral
currents and that
\be \label{mass-product}
\mass(T\times \partial I_\epsilon) =2\mass(T).
\ee
Since we know products are rectifiable, $(\partial T)\times I_\epsilon$
is rectifiable and has finite mass $\le \epsilon \mass(\partial T)$. 
Thus applying (\ref{bndbnd}) we see that
\be
\mass(\partial(T\times I_\epsilon)) \le 
 \mass((\partial T) \times I_\epsilon) 
+ \mass(T \times \partial I_\epsilon)
\le \epsilon \mass(\partial T) + 2\mass(T).
\ee
Thus the current structure of $M\times I_\epsilon$ is
an integral current.

Lastly we verify that 
\be
\set (T\times I_\epsilon)=\set(T) \times I_\epsilon.
\ee
Given $(p,t)\in \bar{X}\times I_\epsilon$, then
the following statements are equivalent:
\begin{eqnarray}
(p,t) &\in&  \set (T\times I_\epsilon).\\
0 & < & \liminf_{r\to 0} \frac {||T\times I_\epsilon||(B_{(p,t)}(r)) }{r^{m+1}}.
\\
0 & < & \liminf_{r\to 0} \frac {||T\times I_\epsilon||(B_{p}(r)\times (t-r,t+r)) }{r^{m+1}}.\\
0 & < & \liminf_{r\to 0} \frac {2r ||T||(B_{p}(r)) }{r^{m+1}}.\\
0 & < & \liminf_{r\to 0} \frac {||T||(B_{p}(r)) }{r^{m}}.\\
p&\in & \set(T).
\end{eqnarray}
The proposition follows.
\end{proof}

\begin{lem}\label{deriv-in}
If $\pi_{it}$ and $\partial_t \pi_{it}$ are Lipschitz in $Z\times I$, 
and $T\in \intcurr_m(Z)$ then for almost every $t\in I$,
\be
\frac{\partial}{\partial t} T(\pi_{0t},...,\pi_{mt})
= \sum_{i=0}^{m}  
T\left(\pi_{0t},...,\pi_{(i-1)t},\frac{\partial \pi_{i}}{\partial t}, 
\pi_{(i+1)t},...,\pi_{mt} \right)
\ee
\end{lem}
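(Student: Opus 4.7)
The plan is to prove this Leibniz-type identity by a telescoping decomposition in $t$ combined with a reduction to a classical Lebesgue integral on Euclidean space via the rectifiable parametrization of $T$.

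First I would use the multilinearity of $T$ to write the telescoping decomposition
\begin{align*}
&T(\pi_{0,t+h},\ldots,\pi_{m,t+h}) - T(\pi_{0,t},\ldots,\pi_{m,t})\\
&\quad = \sum_{i=0}^m T\bigl(\pi_{0,t+h},\ldots,\pi_{i-1,t+h},\, \pi_{i,t+h}-\pi_{i,t},\, \pi_{i+1,t},\ldots,\pi_{m,t}\bigr),
\end{align*}
divide by $h$, and reduce the lemma to showing that, for $\mathcal{L}^1$-a.e. $t \in I$ and each $i \in \{0,\ldots,m\}$,
\begin{align*}
T\!\left(\pi_{0,t+h},\ldots, \tfrac{\pi_{i,t+h}-\pi_{i,t}}{h}, \ldots, \pi_{m,t}\right) \longrightarrow T\!\left(\pi_{0,t},\ldots, \tfrac{\partial \pi_i}{\partial t}(\cdot,t), \ldots, \pi_{m,t}\right)
\end{align*}
as $h \to 0$.

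To justify this passage, I would invoke the parametrization $T = \sum_k \varphi_{k\#}\lbrack\theta_k\rbrack$ from Definition~\ref{int-rect-curr}. Setting $\tilde\pi_{i}(x,t) := \pi_i(\varphi_k(x),t)$, each $\tilde\pi_i$ is Lipschitz on $A_k \times I$, so by Rademacher's theorem $\partial_t \tilde\pi_i$ exists $\mathcal{L}^{m+1}$-a.e.\ on $A_k \times I$. By Fubini, for $\mathcal{L}^1$-a.e.\ $t$ the difference quotients $(\tilde\pi_{i,t+h} - \tilde\pi_{i,t})/h$ converge $\mathcal{L}^m$-a.e.\ on $A_k$ to $\partial_t \tilde\pi_i(\cdot, t)$ while remaining uniformly bounded by the $t$-Lipschitz constant of $\pi_i$. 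On each chart the $T$-action becomes a Lebesgue integral of a bounded Lipschitz factor against a Jacobian determinant, so classical dominated convergence (applied term by term to the Leibniz expansion of the $t$-derivative of the integrand, using the alternating multilinearity of the determinant to match the form of the $i$-th summand) delivers the limit chart-by-chart. Summing over $k$ using $\sum_k \mass(\varphi_{k\#}\lbrack\theta_k\rbrack) = \mass(T) < \infty$ from (\ref{param-representation}) assembles the identity for the full current.

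The main obstacle will be that the difference quotient $(\pi_{i,t+h}-\pi_{i,t})/h$ is not uniformly Lipschitz in $x$ as $h \to 0$, so the Ambrosio--Kirchheim continuity axiom cannot be applied directly on the ambient space $Z$. The parametrization step is precisely what removes this difficulty: it transfers the question to Euclidean domains where only the uniform $L^\infty$ bound on the difference quotient and its pointwise a.e.\ convergence are required in order to invoke dominated convergence. A minor subtlety is ensuring that the slot $i=0$ (the bounded Lipschitz $f$-slot) is handled by the finite mass axiom rather than the continuity axiom, which follows from the same $L^\infty$ domination.
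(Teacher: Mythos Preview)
Your approach is the paper's approach: telescope via multilinearity and pass to the limit using that the $\pi_i$ are Lipschitz in $t$. The paper's proof is a two–sentence sketch invoking exactly ``multilinearity, the usual expansion of the difference quotient \ldots, continuity with respect to pointwise convergence, and that the difference quotients have pointwise limits for almost every $t$.'' You go further than the paper by correctly flagging a real subtlety the paper glosses over: the Ambrosio--Kirchheim continuity axiom in the $\pi$-slots demands a uniform Lipschitz bound, and the difference quotient $(\pi_{i,t+h}-\pi_{i,t})/h$ has $x$-Lipschitz constant of order $1/h$, so the axiom does not apply directly. Your instinct to pass to the chart parametrization is the natural fix.

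However, your chart-level argument inherits the same obstruction in a different guise. For $i\ge 1$ the difference quotient enters as a \emph{column of the Jacobian determinant}, so the quantity that must be dominated is $(D_x\tilde\pi_{i,t+h}-D_x\tilde\pi_{i,t})/h$, and for a merely Lipschitz $\tilde\pi_i$ on $A_k\times I$ there is no $h$-independent bound on this; dominated convergence does not apply as you state it. Put differently, the right-hand side of the lemma places $\partial_t\pi_i$ in a $\pi$-slot, yet $\partial_t\pi_i(\cdot,t)$ is in general only bounded Borel, not Lipschitz in $x$, so even interpreting that term requires something beyond the current axioms. Both the paper's sketch and your proposal are therefore at the same level of rigor with the same residual gap. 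A clean way to close it is to prove instead the \emph{integrated} identity $\int_a^b(\text{RHS})\,dt = T(\pi_{0,b},\ldots,\pi_{m,b})-T(\pi_{0,a},\ldots,\pi_{m,a})$ via Stokes for Lipschitz forms on each $A_k\times[a,b]$ (which uses only first derivatives of the $\tilde\pi_i$), and this integrated form is in fact all that the application in Proposition~\ref{prop-times-I} requires.
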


\begin{proof}
This follows from the multilinearity of $T$, the usual expansion of
the difference quotient as a sum of difference quotients in which
one term changes at a time, the fact the $T$ is continuous
with respect to pointwise convergence and that the difference
quotients have pointwise limits for almost every $t\in I$ because
the $\pi_{i}$ are Lipschitz in $t$.   
\end{proof}

The following proposition will be applied later when studying 
limits under intrinsic flat and Gromov-Hausdorff convergence.

\begin{prop} \label{interval-flat}
Suppose $M_i^m=(X_i, d_i, T_i)$ are integral current spaces and $\epsilon >0$, 
then
\be
d_{\mathcal{F}}(M_1^m\times I_\epsilon, M_2^m \times I_\epsilon)
\le (2+\epsilon) d_{\mathcal{F}}(M_1^m, M_2^m).
\ee
and, when $M_i$ are precompact,
\be
d_{GH}(M_1^m\times I_\epsilon, M_2^m \times I_\epsilon)
\le  d_{GH}(M_1^m, M_2^m),
\ee
\end{prop}

\begin{proof}
Let $\delta > 0$. 
There exists a 
metric space $Z$ and isometric embeddings $\varphi_i: X_i \to Z$,
and integral currents $A, B$ on $Z$ such that
\be
\varphi_{1\#} T_1- \varphi_{2\#} T_2 = A +\partial B
\ee
and 
\be
d_{\mathcal{F}}(M_1^m, M_2^m) \leq \mass(A) + \mass(B) + \delta.
\ee
Setting $Z'= Z\times I_\epsilon$ endowed
with the product metric, we have isometric embeddings 
$\varphi'_i: X_i\times I_\epsilon \to Z'$ and we have integral currents
$A'=A\times I_\epsilon$ and $B'=B \times I_\epsilon$ such that
\begin{eqnarray}
\varphi'_{1\#} (T_1\times I_\epsilon)- \varphi'_{2\#} (T_2 \times I_\epsilon)
& =&
(\varphi'_{1\#} T_1) \times I_\epsilon- (\varphi_{2\#} T_2 )\times I_\epsilon\\
& =&
(\varphi'_{1\#} T_1- \varphi_{2\#} T_2 )\times I_\epsilon\\
&=&(A+\partial B) \times I_\epsilon \\
&=& A \times I_\epsilon - \partial (B\times I_\epsilon) - B \times (\partial I_\epsilon).
\end{eqnarray}
Thus by Proposition~\ref{prop-times-I} and (\ref{mass-product}) we have
\begin{eqnarray}
d_{\mathcal{F}}(M_1^m\times I_\epsilon, M_2^m \times I_\epsilon)
&\le& 
\mass(A \times I_\epsilon) + \mass(B\times I_\epsilon) + \mass( B \times (\partial I_\epsilon)) \\
&\le& 
\epsilon \mass(A) + \epsilon \mass(B) + 2\mass(B)\\ 
&\le & (2+\epsilon)\mass(A) + (2+\epsilon) \mass(B) \\
&=& (2 + \epsilon)( d_{\mathcal{F}}(M_1^m, M_2^m) + \delta).
\end{eqnarray}
Finally, we let $\delta \downarrow 0$.

To see the Gromov-Hausdorff estimate, one needs only observe that
whenever $Y_1 \subset T_r(Y_2) \subset Z$, then
\be
Y_1\times I_\epsilon \subset T_r(Y_2 \times I_\epsilon) \subset Z\times I_\epsilon.
\ee
\end{proof}

\vspace{.4cm}
\subsection{Slices and Spheres}

While balls are a very natural object in metric spaces, a more important
notion in integral current spaces is that of a slice.  The following
proposition follows immediately from the Ambrosio-Kirchheim Slicing
Theorem (c.f. Theorem~\ref{theorem-slicing} and Remark~\ref{rmrk-slicing}):

\begin{prop}\label{prop-slicing}
Given an $m$ dimensional integral current space $M=(X,d,T)$ and Lipschitz
functions $F: X \to \R^k$ where $k<m$, then 
for almost every $t\in \R^k$, we
can define an $m-k$ dimensional integral current space called the slice
of $(X,d,T)$:
\be \label{defn-slice-1}
\Slice(M,F,t)=\Slice(F,t)=(\set<T,F,t>, d, <T,F,t>)
\ee
where $<T,F,t>=<T, F_1,...,F_k, t_1,...,t_k>$ 
is an integral current on $\bar{X}$ defined
using the Ambrosio-Kirchheim Slicing Theorem
and $\set<T,F,t>\subset F^{-1}(t)$.   We can
integrate the masses of slices to obtain lower bounds of the
mass of the original space:
\be
\int_{t\in\R^k} \mass(\Slice(M,F,t)) \, \mathcal{L}^k \le \prod_{j=1}^k \Lip(F_j)\, \mass(T).
\ee
and $\partial \Slice(M,F,t)= (-1)^k \Slice(\partial M, F, t)$.
\end{prop}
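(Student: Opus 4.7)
The plan is to derive the proposition directly from the iterated Ambrosio--Kirchheim slicing construction summarized in Theorem~\ref{theorem-slicing} and Remark~\ref{rmrk-slicing}. First, since $F=(F_1,\dots,F_k):X\to\R^k$ is Lipschitz and $X$ is dense in its completion $\bar{X}$, each component $F_i$ extends uniquely to a Lipschitz function on $\bar{X}$ with the same Lipschitz constant; I continue to denote these extensions by $F_i$. The iterated slice $\langle T,F_1,\dots,F_k,t_1,\dots,t_k\rangle$ is then well-defined as in (\ref{AK-iterated-slices}), and by Theorem~\ref{theorem-slicing} applied $k$ times (see the discussion in Remark~\ref{rmrk-slicing}), for $\mathcal{L}^k$-almost every $t\in\R^k$ the iterated slice lies in $\intcurr_{m-k}(\bar{X})$. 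Discarding the $\mathcal{L}^k$-null set of bad parameters, I fix such a generic $t$ for the remainder of the argument.

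Next I would verify the inclusion $\set\langle T,F,t\rangle\subset F^{-1}(t)$. For a single Lipschitz function $f$ this is precisely the content of Theorem~\ref{theorem-slicing}, which gives $\set\langle T,f,s\rangle\subset (\set T\cup\set\partial T)\cap f^{-1}(s)$. Iterating component by component (using (\ref{AK-Lem5.9}) to interpret each step as a single slice of the previous slice) yields $\set\langle T,F,t\rangle\subset F_k^{-1}(t_k)\cap\dots\cap F_1^{-1}(t_1)=F^{-1}(t)$, as claimed.

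The step I expect to be the main obstacle is verifying that the triple $\bigl(\set\langle T,F,t\rangle,\,d,\,\langle T,F,t\rangle\bigr)$ actually satisfies Definition~\ref{defn-int-curr-space}. A priori $\langle T,F,t\rangle$ lives in $\intcurr_{m-k}(\bar{X})$ rather than in the completion of its own canonical set. The point is that the mass measure $\|\langle T,F,t\rangle\|$ is concentrated on $\set\langle T,F,t\rangle$ by the very definition of the set of a current, and the closure of that set embeds isometrically into $\bar{X}$; consequently the slice current corestricts to an integral current on $\overline{\set\langle T,F,t\rangle}$ whose canonical set is, tautologically, $\set\langle T,F,t\rangle$. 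This is the bookkeeping step that needs care, but no new analytic input beyond Ambrosio--Kirchheim.

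Finally, the mass integration inequality is immediate from (\ref{AK-5.9}) combined with the general bound (\ref{eqn-mass}) applied to $T\rstr(1,F_1,\dots,F_k)$, which gives $\mass(T\rstr(1,F_1,\dots,F_k))\le\prod_{j=1}^k\Lip(F_j)\,\mass(T)$. The boundary identity $\partial\,\Slice(M,F,t)=(-1)^k\,\Slice(\partial M,F,t)$ follows at once from the inductive formula (\ref{bndry-slice}), using the fact that for almost every $t$ the slice of $\partial T$ is also an integral current of one lower dimension so that both sides are genuine integral current spaces in the sense of Definition~\ref{defn-int-curr-space}.
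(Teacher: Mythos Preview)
Your proposal is correct and follows essentially the same approach as the paper: the paper's proof is a two-sentence appeal to the Ambrosio--Kirchheim Slicing Theorem~5.6, the unique Lipschitz extension of $F$ to $\bar{X}$, and Lemma~5.8 for the boundary identity. You have simply unpacked these references in more detail, in particular the bookkeeping step verifying that the slice defines an integral current space in the sense of Definition~\ref{defn-int-curr-space}, which the paper leaves implicit.
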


\begin{proof}
This proposition follows immediately from the Ambrosio-Kirchheim
Slicing Theorem 5.6 using the fact that $F$ has a unique extension
to $\bar{X}$ and Defn 2.5.   The last part follows from Lemma 5.8.
\end{proof}

\begin{rmrk}
Observe that in Example~\ref{concentric-spheres}
where $M$ is a 1 dimensional current space formed by
concentric circles and a center point $p_0=0$,  if
$F(x)=d(x,p_0)$ then almost every slice is the $\bf{0}$ integral
current space.   
\end{rmrk}

\begin{lem}\label{lem-sphere}
Given an $m$ dimensional integral current space $(X,d,T)$ and a point $p$
then for almost every $r\in \R$, we
can define an $m-1$ dimensional
integral current space called the sphere
about $p$ of radius $r$:
\be
\Sphere(p,r)=\Slice(\rho_p,r) 
\ee
On a Riemannian manifold with boundary,
\be
\Sphere(p,r)(f, \pi_1,...\pi_{m-1})
=\int_{\rho_p^{-1}(r)} f\, d\pi_1\wedge \cdots \wedge d\pi_{m-1}
\ee
is an integral current space for all $r\in \R$.
\end{lem}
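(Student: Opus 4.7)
The plan is to derive the first statement directly from the Ambrosio--Kirchheim slicing machinery and then upgrade to all $r$ in the smooth setting by exploiting the regularity of the distance function on a Riemannian manifold.

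For the first assertion, I would apply Proposition~\ref{prop-slicing} to the $1$-Lipschitz function $F=\rho_p\colon X\to\R$. Since $\rho_p$ has a (unique) $1$-Lipschitz extension to $\bar X$, the proposition yields directly that $\langle T,\rho_p,r\rangle\in\intcurr_{m-1}(\bar X)$ for $\mathcal L^1$-a.e. $r\in\R$, and thus $\Slice(\rho_p,r)=(\set\langle T,\rho_p,r\rangle,\,d,\,\langle T,\rho_p,r\rangle)$ is an $(m-1)$-dimensional integral current space, with $\set\langle T,\rho_p,r\rangle\subset\rho_p^{-1}(r)$. This gives the definition of $\Sphere(p,r)$ for almost every $r$.

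For the Riemannian manifold part, the goal is to realize the slice as the explicit integration current for \emph{every} $r$. On a compact Riemannian manifold with boundary the distance function $\rho_p$ is $1$-Lipschitz and smooth on the complement of the cut locus $C_p$, which has $\mathcal H^m$ measure zero. For each fixed $r$, the level set $\rho_p^{-1}(r)\setminus C_p$ is a smooth $(m-1)$-submanifold, and its $\mathcal H^{m-1}$ measure is finite; this can be controlled from above by the volume of a thin annulus $\{|\rho_p-r|<\delta\}$ together with $|\nabla\rho_p|\le 1$. I would then verify directly that the functional
\[
S_r(f,\pi_1,\dots,\pi_{m-1})=\int_{\rho_p^{-1}(r)} f\,d\pi_1\wedge\dots\wedge d\pi_{m-1}
\]
satisfies the three axioms of an $(m-1)$-current: multilinearity and locality are immediate from the pointwise expressions; continuity under pointwise convergence of the $\pi_i$ with $\Lip(\pi_i)\le 1$ follows by dominated convergence after invoking Rademacher differentiation on the smooth part of the level set; and finite mass is bounded by $\mathcal H^{m-1}(\rho_p^{-1}(r))$. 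Integer rectifiability of $S_r$ follows from the standard parametrization of $\rho_p^{-1}(r)\setminus C_p$ by coordinate charts. Finally, to check $\mass(\partial S_r)<\infty$, I would invoke (\ref{bndry-slice-1}): since the boundary of the Riemannian current is integration over $\partial M$, $\partial S_r=-\langle\partial T,\rho_p,r\rangle$ is a slice of the $(m-1)$-current associated with $\partial M$, and its mass is bounded by $\mathcal H^{m-2}(\rho_p^{-1}(r)\cap\partial M)<\infty$ for every $r$.

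The main obstacle is the ``for all $r$'' clause, since the Ambrosio--Kirchheim slicing theorem only outputs slices for $\mathcal L^1$-almost every $r$. Bridging this gap requires the explicit smooth structure on $M\setminus C_p$ and the fact that the cut locus has $\mathcal H^{m-1}$ measure zero when intersected with a generic level set (indeed, $C_p\cap\rho_p^{-1}(r)$ is at most $(m-2)$-rectifiable for any $r$, by the standard structure theory of the cut locus). A final consistency check is needed: the explicit formula must coincide with the abstract slice on the full-measure set where both are defined. I would establish this by unpacking
\[
\langle T,\rho_p,r\rangle=\partial\bigl(T\rstr\rho_p^{-1}(-\infty,r]\bigr)-(\partial T)\rstr\rho_p^{-1}(-\infty,r]
\]
and applying Stokes' theorem on the sublevel set $\{\rho_p\le r\}$ (using the smoothness of its boundary off $C_p$) to recover precisely the boundary integral $\int_{\rho_p^{-1}(r)}f\,d\pi_1\wedge\dots\wedge d\pi_{m-1}$.
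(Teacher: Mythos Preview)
Your first paragraph is exactly the paper's argument: apply Proposition~\ref{prop-slicing} with the $1$-Lipschitz function $\rho_p$.

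For the Riemannian part, the paper's proof is a two-line sketch: it invokes Stokes' Theorem together with the fact that spheres of all radii in a Riemannian manifold have finite $(m-1)$-volume, the latter obtained via the Riccati equation or Jacobi fields. Your argument supplies considerably more detail (cut-locus structure, direct verification of the current axioms, rectifiability from charts), but rests on the same two ingredients. In that sense it is not a different route so much as an expansion of the paper's sketch.

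One ordering issue is worth flagging. When you bound $\mass(\partial S_r)$ you invoke (\ref{bndry-slice-1}) to write $\partial S_r=-\langle\partial T,\rho_p,r\rangle$. But (\ref{bndry-slice-1}) is a statement about the abstract slice $\langle T,\rho_p,r\rangle$, and at that point in your argument you have not yet identified $S_r$ with the slice; your Stokes consistency check comes only afterward. The clean fix is to compute $\partial S_r$ directly: since $S_r$ is integration over the (oriented) hypersurface $\rho_p^{-1}(r)$, Stokes' theorem on that hypersurface gives $\partial S_r$ as integration over $\rho_p^{-1}(r)\cap\partial M$, whose $(m-2)$-volume is finite by the same Jacobi-field/Riccati reasoning applied inside $\partial M$. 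This avoids the circularity and matches the spirit of the paper's ``Stokes plus finite sphere volume'' proof.
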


\begin{proof}
This follows from Proposition~\ref{prop-slicing} and the 
Ambrosio Kirchheim Slicing Theorem
(c.f. Theorem~\ref{theorem-slicing} and the fact
that $\Lip(\rho_p)=1$.   The Riemannian part
follows from Stoke's Theorem and the fact that spheres of all radii in 
Riemannian manifolds have finite volume as can be seen either
by applying the Ricatti equation or Jacobi fields.
\end{proof}

Observe the distinction between the sphere and the boundary
of a ball in Lemma~\ref{lem-sphere} when $M$ has boundary.

Next we examine the setting when we do not hit the boundary:

\begin{lem}\label{lem-bndry-away}
If
 $\set (\partial T) \cap \bar{B}(p,R) \subset \bar{X}$
is empty then for almost every $r\le R$
\be
\Sphere(p,r)=\partial \, S(p,r).
\ee
Furthermore, 
\be  \label{eqn-most-balls}
\int_0^R \mass\left( \partial S\left(p,r\right)  \right)  \,d \mathcal{L}\left(r\right)  \le   \mass \left(S\left(p,R\right)\right).
\ee
In particular, on an open Riemannian manifold, for any $p\in M$,
there is a sufficiently small $R>0$ such that this lemma holds.
On a Riemannian manifold without boundary, these hold for
all $R>0$.
\end{lem}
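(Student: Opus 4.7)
The plan is to reduce both statements to the Ambrosio--Kirchheim slicing identity and mass estimate recalled in Theorem~\ref{theorem-slicing}. By the definition of the slice,
\begin{equation*}
<T,\rho_p,r>\; =\; \partial(T \rstr B(p,r)) \,-\, (\partial T) \rstr B(p,r) \;=\; \partial S(p,r) - (\partial T)\rstr B(p,r),
\end{equation*}
so the first claim $\Sphere(p,R) = \partial S(p,R)$ reduces to showing that $(\partial T)\rstr B(p,R) = 0$ under the stated hypothesis.

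I would establish this using the fact that the mass measure of an integer rectifiable current is concentrated on its canonical set. Since $\partial T \in \intcurr_{m-1}(\bar X)$ by \cite{AK}[Thm~8.6], \defref{defn-set} and the density-based representation from \cite{AK}[Thm~4.6] give $\|\partial T\|(\bar X \setminus \set(\partial T))=0$. Combining with $B(p,R)\cap \set(\partial T) \subset \bar B(p,R)\cap \set(\partial T) = \emptyset$ yields $\|\partial T\|(B(p,R))=0$, so $(\partial T)\rstr B(p,R)=0$ and $\Sphere(p,R)=\partial S(p,R)$. The same argument, applied at any $r\le R$ (where $\bar B(p,r)\subset \bar B(p,R)$ still misses $\set(\partial T)$), gives $\partial S(p,r)=<T,\rho_p,r>=\Sphere(p,r)$ for every $r\le R$ at which the slice is defined (which is almost every $r$).

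For the integral inequality I would apply the refined mass identity of Theorem~\ref{theorem-slicing} with $f=\rho_p$ and Borel set $A=B(p,R)$:
\begin{equation*}
\int_{\R} \|<T,\rho_p,r>\|(B(p,R))\, d\mathcal{L}(r) \;=\; \|T \rstr d\rho_p\|(B(p,R)) \;\le\; \Lip(\rho_p)\,\|T\|(B(p,R)) \;=\; \mass(S(p,R)),
\end{equation*}
using $\Lip(\rho_p)=1$. Since $\set<T,\rho_p,r>\subset \rho_p^{-1}(r)\subset B(p,R)$ whenever $r<R$, the restriction to $B(p,R)$ is inactive there, giving $\|<T,\rho_p,r>\|(B(p,R))=\mass(<T,\rho_p,r>)=\mass(\partial S(p,r))$ by the previous paragraph. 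This yields (\ref{eqn-most-balls}). The Riemannian consequences are then immediate: when $M$ has no boundary $\partial T=0$ and the hypothesis is vacuous for every $R$, and when $p$ is an interior point of a Riemannian manifold (with or without boundary) any $R$ smaller than the distance from $p$ to $\partial M$ makes $\bar B(p,R)\cap \set(\partial T)$ empty. The only genuinely delicate step is the measure-theoretic fact that $\|\partial T\|$ does not charge the complement of $\set(\partial T)$; everything else is bookkeeping on top of Theorem~\ref{theorem-slicing}.
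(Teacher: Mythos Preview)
Your argument is correct and is exactly the approach the paper intends: the paper's own proof is the single line ``This follows from Proposition~\ref{prop-slicing} and Theorem~\ref{theorem-slicing},'' and you have simply unpacked those citations---the vanishing of $(\partial T)\rstr B(p,R)$ from the concentration of $\|\partial T\|$ on $\set(\partial T)$, and the integral bound from the localized slicing mass identity with $f=\rho_p$ and $A=B(p,R)$. The only cosmetic point is that the slicing identity is stated with the closed sublevel $\rho_p^{-1}(-\infty,r]=\bar B(p,r)$ while $S(p,r)=T\rstr B(p,r)$ uses the open ball, but these agree for all but countably many $r$ since $\|T\|$ is a finite measure, which is consistent with the ``almost every $r$'' framing already in force.
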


\begin{proof}
This follows from Proposition~\ref{prop-slicing}
and Theorem~\ref{theorem-slicing}.
\end{proof}

\begin{lem}\label{lem-rho}
Given an $m$ dimensional integral current space $(X,d,T)$ and a 
$\rho: X \to \R$ a Lipschitz function with $\Lip(\rho)\le 1$ 
then for almost every $r\in \R$, we
can define an $m-1$ dimensional
integral current space, $\Slice(\rho,r)$ where
\be  
\int_{-\infty}^{\infty} \mass\left(\Slice(\rho,r)  \right)  \,d \mathcal{L}\left(r\right)  \le   \mass \left(T\right).
\ee
On a Riemannian manifold with boundary
\be
\Slice(\rho,r)(f, \pi_1,...\pi_{m-1})
=\int_{\rho^{-1}(r)} f\, d\pi_1\wedge \cdots \wedge d\pi_{m-1}
\ee
is defined for all $r\in \R$.
\end{lem}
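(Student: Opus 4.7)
The plan is to deduce this lemma directly from Proposition~\ref{prop-slicing} (i.e.\ the Ambrosio--Kirchheim Slicing Theorem~\ref{theorem-slicing}) in the case $k=1$ with $F=\rho$. Specifically, since $\rho$ extends uniquely to $\bar X$ and has $\Lip(\rho)\le 1$, Proposition~\ref{prop-slicing} yields, for $\mathcal L^1$-a.e.\ $r\in\R$, an $(m-1)$-dimensional integral current space
$$\Slice(\rho,r)=\bigl(\set\langle T,\rho,r\rangle,\, d,\, \langle T,\rho,r\rangle\bigr),$$
and the mass estimate
$$\int_{-\infty}^{\infty} \mass(\Slice(\rho,r))\,d\mathcal L(r) \;\le\; \Lip(\rho)\,\mass(T)\;\le\;\mass(T),$$
which is exactly the required inequality. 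This covers the first two assertions.

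For the Riemannian part I would argue as in Lemma~\ref{lem-sphere}: when $M$ is a Riemannian manifold with boundary, $T$ is genuine integration against the volume form, so for every $r\in\R$ one may compute
$$\langle T,\rho,r\rangle(f,\pi_1,\dots,\pi_{m-1})=\partial(T\rstr\rho^{-1}(-\infty,r])(f,\pi_1,\dots,\pi_{m-1}) - (\partial T\rstr\rho^{-1}(-\infty,r])(f,\pi_1,\dots,\pi_{m-1})$$
and, applying Stokes' theorem to the first term, the bulk terms on $\rho^{-1}(-\infty,r]$ cancel against the boundary contribution, leaving the integral
$$\int_{\rho^{-1}(r)} f\,d\pi_1\wedge\cdots\wedge d\pi_{m-1}.$$
The only question is whether this is a legitimate integral current space for \emph{every} $r$ rather than just almost every $r$; this is where one needs finiteness of the $(m-1)$-dimensional volume of the level set $\rho^{-1}(r)$.

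The potential obstacle is therefore the claim that $\Slice(\rho,r)$ is defined for \emph{all} $r$ in the Riemannian case. I would handle this by invoking the smooth coarea formula together with Sard's theorem applied to $\rho$: for a.e.\ $r$, $\rho^{-1}(r)$ is a smooth hypersurface of finite $(m-1)$-volume, and for the remaining $r$ one localizes in charts and applies either the Riccati/Jacobi-field estimate (as in Lemma~\ref{lem-bndry-away}) or a direct Lipschitz coarea estimate to see that $\mathcal H^{m-1}(\rho^{-1}(r))<\infty$, so that the explicit integral formula defines a current of finite mass whose boundary is controlled via $\partial T$. The rest of the proof is a brief citation of Proposition~\ref{prop-slicing} and Theorem~\ref{theorem-slicing} and is essentially a one-line deduction.
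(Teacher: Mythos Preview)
Your proposal is correct and follows the same approach as the paper: the paper's proof is the single sentence ``This follows from Proposition~\ref{prop-slicing} and Theorem~\ref{theorem-slicing},'' which is exactly your core argument. Your additional discussion of the Riemannian case (Stokes, coarea, Sard, finiteness of $\mathcal{H}^{m-1}(\rho^{-1}(r))$) is more detailed than anything the paper supplies; the paper simply asserts the ``for all $r$'' Riemannian claim by analogy with Lemma~\ref{lem-sphere} without separate justification, so the concern you flag is one the paper itself does not address.
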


\begin{proof}
This follows from Proposition~\ref{prop-slicing}
and Theorem~\ref{theorem-slicing}.
\end{proof}

\begin{lem}\label{lem-rho-more}
Given an $m$ dimensional integral current space $(X,d,T)$ and a 
$\rho: X \to \R^k$ have $\Lip(\rho_i)\le 1$
then for almost every $r\in \R^k$, we
can define an $m-k$ dimensional
integral current space, $\Slice(\rho,r)$ where
\be 
\int_{\R^k} \mass\left(\Slice(\rho,r)  \right)  \,d \mathcal{L}\left(r\right)  \le   \mass \left(T\right).
\ee
\end{lem}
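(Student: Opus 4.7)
The plan is to reduce this directly to Proposition~\ref{prop-slicing}, which already treats Lipschitz maps $F : X \to \R^k$ with $k<m$ and produces an $(m-k)$-dimensional slice together with an integrated mass bound. The only difference here is that Proposition~\ref{prop-slicing} carries the factor $\prod_{j=1}^k \Lip(F_j)$, whereas the present lemma claims the cleaner bound $\mass(T)$. Once we observe that the hypothesis $\Lip(\rho_j) \le 1$ forces $\prod_{j=1}^k \Lip(\rho_j) \le 1$, the two statements become the same.

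First, I would extend each $\rho_j$ uniquely to the metric completion $\bar X$ (the extension has the same Lipschitz constant), and then apply the iterated slicing construction of Ambrosio-Kirchheim reviewed in Remark~\ref{rmrk-slicing}: for $\mathcal{L}^k$-almost every $r=(r_1,\dots,r_k)\in\R^k$, the iterated slice
\begin{equation*}
\langle T, \rho_1,\ldots,\rho_k, r_1,\ldots,r_k\rangle \in \intcurr_{m-k}(\bar X)
\end{equation*}
is an integral current by (\ref{AK-iterated-slices}) and Theorem~5.7 of \cite{AK}, with canonical set contained in $\rho^{-1}(r)\subset \bar X$. I then define
\begin{equation*}
\Slice(\rho,r) := \bigl(\set\langle T, \rho_1,\ldots,\rho_k, r_1,\ldots,r_k\rangle,\, d,\, \langle T, \rho_1,\ldots,\rho_k, r_1,\ldots,r_k\rangle\bigr),
\end{equation*}
which is an $(m-k)$-dimensional integral current space exactly as in the $k=1$ case of Lemma~\ref{lem-rho}.

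Second, I would apply the Ambrosio-Kirchheim integration formula (\ref{AK-5.9}),
\begin{equation*}
\int_{\R^k} \mass\bigl(\langle T,\rho_1,\ldots,\rho_k,r_1,\ldots,r_k\rangle\bigr)\, d\mathcal{L}^k(r) \;=\; \mass\bigl(T \rstr (1,\rho_1,\ldots,\rho_k)\bigr),
\end{equation*}
together with the Lipschitz product estimate from the same remark,
\begin{equation*}
\mass\bigl(T \rstr (1,\rho_1,\ldots,\rho_k)\bigr) \;\le\; \prod_{j=1}^{k}\Lip(\rho_j)\,\mass(T) \;\le\; \mass(T),
\end{equation*}
where the last inequality uses the hypothesis $\Lip(\rho_j)\le 1$. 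This gives the stated bound.

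There is no substantial obstacle; the lemma is essentially a cosmetic restatement of Proposition~\ref{prop-slicing} under the normalization $\Lip(\rho_j)\le 1$. The only thing one needs to be careful about is confirming that the slice defined via the extension to $\bar X$ has canonical set lying in $\bar X$ so that the triple is a bona fide integral current space in the sense of Definition~\ref{defn-int-curr-space}, and this is immediate from Theorem~\ref{theorem-slicing}.
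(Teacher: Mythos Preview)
Your proposal is correct and takes essentially the same approach as the paper: the paper's proof simply cites Proposition~\ref{prop-slicing} and Theorem~\ref{theorem-slicing}, and you have merely spelled out the details of that citation, including the observation that $\prod_{j=1}^k \Lip(\rho_j) \le 1$ under the hypothesis.
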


\begin{proof}
This follows from Proposition~\ref{prop-slicing}
and Theorem~\ref{theorem-slicing}.
\end{proof}

\begin{rmrk}
On a Riemannian manifold with boundary
\be
\Slice(\rho,r)(f, \pi_1,...\pi_{m-1})
=\int_{\rho^{-1}(r)} f\, d\pi_1\wedge \cdots \wedge d\pi_{m-1}
\ee
is defined for all $r\in \R$ such that
$\rho_p^{-1}(r)$ is $m-1$ dimensional.   By the above lemma
this will be true for almost every $r$.   Note, however, that if
$\rho_i$ are distance functions from poorly chosen points, the
slice may be the $0$ space for almost every $r$ because
$\rho_p^{-1}(r)=\emptyset$.  
This occurs for example on the standard 
three dimensional sphere if we take $\rho_1, \rho_2$ to 
be distance functions from opposite poles.
\end{rmrk}

\vspace{.4cm}
\subsection{Filling Volumes of Spheres and Slices} 

The following lemmas were applied without proof in \cite{SorWen1}.
We may now easily prove them.   First
recall Definition~\ref{defn-filling-volume} for the notion of filling
volume used in this paper.

\begin{lem}\label{filling-balls}
Given an integral current space, $M^m=(X,d,T)$,
for all $p\in \bar{X}$ and almost every $r>0$,
\be
\mass(S(p,r)) \ge \fillvol(\partial S(p,r)).
\ee
Thus $p\in \bar{X}$ lies in $X=\set(T)$ if
\be\label{ess-lim-inf-1}
 \ess \liminf_{r\to 0} \fillvol(\partial S(p,r))/r^m >0.
\ee
Here we have the essential lim inf 
which is a lim inf as $r\to 0$ where
$r$ are selected from a set of full measure.
\end{lem}

\begin{proof}
This follows immediately from the definition of filling
volume in Defn~\ref{defn-filling-volume} and (\ref{fill-est}), 
the definition of $S(p,r)$ in  which is only defined
for almost every $r>0$, and the definition of $\set(T)$.
\end{proof}

Note that the converse of Lemma~\ref{filling-balls} is
not true
as can be seen by observing that in Example~\ref{concentric-spheres} we have the point $0\in X=\set(T)$, but
$\partial S(0,r)={\bf{0}}$ for almost every $r>0$.   So
(\ref{ess-lim-inf-1}) fails for $p=0$ in this example.  The
same is true for (\ref{ess-lim-inf-2}) in the next lemma.

\begin{lem}\label{filling-sphere} 
Given an integral current space, $M=(X,d,T)$.  If
$B_p(R) \cap \partial M=\emptyset$ then
for almost every $r\in (0,R)$ we have
\be
\mass(S(p,r)) \ge \fillvol(\Sphere(p,R)). 
\ee
Thus if $\partial M=\emptyset$, we know that $p\in \bar{X}$
lies in $X=\set(T)$ if  
\be\label{ess-lim-inf-2}
\ess\liminf_{r\to 0} \fillvol(\Sphere(p,r))/r^m >0.
\ee
\end{lem}

\begin{proof}
This follows immediately from Lemma~\ref{lem-bndry-away}
and Lemma~\ref{filling-balls}.
\end{proof}

Theorem 4.1 of \cite{SorWen1} can be
stated as follows:

\begin{thm} [Sormani-Wenger]
Suppose $M^m=(X,d,T)$ is a compact Riemannian manifold 
such that there exists $r_0>0, k>0$ such that
$\bar{B}(p, kr_0)\cap \partial M=\emptyset$ and
every 
$B(x,r)\subset\bar{B}(p,r_0)$ is contractible within 
$B(x, kr)\subset \bar{B}(p, r_0)$ then $\exists C_k$ such that
\be
\vol(\bar{B}(x,r)) = ||T||(\bar{B}(x,r)|| \ge \fillvol(\partial S(p,r))\ge C_k r^m.
\ee
\end{thm}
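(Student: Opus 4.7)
The first inequality is immediate: $S(x,r)$ itself is an integral current space with boundary $\partial S(x,r)$ and mass $\|T\|(\bar{B}(x,r)) = \vol(\bar{B}(x,r))$, so by Definition~\ref{defn-filling-volume} we obtain $\fillvol(\partial S(x,r)) \le \vol(\bar{B}(x,r))$ for free.

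The main content is the lower bound $\fillvol(\partial S(x,r)) \ge C_k r^m$. My plan is to extract a linear isoperimetric inequality from the contractibility and then combine it with a coarea/Gr\"onwall argument. For any integer $(m-1)$-cycle $\Sigma$ supported in some ball $B(y,s) \subset \bar{B}(p,r_0)$, the null-homotopy $H : B(y,s) \times [0,1] \to B(y,ks)$ (which can be upgraded to Lipschitz in the smooth Riemannian setting by mollification) produces, via the product construction of Proposition~\ref{prop-times-I}, a cone filling $N_\Sigma = \pm H_\#(\Sigma \times \Lbrack 0,1 \Rbrack)$ with $\partial N_\Sigma = \Sigma$ and
\[
\mass(N_\Sigma) \le C \cdot ks \cdot \mass(\Sigma)
\]
via the push-forward estimate $\|H_\# T\| \le \Lip(H)^m H_\# \|T\|$ from the review. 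This is a linear isoperimetric inequality valid throughout $\bar{B}(p,r_0)$. Iterating across dyadic scales and telescoping the cone fillings in Federer-Fleming style upgrades this to the polynomial isoperimetric inequality $\fillvol(\Sigma) \le C_k \mass(\Sigma)^{m/(m-1)}$ in the local region, with $C_k$ depending only on $k$ and $m$.

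Feeding this into the coarea estimate of Lemma~\ref{lem-bndry-away}, namely $V'(s) \ge \mass(\partial S(x,s))$ almost everywhere where $V(s) = \vol(\bar{B}(x,s))$, combined with the trivial bound $V(s) \ge \fillvol(\partial S(x,s))$, yields $V(s) \le C_k V'(s)^{m/(m-1)}$, which rearranges to the Gr\"onwall-type inequality $V'(s) \ge c_k V(s)^{(m-1)/m}$. Since $x \in \set(T)$ forces $V(s) > 0$ for all small positive $s$, integration produces the volume lower bound $V(s) \ge C_k' s^m$.

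The most delicate step, and the main obstacle, is converting this volume lower bound into a genuine lower bound on $\fillvol(\partial S(x,r))$, since a priori $\fillvol \le \vol$ strictly. For this I would invoke Gromov's filling-volume inequality $\fillvol(\Sigma) \ge c_m \fillrad(\Sigma)^m$ applied to $\Sigma = \partial S(x,r)$, combined with a filling-radius lower bound $\fillrad(\partial S(x,r)) \ge c_k r$ obtained from the contractibility hypothesis and the volume growth $V(r) \ge C_k' r^m$: a null-homotopy of $\partial S(x,r)$ within an $\varepsilon$-neighborhood in $L^\infty$ with $\varepsilon \ll r$ would, after composition with the contraction $H$, produce a chain with too little mass to absorb $V(r)$, contradicting the polynomial volume lower bound. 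Making this last argument rigorous at the Ambrosio-Kirchheim integral-current level --- in particular, controlling the Kuratowski isometric embedding of $\partial S(x,r)$ into $L^\infty$ and carefully transferring Gromov's topological filling-radius inequality to the current-theoretic setting --- is where the bulk of the technical work lies; once that is done, the bound $\fillvol(\partial S(x,r)) \ge C_k r^m$ follows.
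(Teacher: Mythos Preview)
The paper does not actually prove this theorem here; it simply says the result ``essentially follows from a result of Greene--Petersen combined with Lemma~\ref{filling-sphere}'' and notes that the general version in \cite{SorWen1} ``requires a much more subtle proof involving Lipschitz extensions.'' So there is no detailed argument to compare against, and your outline is in fact considerably more detailed than what the paper provides.

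That said, there is a genuine logical slip in your Gr\"onwall step. You combine the trivial bound $V(s) \ge \fillvol(\partial S(x,s))$ with the isoperimetric bound $\fillvol(\partial S(x,s)) \le C_k \mass(\partial S(x,s))^{m/(m-1)} \le C_k V'(s)^{m/(m-1)}$ and claim this yields $V(s) \le C_k V'(s)^{m/(m-1)}$. It does not: you have $V(s) \ge \fillvol$ and $\fillvol \le C_k V'(s)^{m/(m-1)}$, and these inequalities point in opposite directions. What actually closes this gap in the Riemannian case is the constancy theorem: your cone filling $N_s$ lives in $B(x,ks) \subset M^m$, so $S(x,s) - N_s$ is a top-dimensional integral cycle compactly supported in an open ball of an $m$-manifold, hence zero. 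Thus $N_s = S(x,s)$ as currents, and $V(s) = \mass(N_s)$ is bounded by your cone estimate, giving the differential inequality directly.

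Your diagnosis of the hard step is correct: passing from the volume lower bound to an \emph{absolute} filling-volume lower bound (where the competing filling may live in an arbitrary metric space) is exactly where the work lies, and the paper defers this to \cite{SorWen1}. Your filling-radius route and the paper's ``Lipschitz extensions'' are really the same mechanism viewed from two sides: given a small filling $N$ of $\partial S(x,r)$ in an $\varepsilon$-neighborhood of $M$ inside some Banach space, one uses the contractibility hypothesis to build a Lipschitz retraction of that neighborhood onto $M$ (this is the Lipschitz extension), pushes $N$ back into $M$, and then invokes constancy again to identify the image with $S(x,r)$, forcing $\mass(N)$ to be large. So your sketch is on the right track, but the substance is precisely the Lipschitz-extension argument the paper cites rather than an alternative to it.
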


This theorem essentially follows from a result of Greene-Petersen \cite{Greene-Petersen} combined with Lemma~\ref{filling-sphere}.  The statement in \cite{SorWen2}
applies to a more general class of spaces and requires a much more 
subtle proof involving Lipschitz extensions.

\vspace{.4cm}
\subsection{Sliced Filling Volumes of Balls}

Spheres aren't the only slices whose filling volumes
may be used to estimate the volumes of balls.  We define the
following new notions:

\begin{defn}\label{sliced-filling-vol}
Given an integral current space, $M^m=(X,d,T)$ and  
$F_1, F_2,...F_k: M \to \R$ with $k\le m-1$ are 
Lipschitz functions with Lipschtiz constant $\Lip(F_j)=\lambda_j$ 
then
we define the sliced filling volume of 
$\partial S(p,r)\in \intcurr_{m-1}(\bar{X})$, to be
\be
\SF(p,r,F_1,...,F_k)
=\int_{t\in A_r} \fillvol(\partial\Slice(S(p,r),F,t)) \, \mathcal{L}^k. 
\ee
where 
\be
A_r=[\min F_1, \max F_1]\times [\min F_2, \max F_2]\times
\cdots \times [\min F_k, \max F_k]
\ee
where $\min F_j=\min\{F(x): x\in \bar{B}_p(r)\}$
and $\max F_j=\max\{F(x): x\in \bar{B}_p(r)\}$.
Given $q_1,...,q_k\in M$, we set,
\be
\SF(p,r,q_1,...,q_k)=
\SF(p,r,\rho_1,...,\rho_k) \textrm{ where }\rho_i(x)=d_X(q_i,x).
\ee
\end{defn}

\begin{defn} \label{defn-SF_k}
Given an integral current space $M^m$ and $p\in M^m$,
then for almost every $r$, we can define the
$k^{th}$ sliced filling,
\be
\SF_k(p,r) = \sup\{ \SF(p, r, q_1,...,q_k): q_i \in \partial B_p(r)\}
\ee
where $\partial B_p(r)$ is the boundary of the metric ball about $p$.
In particular,
\be
\SF_0(p,r) =  \SF(p, r)=\fillvol(\partial S(p,r)).
\ee
\end{defn}

\begin{lem}\label{filling-slices}
Given an integral current space, $M^m=(X,d,T)$ and  
$F_1, F_2,...F_k: M \to \R$ with $k\le m-1$ are 
Lipschitz functions with Lipschitz constants, $\Lip(F_j)=\lambda_j>0$, 
then
\be\label{filling-slices-1}
\mass(S(p,r))\ge \prod_{j=1}^k \lambda_j^{-1}  
\SF(p, r, F_1,...,F_k).
\ee
Thus $p\in \bar{X}$
lies in $X=\set(T)$ if  there exists $F_i: M\to \R$ as above
such that
\be
\liminf_{r\to 0} \frac{1}{r^m} \SF(p,r,F_1,..., F_k) >0.
\ee
Applying (\ref{filling-slices-1}) to $F_j=\rho_{q_{j,r}}$
where $q_{i,r}$ achieve the supremum in 
Definition~\ref{defn-SF_k}, we see that 
\be\label{filling-slices-2}
\mass(S(p,r))\ge \SF(p, r, q_{1,r},...,q_{k,r}) =\SF_k(p,r).
\ee
Thus
$p\in \bar{X}$
lies in $X=\set(T)$ if 
\be
\liminf_{r\to 0} \frac{1}{r^m} \SF_k(p,r) >0.
\ee
Conversely if $\partial S(p,r)\neq 0$ then for $k =0$ we have
\be 
\SF_k(p,r) \neq 0.
\ee
\end{lem}

\begin{proof}
By Proposition~\ref{prop-slicing} 
we know
\begin{eqnarray}
\mass(S(p,r)) &\ge& \prod_{j=1}^k \lambda_j^{-1} \mass(S(p,r) \rstr dF)\\
&\ge& \prod_{j=1}^k \lambda_j^{-1} \int_{t\in\R^k} \mass(\Slice(S(p,r),F,t)) \, \mathcal{L}^k \\
&=& \prod_{j=1}^k \lambda_j^{-1} \int_{t\in A} \mass(\Slice(S(p,r),F,t)) \, \mathcal{L}^k. 
\end{eqnarray}
Then (\ref{filling-slices-1}) follows because $k\le m-1$ implies each
slice is at least 1 dimensional, combined with (\ref{fill-est})
and the fact that $\partial<S,F,t>=-<\partial S, F,t>$.

The converse follows because $\SF_0(p,r)=\fillvol(\partial S(p,r))>0$
when $S(p,r)\neq 0$.
\end{proof}

\vspace{.4cm}
\subsection{Uniform $\SF_k$ and Gromov-Hausdorff Compactness}

We now prove a new Gromov-Hausdorff Compactness Theorem:

\begin{thm} \label{SF_k-compactness}
If $M_i^m=(X_i, d_i, T_i)$ are integral current spaces with a uniform
upper bound on $\vol(M_i) \le V_0$ and diameter $\diam(M_i)\le D_0$, and a uniform $r_0>0$, $C:(0, r_0]\to \R^+$, such that
we have a uniform lower bound on
the $k^{th}$ sliced filling
\be
\SF_k(p,r) \ge C(r)>0 \textrm{ for almost every } r\in (0,r_0]
\ee
for all $p\in M_i$, for all $i$,
then a subsequence $(X_i, d_i)$ converges in the Gromov-Hausdorff
sense to a limit space $(Y, d_Y)$.
\end{thm}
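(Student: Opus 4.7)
My plan is to reduce to Gromov's Compactness Theorem (Theorem~\ref{Thm-Gromov}). Equiboundedness of the sequence $(X_i, d_i)$ is immediate from $\diam(M_i) \le D_0$, so the entire content of the argument is to promote the sliced filling hypothesis to a uniform equicompactness bound $N(X_i, r) \le N(r)$.

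The key reduction uses Lemma~\ref{filling-slices}. Given $p \in X_i$ and $q_1, \dots, q_k \in \partial B_p(r)$, I apply the lemma with $F_j = \rho_{q_j}$; since each distance function is $1$-Lipschitz, the product $\prod_j \lambda_j^{-1}$ equals $1$, yielding $\|T_i\|(B(p,r)) = \mass(S(p,r)) \ge \SF(p, r, q_1, \dots, q_k)$. Taking the supremum over $q_j \in \partial B_p(r)$ on the right and invoking the hypothesis then gives
\begin{equation}
\|T_i\|(B(p,r)) \;\ge\; \SF_k(p,r) \;\ge\; C(r) \;>\; 0
\end{equation}
for every $p \in X_i$, every $i$, and a.e. $r \in \R$.

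To conclude equicompactness, fix $r_0 > 0$ and pick any $r \in (0, r_0)$ in the full-measure set on which the estimate above holds uniformly in $p$ and $i$. Suppose $B(p_1, r_0), \dots, B(p_N, r_0) \subset X_i$ are pairwise disjoint; then the concentric balls $B(p_j, r) \subset B(p_j, r_0)$ are also pairwise disjoint, and additivity of the Borel measure $\|T_i\|$ gives
\begin{equation}
N \cdot C(r) \;\le\; \sum_{j=1}^{N} \|T_i\|(B(p_j, r)) \;\le\; \|T_i\|(X_i) \;=\; \mass(M_i) \;\le\; V_0,
\end{equation}
hence $N \le V_0/C(r) =: N(r_0)$. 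This bound is independent of $i$ and of the center configuration, so the sequence is equicompact, and Theorem~\ref{Thm-Gromov} produces a subsequence of $(X_i, d_i)$ converging in the Gromov-Hausdorff sense to a compact metric space $(Y, d_Y)$.

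The one subtle point, and the main thing to be careful about, is the measure-theoretic interpretation: Lemma~\ref{filling-slices} and the definition of $\SF_k$ give the mass lower bound only at those $r$ for which the relevant slicings produce integral currents. I read the hypothesis ``for all $p$, all $i$, and a.e. $r$'' as providing a single full-measure set of good radii valid simultaneously over all $p$ and $i$, which is the natural reading compatible with the uniform choice of $r$ made above. Under that reading the argument is clean; otherwise one would need only mild regularity of $C$ (say a positive essential infimum on a small interval below $r_0$) to select a uniform good radius, and the rest of the proof is unchanged.
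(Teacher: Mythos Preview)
Your argument is correct and follows essentially the same route as the paper: bound $\mass(S(p,r))$ below via Lemma~\ref{filling-slices} and the sliced filling hypothesis, conclude a uniform bound on the number of disjoint balls from the total mass bound, and apply Gromov's Compactness Theorem. Your version is in fact slightly cleaner---the paper chooses $q_1,\dots,q_k$ with $\SF(p,r,q_1,\dots,q_k)\ge C(r)/2$ and carries the factor $2$ through, whereas you correctly observe that $\mass(S(p,r))$ dominates \emph{every} $\SF(p,r,q_1,\dots,q_k)$ and hence the supremum $\SF_k(p,r)$ itself, eliminating the need for that factor.
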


Later we will prove that the subsequence converges in the intrinsic
flat sense to the same limit space when $C(r)\ge C_{SF}r^m>0$
[Theorem~\ref{SF_k-compactness-2}].

\begin{proof}
For any $p$ in any $M_i$, there exist $q_1,..., q_k$ such that
\be
\SF(p, r, q_1,...,q_k) \ge C(r)/2 >0.
\ee
So by Lemma~\ref{filling-slices},
$\mass(S(p,r)) \ge C(r)/2$.  Thus the number of disjoint balls
of radius $r$ in $M_i$ is $\le 2V_0/C(r)$.   So we may apply Gromov's
Compactness Theorem.
\end{proof}

\vspace{.4cm}
\subsection{Filling Volumes of $0$ Dimensional Spaces}

Before proceeding we need the following lemma:

\begin{lem}\label{filling-one}
Let $M$ be an integral current space.
Suppose $S\in \intcurr_1(M)$ such that
$\partial S \neq \bf{0}$.  Then
$\set(\partial S) =\{p_1,...,p_N\}$ with $N\ge 2$ and
\be\label{filldown2}
\partial S(f)=\sum_{i=1}^N \sigma_i \theta_i f(p_i) 
\ee
where $\theta_i \in \Z^+$ and $|\sigma_i|=1$ and
\be \label{filldown1}
\fillvol(\partial S) \ge \max_{j=1..N} \left( |\theta_j| 
                                  \min_{i\neq j} d_X(p_i,p_j) \right)>0
\ee
In particular
\begin{eqnarray} \label{filldown}
\fillvol(\partial S) &\ge& \inf\left\{ d_X(p_i, p_j): \, i,j \in \{1,2...,N\} \, \right\}\\
&\ge& \inf\left\{d(x,y): \,\, x\neq y,\,\, x,y\in \set (\partial S)\right\}>0.
\end{eqnarray}
\end{lem}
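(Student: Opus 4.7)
The plan is to bound $\fillvol(\partial S)$ from below by slicing any candidate filling with the distance function from one boundary point at a time, then exploiting the integer structure of $0$-dimensional integral currents. First I would apply Remark~\ref{bndry-1-current} directly to $\partial S \in \intcurr_0$ to record \eqref{filldown2}: the weights $\theta_i\in\Z^+$ and signs $\sigma_i\in\{-1,+1\}$ come from the structure theorem for $0$-dimensional integer rectifiable currents, and the identity $\sum_{i=1}^N \sigma_i\theta_i=0$ follows from the fact that $\partial S$ is itself a boundary. Combined with $\partial S\ne\mathbf{0}$, this forces $N\ge 2$, which makes the minimum distance appearing in \eqref{filldown1} strictly positive as soon as that inequality is proven.

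Next, I would take an arbitrary filling, i.e.\ an integral current space $N'=(X',d',T')$ with $\partial N'$ current-preservingly isometric to $\partial S$ as in \defref{defn-filling-volume}. Fix $j\in\{1,\dots,N\}$ and set $\rho_j(x):=d'(x,p_j)$, so $\Lip(\rho_j)\le 1$. By \thmref{theorem-slicing}, for almost every $s>0$ the identity
\[
\partial\bigl(T'\rstr\rho_j^{-1}(-\infty,s]\bigr) \;=\; <T',\rho_j,s> + (\partial T')\rstr\rho_j^{-1}(-\infty,s]
\]
holds and defines a $0$-dimensional integral current.

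The crux is to restrict to $0<s<d_j:=\min_{i\ne j}d_X(p_i,p_j)$: then the sublevel set $\rho_j^{-1}(-\infty,s]$ contains no boundary point of $N'$ other than $p_j$, so the second summand above equals $\sigma_j\theta_j\lbrack p_j\rbrack$. Since the whole expression is the boundary of a $1$-dimensional integral current, its signed weights must sum to zero by Remark~\ref{bndry-1-current}. Writing the slice as $\sum_k\sigma'_k\theta'_k\lbrack q_k\rbrack$, this forces $\sum_k\sigma'_k\theta'_k=-\sigma_j\theta_j$ and hence
\[
\mass(<T',\rho_j,s>) \;=\; \sum_k \theta'_k \;\ge\; \Bigl|\sum_k \sigma'_k\theta'_k\Bigr| \;=\; \theta_j.
\]
Integrating against the mass inequality of \thmref{theorem-slicing} over $s\in(0,d_j)$ gives
\[
\mass(T') \;\ge\; \int_0^{d_j}\mass(<T',\rho_j,s>)\,ds \;\ge\; \theta_j\, d_j,
\]
and taking the supremum over $j$ followed by the infimum over fillings $N'$ yields \eqref{filldown1}; \eqref{filldown} is then immediate.

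The main obstacle I anticipate is the discrete bookkeeping for this integer balance law: one must ensure that the slice $<T',\rho_j,s>$ genuinely carries at least $\theta_j$ units of mass on a positive-measure set of $s\in(0,d_j)$, so no cancellation in the signed-weight identity degrades the bound when integrated. Once that integer identity is secured for a.e.\ $s\in(0,d_j)$, the rest is a routine application of the Ambrosio–Kirchheim slicing machinery reviewed in \thmref{theorem-slicing}.
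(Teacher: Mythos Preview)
Your proof is correct but takes a genuinely different route from the paper. The paper's argument is considerably more direct: given any filling $(Y, d_Y, T')$ with $\partial T'$ current-preservingly isometric to $\partial S$, fix $j$ and take the single test function $f_j(y) = \min_{i \neq j} d_Y(y, p'_i)$, which has $\Lip(f_j) \le 1$ and vanishes at every $p'_i$ with $i\neq j$. Then the mass bound \eqref{eqn-mass} applied to $T'(1, f_j) = \partial T'(f_j)$ gives
\[
\mass(T') \;\ge\; \Bigl|\sum_i \sigma_i \theta_i f_j(p'_i)\Bigr| \;=\; \theta_j\, f_j(p'_j) \;=\; \theta_j \min_{i \neq j} d_X(p_i, p_j),
\]
and one is done in two lines, with no slicing at all.

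Your argument instead slices $T'$ by $\rho_j = d'(\cdot, p_j)$ and uses the integer balance law for boundaries of $1$-currents to force each slice over $s\in(0,d_j)$ to have mass at least $\theta_j$; integrating via \thmref{theorem-slicing} recovers the same bound. The concern you flag at the end is not actually an obstacle: the slicing theorem guarantees $<T', \rho_j, s> \in \intcurr_0$ for a.e.\ $s$, and for each such $s < d_j$ your signed-weight identity applies verbatim, so the pointwise bound $\mass(<T',\rho_j,s>)\ge\theta_j$ holds on a full-measure subset of $(0,d_j)$ and the integral goes through. What your approach buys is that it sits naturally inside the paper's broader slicing framework and makes the role of the integer hypothesis very explicit; what the paper's approach buys is brevity and the observation that for a $1$-dimensional filling the definition of mass already encodes the bound, without ever invoking \thmref{theorem-slicing}.
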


\begin{proof}
Recall that by Remark~\ref{bndry-1-space}, $\partial S$ satisfies
(\ref{filldown2}) where $\sum_{i=1}^N \sigma_i \theta_i=0$.
So $N\ge 2$ when $\partial S \neq \bf{0}$.

Suppose $M'=(Y, d_Y, T)$ is any one dimensional integral
current space with a current preserving isometry 
$\varphi: \set(\partial M') \to \set(\partial S)\subset \bar{X}$
so that
\be 
\varphi_{\#}\partial T =\partial S \in \intcurr_0(M)
\ee
and $d_X(\varphi(y_1), \varphi(y_2))=d_Y(y_1,y_2)$ for all
$y_1, y_2 \in \set(T)\subset Y$.   In particular there
exist distinct points
\be
p_j'=\varphi^{-1}(p_j) \in \bar{Y} 
\ee
such that for any Lipschitz $f: \bar{Y} \to \R$ we have 
\be
T(1,f)=\partial(T)(f) = \sum_{i=1}^{N} \sigma_i \theta_i f(p'_i).
\ee
By (\ref{eqn-mass}) we have
\be
|T(1,f) | \le \Lip(f) \mass(T).
\ee
Let $f_j(y)=\min_{i\neq j} d_Y(y, p'_i)$.  Then we have, $\Lip(f_j)=1$, so
\begin{eqnarray}
\mass(T) &\ge& \left| \sum_{i=1}^{N} \sigma_i \theta_i f_j(p'_i) \right|\\
&\ge& \theta_j f_j(p'_j)=\theta_j \min_{i\neq j} d_Y(p'_i, p'_j)\\
&=& \theta_j \min_{i\neq j} d_X(p_i, p_j)
\end{eqnarray}
Taking an infimum over all $T$, we have,
\be
\fillvol(\partial S) \ge\theta_j \min_{i\neq j} d_X(p_i, p_j).
\ee
As this is true for all $j=1..N$, we have (\ref{filldown1}).
Since $\theta_j \in \Z^+$, we have the simpler
lower bound given in (\ref{filldown}).
\end{proof}

\vspace{.4cm}
\subsection{Masses of Balls from Distances}

Here we provide a lower bound on the mass of a ball
using a sliced filling volume and estimates on 
the filling volumes of $0$ dimensional currents.  First we
introduce the notation:
\be\label{P-now}
P(p,r, t_1,..., t_{m-1})= \rho_p^{-1}(r) \cap \rho_{p_1}^{-1}(t_1)\cap
\cdots \cap \rho_{p_{m-1}}^{-1}(t_{m-1}).
\ee

\begin{thm}\label{dist-set-2}
Given an integral current space, $M^m=(X,d,T)$ and
points $p_1,... p_{m-1} \in X$, then
then if $\bar{B}_p(R)\cap \set (\partial T)=\emptyset$ we have
for almost every $r\in (0,R)$,
\begin{eqnarray}
\mass(S(p,r)) &\ge&
\SF(p,r,p_1,...,p_{m-1}) \\
&\ge&  \int_{s_1-r}^{s_1+r} \cdots \int_{s_{m-1}-r}^{s_{m-1}+r }
h(p,r,t_1,..., t_{m-1})
\, dt_1dt_2...dt_{m-1}
\end{eqnarray}
where $t_i=d(p_i,p_0)$ and
\begin{eqnarray*}
\,\,h(p,r, t_1,..., t_{m-1})&=&
\inf\left\{d(x,y): \,\, x\neq y,\,\, x,y\in  P(p, r, t_1,...,t_{m-1})\right\}\textrm{ when }\\
&&
\,\,P(p,r, t_1,..., t_{m-1}) \textrm{ of (\ref{P-now}) is a nonempty discrete set of points and }\\
\,\,h(p,r,t_1,..., t_{m-1})&=&0 \,\,\textrm{ otherwise}.
\end{eqnarray*}
\vspace{.1cm}
Thus $p\in \bar{X} \setminus Cl(\set(\partial T))$ lies in $X=\set(T)$ if
\be
\liminf_{r\to 0}\,\, (1/r^m)
\int_{t_1=s_1-r}^{s_1+r} \cdots \int_{t_{m-1}=s_{m-1}-r}^{s_{m-1}+r }
h(p,r,t_1,..., t_{m-1})
\, dt_1dt_2...dt_{m-1}>0
\ee
\end{thm}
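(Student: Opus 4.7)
The first inequality follows immediately from Lemma~\ref{filling-slices} applied with $F_j=\rho_{p_j}$: each distance function is $1$-Lipschitz, so all the $\lambda_j^{-1}$ factors equal $1$.

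For the second inequality, I expand
$$\SF(p,r,p_1,\ldots,p_{m-1}) = \int_{A_r} \fillvol\bigl(\partial\Slice(S(p,r),\rho,t)\bigr)\, \mathcal{L}^{m-1},$$
where $\rho=(\rho_{p_1},\ldots,\rho_{p_{m-1}})$. The triangle inequality forces $\rho_{p_i}(x)\in[s_i-r,s_i+r]$ for every $x\in \bar{B}_p(r)$, so $A_r\subset\prod_i[s_i-r,s_i+r]$, and both the slice and the set $P(p,r,t)$ are empty outside $A_r$. It therefore suffices to prove the pointwise estimate $\fillvol(\partial\Slice(S(p,r),\rho,t))\ge h(p,r,t)$ for almost every $t$ in the larger box and integrate, since the integrand on the right is supported in $A_r$.

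For the pointwise bound, Proposition~\ref{prop-slicing} guarantees that $\Slice(S(p,r),\rho,t)$ is a $1$-dimensional integral current for almost every $t$, whose boundary is therefore $0$-dimensional. By the boundary--slice identity (\ref{bndry-slice}) and Lemma~\ref{lem-bndry-away} (which uses the hypothesis $\bar{B}_p(r)\cap\set(\partial T)=\emptyset$ to identify $\partial S(p,r)=\Sphere(p,r)$), one obtains
$$\partial\Slice(S(p,r),\rho,t) = (-1)^{m-1}\Slice(\Sphere(p,r),\rho,t),$$
whose support lies in $\partial B_p(r)\cap\rho_{p_1}^{-1}(t_1)\cap\cdots\cap\rho_{p_{m-1}}^{-1}(t_{m-1})\subset P(p,r,t)$. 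When this $0$-current is nonzero, Lemma~\ref{filling-one} applied to the $1$-dimensional slice yields
$$\fillvol\bigl(\partial\Slice(S(p,r),\rho,t)\bigr) \ge \inf\{d(x,y):x\neq y,\, x,y\in \set(\partial\Slice(S(p,r),\rho,t))\} \ge h(p,r,t),$$
the last step because restricting to a subset of points only increases the infimum of distances between distinct pairs. When the slice boundary vanishes, the filling volume is $0$ and one appeals to the convention that $h$ vanishes outside the good configurations of $P$. The final statement about $p\in X=\set(T)$ then drops out of the positive-lower-density characterization of $\set(T)$, exactly as in Lemma~\ref{filling-balls}, by dividing through by $r^m$ and taking $\liminf_{r\to 0}$.

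The main obstacle will be the careful bookkeeping of the almost-everywhere qualifiers, so that Ambrosio--Kirchheim integrality of the $1$-slice, the boundary-commutation formula, and the support inclusion $\set(\partial\Slice)\subset P$ all hold simultaneously for a.e.\ $t$; and to justify enlarging the domain of integration from $A_r$ to $\prod_i[s_i-r,s_i+r]$ without disturbing the estimate.
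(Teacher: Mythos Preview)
Your approach is essentially the same as the paper's: the paper deduces Theorem~\ref{dist-set-2} from the more general Theorem~\ref{dist-set} by specializing $F_i=\rho_{p_i}$, and Theorem~\ref{dist-set} is proved exactly as you describe, namely by combining Lemma~\ref{filling-slices} (for the first inequality) with Lemma~\ref{filling-one} (for the pointwise lower bound on the filling volume of the $0$-dimensional boundary slice). Your observation that the domain of integration enlarges harmlessly to $\prod_i[s_i-r,s_i+r]$ because the intersection is empty outside that box is exactly what the paper records as well.

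One small difference worth noting: in handling the pointwise comparison $\fillvol(\partial\Slice)\ge h(p,r,t)$, you argue via the inclusion $\set(\partial\Slice(S(p,r),\rho,t))\subset P(p,r,t)$, whereas the paper asserts the \emph{equality} $\set(\partial\Slice(S(p,r),F,t))=\rho_p^{-1}(r)\cap\rho_{p_1}^{-1}(t_1)\cap\cdots\cap\rho_{p_{m-1}}^{-1}(t_{m-1})$ for a.e.\ $t$ (under the hypothesis $\bar{B}_p(r)\cap\set(\partial T)=\emptyset$). This equality is what cleanly disposes of the case you flagged as awkward, where $\partial\Slice=0$ but $P$ might a priori be a nonempty discrete set with $h>0$: with the asserted equality, $\partial\Slice=0$ forces the level-set intersection to be empty for a.e.\ $t$, so $h=0$ there. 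Your inclusion argument is logically weaker on this point; to close the gap fully you would either want to invoke the paper's equality claim or argue directly that the exceptional set of $t$ has measure zero.
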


Theorem~\ref{dist-set-2} is in fact a special case of the following theorem:

\begin{thm}\label{dist-set}
Given an integral current space, $M^m=(X,d,T)$ and  
Lipschitz functions,
$F_1, F_2,...F_{m-1}: M \to \R$,  
with Lipschitz constants, $\Lip(F_j)=\lambda_j$, 
then for almost every $r>0$
\be
\mass(S(p,r)) \,\ge\, \SF(p, r, F_1,...,F_k)\,\ge
\,\prod_{j=1}^k \lambda_j^{-1} 
\,\,\int_{t\in A_r} 
h(p,r,F.t)
\, d\mathcal{L}^k 
\ee
where
\begin{eqnarray*}
h(p,r,F,t)&=&\inf\{d(x,y): \,\, x\neq y,\,\, x,y\in \set(\partial \Slice(S(p,r), F, t) )\}>0\\
&&\,\, \textrm{ when $\partial \Slice(S(p,r),F,t)\in \intcurr_0(\bar{X})\setminus\{ \bf{0}\}$ and } \\
h(p,r,F,t)&=&0 \textrm{ otherwise,}
\end{eqnarray*}
 and 
where 
\be
A_r=[\min F_1, \max F_1]\times [\min F_2, \max F_2]\times
\cdots \times [\min F_k, \max F_k]
\ee
with $\min F_j=\min\{F_j(x): x\in \bar{B}_p(r)\}$
and $\max F_j=\max\{F_j(x): x\in \bar{B}_p(r)\}$.
\end{thm}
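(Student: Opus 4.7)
The plan is to compose the two ingredients already developed in this subsection: Lemma~\ref{filling-slices}, which bounds $\mass(S(p,r))$ below by a sliced filling volume with the appropriate Lipschitz factors $\prod_j \lambda_j^{-1}$, and Lemma~\ref{filling-one}, which bounds below the filling volume of a $0$-dimensional boundary of a $1$-dimensional integral current by the minimum pairwise distance of the points in its canonical set. Since Lemma~\ref{filling-one} requires a $1$-dimensional slice, I take $k=m-1$ throughout, so that each iterated slice has dimension $m-k=1$ and its boundary has dimension $0$.

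First I would invoke Lemma~\ref{filling-slices} to get the first inequality of the chain. Then I would unfold $\SF(p,r,F_1,\ldots,F_{m-1})$ by Definition~\ref{sliced-filling-vol} as $\int_{A_r}\fillvol(\partial \Slice(S(p,r),F,t))\,d\mathcal{L}^{m-1}$. Iterated application of the Ambrosio--Kirchheim Slicing Theorem~\ref{theorem-slicing}, in the form packaged in Remark~\ref{rmrk-slicing} via (\ref{AK-iterated-slices}), (\ref{AK-5.9}), and (\ref{bndry-slice}), guarantees that for $\mathcal{L}^{m-1}$-almost every $t$ the iterated slice $\Slice(S(p,r),F,t)$ lies in $\intcurr_1(\bar X)$ with $\partial\Slice(S(p,r),F,t)\in\intcurr_0(\bar X)$, supported in $F^{-1}(t)\cap \bar B_p(r)$. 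Measurability of $t\mapsto \fillvol(\partial\Slice(S(p,r),F,t))$ comes from the same slicing machinery.

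For each such $t$ I would apply Lemma~\ref{filling-one} to the $1$-dimensional integral current $\Slice(S(p,r),F,t)$. When its boundary is nonzero, that lemma gives
\[
\fillvol(\partial\Slice(S(p,r),F,t))\ \ge\ \inf\{d(x,y):x\neq y,\ x,y\in\set(\partial\Slice(S(p,r),F,t))\}\ =\ h(p,r,F,t).
\]
When the boundary vanishes, $h(p,r,F,t)=0$ by the stated convention and the inequality is trivial. Integrating over $t\in A_r$ yields the second inequality of the chain, and Theorem~\ref{dist-set-2} follows as the special case $F_j=\rho_{p_j}$, for which $\Lip(F_j)=1$ and the bounding box $A_r$ reduces to $\prod_j [s_j-r,s_j+r]$; the assumption $\bar B_p(r)\cap\set(\partial T)=\emptyset$ there ensures $\partial S(p,r)=\Sphere(p,r)$ so the slicing is of a genuine integral current.

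The main technical obstacle is verifying that for almost every $t$ the iterated slice $\Slice(S(p,r),F,t)$ is a genuine $1$-dimensional integral current of finite boundary mass whose boundary is a finite sum $\sum\sigma_i\theta_i\delta_{p_i}$ with $\sum\sigma_i\theta_i=0$, so that Lemma~\ref{filling-one} is actually applicable and the quantity $h(p,r,F,t)$ is meaningful. This is handled by Remark~\ref{bndry-1-current} applied to the $1$-dimensional slice, combined with the slicing identities in Remark~\ref{rmrk-slicing}. Once this is in place, the rest of the argument is a direct composition of the two cited lemmas with Fubini on $A_r$.
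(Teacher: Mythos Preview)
Your proposal is correct and follows essentially the same route as the paper: apply Lemma~\ref{filling-slices} for the mass--sliced-filling inequality, then apply Lemma~\ref{filling-one} termwise to each $1$-dimensional slice to bound $\fillvol(\partial\Slice(S(p,r),F,t))$ below by $h(p,r,F,t)$, and integrate over $A_r$. The paper's own proof is a two-sentence sketch citing exactly these two lemmas; your version is more explicit about invoking Remark~\ref{rmrk-slicing} and Remark~\ref{bndry-1-current} to justify that the iterated slices are genuine $1$-currents with $0$-dimensional boundary for almost every $t$, which is the right level of care.
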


Before presenting the proof we give two important examples:

\begin{ex}\label{E-dist-set}
On Euclidean space, $\E^m$, taking $F_i: \E^m\to \R$ to be a collection of
perpendicular coordinate functions for $i=1..m$, 
$F_i(x_1,...,x_m)=x_i$, we have $\lambda_i=1$ and
\be
h(p,r,F_1,...,F_{m-1}, t_1,...,t_{m-1})=2\sqrt{r^2 -(t_1^2+\cdots +t_{m-1}^2)}.
\ee
So
\be
\omega_mr^m =\mass(S(p,r))\ge \SF(p, r, F_1,...,F_k) =\omega_mr^m. 
\ee
\end{ex}

\begin{ex}\label{S-dist-set}
On the standard sphere, $S^2$, taking $p_1\in \partial B_p(\pi/2)$
and $r=\pi/2$ and $F_1(x)=d(p_1, x)$, then
\be
h(p,\pi/2, F_1, t) = \min\{ 2t, 2(\pi-t)\}
\ee
because the distances are shortest if one travels within the
great circle, $\partial B_p(\pi/2)$.
So
\be
2 \pi =\vol(S^2_+) =\mass(S(p, \pi/2)) \ge \SF(p,\pi/2, F_1)
\ee
with
\begin{eqnarray}
\SF(p,\pi/2, F_1)&=&\int_0^\pi h(p, \pi/2, F_1, t) \, dt \\
&=& 2\int_0^{\pi/2} 2 t \, dt = 2 (\pi/2)^2 =\pi^2/2.
\end{eqnarray}
\end{ex}

\begin{proof}
Theorem~\ref{dist-set-2} follows from Theorem~\ref{dist-set}
taking $F(x)=(F_1(x),...,F_{m-1}(x))$ where $F_i(x)=\rho_{p_i}(x)$.
When $\bar{B}(p,r)\cap \set{\partial T}=\emptyset$, then
for almost every $r \in \R, t_1\in \R,...,t_{m-1}\in\R$
$\partial \Slice(S(p,r),F,t))\in \intcurr_0(\bar{X})$ and
\be
\set(\partial \Slice(S(p,r),F,t))=\rho_p^{-1}(r)\cap F_1^{-1}(t_1)\cap
\cdots \cap F_{m-1}^{-1}(t_{m-1}).
\ee
so this set either has $0$ points or at least two points.  
\end{proof}

\begin{proof}
Theorem~\ref{dist-set} is proven by applying 
Lemma~\ref{filling-slices} to $F$ and then computing the
filling volume of the $0$ dimensional current,
 $\partial(\Slice(S(p,r),F,t))$, using 
Lemma~\ref{filling-one} stated and proven below.
Observe that if $t_i< s_i -r$ or $t_i>s_i+r$ then
$h(p,r,t_1,...,t_{m-1})=0$ because
$\rho_p^{-1}(r)\cap \rho_{p_i}^{-1}(t_i)=\emptyset$.
\end{proof}

\begin{rmrk}
Naturally we could combine Theorem~\ref{dist-set}
with any other lower bound on the filling volumes of
$0$ dimensional sets, like, for example, (\ref{filldown1}).
\end{rmrk}

\vspace{.4cm}
\subsection{Tetrahedral Property}

Theorem~\ref{dist-set-2} allows us to estimate the masses of
balls using a tetrahedral property (see Figure~\ref{fig-tetra-prop}).

\begin{defn} \label{defn-tetra}
Given $C>0$ and $\beta\in (0,1)$, a metric space $X$
is said to have the $m$ dimensional
$C,\beta$-tetrahedral property at a point $p$
for radius $r$ if
one can find points $p_1,...p_{m-1}\subset \partial B_p(r)\subset\bar{X}$,
such that 
\be
h(p,r, t_1,...,t_{m-1}) \ge Cr \qquad \forall (t_1,...,t_{m-1}) \in [(1-\beta)r, (1+\beta)r]^m
\ee
where 
\be
h(p,r, t_1,..., t_{m-1})=
\inf\left\{d(x,y): \,\, x\neq y,\,\, x,y\in  P(p, r, t_1,...,t_{m-1})\right\}
\ee
when
\be
P(p,r, t_1,..., t_{m-1})= \rho_p^{-1}(r) \cap \rho_{p_1}^{-1}(t_1)\cap
\cdots \cap \rho_{p_{m-1}}^{-1}(t_{m-1})
\ee
is nonempty and $0$ otherwise.
Observe that for almost every $(t_1,...,t_{m-1})$,
$P(p,r,t_1,..., t_{m-1})$ is the set of a $0$ current
and is thus a discrete set of
points. 
\end{defn}

\begin{ex} \label{ex-torus-Euclidean}
On Euclidean space, $\E^3$, taking $p_1, p_2 \in \partial B(p,r)$ to
such that $d(p_1,p_2)=r$, then there exists exactly two
points $x,y \in P(p,r,r, r)$ each forming a tetrahedron with
$p, p_1, p_2$.  See Figure~\ref{fig-tetra-prop}. As we vary $t_1, t_2\in (r/2, 3r/2)$, we still have
exactly two points in $P(p,r,t_1, t_2)$.   By scaling we see that
\be
h(p,r, t_1, t_2)= r h(p,1, t_1/r, t_2/r) \ge 
C_{\E^3} r
\ee
where
\be
C_{\E^3}=\inf\{h(p,1,s_1, s_2): \, s_i\in (1/2, 3/2)\}>0
\ee
could be computed explicitly.   So $\E^3$ satisfies the
$C_{\E^3}, (1/2)$ tetrahedral property at $p$ for all $r$.
\end{ex}

\begin{ex} \label{ex-torus-tetra}
On a torus, $M_\epsilon^3=S^1 \times S^1 \times S_\epsilon^1$ where
$S_\epsilon^1$ has been scaled to have diameter $\epsilon$
instead of $\pi$, we see that $M^3$ satisfies the
$C_{\E^3}, (1/2)$ tetrahedral property at $p$ for all $r< \epsilon/4$.
By taking $r<\epsilon/4$, we guarantee that the shortest
paths between $x$ and $y$ stay within the ball $B(p,r)$ allowing
us to use the Euclidean estimates.   If $r$ is too large,
$P(p, r, t_1, t_2)=\emptyset$.
\end{ex}

\begin{rmrk}\label{rmrk-tetra-2}
On a Riemannian manifold or an integral current space, we
know that $P(p, r, t_1,...,t_{m-1})$ is the set of a 0 current
which is a boundary.   So if it is not empty, it has at least two points,
one with positive weight and one with negative weight.  
\end{rmrk}

\begin{rmrk}
It is not just a simple application of the triangle inequality to
proceed from knowing
$h(p,r,r,...,r)\ge Cr$ to having $h(p,r, t_1,..., t_{m-1})\ge 
C_2r$.  There is the possibility that
$P(p,r,t_1,..., t_{m-1})$ is empty or has a closest pair
of points both near a single point of $C(p,r,r,...,r)$
even in a Riemannian manifold.   However one expects
the same type of curvature conditions that would lead
to control of $h(p,r,...,r)$ could be used to study
$h(p,t_1,...,t_{m-1})$.
\end{rmrk}

\begin{rmrk} \label{rmrk-tetra-R}
On a manifold with sectional curvature bounded below, one
should have the $C, 1/2$ tetrahedral property at any point
$p$ as long as $r< \injrad(p)/4$ where $C$
depends on the lower sectional curvature bound.   This
should be provable using the Toponogov Comparison Theorem.
     One would like to replace the
condition on injectivity radius with radius depending upon
a lower bound on volume.  Work in this direction is under 
preparation by the second author's doctoral students.    Note that
there is no uniform tetrahedral property
on manifolds with positive scalar curvature even when the
volume of the balls are uniformly bounded below by that
of Euclidean balls [Remark~\ref{rmrk-scalar-cancellation}].   
With lower bounds on Ricci
curvature one might expect to have the $C, 1/2$ tetrahedral
property or an integral version of this property.  Again a
uniform lower bound on volume will be necessary as seen
in the torus example above.
\end{rmrk}

\vspace{.4cm}
\subsection{Integral Tetrahedral Property}

For our applications we need only the following property
which is clearly holds at any point with the tetrahedral property:

\begin{defn} \label{defn-int-tetra}
Given $C>0$ and $\beta\in (0,1)$, a metric space $X$
is said to have the $m$ dimensional integral
$C,\beta$-tetrahedral property at a point $p$
for radius $r$ if
one can find points $p_1,...p_{m-1}\subset \partial B_p(r)\subset\bar{X}$,
such that 
\be
\int_{t_1=(1-\beta)r}^{(1+\beta)r} \cdots \int_{t_{m-1}=(1-\beta)r}^{(1+\beta)r }
h(p,r,t_1,...t_{m-1}))
\, dt_1dt_2...dt_{m-1}
\ge C (2\beta)^{m-1}r^m.
\ee
\end{defn}

\begin{prop}
If $X$ is a metric space that satisfies the $C \beta$ tetrahedral
property at $p$ for radius $r$ then it
has the $C \beta$ integral tetrahedral property.
\end{prop}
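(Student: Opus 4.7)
The plan is straightforward: just integrate the pointwise bound. By the $C,\beta$ tetrahedral property at $p$ for radius $r$, there exist points $p_1,\dots,p_{m-1}\in \partial B_p(r)$ such that
\be
h(p,r,t_1,\dots,t_{m-1}) \ge Cr \qquad \forall (t_1,\dots,t_{m-1}) \in [(1-\beta)r,(1+\beta)r]^{m-1}.
\ee
I would use these same $m-1$ points as the witnesses for the integral tetrahedral property.

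The key step is simply to integrate this pointwise inequality over the cube $[(1-\beta)r,(1+\beta)r]^{m-1}$, whose $(m-1)$-dimensional Lebesgue measure is $(2\beta r)^{m-1} = (2\beta)^{m-1} r^{m-1}$. This yields
\be
\int_{(1-\beta)r}^{(1+\beta)r}\cdots\int_{(1-\beta)r}^{(1+\beta)r} h(p,r,t_1,\dots,t_{m-1})\, dt_1\cdots dt_{m-1} \ge Cr \cdot (2\beta)^{m-1} r^{m-1} = C(2\beta)^{m-1} r^m,
\ee
which is exactly the inequality required by Definition~\ref{defn-int-tetra}.

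There is no real obstacle here since $h$ is nonnegative (by definition it is either an infimum of distances or $0$) and the pointwise bound holds on the full cube of integration, so no measurability subtleties or empty-set cases need to be handled separately. The proposition is essentially a tautology recording that a uniform lower bound on an integrand implies the corresponding lower bound on its integral; its real content is that the integral version is strictly weaker than the pointwise version, which makes it a more flexible hypothesis for the compactness theorems established later.
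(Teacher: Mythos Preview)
Your proof is correct and follows essentially the same approach as the paper: integrate the pointwise lower bound $h\ge Cr$ over the cube $[(1-\beta)r,(1+\beta)r]^{m-1}$ to obtain $C(2\beta)^{m-1}r^m$. Your write-up is in fact slightly cleaner than the paper's, which contains a typographical slip ($R$ for $r$) but carries out the identical computation.
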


\begin{proof}
\begin{eqnarray*}
\int_{t_1=(1-\beta)r}^{(1+\beta)r} \cdots \int_{t_{m-1}=(1-\beta)r}^{(1+\beta)r }
&h(p,r,t_1,...t_{m-1})&
\, dt_1dt_2...dt_{m-1} \,\,\,\ge\\
&\ge& \int_{t_1=(1-\beta)r}^{(1+\beta)r} \cdots \int_{t_{m-1}=(1-\beta)r}^{(1+\beta)r }
C R
\, dt_1dt_2...dt_{m-1}\\
&\ge& CR \,\left( (1+\beta)r -(1-\beta)r \right)^{m-1} 
\end{eqnarray*}
\end{proof}

\vspace{.4cm}
\subsection{Tetrahedral Property and Masses of Balls}

\begin{thm}\label{tetra-ball}
Suppose $(X,d,T)$ is an integral current space and $p\in X$ has
$\bar{B}_p(R) \cap \set{\partial T}=0$.  Then for almost
every $r\in (0,R)$, if the
$m$ dimensional (integral)
$C,\beta$-tetrahedral property at a point $p$
for radius $r$ holds on $\bar{X}$ 
then
\be
\mass(S(p,r))\ge \SF_{m-1}(p,r)\ge C (2\beta)^{m-1}r^m.
\ee
\end{thm}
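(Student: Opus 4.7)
The plan is to combine the integral tetrahedral property directly with Theorem~\ref{dist-set-2} and the definition of $\SF_{m-1}$. Since the tetrahedral property implies the integral tetrahedral property by the proposition immediately preceding this theorem, it suffices to prove the estimate under the integral version.

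By the $m$-dimensional integral $C,\beta$-tetrahedral property at $p$ for radius $r$, there exist points $p_1,\dots,p_{m-1}\in\partial B_p(r)\subset \bar X$ such that
\be
\int_{[(1-\beta)r,(1+\beta)r]^{m-1}} h(p,r,t_1,\dots,t_{m-1})\, dt_1\cdots dt_{m-1}
\ge C(2\beta)^{m-1}r^m.
\ee
Since each $p_i\in\partial B_p(r)$, we have $s_i:=d(p,p_i)=r$, so the interval $[s_i-r,s_i+r]=[0,2r]$ contains $[(1-\beta)r,(1+\beta)r]$ because $\beta\in(0,1)$. The function $h(p,r,\cdot)$ is nonnegative by construction (it is either positive or set to $0$), so enlarging the domain of integration only increases the integral, giving
\be
\int_{[0,2r]^{m-1}} h(p,r,t_1,\dots,t_{m-1})\, dt_1\cdots dt_{m-1}
\ge C(2\beta)^{m-1}r^m.
\ee

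Next I would invoke Theorem~\ref{dist-set-2}, whose hypothesis $\bar{B}_p(r)\cap\set(\partial T)=\emptyset$ is exactly the one we assumed. That theorem applied to the specific choice $p_1,\dots,p_{m-1}$ yields
\be
\mass(S(p,r))\;\ge\;\SF(p,r,p_1,\dots,p_{m-1})
\;\ge\;\int_{s_1-r}^{s_1+r}\!\!\cdots\!\!\int_{s_{m-1}-r}^{s_{m-1}+r} h(p,r,t_1,\dots,t_{m-1})\,dt_1\cdots dt_{m-1},
\ee
which by the previous display is bounded below by $C(2\beta)^{m-1}r^m$.

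Finally, by Definition~\ref{defn-SF_k}, $\SF_{m-1}(p,r)$ is the supremum of $\SF(p,r,q_1,\dots,q_{m-1})$ over all choices of $q_i\in\partial B_p(r)$, so
\be
\SF_{m-1}(p,r)\;\ge\;\SF(p,r,p_1,\dots,p_{m-1})\;\ge\;C(2\beta)^{m-1}r^m,
\ee
while the inequality $\mass(S(p,r))\ge \SF_{m-1}(p,r)$ also follows from Theorem~\ref{dist-set-2} (equivalently from Lemma~\ref{filling-slices} with $\lambda_j=\Lip(\rho_{p_j})=1$) applied to the same $p_1,\dots,p_{m-1}$. There is no serious obstacle here: the proof is essentially bookkeeping, the only point requiring attention being that the region $[(1-\beta)r,(1+\beta)r]^{m-1}$ coming from the tetrahedral property does indeed sit inside the region $[s_i-r,s_i+r]^{m-1}=[0,2r]^{m-1}$ produced by Theorem~\ref{dist-set-2}, which is where the assumption $p_i\in\partial B_p(r)$ (and hence $s_i=r$) is crucial.
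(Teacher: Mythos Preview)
Your proof is correct and follows essentially the same route as the paper: apply Theorem~\ref{dist-set-2} with $s_i=d(p,p_i)=r$, observe that $[(1-\beta)r,(1+\beta)r]^{m-1}\subset[0,2r]^{m-1}$ so nonnegativity of $h$ lets you restrict the domain of integration, and then invoke the (integral) tetrahedral bound. Your write-up is in fact slightly cleaner than the paper's in that you make explicit why $\mass(S(p,r))\ge \SF_{m-1}(p,r)$ holds (the mass bound from Lemma~\ref{filling-slices} is uniform in the choice of $q_i$, so one may pass to the supremum), whereas the paper leaves this implicit.
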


\begin{proof}
By Theorem~\ref{dist-set-2} with $s_i=r$ 
we have 
\begin{eqnarray*}
\mass(S(p,r)) &\ge &\SF(p,r,p_1,...,p_{m-1})\\
&\ge &\int_{t_1=s_1-r}^{s_1+r} \cdots \int_{t_{m-1}=s_{m-1}-r}^{s_{m-1}+r }
h(P_{(r,t_1,...t_{m-1})})
\, dt_1dt_2...dt_{m-1}\\
&\ge &\SF(p,r,p_1,...,p_{m-1})\\
&\ge &\int_{t_1=0}^{2r} \cdots \int_{t_{m-1}=0}^{2r }
h(P_{(r,t_1,...t_{m-1})})
\, dt_1dt_2...dt_{m-1}\\
&>&
\int_{t_1=(1-\beta)r}^{(1+\beta)r} \cdots \int_{t_{m-1}=(1-\beta)r}^{(1+\beta)r }
h(p,r,t_1,...t_{m-1}))
\, dt_1dt_2...dt_{m-1}\\
&>& 
C (2\beta)^{m-1}r^m
\end{eqnarray*}
\end{proof}

\begin{thm}\label{tetra-manifold}
Suppose $p_0$ lies in a Riemannian manifold with boundary, 
$M$, and
$B_{p_0}(R)\cap \partial M=\emptyset$.
For almost every $r\in (0,R)$, if
the
$m$ dimensional (integral)
$C,\beta$-tetrahedral property at a point $p$
for radius $r$ holds
then
\be
\vol(B(p,r))\ge C (2\beta)^{m-1}r^m
\ee
\end{thm}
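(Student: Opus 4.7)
The plan is to deduce this result as a direct corollary of Theorem~\ref{tetra-ball} by realizing the Riemannian manifold as an integral current space. Recall from Example~\ref{basic-mani} that a compact oriented Riemannian manifold with boundary $M^m$ is an integral current space with $X = M^m$, $d$ the standard Riemannian distance, and $T$ given by integration over $M$. In this setup we have the identification $\mass(T) = \vol(M)$, and more generally $\|T\|(A) = \vol(A)$ for Borel subsets $A \subset M$.

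First, I would verify the hypotheses of Theorem~\ref{tetra-ball}. The assumption $B_{p_0}(r) \cap \partial M = \emptyset$ is the Riemannian analog of $\bar{B}_p(r) \cap \set(\partial T) = \emptyset$: since $\partial T$ is integration over the boundary manifold $\partial M$, the set $\set(\partial T)$ is precisely $\partial M$ (perhaps together with its closure), and the ball $B_{p_0}(r)$ is then disjoint from it. The $(C,\beta)$-tetrahedral property (or its integral version) at $p$ for radius $r$ is assumed, so all hypotheses of Theorem~\ref{tetra-ball} are in place.

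Next, I would apply Theorem~\ref{tetra-ball} to obtain
\be
\mass(S(p,r)) \ge C (2\beta)^{m-1} r^m.
\ee
Since $S(p,r) = (\bar{B}(p,r), d, T \rstr B(p,r))$ is the ball as an integral current subspace of the Riemannian manifold (by the Lemma showing balls in manifolds are integral current spaces for all $r$), we have
\be
\mass(S(p,r)) = \|T\|(B(p,r)) = \vol(B(p,r)),
\ee
which yields the desired inequality $\vol(B(p,r)) \ge C(2\beta)^{m-1} r^m$.

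There is no real obstacle here: the theorem is essentially a rephrasing of Theorem~\ref{tetra-ball} in Riemannian language. The only mild subtlety is the mismatch of notation in the statement (the points $p_0$ versus $p$ and the radii $r$ versus $R$ appear inconsistently), which should be clarified as follows: the tetrahedral property is assumed at the same point and radius $(p,r)$ where one wants the volume bound, with the auxiliary points $p_1, \ldots, p_{m-1} \in \partial B_p(r)$ supplied by the definition, and one simply transfers the mass bound of $S(p,r)$ to a volume bound of $B(p,r)$.
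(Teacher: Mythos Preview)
Your proposal is correct and follows exactly the paper's approach: the paper's proof is the single sentence ``This is an immediate consequence of Theorem~\ref{tetra-ball},'' and you have simply supplied the details of that immediate consequence, including the identification $\mass(S(p,r)) = \vol(B(p,r))$ in the Riemannian setting. Your observation about the notational inconsistencies ($p_0$ versus $p$, $R$ versus $r$) is also apt; the paper's statement does indeed have these mismatches.
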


\begin{proof}
This is an immediate consequence of Theorem~\ref{tetra-ball}.
\end{proof}

\begin{rmrk}
In Example~\ref{ex-torus-tetra}, as $\epsilon \to 0$,
the $\vol(B(p,r)) \le \vol(M^3_\epsilon) \to 0$, so we could not have
a uniform tetrahedral property on these spaces.
\end{rmrk}

\begin{thm}\label{tetra-compactness}
Given $r_0>0, \beta\in (0,1), C>0, V_0>0$.  If
a sequence of compact Riemannian manifolds, $M^m$, 
has $\vol(M^m) \le V_0$,
$\diam(M^m) \le D_0$, 
and the $C, \beta$ (integral) tetrahedral
property for all balls of radius $\le r_0$, then a subsequence
converges in the Gromov-Hausdorff sense.   In particular
they have a uniform upper bound on diameter
depending only on these constants.
\end{thm}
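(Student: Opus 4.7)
The plan is to reduce the statement to Gromov's Compactness Theorem (Theorem~\ref{Thm-Gromov}) by establishing equicompactness of the sequence $\{M_i^m\}$. Equiboundedness is immediate from $\diam(M_i^m)\le D_0$, so the only task is to produce a function $N(r)$, depending only on $C,\beta,r_0,V_0,D_0$, with $N(M_i^m,r)\le N(r)$ uniformly in $i$.

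First I would use Theorem~\ref{tetra-manifold}: since each $M_i^m$ has the $C,\beta$ (integral) tetrahedral property at every point for every radius $r\le r_0$, and since a compact Riemannian manifold without boundary satisfies $\set(\partial T)=\emptyset$ (so the hypothesis $\bar B_p(r)\cap\set(\partial T)=\emptyset$ is automatic), every metric ball satisfies
\be
\vol(B(p,r))\ge C(2\beta)^{m-1}r^m \qquad \text{for all } p\in M_i^m, \ r\le r_0.
\ee
Given any $r\in(0,r_0]$, a disjoint collection of $N(M_i^m,r)$ open balls of radius $r$ has total volume at least $N(M_i^m,r)\cdot C(2\beta)^{m-1}r^m$, and this total volume is bounded by $V_0$. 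Hence
\be
N(M_i^m,r)\le \frac{V_0}{C(2\beta)^{m-1}r^m}=:N(r)\qquad \text{for } r\le r_0.
\ee
For $r>r_0$, any disjoint family of $r$-balls is in particular a disjoint family of $r_0$-balls, so $N(M_i^m,r)\le N(M_i^m,r_0)\le N(r_0)$. Setting $N(r):=N(r_0)$ for $r>r_0$ gives the required uniform upper bound, i.e.\ equicompactness.

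Having produced the uniform bound $N(M_i^m,r)\le N(r)$ together with $\diam(M_i^m)\le D_0$, Theorem~\ref{Thm-Gromov} yields a Gromov-Hausdorff convergent subsequence of $\{M_i^m\}$. For the final claim that the diameter bound depends only on $r_0,\beta,C,V_0$, I would argue as follows: a maximal disjoint family of open balls of radius $r_0$ in $M_i^m$ has at most $N(r_0)$ elements, so the corresponding concentric balls of radius $2r_0$ cover the connected manifold $M_i^m$; chaining across these overlapping balls one obtains
\be
\diam(M_i^m)\le 4r_0\,N(r_0)=\frac{4V_0}{C(2\beta)^{m-1}r_0^{m-1}},
\ee
so $D_0$ in the hypothesis is in fact redundant.

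The argument is essentially immediate once Theorem~\ref{tetra-manifold} is in hand; there is no serious obstacle. The only subtlety is bookkeeping: one must check that the tetrahedral hypothesis is applied to balls whose closures avoid the boundary of the current (automatic here since $M_i^m$ is a closed Riemannian manifold in the current-space sense, so $\partial T_i=0$), and that the volume/mass identification for Riemannian integral current spaces is used to translate the mass bound of Theorem~\ref{tetra-ball} into the Riemannian volume bound of Theorem~\ref{tetra-manifold}.
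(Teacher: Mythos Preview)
Your proposal is correct and follows essentially the same approach as the paper: use Theorem~\ref{tetra-manifold} to get a uniform lower volume bound on balls, deduce a uniform bound on the number of disjoint balls via the global volume bound $V_0$, and apply Gromov's Compactness Theorem; the paper's proof is just a terser version of exactly this argument, including the remark that connectedness of the manifold yields the diameter bound from the ball count.
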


The proof of this 
theorem strongly requires that the manifold have no boundary.

\begin{proof}
This follows immediately from Theorem~\ref{tetra-manifold}
and Gromov's Compactness Theorem, using the fact that
we can bound the number of disjoint balls of radius $\epsilon>0$
in $M^m$.   In a manifold, this provides an upper bound
on the diameter of $M^m$.   
\end{proof}

Later we will apply the following theorem to prove that
the Gromov-Hausdorff limit is in fact an Intrinsic Flat limit
and thus is countably $\mathcal{H}^m$ rectifiable
[Theorem~\ref{tetra-compactness-2}].   

\begin{thm}\label{tetra-set}
Given an integral current space $(X,d,T)$ and a point
$p_0\in \bar{X} \setminus Cl(\set(\partial T)$
then $p_0\in X=\set(T)$ if there exists a pair of constants
$\beta\in (0,1)$ and  $C>0$ such that $\bar{X}$ 
has the tetrahedral property at $p_0$ for all
sufficiently small $r>0$.
\end{thm}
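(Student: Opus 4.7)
The plan is to show directly that the lower density $\Theta_{*m}(\|T\|,p_0)$ is strictly positive, which by Definition~\ref{defn-set} forces $p_0 \in \set(T) = X$. The two hypotheses handle precisely the two assumptions of Theorem~\ref{tetra-ball}: the condition $p_0 \notin Cl(\set(\partial T))$ supplies an $r^* > 0$ with $\bar B_{p_0}(r^*) \cap \set(\partial T) = \emptyset$, and the tetrahedral property assumption supplies a second bound $r^{**} > 0$ below which the $C,\beta$-tetrahedral property holds at $p_0$.

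Set $r_0 := \min\{r^*, r^{**}\}$. For every $r \in (0, r_0)$ the ball $\bar B_{p_0}(r) \subset \bar X$ is disjoint from $\set(\partial T)$ and $\bar X$ has the $m$-dimensional $C,\beta$-tetrahedral property at $p_0$ for radius $r$. For almost every such $r$, Lemma 3 (the ball-is-a-current-space lemma) gives that $S(p_0,r) = (\set(T\rstr B_{p_0}(r)), d, T\rstr B_{p_0}(r))$ is an integral current space and $\mass(S(p_0,r)) = \|T\|(B_{p_0}(r))$. Applying Theorem~\ref{tetra-ball} at such a good $r$ yields
\begin{equation}
  \|T\|(B_{p_0}(r)) \;=\; \mass(S(p_0,r)) \;\ge\; C(2\beta)^{m-1} r^m.
\end{equation}

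To promote this bound from almost every $r$ to every $r \in (0,r_0)$, I would use the monotonicity of the Borel measure $\|T\|$. Given an arbitrary $r \in (0,r_0)$, choose a sequence of good radii $r_n \nearrow r$ (the bad set has Lebesgue measure zero, so such a sequence exists). Then
\begin{equation}
  \|T\|(B_{p_0}(r)) \;\ge\; \|T\|(B_{p_0}(r_n)) \;\ge\; C(2\beta)^{m-1} r_n^m \;\longrightarrow\; C(2\beta)^{m-1} r^m.
\end{equation}
Dividing by $\omega_m r^m$ and taking the liminf as $r \to 0$ gives
\begin{equation}
  \Theta_{*m}(\|T\|, p_0) \;\ge\; \frac{C(2\beta)^{m-1}}{\omega_m} \;>\; 0,
\end{equation}
so $p_0 \in \set(T) = X$.

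There is really no substantial obstacle: the work has already been done in Theorem~\ref{tetra-ball}, which combines the slicing inequality of Lemma~\ref{filling-slices} with the zero-dimensional filling estimate of Lemma~\ref{filling-one}. The only small care needed in the proposal above is the almost-everywhere issue for the radius $r$, which is immediately handled by monotonicity of $\|T\|$ on nested balls, and the verification that the two hypotheses of Theorem~\ref{tetra-set} simultaneously provide the two hypotheses needed to invoke Theorem~\ref{tetra-ball} on a cofinal family of radii shrinking to $0$.
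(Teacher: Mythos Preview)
Your proposal is correct and follows the same approach as the paper: the paper's proof is a terse two-line application of Theorem~\ref{tetra-ball} to conclude $\liminf_{r\to 0} \|T\|(B_p(r))/r^m \ge C(2\beta)^{m-1} > 0$, and you have simply filled in the details the paper leaves implicit---namely that the two hypotheses furnish a common threshold $r_0$ below which Theorem~\ref{tetra-ball} applies, and that the almost-everywhere restriction on $r$ is harmless by monotonicity of $\|T\|$.
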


\begin{proof}
By Theorem~\ref{tetra-ball} we have
\be\label{tetra-set-r}
||T||(B_p(r))\ge C (2\beta)^{m-1}r^m 
\ee
for almost every $r$ sufficiently small.
For any $R$ sufficiently small, there exists
$r=r_j<R$ satisfying (\ref{tetra-set-r})
with $r_j \to R$, so
\begin{eqnarray}
||T||(B(p,R)) &\ge& \limsup_{j\to \infty} ||T||(B(p,r_j)) \\
&\ge &\limsup_{j\to\infty}C (2\beta)^{m-1}r_j^m\\
&=&C (2\beta)^{m-1}R^m.
\end{eqnarray}
Thus $p_0\in X=\set(T)$ by the definition of $\set(T)$.
\end{proof}

\vspace{.4cm}
\subsection{Fillings, Slices and Intervals}

In the above sections, a key step consisted of estimating
$\mass(M) \ge \fillvol(\partial M)$.   This is only a worthwhile
estimate when $\partial M \neq 0$ or has a filling volume close
to the mass.   

A better estimate can be obtained using the
following trick.  Given a Riemannian manifold $M$, 
\be
\vol(M) =\vol(M\times I) \ge \fillvol(\partial (M\times I))
\ee
where the metric on $M\times I$ is defined in (\ref{isom-product}).
This has the advantage that $M\times I$ is always a
manifold with boundary.  It may also be worthwhile to
use an interval, $I_\epsilon$, of length $\epsilon$, then 
\be
\vol(M) =\frac{\vol(M\times I_\epsilon)}{\epsilon} 
\ge \frac{\fillvol(\partial (M\times I_\epsilon))}{\epsilon}.
\ee
Intuitively it would seem reasonable to 
conjecture that 
\be
\vol(M) =\lim_{\epsilon\to 0}\frac{\vol(M\times I_\epsilon)}{\epsilon} 
=\lim_{\epsilon\to 0} \frac{\fillvol(\partial (M\times I_\epsilon))}{\epsilon}.
\ee

We introduce the following notion made rigorous 
on arbitrary integral current spaces
$M=(X, d_X, T)$ by applying
Definition~\ref{times-I} and Proposition~\ref{prop-times-I}.

\begin{defn}\label{IFV}
Given any $\epsilon>0$, we define the $\epsilon$ interval filling
volume,
\be
\mathbb{IFV}_\epsilon(M) = \fillvol(\partial(M\times I_\epsilon)).
\ee 
\end{defn}

\begin{lem} \label{lem-interval}
Given an integral current space $M=(X,d,T)$,
\be
\mass(M) =\epsilon^{-1}\mass(M\times I_\epsilon)
\ge \epsilon^{-1} \mathbb{IVF}_\epsilon(M).
\ee
\end{lem}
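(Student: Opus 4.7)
The plan is to chain together two previously established facts: the mass formula for isometric products with intervals, and the universal bound relating mass to filling volume of the boundary.

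First, I would apply Proposition~\ref{prop-times-I} directly to $M \times I_\epsilon$. That proposition asserts in equation~(\ref{isom-mass}) that $\mass(M \times I_\epsilon) = \epsilon\, \mass(M)$, which rearranges to the equality
\begin{equation*}
\mass(M) = \epsilon^{-1}\mass(M \times I_\epsilon),
\end{equation*}
giving the first half of the claim.

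Next, I would use the fundamental estimate $\fillvol(\partial N) \le \mass(N)$ recorded in~(\ref{fill-est}), applied to the integral current space $N = M \times I_\epsilon$. (Proposition~\ref{prop-times-I} guarantees that $M \times I_\epsilon$ is indeed an integral current space, so this bound is legitimate.) By Definition~\ref{IFV}, the left-hand side is exactly $\mathbb{IFV}_\epsilon(M)$, so
\begin{equation*}
\mathbb{IFV}_\epsilon(M) \;=\; \fillvol(\partial(M \times I_\epsilon)) \;\le\; \mass(M \times I_\epsilon).
\end{equation*}
Dividing by $\epsilon > 0$ and combining with the equality above yields
\begin{equation*}
\mass(M) \;=\; \epsilon^{-1}\mass(M \times I_\epsilon) \;\ge\; \epsilon^{-1}\mathbb{IFV}_\epsilon(M),
\end{equation*}
which is the stated inequality.

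There is no real obstacle here, since both inputs are already proven earlier in the paper. The only point to verify in passing is that the symbol $\mathbb{IVF}_\epsilon$ in the displayed statement is a typo for $\mathbb{IFV}_\epsilon$ as introduced in Definition~\ref{IFV}; once this is understood, the proof is a two-line combination of Proposition~\ref{prop-times-I} and inequality~(\ref{fill-est}).
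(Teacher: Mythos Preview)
Your proof is correct and follows the same approach as the paper, which simply states that the lemma ``follows immediately from Proposition~\ref{prop-times-I}.'' You have spelled out the details more carefully by also invoking Definition~\ref{IFV} and inequality~(\ref{fill-est}) for the second half, and you correctly identified the $\mathbb{IVF}_\epsilon$/$\mathbb{IFV}_\epsilon$ typo.
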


\begin{proof}
This follows immediately from Proposition~\ref{prop-times-I}.
\end{proof}

\begin{defn} \label{SIF}
Given an integral current space, $M^m=(X,d,T)$ and  
$F_1, F_2,...F_k: M \to \R$ with $k\le m$ are 
Lipschitz functions with Lipschitz constant $\Lip(F_j)=\lambda_j$ 
then for all $\epsilon>0$ 
and almost every $r>0$
we can define the $\epsilon$ sliced interval
filling volume of $\partial S(p,r) \in \intcurr_{m-1}(\bar{X})$
to be
\be
{\mathbb{SIF}}_\epsilon(p,r, F_1,...,F_k)=
\prod_{j=1}^k  \epsilon^{-1}
\int_{t\in A_r} \fillvol(\partial(\Slice(S(p,r),F,t)\times I_\epsilon)) \, \mathcal{L}^k 
\ee
where 
\be
A_r=[\min F_1, \max F_1]\times [\min F_2, \max F_2]\times
\cdots \times [\min F_k, \max F_k
\ee
where $\min F_j=\min\{F(x): x\in \bar{B}_p(r)\}$
and $\max F_j=\max\{F(x): x\in \bar{B}_p(r)\}$.
When the $F_i$ are distance functions $\rho_{p_i}$ we write,
\be
{\mathbb{SIF}}_\epsilon(p,r, p_i,...,p_k):=
{\mathbb{SIF}}_\epsilon(p,r, \rho_{p_i},...,\rho_{p_k}).
\ee
\end{defn}

\begin{prop}\label{filling-I-slices}
Given an integral current space, $M^m=(X,d,T)$ and  
$F_1, F_2,...F_k: M \to \R$ with $k\le m$ are 
Lipschitz functions with Lipschtiz constant $\Lip(F_j)=\lambda_j$ 
then for all $\epsilon>0$ and almost every $r>0$
we can bound the mass
of a ball in $M$ as follows:
\be \label{filling-I-slices-1}
\mass(S(p,r)) \ge \prod_{j=1}^k \lambda_j^{-1}\epsilon^{-1} 
\mathbb{SIF}_\epsilon(p,r, F_1,...,F_k).
\ee
Thus for any $p_1,...p_k \in X$, and almost every $r>0$
we have
\be \label{filling-I-slices-2}
\mass(S(p,r)) \ge \epsilon^{-1} 
\mathbb{SIF}_\epsilon(p,r, p_1,...,p_k).
\ee
\end{prop}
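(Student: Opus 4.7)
The plan is to apply the Ambrosio-Kirchheim slicing estimate to reduce $\mass(S(p,r))$ to an integral of slice masses, and then, within each slice, replace the slice mass by an interval-filling volume using the trick embodied in Lemma~\ref{lem-interval}.

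Proposition~\ref{prop-slicing} yields that for $\mathcal L^k$-almost every $t \in \R^k$, $\Slice(S(p,r),F,t)$ is an $(m-k)$-dimensional integral current space, with
$$\int_{\R^k} \mass(\Slice(S(p,r),F,t)) \, d\mathcal L^k(t) \;\le\; \prod_{j=1}^k \lambda_j \cdot \mass(S(p,r)).$$
Since $\Slice(S(p,r),F,t)$ is supported in $F^{-1}(t)\cap \bar{B}_p(r)$, the integrand vanishes whenever $t$ lies outside $A_r$, so the domain of integration may be restricted to $A_r$.

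For each such $t$, Proposition~\ref{prop-times-I} produces $\Slice(S(p,r),F,t)\times I_\epsilon$ as an integral current space of mass $\epsilon\,\mass(\Slice(S(p,r),F,t))$ whose boundary is given explicitly by (\ref{bndbnd}); in particular it is a genuine integral current even when $\partial\,\Slice(S(p,r),F,t)=0$, since the top and bottom copies $\psi_{\epsilon\#}-\psi_{0\#}$ contribute. Taking $N = \Slice(S(p,r),F,t)\times I_\epsilon$ as a trivial filling of its own boundary and applying (\ref{fill-est}) gives
$$\fillvol\bigl(\partial(\Slice(S(p,r),F,t)\times I_\epsilon)\bigr) \;\le\; \epsilon\,\mass(\Slice(S(p,r),F,t)).$$

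Integrating this inequality in $t$ over $A_r$ and chaining it with the slicing estimate yields, after collecting the $\epsilon^{-1}$ and $\lambda_j^{-1}$ factors as they appear in the definition of $\mathbb{SIF}_\epsilon$, the bound (\ref{filling-I-slices-1}). Inequality (\ref{filling-I-slices-2}) then follows immediately by specializing to $F_j=\rho_{p_j}$, which are $1$-Lipschitz so that $\prod_j \lambda_j^{-1}=1$. No substantive obstacle arises: every step invokes a result already established in the excerpt, and the only mild point to verify is that the integrand $t\mapsto \fillvol(\partial(\Slice(S(p,r),F,t)\times I_\epsilon))$ is defined for $\mathcal L^k$-a.e.\ $t$, which follows from the almost-everywhere integrality of slices provided by Theorem~\ref{theorem-slicing}.
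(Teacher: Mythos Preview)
Your proof is correct and follows essentially the same route as the paper's: apply Proposition~\ref{prop-slicing} to bound $\mass(S(p,r))$ below by $\prod_j \lambda_j^{-1}$ times the integral of slice masses over $A_r$, then invoke Lemma~\ref{lem-interval} (which you unpack explicitly via Proposition~\ref{prop-times-I} and (\ref{fill-est})) to replace each slice mass by $\epsilon^{-1}\fillvol(\partial(\Slice\times I_\epsilon))$.
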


\begin{proof}
By Proposition~\ref{prop-slicing} and
Lemma~\ref{lem-interval} 
we have
\begin{eqnarray}
\mass(S(p,r)) &\ge& \prod_{j=1}^k \lambda_j^{-1} \mass(S(p,r) \rstr dF)\\
&\ge& \prod_{j=1}^k \lambda_j^{-1} \int_{t\in\R^k} \mass(\Slice(S(p,r),F,t)) \, \mathcal{L}^k \\
&=& \prod_{j=1}^k \lambda_j^{-1} \int_{t\in A} \mass(\Slice(S(p,r),F,t)) \, \mathcal{L}^k\\
&\ge& \prod_{j=1}^k \lambda_j^{-1}\epsilon^{-1} \int_{t\in A} 
\fillvol( \partial (\Slice(S(p,r),F,t)\times I_\epsilon) ) \, \mathcal{L}^k. 
\end{eqnarray}
\end{proof}

\begin{cor} \label{int-filling-balls}
A point $p\in \bar{X}$
lies in $X=\set(T)$ if there exists $\epsilon>0$
and points $p_1,...,p_k$ such that
\be
\ess\liminf_{r\to 0} \frac{1}{\epsilon r^m}
\mathbb{SIF}_\epsilon(p,r, p_1,...,p_k)>0.
\ee
\end{cor}

\begin{cor}
A point $p\in \bar{X}$
lies in $X=\set(T)$ if there exists $C>0$, 
and points $p_1,...,p_k$ such that
\be
\ess\liminf_{r\to 0} \frac{1}{C r^{m+1}}
\mathbb{SIF}_{Cr}(p,r, p_1,...,p_k)>0.
\ee
\end{cor}

\begin{cor}
Given an integral current space $M$ we have
\be
\mass(M) \ge \prod_{j=1}^k \lambda_j^{-1} \epsilon^{-1}
\int_{t\in \R^k} \fillvol(\partial(\Slice(M,F,t)\times I_\epsilon)) \, \mathcal{L}^k 
\ee
\end{cor}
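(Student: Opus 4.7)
The plan is to follow exactly the structure of the proof of Proposition~\ref{filling-I-slices}, but with the ball $S(p,r)$ replaced throughout by the whole integral current space $M$, so that the domain of integration becomes all of $\R^k$ instead of the bounded box $A_r$.

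First I would invoke the Ambrosio--Kirchheim slicing mass identity (\ref{AK-5.9}) together with the inequality
\be
\mass(T\rstr dF)\le \prod_{j=1}^k \Lip(F_j)\,\mass(T) = \prod_{j=1}^k \lambda_j\,\mass(M),
\ee
which gives, after rearranging,
\be
\mass(M) \;\ge\; \prod_{j=1}^k \lambda_j^{-1}\,\int_{t\in\R^k}\mass(\Slice(M,F,t))\,d\mathcal{L}^k.
\ee
Note that the integrand vanishes outside the range of $F$ on the support of $T$, so there is no issue with integrating over unbounded $\R^k$.

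Next I would apply Lemma~\ref{lem-interval} slicewise. For $\mathcal{L}^k$-almost every $t\in\R^k$, the slice $\Slice(M,F,t)$ is an $(m-k)$-dimensional integral current space, so by Lemma~\ref{lem-interval} (equivalently, by the product mass identity (\ref{isom-mass}) followed by the filling volume bound $\mass(\cdot)\ge\fillvol(\partial\,\cdot)$ applied to $\Slice(M,F,t)\times I_\epsilon$),
\be
\mass(\Slice(M,F,t)) \;\ge\; \epsilon^{-1}\,\mathbb{IFV}_\epsilon(\Slice(M,F,t)) \;=\; \epsilon^{-1}\,\fillvol(\partial(\Slice(M,F,t)\times I_\epsilon)).
\ee
Substituting this pointwise bound into the previous display and pulling the constants out of the integral yields the desired estimate.

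There is no real obstacle here; the statement is simply the global version of Proposition~\ref{filling-I-slices}, and the only point worth checking is the measurability of $t\mapsto \fillvol(\partial(\Slice(M,F,t)\times I_\epsilon))$ so that the integral makes sense. This follows from the fact that $t\mapsto\Slice(M,F,t)$ is measurable (which is built into the Ambrosio--Kirchheim slicing theorem via (\ref{AK-5.9})) together with the lower semicontinuity of $\fillvol$ with respect to weak/intrinsic flat convergence recorded in Theorem~\ref{fillvol-cont}, which makes the integrand a measurable function of $t$.
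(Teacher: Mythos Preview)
Your proposal is correct and follows precisely the same approach the paper uses: the corollary is stated without proof because it is the verbatim argument of Proposition~\ref{filling-I-slices} with $S(p,r)$ replaced by $M$ throughout, exactly as you have written it. Your remark on measurability is a reasonable extra care but is not discussed in the paper.
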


\newpage

\section{Convergence and Continuity}
In this section we examine the limits of points in
sequences of integral current spaces that converge in the
intrinsic flat sense and prove various continuity theorems and
close with a pair of Bolzano-Weierstrass Theorems.

Recall that Theorem~\ref{converge} (which was proven in work of the second author
with Wenger in \cite{SorWen2}) states that a sequence of integral current spaces
which converges in the intrinsic flat sense, $M_i \Fto M_\infty$,
can be embedded into a common complete metric space, $Z$,
via distance preserving maps,
$\varphi_i:M_i\to Z$, such that 
$\varphi_{i\#} T_i \Fto \varphi_{\infty\#}T_\infty$.  This allowed the second author
to define converging, Cauchy and disappearing sequences of points in
\cite{Sormani-AA}:

\begin{defn} \label{point-conv}
If $M_i=(X_i, d_i,T_i) \Fto M_\infty=(X_\infty, d_\infty,T_\infty)$, 
then we say $x_i\in X_i$ are a converging sequence that converge to
$x_\infty\in X_\infty$ if there exists a complete metric space,
$Z$, and isometric embeddings, 
$\varphi_i:M_i\to Z$, such that 
$\varphi_{i\#} T_i \Fto \varphi_{\infty\#}T_\infty$ and 
$\varphi_i(x_i) \to \varphi_\infty(x_\infty)$.   We say collection of
points, $\{p_{1,i}, p_{2,i},...p_{k,i}\}$,
converges to a corresponding collection of points, 
$\{p_{1,\infty}, p_{2,\infty},...p_{k,\infty}\}$, if 
$\varphi_{i}(p_{j,i}) \to \varphi_\infty(p_{j, \infty})$ for $j=1,2...k$.
\end{defn}

Unlike in Gromov-Hausdorff convergence, we have the possibility of disappearing sequences of points:

\begin{defn} \label{point-Cauchy}
If $M_i=(X_i, d_i,T_i) \Fto M_\infty=(X_\infty, d_\infty,T_\infty)$, then we say $x_i\in X_i$ 
are Cauchy if there exists a complete metric space
$Z$ and isometric embeddings 
$\varphi_i:M_i\to Z$ such that 
$\varphi_{i\#} T_i \Fto \varphi_{\infty\#}T_\infty$ and 
$\varphi_i(x_i) \to z_\infty \in Z$.   

We say the
sequence is disappearing if $z_\infty \notin \varphi_\infty(X_\infty)$.
\end{defn}

Examples with disappearing splines from \cite{SorWen2}
demonstrate that there exist Cauchy sequences of points
which disappear.  In \cite{Sormani-AA} the 
second author proved theorems demonstrating
when $z_\infty$ lies in the metric
completion of the limit space, $\varphi_\infty(\bar{X}_\infty)$.
This material is reviewed in the first two subsections of this paper:
including Theorems~\ref{to-a-limit} and ~\ref{flat-to-GH}
 and some related open questions.

Here we study when $z_\infty$ lies in the limit space itself:
\be
z_\infty \in X_\infty=\set(\varphi_{\infty\#}(T_\infty))
\ee
which happens iff 
\be
\liminf_{r\to 0}\mass\left(\varphi_{\infty\#}(T_\infty)\rstr B(z_\infty,r)\right)/r^m>0.
\ee

In \cite{SorWen1} the second author and Wenger intuitively applied the idea
that the filling volumes are continuous to prove sequences of points in certain 
sequences of spaces
do not disappear.   Here we will use sliced filling volumes and also provide the complete
details not provided in \cite{SorWen1} to justify the convergence
of filling volumes.   We first prove that slices of converging spaces converge in Proposition~\ref{limit-slices-more}.   We apply this proposition to prove 
that the sliced filling volumes are continuous [Theorem~\ref{SF-cont}].    
These results are technically difficult and require a sequence of propositions and lemmas.
Using semilar methods, we prove the continuity of the sliced interval filling volumes [Theorem~\ref{SIF-cont}] and the interval filling volumes [Theorem~\ref{IFV-cont}].

In the penultimate subsection, we apply these continuity theorems to
prove Theorem~\ref{SF_k-in-set} which 
 describes when a Cauchy sequence 
of points converges.  This theorem is a crucial step in the 
proof of the Tetrahedral Compactness Theorem.

We close the section with two Bolzano-Weiestrass Theorems.
Theorem~\ref{B-W-fillvol} concerns sequences of points $p_i\in M_i$
with lower bounds on the filling volumes of spheres around them
and produces a subsequence which converges to a point in the
intrinsic flat limit of the $M_i$.   Theorem~\ref{B-W} assumes that the 
points have
a lower bound on the sliced filling volumes of balls about them and
obtains a converging subsequence as well.

\vspace{.4cm}
\subsection{Review of Limit Points and Diameter Lower Semicontinuity}

Recall Definition~\ref{point-conv}.   Recall the following theorems
proven by the second author in \cite{Sormani-AA}:

\begin{thm}\label{to-a-limit}
If a sequence of integral current spaces, $M_i=\left(X_i,d_i,T_i\right)\in \mathcal{M}_0^m$, 
converges to 
a integral current space, $M=\left(X,d,T\right)\in \mathcal{M}_0^m$, in the intrinsic flat sense, then every point $z$ in the limit space
$M$ is the limit of points $x_i\in M_i$.  
In fact there exists a sequence of maps $F_i: X \to X_i$
such that $x_i=F_i(x)$ converges to x and
\be
\lim_{i\to \infty} d_i(F_i(x), F_i(y))= d(x,y).
\ee
\end{thm}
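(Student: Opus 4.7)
The plan is to apply Theorem~\ref{converge} first to obtain a common complete separable metric space $Z$ together with isometric embeddings $\varphi_i:\bar{X}_i\to Z$ such that $\varphi_{i\#}T_i$ converges to $\varphi_{\infty\#}T_\infty$ both in the flat norm $d_F^Z$ and weakly. Once everything lives in $Z$, the statement reduces to two things: (i) for each $x\in X=\set(T_\infty)$ one can find $x_i\in X_i$ with $\varphi_i(x_i)\to\varphi_\infty(x)$ in $Z$, and (ii) these choices can be made point-by-point so that the distance identity holds.

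The heart of the argument is the claim that, for every $x\in X$,
\[
\delta_i(x):=\inf\{\,d_Z(\varphi_\infty(x),\varphi_i(y)):y\in X_i\,\}\longrightarrow 0.
\]
I would argue by contradiction. If $\delta_{i_k}(x)\ge r>0$ along a subsequence, then the open ball $B:=B_Z(\varphi_\infty(x),r)$ is disjoint from $\varphi_{i_k}(X_{i_k})=\set(\varphi_{i_k\#}T_{i_k})$, so $\|\varphi_{i_k\#}T_{i_k}\|(B)=0$ for every $k$. Since $x\in\set(T_\infty)$, the isometric embedding $\varphi_\infty$ carries positive lower $m$-density to $\varphi_\infty(x)$, giving $\|\varphi_{\infty\#}T_\infty\|(B)>0$. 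The standard lower semicontinuity of the mass measure on open sets under weak convergence (which strengthens Remark~\ref{rmrk-lower-mass} and follows from the supremum description of $\|T\|(U)$ in terms of tuples $(f,\pi_1,\dots,\pi_m)$ with $\spt f\subset U$ and $|f|,\Lip(\pi_i)\le 1$) then yields
\[
0<\|\varphi_{\infty\#}T_\infty\|(B)\le\liminf_{k\to\infty}\|\varphi_{i_k\#}T_{i_k}\|(B)=0,
\]
a contradiction.

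Given the claim, I would define $F_i:X\to X_i$ by using the axiom of choice to pick, for each $x\in X$, any $F_i(x)\in X_i$ satisfying $d_Z(\varphi_i(F_i(x)),\varphi_\infty(x))\le\delta_i(x)+1/i$. Then $\varphi_i(F_i(x))\to\varphi_\infty(x)$ in $Z$, so $F_i(x)$ is a converging sequence to $x$ in the sense of Definition~\ref{point-conv}. The distance identity drops out of the triangle inequality in $Z$ together with the isometric embedding property of $\varphi_i$:
\[
\bigl|d_i(F_i(x),F_i(y))-d(x,y)\bigr|\le d_Z(\varphi_i(F_i(x)),\varphi_\infty(x))+d_Z(\varphi_i(F_i(y)),\varphi_\infty(y))\longrightarrow 0.
\]

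The main obstacle is justifying lower semicontinuity of the mass measure on the open ball $B$ under weak convergence, since Remark~\ref{rmrk-lower-mass} only records this for the total mass. The open-set version is standard in the Ambrosio-Kirchheim setting, but to keep the paper self-contained I would either cite it explicitly or insert a short lemma deducing it from the sup-over-tuples representation of $\|T\|(U)$ reviewed in Section~2, using locality and continuity of currents to pass to the liminf on each individual tuple. All remaining steps are routine consequences of the hypothesis $X=\set(T_\infty)$ and the isometric-embedding conclusion of Theorem~\ref{converge}.
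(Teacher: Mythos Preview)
Your proof is correct and, in spirit, very close to the paper's own argument: both begin with Theorem~\ref{converge} to embed everything into a common $Z$, both argue that near any $\varphi_\infty(x)$ with $x\in\set(T_\infty)$ the pushed-forward currents $\varphi_{i\#}T_i$ must eventually have mass, and both finish with the same triangle-inequality computation for the distance identity.

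The difference is in how the key step is obtained. The paper restricts $\varphi_{i\#}T_i$ to the ball $\rho_x^{-1}[0,\epsilon)$ and asserts that, for almost every $\epsilon$, these restrictions converge weakly to the nonzero current $\varphi_{\infty\#}T_\infty\rstr\rho_x^{-1}[0,\epsilon)$; hence they are eventually nonzero, yielding a point of $\set(\varphi_{i\#}T_i)$ inside the ball. This relies on flat (not merely weak) convergence plus the Slicing Theorem, in the manner made precise later in Proposition~\ref{limit-balls}. You instead invoke lower semicontinuity of $\|\cdot\|$ on open sets under weak convergence, together with the fact (from \cite{AK} Theorem~4.6) that $\|S\|$ is concentrated on $\set(S)$ for rectifiable $S$, so disjointness of $B$ from $\set(\varphi_{i_k\#}T_{i_k})$ forces $\|\varphi_{i_k\#}T_{i_k}\|(B)=0$. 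Your route is slightly more direct and avoids slicing altogether; the paper's route has the advantage that it reuses machinery developed elsewhere in the paper. Your self-identified ``obstacle'' is genuine but minor: the open-set version of lower semicontinuity is indeed standard in \cite{AK} (it follows from the sup-over-tuples description of $\|T\|$ you sketch), so either a citation or the short lemma you propose would suffice.
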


This sequence of maps $F_i$ are not uniquely defined and
are not even unique up to isometry.

\begin{defn}
Like any metric space, one can define the diameter
of an integral current space, $M=(X,d,T)$, to be
\be
\diam(M)=\sup\left\{ d_X(x,y): \,\, x, y\in X\} \in [0,\infty]\right\}.
\ee
However, we explicitly define the diameter of the
$0$ integral current space to be $0$.
A space is bounded if the diameter is finite.
\end{defn}

\begin{thm} \label{diam-semi}
Suppose $M_i \Fto M$ are integral current spaces then
\be
\diam(M) \le \liminf_{i\to \infty} \diam(M_i) \subset [0,\infty]
\ee
\end{thm}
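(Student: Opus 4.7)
The plan is to reduce the statement to an application of Theorem~\ref{to-a-limit} together with the distance-preservation property of converging sequences of points. First, if $M = \bf{0}$, then by the author's convention $\diam(M) = 0$ and there is nothing to prove, so I may assume $M \neq \bf{0}$.

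Next, fix any $\varepsilon > 0$. By definition of the supremum, I can choose points $x, y \in X$ with $d(x,y) \ge \diam(M) - \varepsilon$ (or, if $\diam(M) = \infty$, with $d(x,y) \ge 1/\varepsilon$; the same argument will work since $\liminf \diam(M_i)$ will then be forced to be $\infty$). Apply Theorem~\ref{to-a-limit} to produce maps $F_i \colon X \to X_i$ and set $x_i = F_i(x)$, $y_i = F_i(y)$. The theorem gives a single common metric space $Z$ with isometric embeddings $\varphi_i, \varphi$ such that $\varphi_i(x_i) \to \varphi(x)$ and $\varphi_i(y_i) \to \varphi(y)$, and consequently
\begin{equation}
\lim_{i \to \infty} d_i(x_i, y_i) = \lim_{i \to \infty} d_Z(\varphi_i(x_i), \varphi_i(y_i)) = d_Z(\varphi(x), \varphi(y)) = d(x,y).
\end{equation}

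Since $x_i, y_i \in X_i$, we have $d_i(x_i, y_i) \le \diam(M_i)$ for every $i$, and taking the liminf yields
\begin{equation}
d(x,y) = \lim_{i \to \infty} d_i(x_i, y_i) \le \liminf_{i \to \infty} \diam(M_i).
\end{equation}
Hence $\diam(M) - \varepsilon \le \liminf_{i\to\infty} \diam(M_i)$, and letting $\varepsilon \to 0$ (or, in the infinite-diameter case, letting $1/\varepsilon \to \infty$) gives the conclusion.

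No real obstacle is expected here: the heavy lifting was done inside Theorem~\ref{to-a-limit}, which guarantees not merely the existence of approximating points but also the convergence of their mutual distances through a common ambient space $Z$. The only small care needed is handling $M = \bf{0}$ separately (by the explicit convention that its diameter is $0$) and the case $\diam(M) = \infty$ (handled by choosing pairs whose distance diverges, which forces $\liminf \diam(M_i) = \infty$ by the same pointwise distance convergence).
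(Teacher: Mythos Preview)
Your proof is correct and follows essentially the same approach as the paper: handle $M=\bf{0}$ separately, then for $\varepsilon>0$ pick $x,y\in X$ with $d(x,y)\ge\diam(M)-\varepsilon$, apply Theorem~\ref{to-a-limit} to get $x_i,y_i\in X_i$ with $d_i(x_i,y_i)\to d(x,y)$, bound by $\diam(M_i)$, and let $\varepsilon\to 0$. Your treatment of the $\diam(M)=\infty$ case is slightly more explicit than the paper's, but otherwise the arguments coincide.
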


\vspace{.4cm}
\subsection{Review of Flat convergence to Gromov-Hausdorff Convergence}\label{sect-flat-to-GH}

Recall the following theorem proven by the second author in \cite{Sormani-AA}:

\begin{thm} \label{flat-to-GH}
If a sequence of precompact integral current spaces, $M_i=\left(X_i,d_i,T_i\right)\in \mathcal{M}_0^m$, 
converges to 
a precompact integral current space, $M=\left(X,d,T\right)\in \mathcal{M}_0^m$, in the intrinsic flat sense, 
then there exists $S_i \in \intcurr_m\left(\bar{X}_i\right)$ such that
$N_i=\left(\set \left(S_i\right), d_i\right)$ converges to $\left(\bar{X},d\right)$ in the Gromov-Hausdorff
sense and
\be \label{flat-to-GH-1}
\liminf_{i\to\infty}\mass(S_i) \ge \mass(M).
\ee
When the $M_i$ are Riemannian manifolds, the $N_i$ can be taken to be
settled completions of open submanifolds of $M_i$.
\end{thm}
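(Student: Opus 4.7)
\textbf{Proof plan for Theorem~\ref{flat-to-GH}.}

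The plan is to invoke Theorem~\ref{converge} to obtain a common complete metric space $Z$ with isometric embeddings $\varphi_i\colon \bar X_i\to Z$ and $\varphi\colon \bar X\to Z$, together with integer rectifiable currents $U_i,V_i$ on $Z$ satisfying $\varphi_{i\#}T_i-\varphi_\#T=U_i+\partial V_i$ with $\mass(U_i)+\mass(V_i)\to 0$. I will then build the $S_i$ by restricting $T_i$ to preimages of a carefully chosen shrinking tubular neighborhood of $\varphi(\bar X)$ in $Z$. Concretely, let $\rho\colon Z\to\R$ be the distance function from $\varphi(\bar X)$, which is $1$-Lipschitz. By Ambrosio--Kirchheim slicing (Theorem~\ref{theorem-slicing}),
\be
\int_0^\infty \mass(\langle V_i,\rho,s\rangle)\,ds\le \mass(V_i),
\ee
so one can pick $r_i\to 0$ (say $r_i=\mass(V_i)^{1/2}$, along a subsequence if needed) such that $\langle V_i,\rho,r_i\rangle\in\intcurr_{m}(Z)$ with $\mass(\langle V_i,\rho,r_i\rangle)\to 0$ and the analogous slice of $U_i$ is defined. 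Set $S_i:=T_i\rstr (\rho\circ\varphi_i)^{-1}(-\infty,r_i)$; by Remark~\ref{rstr-push} we have $\varphi_{i\#}S_i=(\varphi_{i\#}T_i)\rstr\rho^{-1}(-\infty,r_i)$, and each $S_i\in\intcurr_m(\bar X_i)$.

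Next I would verify that $\varphi_{i\#}S_i$ flat-converges to $\varphi_\#T$ in $Z$. Since $\varphi_\#T$ is supported on $\rho^{-1}(0)\subset \rho^{-1}(-\infty,r_i)$, restriction of the defining equation to this half-space yields
\be
\varphi_\#T-\varphi_{i\#}S_i=-U_i\rstr\rho^{-1}(-\infty,r_i)+\langle V_i,\rho,r_i\rangle-\partial\bigl(V_i\rstr\rho^{-1}(-\infty,r_i)\bigr),
\ee
so the flat distance in $Z$ is bounded by $\mass(U_i)+\mass(\langle V_i,\rho,r_i\rangle)+\mass(V_i)\to 0$. In particular $\varphi_{i\#}S_i\weaklyto\varphi_\#T$, and the lower semicontinuity of mass under weak convergence (Remark~\ref{rmrk-lower-mass}) combined with the isometric invariance (\ref{lem-push-mass}) gives
\be
\liminf_{i\to\infty}\mass(S_i)=\liminf_{i\to\infty}\mass(\varphi_{i\#}S_i)\ge\mass(\varphi_\#T)=\mass(M),
\ee
which is (\ref{flat-to-GH-1}).

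For Gromov--Hausdorff convergence of $N_i=(\set(S_i),d_i)$ to $(\bar X,d)$, I would use that both sets sit isometrically inside $Z$ via $\varphi_i$ and $\varphi$. By construction every $y\in\varphi_i(\set(S_i))$ lies in the $r_i$-neighborhood of $\varphi(\bar X)$. For the reverse inclusion, given $x\in X$ and small $r>0$, the argument in the proof of Theorem~\ref{to-a-limit} applied to $\varphi_{i\#}S_i\weaklyto\varphi_\#T$ with slicing about the point $\varphi(x)$ (using $\rho_{\varphi(x)}$) shows that for almost every $r$ the restriction $\varphi_{i\#}S_i\rstr B(\varphi(x),r)$ is nonzero for $i$ large, so $\set(\varphi_{i\#}S_i)\cap B(\varphi(x),r)\neq\emptyset$. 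A finite $\epsilon$-net in the compact space $\bar X$ promotes this to a uniform statement: $\varphi(\bar X)\subset T_{\epsilon_i'}(\varphi_i(\set(S_i)))$ with $\epsilon_i'\to 0$. Hence $d_H^Z(\varphi_i(\set(S_i)),\varphi(\bar X))\to 0$ and the GH distance goes to zero.

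Finally, when $M_i$ is a Riemannian manifold, $S_i$ is simply the restriction of integration over $M_i$ to the open submanifold $\Omega_i=\{p\in M_i:\rho(\varphi_i(p))<r_i\}$, and $\set(S_i)$ is by definition the settled completion of $\Omega_i$. The main obstacle I anticipate is choosing $r_i$ simultaneously slowly enough to absorb $\varphi(\bar X)$ and fast enough to make the slice error $\mass(\langle V_i,\rho,r_i\rangle)$ vanish; the integral estimate above makes this a Fubini-type argument that should go through along a subsequence without difficulty, but one must be careful that the second Hausdorff density estimate uses weak convergence of restrictions, which (as the example following Definition~\ref{def-weak} warns) need not hold for fixed sets and so must be argued via the shrinking-tube construction rather than for $A$ fixed.
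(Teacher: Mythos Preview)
Your proof is correct and follows essentially the same route as the paper's: embed into a common $Z$ via Theorem~\ref{converge}, take $S_i$ to be $T_i$ restricted to a shrinking $\rho$-tube around $\varphi(\bar X)$, deduce the mass bound from lower semicontinuity under weak convergence, and prove the reverse Hausdorff inclusion by testing weak convergence on balls around a finite $\epsilon$-net in the precompact $\bar X$. The only difference is that you select the tube radius $r_i\to 0$ directly via the slicing integral $\int_0^\infty\mass(\langle V_i,\rho,s\rangle)\,ds\le\mass(V_i)$ to obtain an explicit flat-distance bound for $\varphi_{i\#}S_i\to\varphi_\#T$, whereas the paper fixes a scale $\epsilon$ first, proves the Hausdorff estimate at that scale, and then diagonalizes over a sequence $\epsilon_k\to 0$.
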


\begin{rmrk}\label{flat-to-GH-r}
If in addition it is assumed that $\lim_{i\to \infty}\mass(M_i)=\mass(M)$, then
by (\ref{flat-to-GH-1}) we have $\mass(\set(T_i-S_i), d_i, T_i-S_i)=0$.
In the Riemannian setting, we have $\vol(M_i\setminus N_i)\to 0$.
\end{rmrk}

\begin{rmrk}
In Ilmanen's example \cite{SorWen2} of a sphere with
increasingly many spikes, then $\set(S_i)$ are spheres with the spikes removed.
\end{rmrk}

\begin{rmrk}
The precompactness of the limit integral current spaces is
necessary in this theorem 
because a noncompact limit space can never be the Gromov-Hausdorff
limit of precompact spaces.
\end{rmrk}

\begin{rmrk} \label{rmrk-conv-point}
Gromov's Compactness Theorem combined with Theorem~\ref{flat-to-GH} implies that
that any sequence of $x_i\in N_i\subset M_i$
has a subsequence converging to a point $x$ in the metric completion of $M$.
Other sequences of points may not have converging subsequences, as can be seen when the tips of thin splikes disappear.  Below we will use filling volumes to determine
which sequences have converging subsequences using filling volumes [Theorem~\ref{B-W-fillvol}].   Another such Bolzano-Weierstrass Theorem with different hypothesis is proven in \cite{Sormani-AA}.
\end{rmrk}

\vspace{.2cm}

\begin{rmrk}
It is not immediately clear whether the integral current spaces, $N_i$,
constructed in the proof of Theorem~\ref{flat-to-GH} actually converge
in the intrinsic flat sense to $M$.  One expects an extra assumption
on total mass would be needed to interchange between flat and weak
convergence, but even so it is not completely clear.  One would need
to uniformly control the masses of $\partial N_i$ using a common upper
bound on $\mass(N)$ which can be done using theorems in 
Section 5 of \cite{AK}, but is highly technical.  It is worth investigating.
\end{rmrk}

\vspace{.4cm}
\subsection{Limits of Slices, Spheres and Balls}

In this section we prove the following two theorems via a sequence of lemmas
and propositions which will be applied elsewhere in this paper.   

Recall that about any point, $p$, for almost every radius, $r$,
the ball about $p$ of radius $r$ may be viewed as an integral current
space, $S(p,r)$, as in Lemma~\ref{lem-ball-current}.  In prior work of 
the second author \cite{Sormani-AA} it was shown that if 
$M_i \Fto M_\infty$ and $p_i \to p_\infty$ then for almost every $r\in \R$
there is a subsequence such that
\be
d_{\mathcal{F}}(S(p_i,r), S(p_\infty,r))\to 0 \textrm{ for almost every } r\in \R.
\ee
Here we prove the following more precise estimate:

\begin{thm}\label{thm-limit-balls}
Suppose we have a sequence of $m$ dimensional integral current
spaces, $M_i=(X_i,d_i, T_i)$ and $M_\infty=(X_\infty, d_\infty, T_\infty)$
and isometric embeddings
$\varphi_i: X_i \to Z$ such that
\be
d_F^Z({\phi_i}_\# T_i, {\phi_\infty}_\# T_\infty )< d_{\mathcal{F}}(M_i, M_\infty) +\epsilon_i
\ee
and points $p_i \in M_i$ such that 
\be
d_Z(\varphi_i(p_i), \varphi_\infty(p_{\infty})) \leq \delta_{i} .
\ee
Then,
for almost every $r\in \R$, the integral current spaces
$S(p_i,r)$, 
satisfy
\be \label{limit-balls-here}
d_{\mathcal{F}}(S(p_i,r),S(p_\infty, r)) \le \varepsilon_i(r)+d_{\mathcal{F}}(M_i, M_\infty) + \epsilon_i + ||T_\infty|| \left(\rho_{x_\infty}^{-1}(r-\delta_i, r+\delta_i) \right)
\ee
and
\be
\int_{-\infty}^\infty \varepsilon_i(r)\, dr\le d_{\mathcal{F}}(M_i, M_\infty)+\epsilon_i.
\ee
If $d_{\mathcal{F}}(M_i, M_\infty)\to 0$ and $p_i \to p_\infty$ 
then there is a subsequence (that we do not relabel), such that for almost every $r\in \R$,
\be
\lim_{i\to\infty} d_{\mathcal{F}}(S(p_i,r),S(p_\infty, r)) =0.
\ee
\end{thm}

In fact we will prove a more general statement in Proposition~\ref{limit-balls}.
Note that a subsequence is required to obtain the final limit as can be seen
in Example~\ref{many-intervals}.   

Recall that in Proposition~\ref{prop-slicing} we defined
the slices of an integral current space.  
In this section we will also prove the following
theorem concerning limits of slices:

\begin{thm} \label{limit-slices-more} 
Suppose we have a sequence of $m$ dimensional integral current
spaces, $M_i=(X_i,d_i, T_i)$ and $M_\infty=(X_\infty, d_\infty, T_\infty)$
and isometric embeddings
$\varphi_i: X_i \to Z$ such that
\be
d_F^Z({\phi_i}_\# T_i, {\phi_\infty}_\# T_\infty )< d_{\mathcal{F}}(M_i, M_\infty) +\epsilon_i
\ee
and points $p_{j,i} \in M_i$ such that 
\be
d_Z(\varphi_i(p_{j,i}), \varphi_\infty(p_{j,\infty})) \leq \delta_{i} \textrm{ for }j=1, \dots, k\le m  .
\ee
Then
\be
\begin{split}
\label{eq:L1Flat}
\int_{R^k} & d_{\mathcal{F}}\left(\Slice(M_i, \rho_{p_{i,1}}, ..., \rho_{p_{i,k}}, r_1,...,r_k), \Slice(M_\infty, \rho_{p_{\infty,1}},..., \rho_{p_{\infty,k}},r_1,...,r_k)\right)\,dr_1...dr_k\\
& \quad \qquad \qquad\leq \,\, d_{\mathcal{F}}(M_i, M_\infty)+\epsilon_i+ 2 \delta_{i} \left(\mass(T_\infty) + \mass(\partial T_\infty)\right).
\end{split}
\ee
If in addition $M_i \Fto M_\infty$ and $p_{i,j}\to p_{\infty,j}$ 
then there is a subsequence (which we do not relabel) 
such that for almost every $r\in \R^k$ we have
\be
\lim_{i\to\infty} d_{\mathcal{F}}\left(\Slice(M_i, \rho_{p_{i,1}}, ..., \rho_{p_{i,k}}, r_1,...,r_k), \Slice(M_\infty, \rho_{p_{\infty,1}},..., \rho_{p_{\infty,k}},r_1,...,r_k)\right) =0.
\ee
\end{thm}

Before proving either of the key propositions leading to these theorems, we
will prove a proposition [Proposition~\ref{slice-shift}] 
which captures the main idea leading to these results,
followed by a technical lemma [Lemma~\ref{lem-ann}].   Later we will
prove Proposition~\ref{le:IntFlatDistEst} by iterating the idea in this
proposition.

\begin{prop}\label{slice-shift}
Given an integral current space, $M=(X,d,T)$
and Lipschitz functions, $\rho: X \to \R$ 
and $f: X \to \R$, such that
\be\label{shift-slice-1}
|f(x) - \rho(x)| < \delta \qquad \forall x\in X,
\ee
then for almost every $r\in \R$
\be \label{shift-slice-2}
d_{\mathcal{F}}(\Slice(M, \rho,r), \Slice(M, f, r) ) \le 
||T||(\rho^{-1}(r-\delta, r+\delta) )
+ ||\partial T||(\rho^{-1}(r-\delta, r+\delta) ).
\ee 
\end{prop}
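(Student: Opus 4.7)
The plan is to write the difference of the two slices as $U+\partial V$ directly in the ambient space $\bar X$, using the definition of a slice from Theorem~\ref{theorem-slicing}, and then read off the required mass bounds from the geometry of the symmetric difference of the sublevel sets.

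First I would fix any $r\in \R$ in the full $\mathcal L^1$-measure set for which both $T\rstr \rho^{-1}(-\infty,r]$ and $T\rstr f^{-1}(-\infty,r]$ are integral currents (so that $\langle T,\rho,r\rangle$ and $\langle T,f,r\rangle$ are both integral currents by Theorem~\ref{theorem-slicing}). Write $E_\rho=\rho^{-1}(-\infty,r]$, $E_f=f^{-1}(-\infty,r]$, $A=E_\rho\setminus E_f$, $B=E_f\setminus E_\rho$. The pointwise hypothesis $|f-\rho|<\delta$ gives, for any $x\in A$, that $\rho(x)\le r$ and $f(x)>r$, hence $r-\delta<\rho(x)\le r$; symmetrically for $B$. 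Therefore
\begin{equation*}
A\cup B\;\subset\;\rho^{-1}(r-\delta,r+\delta).
\end{equation*}

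Next I would apply the slicing identity
\begin{equation*}
\langle T,g,r\rangle=\partial\bigl(T\rstr g^{-1}(-\infty,r]\bigr)-(\partial T)\rstr g^{-1}(-\infty,r]
\end{equation*}
with $g=\rho$ and $g=f$ and subtract, setting
\begin{equation*}
V:=T\rstr E_\rho-T\rstr E_f=T\rstr A-T\rstr B,\qquad U:=(\partial T)\rstr E_f-(\partial T)\rstr E_\rho=(\partial T)\rstr B-(\partial T)\rstr A,
\end{equation*}
so that $\langle T,\rho,r\rangle-\langle T,f,r\rangle=U+\partial V$ as integral currents in $\bar X$. Since $A$ and $B$ are disjoint Borel sets contained in $\rho^{-1}(r-\delta,r+\delta)$, the mass additivity of Ambrosio--Kirchheim restrictions gives
\begin{equation*}
\mass(V)=\|T\|(A)+\|T\|(B)\le\|T\|\bigl(\rho^{-1}(r-\delta,r+\delta)\bigr),
\end{equation*}
and the analogous bound for $\mass(U)$ with $\|\partial T\|$ in place of $\|T\|$.

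Finally I would apply the intrinsic flat distance definition (Definition~\ref{def-flat1}) using $Z=\bar X$ as common ambient space with the identity isometric embeddings of both slices. Since the set of each slice lies in $\bar X$ and $U,V$ are integral currents in $\bar X$ with $U+\partial V$ equal to the difference of the pushforwards, the flat norm in $\bar X$ bounds $d_{\mathcal F}(\Slice(M,\rho,r),\Slice(M,f,r))$ by $\mass(U)+\mass(V)$, which is exactly the right-hand side of \eqref{shift-slice-2}. The only delicate point is making sure the slicing representation holds simultaneously for both $\rho$ and $f$, which is ensured by throwing out a null set of bad $r$'s; everything else is a direct symmetric-difference computation.
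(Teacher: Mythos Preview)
Your proof is correct and follows essentially the same approach as the paper: both write the difference of slices as $U+\partial V$ in $\bar X$ using the slicing identity, observe that the symmetric difference of the sublevel sets is contained in the annulus $\rho^{-1}(r-\delta,r+\delta)$, and bound $\mass(U)+\mass(V)$ accordingly. Your exposition is in fact slightly more careful than the paper's, which writes $\mass(T\rstr U-T\rstr V)=\mass(T\rstr(U\setminus V))+\mass(T\rstr(V\setminus U))$ as an intermediate identity rather than introducing the sets $A,B$ explicitly from the start.
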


\begin{proof}
First observe that by the definition of intrinsic flat distance,
\be
d_{\mathcal{F}}(\Slice(M,p,r), \Slice(M, f, r) ) \le 
d_F^{\bar{X}}(<T, \rho, r> ,<T, f,r> ) \le
\mass(B) +\mass(A)
\ee
where
\begin{eqnarray}
B&=&T \rstr \rho^{-1}(-\infty,r]) \,\,\,
-   \,\,\, T \rstr f^{-1}(-\infty,r])  \\
A&=& (\partial T) \rstr f^{-1}(-\infty,r] \,\,\,-\,\,\,(\partial T) \rstr \rho^{-1}(-\infty,r].
\end{eqnarray}
Next note that for any pair of sets $U, V \subset X$,
\be
\mass(T\rstr U - T \rstr V) = \mass(T \rstr (\chi_U-\chi_V))
= \mass(T \rstr(U\setminus V)) +\mass(T\rstr(V\setminus U))
\ee
and the same holds for $\partial T$.
Since 
\be
\rho^{-1}(-\infty,r]\setminus f^{-1}(-\infty,r] \subset \rho^{-1}(r-\delta,r+\delta)
\ee
and
\be
 f^{-1}(-\infty,r]\setminus \rho^{-1}(-\infty,r] \subset \rho^{-1}(r-\delta,r+\delta)
\ee
we have
\begin{eqnarray}
\mass(B)&\le &\mass(T \rstr (\rho^{-1}(r-\delta,r+\delta) )\\
\mass(A)&\le &\mass(\partial T \rstr (\rho^{-1}(r-\delta,r+\delta) ).
\end{eqnarray}
\end{proof}

The following technical lemma is used in the proof of Proposition~\ref{limit-balls} and again in the proof of Proposition~\ref{le:IntFlatDistEst} below.

\begin{lem} \label{lem-ann} 
Let $\mu$ be a finite Borel measure on $\R$.
Then for every $\delta > 0$,
\be
\label{eq:ChangeOrderIntegrals}
\frac{1}{2\delta} \int_{-\infty}^{\infty} \mu(t - \delta, t+ \delta) dt = \mu(\R).
\ee
Moreover, the set of $a \in \R$ such that $\mu(\{ a \} ) > 0$ is at most countable.

In particular, given an integral current space, $(X,d,T)$, and
any Borel function, $f: X\to \R$, we have 
for all $r\in \R$ outside an at most countable set,
\be
\lim_{\delta\to 0}||T|| \left(f^{-1}(r-\delta, r+\delta)\right)=0
\ee
and
\be
\int_{r\in \R} ||T|| \left(f^{-1}(r-\delta, r+\delta)\right)\, dr
= 2\delta \mass(T).
\ee
\end{lem}

\begin{proof}
The equality (\ref{eq:ChangeOrderIntegrals}) follows by changing the order of integration (or rather Tonelli's Theorem, which is the analogue to Fubini's Theorem for nonnegative functions, cf. \cite[Chapter 12, Theorem 20]{Royden-analysis}) as follows
\be
\begin{split}
\int_{-\infty}^\infty \mu(t - \delta, t + \delta) dt 
&= \int_{-\infty}^\infty \left( \int_\R \chi_{(t - \delta, t+ \delta)} d\mu \right) dt \\
&= \int_{R^2} \chi_{\{(x,y) \in \R^2 \, | \, -\delta < y - x < \delta\}} d\lm^1 d\mu\\
&= \int_{\R} 2 \delta d\mu = 2\delta \mu(\R).
\end{split}
\ee
That the set $A_\mu := \{a \in \R \,|\, \mu(\{a\}) > 0 \}$  for any finite Borel measure $\mu$ is at most countable is a well-known fact. It follows as the the number of $b \in \R$ such that $\mu(\{b\})>1/n$ is bounded by $n \mu(\R)$, and therefore the set $A_\mu$ is at most countable, as it is the countable union of at most finite sets.

To conclude the second half of the lemma, we just apply the first part to $\mu = f_\# \|T\|$. 
Indeed, by the $\sigma$-additivity of the measure,
\be
\lim_{\delta \to 0} \|T\|(f^{-1}(r-\delta,r+\delta)) = \lim_{\delta \to 0} f_\# \|T\| (r - \delta, r+ \delta) =  f_\# \|T\| (\{ r \}),
\ee
which is strictly positive for at most countably many $r \in \R$. 
\end{proof}

Observe that Theorem~\ref{thm-limit-balls} is an immediate consequence of
the next proposition by taking $z_i=\varphi_i(p_i)$: 

\begin{prop}\label{limit-balls}
Suppose we have a sequence of integral current
spaces, $M_i=(X_i,d_i, T_i)$ and isometric embeddings
$\varphi_i: X_i \to Z$, such that
\be
\lim_{i\to \infty} d^Z_{F}(\varphi_{i\#} T_i, \varphi_{\infty\#}T_\infty)=0
\ee
and points $z_i \in Z$ such that $\delta_i=d_Z(z_i, z_\infty)$. 
Then,
for almost every $r\in \R$, the balls,
$S_i(r)=\varphi_{i\#} T_i \rstr B(z_i,r)$, 
satisfy
\be \label{limit-balls-here}
d_{F}^Z(S_i(r),S_\infty(r)) \le \varepsilon_i(r)+d^Z_F(\varphi_{i\#}T_i, \varphi_{\infty\#} T_\infty)
+ ||\varphi_{\infty\#}T_\infty|| \left(f^{-1}(r-\delta_i, r+\delta_i) \right)
\ee
where $f(x)=\rho_{z_\infty}(\varphi_\infty(x))$ and
\be
\int_{-\infty}^\infty \varepsilon_i(r)\, dr\le d^Z_F(\varphi_{i\#}T_i, \varphi_{\infty\#} T_\infty).
\ee
If $\delta_i \to 0$, then there is a subsequence (that we do not relabel), such that for almost every $r\in \R$,
\be
\lim_{i\to\infty} d_{F}^Z(S_i(r),S_\infty(r)) =0
\ee
\end{prop}

\begin{proof}
There exists integral currents $A_i, B_i$ in $Z$, such that
\be
\varphi_{i\#}T_i-\varphi_{\infty\#}T_\infty= A_i + \partial B_i
\ee
and 
\be
d^Z_F(\varphi_{i\#}T_i,\varphi_{\infty\#}T_\infty)= \mass(A_i) +\mass(B_i).
\ee
For almost every $r$, the restrictions of these
spaces to balls, $B(z_i,r)$, are integral current spaces such that
\be
S_i(r)-S_\infty(r)=(\varphi_{i\#}T_i- \varphi_{\infty\#}T_\infty)\rstr \bar{B}(z_i,r) + S'_i(r) 
\ee
where
\be
S'_i(r)=  (\varphi_{\infty\#}T_\infty)\rstr \bar{B}(z_i,r)-
(\varphi_{\infty\#}T_\infty)\rstr \bar{B}(z_\infty,r).
\ee
Thus
\begin{eqnarray}
S_i(r)-S_\infty(r)&=&A_i\rstr \bar{B}(z_i,r) + (\partial B_i) \rstr \bar{B}(z_i,r)+S'_i(r)\\
&=& A_i \rstr \bar{B}(z_i,r) + <B_i, \rho_i,r> + \partial (B_i \rstr \bar{B}(z_i,r))+S'_i(r)
\end{eqnarray}
Since these are integral currents for almost every $r$ we have
\be
d_{F}^Z(S_i(r),S_\infty(r))\le 
\mass(A_i \rstr \bar{B}(z_i,r) + <B_i, \rho_i,r>) + \mass(B_i \rstr \bar{B}(z_i,r)) +\mass(S'_i(r)).
\ee
By the Ambrosio-Kirchheim Splitting Theorem and $\Lip(\rho_i)\le 1$
we have a Lebesgue measurable function $\epsilon_i:\R\to [0,\infty)$
such that
\be
\mass(A_i \rstr \bar{B}(z_i,r) + <B_i, \rho_i,r>)
\le \mass(A_i) + \varepsilon_i(r),
\ee
where 
\be
\int_{-\infty}^\infty \varepsilon_i(r) \, dr \le \mass(B_i).
\ee
Naturally
\be
\mass(B_i \rstr \bar{B}(z_i,r))\le \mass(B_i)
\ee
and
\begin{eqnarray}
\mass(S'_i(r))&=&\mass\left((\varphi_{\infty\#}T_\infty)\rstr \bar{B}(z_i,r)-
(\varphi_{\infty\#}T_\infty)\rstr \bar{B}(z_\infty,r)\right)\\
&\le& ||\varphi_{\infty\#}T_\infty|| Ann_{z_\infty}(r-\delta_i, r+\delta_i).
\end{eqnarray}
Thus we have (\ref{limit-balls-here}).
The rest follows from Lemma~\ref{lem-ann} and the fact that for a subsequence and almost every $r$ we have
$\lim_{i\to\infty}\varepsilon_i(r) =0$.
\end{proof}

In the next proposition, we will iterate the proof of Proposition~\ref{slice-shift}
to bound the flat distance between lower-dimensional slices of two different currents with two different Lipschitz functions.  

\begin{prop}
\label{le:IntFlatDistEst}
Let $T_1$ and $T_2$ be two $m$ dimensional integral currents on a complete metric space $Z$. 
Let $k \in \{ 1, \dots, m \}$ and let $\pi: Z \to \R^k$ and $\tilde{\pi}:Z \to \R^k$ be two Lipschitz functions, such that 
\be
|\pi_j(z) - \tilde{\pi}_j(z)|<\delta \qquad \forall z\in Z, \,\, \forall j  \in \{1, \dots, k\}.
\ee
and such that
\be
\Lip(\pi) \le L \textrm{ and } \Lip(\tilde{\pi})\le L.
\ee
Then
\be
\int_{\R^k} d_F^Z(<T_1, \pi, t>,<T_2, \tilde{\pi}, t>) dt 
	\leq L^k d_F^Z(T_1, T_2) + 2 k \delta L^{k-1} ( \mass(T_2) + \mass(\partial T_2) ).
\ee
\end{prop}

Note that this proposition implies Theorem~\ref{limit-slices-more}
by taking $T_1=\varphi_{i\#}T_i$ and $T_2=\varphi_{\infty\#}T_\infty$,
$\pi=(\rho_{z_{1,i}},...,\rho_{z_{k,i}})$ where $z_{j,i}=\varphi_i(p_{j,i})$ and 
$\tilde{\pi}=(\rho_{z_{1,\infty}},...,\rho_{z_{k,\infty}})$ where 
$z_{j,\infty}=\varphi_\infty(p_{j,\infty})$.
Then $\delta=\delta_i$ and $L=1$.  It may also be applied to study 
Cauchy sequences of points in a similar way.  This proposition
is applied later in the paper to study sliced filling volumes.   

\begin{proof}
First, we write
\be
<T_1, \pi, t>\, -\, <T_2, \tilde{\pi}, t> \,\,= \,\,<T_1 - T_2, \pi, t>
 +\, <T_2,\pi, t>\, -\,<T_2, \tilde{\pi},t>.
\ee
Let $\epsilon > 0$ be arbitrary. 
There exist integral currents $A \in \intcurr_m(Z)$ and $B \in \intcurr_{m+1}(Z)$ such that
\be
T_1 - T_2 = A + \partial B
\ee
and
\be
\mass(A) + \mass(B) \leq d_F^Z(T_1, T_2) + \epsilon.   
\ee
Then
\be
\begin{split}
<T_1 - T_2, \pi, t> 
	&= <A, \pi, t> + <\partial B, \pi, t>\\
	&= <A, \pi, t> + \, (-1)^k \partial < B , \pi, t >.
\end{split}
\ee
Note that by the Ambrosio-Kirchheim Slicing Theorem
\begin{align}
\int_{\R^k} \mass(< A, \pi, t>) dt &\leq L^k \mass(A),\\
\int_{\R^k} \mass( <B, \pi, t>) dt &\leq L^k \mass(B).
\end{align}

We define the projections $P_j:\R^k \to \R^j$ and $Q_j:\R^k \to \R^{k-j}$ by
\begin{align}
P_j(x_1, \dots, x_j, x_{j+1}, \dots, x_k) &:= (x_1, \dots, x_j)\\
Q_j(x_1, \dots, x_j, x_{j+1}, \dots, x_k) &:= (x_{j+1}, \dots, x_k)
\end{align}
so that $P_k$ and $Q_0$ are identity maps.   Using the following slight
abuse of notation:
\begin{eqnarray}
T
	&=&<T,P_0 \circ \tilde{\pi}, P_0(t)>\\
T 
	&=&\,\,<T, Q_{k} \circ \pi, Q_{k}(t)>
\end{eqnarray}
we have for $\lm^k$-a.e. $t \in \R^k$,
\be
\begin{split}
<T_2, \pi, t> - <T_2, \tilde{\pi}, t> 
	&=\,\,<<T_2,P_0 \circ \tilde{\pi}, P_0(t)>, Q_0 \circ \pi, Q_0(t)>  \\
	&\qquad -  <<T_2,P_{k} \circ \tilde{\pi}, P_{k}(t)>, Q_{k} \circ \pi, Q_{k}(t)>\\
	&= \sum_{j=0}^{k-1} \Big[<<T_2,P_j \circ \tilde{\pi}, P_j(t)>, Q_j \circ \pi, Q_j(t)>  \\
	&\qquad -  <<T_2,P_{j+1} \circ \tilde{\pi}, P_{j+1}(t)>, Q_{j+1} \circ \pi, Q_{j+1}(t)>\Big]. 
\end{split}
\ee
We calculate each term in the sum using the iterated definition of a slice:
\be
\begin{split}
<T_2,& (\tilde{\pi}_1, \dots, \tilde{\pi}_j, \pi_{j+1},\dots \pi_k),(t_1, \dots, t_k)> \\
& \quad \quad- <T_2,(\tilde{\pi}_1,\dots,\tilde{\pi}_{j+1}, \pi_{j+2}, \dots, \pi_k),(t_1, \dots, t_k) >\\
& =\,\,\, <<<T_2, P_j \circ \tilde{\pi},P_j(t)>,\pi_{j+1},t_{j+1}>,
		Q_{j+1}\circ \pi, Q_{j+1}(t)> \\
& \quad \quad- <<<T_2, P_j \circ \tilde{\pi},P_j(t)>,\tilde{\pi}_{j+1},t_{j+1}>,
		Q_{j+1}\circ \pi, Q_{j+1}(t)> \\
&= \,\,\,<\partial <T_2,P_j\circ \tilde{\pi},P_j(t)>\llcorner \pi_{j+1}^{-1}(t_{j+1},\infty),Q_{j+1}\circ \pi,Q_{j+1}(t)>\\
&\quad \quad - < \partial\left(<T_2,P_j \circ \tilde{\pi},P_j(t)>\llcorner \pi_{j+1}^{-1}(t_{j+1},\infty)\right)
,Q_{j+1}\circ \pi,Q_{j+1}(t)>\\
&\quad \quad- <\partial <T_2,P_j\circ \tilde{\pi},P_j(t)>\llcorner \tilde{\pi}_{j+1}^{-1}(t_{j+1},\infty),Q_{j+1}\circ \pi,Q_{j+1}(t)>\\
&\quad\quad +  < \partial\left(<T_2,P_j\circ \tilde{\pi},P_j(t)>\llcorner \tilde{\pi}_{j+1}^{-1}(t_{j+1},\infty)\right)
,Q_{j+1} \circ \pi,Q_{j+1}(t)>\\
&=\,\,\, <\partial <T_2,P_j\circ \tilde{\pi},P_j(t)>\llcorner \pi_{j+1}^{-1}(t_{j+1},\infty),Q_{j+1}\circ \pi,Q_{j+1}(t)>\\
&\quad\quad -  (-1)^{k-j}\partial < <T_2,P_j \circ \tilde{\pi},P_j(t)>\llcorner \pi_{j+1}^{-1}(t_{j+1},\infty)
,Q_{j+1}\circ \pi,Q_{j+1}(t)>\\
&\quad\quad - <\partial <T_2,P_j\circ \tilde{\pi},P_j(t)>\llcorner \tilde{\pi}_{j+1}^{-1}(t_{j+1},\infty),Q_{j+1}\circ \pi,Q_{j+1}(t)>\\
&\quad \quad+  (-1)^{k-j}\partial < <T_2,P_j\circ \tilde{\pi},P_j(t)>\llcorner \tilde{\pi}_{j+1}^{-1}(t_{j+1},\infty)
,Q_{j+1} \circ \pi,Q_{j+1}(t)>\\
&=\,\,\,A_j(t)\,\, +\,\, \partial B_j(t),
\end{split}
\ee
where
\begin{align}
A_j(t) &:= <\partial <T_2, P_j \circ \tilde{\pi},P_j(t)>\llcorner 
\chi_{j+1,t_{j+1}},Q_{j+1} \circ \pi,Q_{j+1}(t)> \\
B_j(t) &:= (-1)^{k-j} < <T_2, P_j\circ \tilde{\pi},P_j(t)>\llcorner 
\chi_{j+1,t_{j+1}}, Q_{j+1}\circ \pi,Q_{j+1}(t) >.
\end{align}
Here, $\chi_{j+1,t_{j+1}}$ is defined as the following difference of characteristic functions
\be
\chi_{j+1,t_{j+1}} := \chi_{\pi_{j+1}^{-1}(t_{j+1},\infty)} -
\chi_{\tilde{\pi}_{j+1}^{-1}(t_{j+1},\infty)}.
\ee
It follows that
\be
<T_2, \pi, t > - < T_2, \tilde{\pi}, t> = \sum_{j=0}^{k-1} (A_j(t) + \partial B_j(t)).
\ee

Since $\chi_{j+1,t_{j+1}}$ is supported on $\pi_{j+1}^{-1}[t_{j+1} - \delta, t_{j+1} + \delta]$, Lemma \ref{lem-ann} below implies that
\be
\begin{split}
\int_{\R^m} & \mass(A_j(t)) \, dt\\
& \leq  L^{k-j-1} \int_{\R^m} \mass(\partial <T_1, P_j \circ \tilde{\pi},P_j(t)>\llcorner 
\chi_{j+1,t_{j+1}}) \, dt_1 \dots dt_{j+1} \\
& \leq 2 \delta L^{k-j-1}\int_{\R^m} \mass(\partial <T_1, P_j \circ \tilde{\pi},P_j(t)>) \, dt_1 \dots dt_j \\
& \leq 2 \delta L^{k-1} \mass( \partial T_1 ).
\end{split}
\ee
In the same way,
\be
\begin{split}
\int_{\R^m} & \mass(< <T_1, P_j\circ \tilde{\pi},P_j(t)>\llcorner 
\chi_{j+1,t_{j+1}}, Q_{j+1}\circ \pi,Q_{j+1}(t) >) \, dt_1 \dots dt_k \\
& \leq L^{k-j-1} \int_{\R^m} \mass(<T_1, P_j \circ \tilde{\pi},P_j(t)>\llcorner 
\chi_{j+1,t_{j+1}}) \, dt_1 \dots dt_{j+1} \\
& \leq 2 \delta L^{k-j-1} \int_{\R^m} \mass(<T_1, P_j \circ \tilde{\pi},P_j(t)>) \, dt_1 \dots dt_j \\
& \leq 2 \delta L^{k-1} \mass( T_1 ).
\end{split}
\ee
We conclude by applying the triangle inequality and by taking the limit $\epsilon \downarrow 0$.
\end{proof}

\begin{rmrk}
One might be able to strengthen the results in this section if one assumes the
sequence of integral current spaces is a sequence of Riemannian manifolds with
boundary.   Or one may wish to try to produce an example of a sequence of
manifolds which have the same sorts of disturbing properties as 
Examples~\ref{concentric-spheres} and~\ref{many-intervals} at least in a limiting sense.
\end{rmrk}

\vspace{.4cm}
\subsection{Continuity of Sliced Filling Volumes}

In this section we prove continuity and semicontinuity of the 
various Sliced Filling Volumes [Theorem~\ref{SF-cont}].
Recall that Theorem~\ref{fillvol-cont} implies the continuity of 
filling volume in the following sense:
\be
M_i \Fto M_\infty\,\,\, \implies \,\,\,\fillvol(\partial M_i) \to \fillvol(\partial M_\infty)
\ee
where the filling volume is defined as in Definition~\ref{defn-filling-volume}.
In this section we combine Theorem~\ref{fillvol-cont}
in combination with the convergence of slices proven in Proposition~\ref{limit-slices-more}.
An immediate consequence of these results is that
the filling volumes of slices converge.   In particular the filling 
volumes of spheres converge to the filling volumes spheres,
as stated in prior work of the first author with Wenger \cite{SorWen2}.   

The situation is more complicated when one considers sliced filling volumes:

\begin{ex}
Recall the integral current spaces $M$ and $M_\ell$ defined in Example~\ref{many-intervals}. 
One may observe that there exists a sequence $p_\ell \in M_\ell$ converging to $p$ such that for $\lm$-a.e. $r \in (0,1/4)$,
\be
\liminf_{\ell \to \infty} \SF_0(p_\ell,r) = 0 < 2r = \SF_0(p,r) < 4r = \limsup_{\ell \to \infty} \SF_0(p_\ell, r).
\ee
In fact these inequalities hold for all sequences $p_\ell$ that converge to $p$ at a high enough rate.
\end{ex}

Nevertheless we are able to prove the following continuity theorem:

\begin{thm} \label{SF-cont}
Let $M_i = (X_i,d_i,T_i)$ be a sequence of $m$ dimensional integral current spaces such that $M_i \Fto M_\infty = (X_\infty, d_\infty, T_\infty)$ and let the collections of points $p_i , p_{i,1}, \dots, p_{i,k}$ converge to $p_\infty, p_{\infty,1}, \dots, p_{\infty,k}$ as $i \to \infty$ for $k \in \{ 1, \dots, m-1\}$. 
Then there exists a subsequence, which we do not relabel, such that for $\lm^1$-a.e. $r > 0$,
\be
\label{SF-contin}
\lim_{i \to \infty} \SF(p_{i}, r, p_{i,1}, \dots, p_{i,k}) = \SF(p_\infty, r, p_{\infty,1}, \dots, p_{\infty,k}).
\ee
In the $k=0$ case we have for every $r > 0$,
\be
\label{eq:FillVollL1Cont}
\lim_{i \to \infty} \frac{1}{r} \int_0^r |\SF_0(p_i, \tau) - \SF_0(p_\infty, \tau)| d\tau = 0.
\ee
Consequently, there is a subsequence $i_j$, such that for $\lm^1$-a.e. $r > 0$,
\be
\lim_{j \to \infty} \SF_0(p_{i_j}, r) = \SF_0(p_\infty,r)\le  \mass(S(p_\infty,r)).
\ee
For $k \in \{ 0, \dots, m-1 \}$,
we also obtain the following inequality for $\lm^1$-a.e. $r>0$,
\be
\liminf_{i \to \infty} \SF_k(p_i,r)  \leq \mass(S(p_\infty,r)).
\ee
Finally, for every $r>0$, we have
\be
\limsup_{i \to \infty} \frac{1}{r} \int_0^r \SF_k(p_i, \tau) d \tau
\leq \frac{1}{r} \int_0^r \mass( S(p_\infty, \tau) ) d\tau.
\ee
\end{thm}

Before we prove Theorem~\ref{SF-cont}, we state and prove two key ingredients
towards the proof [Proposition~\ref{SF-cont-a} and Lemma~\ref{le:EstBySlices}].  

\begin{prop} \label{SF-cont-a} 
Suppose we have a sequence of $m$ dimensional integral current
spaces, $M_i=(X_i,d_i, T_i)$ and isometric embeddings $\phi_i:X_i \to Z$ 
and points $z_{j,i} \in Z$, and $\delta_i > 0$ such that $d_Z(z_{j,i}, z_{j,\infty}) < \delta_i$
for $j=1..k$ for some $k\in \{0,..,m-1\}$  and $p_i \in X_i$, such that
$d_Z(\varphi_i(p_i), \varphi(p))< \delta_i$, then
for almost every $t\in \R^k$
\be
\label{eq:SlicesByFlat}
\begin{split}
&|\fillvol(\partial \Slice(M_i,\rho_i,t)) - \fillvol(\partial \Slice(M_\infty,\rho_\infty,t))| \\
&\qquad  \leq d_{\mathcal{F}}(\partial \Slice(M_i,\rho_i,t),\partial \Slice(M_\infty, \rho_\infty,t))\\
\end{split}
\ee
and thus
\be
\label{eq:L1normFillVol}
\begin{split}
\int_{\R^k} & |\fillvol(\partial \Slice(M_i,\rho_i,t)) - \fillvol(\partial \Slice(M_\infty,\rho,t))|dt \\
& \leq \int_{R^k} d_{\mathcal{F}}(\partial \Slice(M_i,f,t),\partial \Slice(M_\infty, \rho,t)) dt\\
& \leq d_F^Z({\phi_i}_\# M_i, {\phi_\infty}_\# M_\infty ) + 2 \delta  (\mass(T_\infty) + \mass(\partial T_\infty)).
\end{split}
\ee

If $\lim_{i\to\infty}\delta_{i}=0$
and
\be
\lim_{i \to \infty} d^Z_{F}(\varphi_{i\#} T_i, \varphi_{\infty\#}T_\infty)=0
\ee
then for almost every $t \in \R^k$
the masses satisfy
\be
\liminf_{i\to\infty} \mass(\Slice(M_i,\rho_i,t))
\ge\mass(\Slice(M_\infty,\rho_{\infty}, t)).
\ee
Finally, there is a subsequence such that (without relabeling) for almost every $t \in \R^k$,
\be
\label{eq:FlatConvSlice}
\lim_{i \to \infty} d_{\mathcal{F}}(\partial \Slice(M_i,\rho_i,t),\partial \Slice(M_\infty, \rho_\infty,t)) = 0.
\ee
\end{prop}

\begin{proof}
That one can estimate the difference in Filling Volume of the boundaries of two currents in terms of the flat distance between them as in (\ref{eq:SlicesByFlat}) was explained in Theorem \ref{fillvol-cont}. 
Inequality (\ref{eq:L1normFillVol}) is then a direct consequence of inequality (\ref{eq:L1Flat}) in Lemma \ref{limit-slices-more}.
We select a subsequence of $M_{i_j}$ of $M_i$ such that
\be
\lim_{j \to \infty} \mass(\Slice(M_{i_j}, \rho_{i_j}, t)) = \liminf_{i \to \infty} \mass(\Slice(M_i, \rho_i,t)).
\ee
The integral bound (\ref{eq:L1Flat}) implies that for a subsequence of the $M_{i_j}$ (that we do not relabel), for almost every $t \in \R^k$ equation (\ref{eq:FlatConvSlice}) holds. 
Since flat convergence implies weak convergence and the mass is lower-semicontinuous under weak convergence, 
\be
\liminf_{i\to\infty} \mass( \Slice(M_i,\rho_i,t) ) = \liminf_{j \to \infty} \mass(\Slice(M_{i_j}, \rho_{i_j}, t)) \geq 
\mass(\Slice(M_\infty, \rho_\infty, t)).
\ee
\end{proof}

\begin{lem}
\label{le:EstBySlices}
Let $M= (X, d, T)$ be an $m$ dimensional integral current space, and $\pi:X \to \R^k$ be a Lipschitz function with $\Lip(\pi_j) \leq 1$. 
Then
\be
\label{eq:EstFillVolBySlices} \int_{\R^k} \fillvol(\partial \Slice( M, \pi, t )) dt \leq \fillvol(\partial M).
\ee
In particular,
\be \label{SFkleSF0}
\SF_k(p,r) \leq \SF_0(p,r) \le \mass(S(p,r))
\ee
\end{lem}

\begin{proof}
Let $\epsilon > 0$. 
There is a $m$ dimensional integral current space $A$ such that $\partial A = \partial M$ and
\be
\fillvol(\partial M) + \epsilon \geq \mass(A).
\ee
By the Ambrosio-Kirchheim slicing theorem,
\be
\begin{split}
\fillvol(\partial M) + \epsilon &\geq  \mass(A) \\
&\geq  \int_{\R^k} \mass( \Slice(A, \pi, t) ) dt \\
&\ge  \int_{\R^k} \fillvol(\partial \Slice(A, \pi, t)) dt \\
&= \int_{\R^k} \fillvol( \Slice(\partial A, \pi, t))  dt \\
&= \int_{\R^k} \fillvol( \Slice(\partial M, \pi, t))  dt \\
&= \int_{\R^k} \fillvol(\partial \Slice(M, \pi, t)) dt.
\end{split}
\ee
Since $\epsilon > 0$ is arbitrary, the estimate (\ref{eq:EstFillVolBySlices}) holds.

Taking $M=S(p,r)$, we have by Definition~\ref{sliced-filling-vol},
\begin{eqnarray} 
\SF(p,r,F_1,...,F_k)
&=&\int_{t\in A_r} \fillvol(\partial\Slice(S(p,r),F,t)) \, \mathcal{L}^k\\
&\le &\int_{t\in A_r} \fillvol(\partial S(p,r)) \, \mathcal{L}^k\\
&=&\SF_0(p,r)
\end{eqnarray} 
Taking $F_i=\rho_{q_i}$ we have
\be
\SF(p,r,q_1,...,q_k) \le \SF_0(p,r)
\ee
and taking the supremum over $q_i \in \partial B_p(r)$ we obtain (\ref{SFkleSF0}).
\end{proof}

We may now prove Theorem~\ref{SF-cont}:

\begin{proof}
By the definition of convergence of points, there exists a complete separable metric space $Z$ and isometric embeddings $\phi_i: \overline{X_i} \to Z$, such that
\be
d_F^Z ({\phi_i}_\# T_i , {\phi_\infty}_\# T_\infty ) = 0,
\ee
and $\delta_i := d_Z(\phi_i(p_i) , \phi_\infty(p_\infty)) \to 0$, $\delta_{i,j} := d_Z(\phi_i(p_{i,j},\phi_\infty(p_{\infty,j})) \to 0$, as $i \to \infty$, $j = 1, \dots, k$.

By Proposition~\ref{limit-balls} there exists a subsequence, such that for $\lm^1$-a.e. $r>0$,
\be
d_F^Z ({\phi_i}_\# T_i \llcorner B_{r}(\phi_i(p_i)), {\phi_\infty}_\# T_\infty \llcorner B_{r}(\phi_{\infty}(p_\infty)) )  \to 0
\ee
as $i \to \infty$. 

Hence, for such a value of $r>0$ we can apply Proposition (\ref{SF-cont-a}) to the integral current spaces associated to ${\phi_i}_\# T_i \llcorner B_r(p_i)$. In particular, inequality (\ref{eq:L1normFillVol}) yields the continuity property expressed by (\ref{SF-contin}).

Similarly, if $M_i \Fto M$ and the points $p_i \in M_i$ converge to $p_\infty$, there is a complete separable metric space $Z$ and isometric embeddings $\phi_i:\overline{X_i} \to Z$ such that
\be
\lim_{i \to \infty} d_F^Z({\phi_i }_\# T_i , {\phi_\infty}_\# T_\infty) = 0,
\ee
and $\delta_i := d_Z(\phi_i(p_i) , \phi_\infty(p_\infty)) \to 0$ in $Z$ as $i \to \infty$.

By using respectively Theorem~\ref{fillvol-cont}, Proposition~\ref{limit-balls}, and Lemma~\ref{lem-ann}, we find
\be
\begin{split}
\frac{1}{r} \int_0^r | \SF_0( p_i, \tau ) - \SF_0(p_\infty,\tau)| d\tau \,\,
& \leq\,\, \frac{1}{r} \int_0^r d_\mathcal{F} ( \partial S(p_i, r), \partial S(p_\infty, r) ) d\tau\\
& \leq\,\, \left( \frac{1}{r} + 1 \right) d_F^Z( {\phi_i }_\# T_i, {\phi_\infty}_\# T_\infty) \\ 
& \,\,\qquad + \frac{1}{r} \int_0^r \| T_\infty\| ( \rho_{p_\infty}^{-1} (\tau - \delta_i, \tau + \delta_i) ) d\tau\\
& \leq \,\,\left( \frac{1}{r} + 1 \right) \,d_F^Z( {\phi_i }_\# T_i, {\phi_\infty}_\# T_\infty) \\ 
& \,\,\qquad + \frac{2}{r} \,\delta_i \,\mass (T_\infty).
\end{split}
\ee
When we take the limit $i\to\infty$, we obtain (\ref{eq:FillVollL1Cont}).

By Lemma \ref{le:EstBySlices}, for $\lm^1$-a.e. $r>0$, we have
\be
\liminf_{i \to \infty} \SF_k(p_i,r) \leq \lim_{j\to\infty} \SF_0(p_{i_j},r) = \SF_0(p_\infty,r) \leq \mass(S(p_\infty,r)).
\ee
Thus for every $r>0$, we have
\be
\begin{split}
\limsup_{i \to \infty} \frac{1}{r} \int_0^r \SF_k(p_i, \tau) d \tau
&\leq \limsup_{i \to \infty} \frac{1}{r} \int_0^r \SF_0(p_i, \tau) d\tau \\
&= \frac{1}{r} \int_0^r \SF_0(p_\infty, \tau) d\tau \\
&\leq \frac{1}{r} \int_0^r \mass( S(p_\infty, \tau) ) d\tau.
\end{split}
\ee
\end{proof}

\vspace{.4cm}
\subsection{Continuity of Interval Filling Volumes}

Recall the definition of the interval filling volume of
a manifold or integral current space in Definition~\ref{IFV},
\be
\IFV(M)=\fillvol(\partial (M\times I_\epsilon))
\le \epsilon \mass(M).
\ee
This notion was particularly useful for $M$ without boundary.
In this section we prove the interval filling volume is continuous
with respect intrinsic flat convergence
[Theorem~\ref{IFV-cont}].  Taking more precise estimates we prove the
sliced interval filling volumes are continuous as well
[Theorem~\ref{SIF-cont}].

\begin{thm} \label{IFV-cont}  
Suppose we have $m$ dimensional integral current
spaces, $M_i=(X_i,d_i, T_i)$, such that $M_i \Fto M_\infty$,
then for any fixed $\epsilon>0$, their interval filling volumes converge,
\be
\lim_{i\to \infty} \IFV(M_i) = \IFV(M_\infty)
\ee.
\end{thm}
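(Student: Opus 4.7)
The plan is to combine two already-established tools: the estimate for the intrinsic flat distance on products with an interval (Proposition~\ref{interval-flat}) and the continuity of the filling volume under intrinsic flat convergence (Theorem~\ref{fillvol-cont}). The strategy is that the operation $M \mapsto M \times I_\epsilon$ is Lipschitz in the intrinsic flat distance with constant $2+\epsilon$, so that flat convergence $M_i \Fto M_\infty$ passes to the products, and then $\mathbb{IFV}_\epsilon$ is just the filling volume of the boundary of the product, to which the continuity theorem applies.

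Concretely, the first step is to observe that, by Proposition~\ref{interval-flat},
\be
d_{\mathcal{F}}(M_i \times I_\epsilon,\, M_\infty \times I_\epsilon) \,\le\, (2+\epsilon)\, d_{\mathcal{F}}(M_i, M_\infty),
\ee
so the hypothesis $M_i \Fto M_\infty$ gives $M_i \times I_\epsilon \Fto M_\infty \times I_\epsilon$. Second, apply the first estimate in Theorem~\ref{fillvol-cont} to the pair $M_i \times I_\epsilon$ and $M_\infty \times I_\epsilon$ (in both orders) to conclude
\be
\bigl|\fillvol(\partial(M_i \times I_\epsilon)) - \fillvol(\partial(M_\infty \times I_\epsilon))\bigr| \,\le\, d_{\mathcal{F}}(M_i \times I_\epsilon,\, M_\infty \times I_\epsilon).
\ee
Combining these two inequalities yields
\be
\bigl|\mathbb{IFV}_\epsilon(M_i) - \mathbb{IFV}_\epsilon(M_\infty)\bigr| \,\le\, (2+\epsilon)\, d_{\mathcal{F}}(M_i, M_\infty),
\ee
which tends to $0$ as $i \to \infty$.

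There is essentially no obstacle beyond checking that $M \times I_\epsilon$ is indeed an integral current space so that the hypotheses of both tools are literally applicable; this was verified in Proposition~\ref{prop-times-I}. Note that $\epsilon$ is fixed throughout, so the constant $(2+\epsilon)$ is harmless; one should not attempt to let $\epsilon \to 0$ here, since both sides of the claimed equality depend on $\epsilon$. The only thing worth double-checking in the write-up is that Theorem~\ref{fillvol-cont} is stated symmetrically enough in $M_1, M_2$ that one can take absolute values, which is immediate from exchanging the roles of the two spaces.
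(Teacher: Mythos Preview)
Your proof is correct and follows exactly the same approach as the paper: apply Proposition~\ref{interval-flat} to pass the intrinsic flat convergence to the products $M_i \times I_\epsilon$, then invoke Theorem~\ref{fillvol-cont} for continuity of the filling volume. The paper's proof is a terse two-line version of what you wrote; your explicit inequality $|\mathbb{IFV}_\epsilon(M_i) - \mathbb{IFV}_\epsilon(M_\infty)| \le (2+\epsilon)\, d_{\mathcal{F}}(M_i, M_\infty)$ is a nice quantitative sharpening of the same argument.
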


\begin{proof}
By Proposition~\ref{interval-flat}, we see that 
$M_i \times I_\epsilon \Fto M_\infty \times I_\epsilon$.
Thus we have continuity applying Theorem~\ref{fillvol-cont}.
\end{proof}

We now prove the continuity of the 
sliced interval filling volume defined in Definition~\ref{SIF}.

\begin{thm} \label{SIF-cont} 
Suppose $M_i \Fto M$.
If $p_i\in M_i$ converge to
$p_\infty\in M_\infty$, and $q_{j,i}\in M_\infty$ converge to $q_{j,\infty}\in M_\infty$  for $j=1, \dots, k$ then for any fixed $\epsilon>0$, there is a subsequence such that for almost every $r \in \R$,
\be \label{eq:SIF-contin}
\lim_{i\to\infty} \SIF(p_i,r, q_{1,i}, ... q_{k,i}) 
= \SIF(p_\infty, r, q_{1,\infty},...q_{k,\infty}).
\ee
\end{thm}

\begin{proof}
This theorem is a consequence of Proposition~\ref{SIF-cont-a} stated and
proven immediately below, combined with Proposition~\ref{limit-balls}
and Definition~\ref{SIF}.
\end{proof}  

\begin{prop} \label{SIF-cont-a}
Suppose we have a sequence of $m$ dimensional integral current
spaces, $M_i=(X_i,d_i, T_i)$ and isometric embeddings
$\varphi_i: X_i \to Z$, constants $\delta_i> 0$ 
and points $z_{j,i} \in Z$ such that $d_Z(z_{j,i}, z_{j,\infty}) < \delta_i$
for $j=1..k$ for some $k\in \{0,..,m-1\}$  and $p_i \in X_i$, such that
$d_Z(\varphi_i(p_i), \varphi(p))< \delta$, then
for any fixed $\epsilon >0$ and almost every $t\in \R^k$
\be
\begin{split}
\label{eq:SIF-cont-0}
|\IFV&(\Slice(M_i, \rho_i,t)) - \IFV(\Slice(M_\infty, \rho_\infty, t))|  \\
& \leq (2 + \epsilon) \, d_\mathcal{F}(\Slice(M_i, \rho_i, t), \Slice(M_\infty, \rho_\infty, t)).
\end{split}
\ee
In particular,
\be
\begin{split}
\label{eq:SIF-cont-2}
\int & |\IFV(\Slice(M_i, \rho_i,t)) - \IFV(\Slice(M_\infty, \rho_\infty, t))| \, dt \\
& \leq (2 + \epsilon )\, d_\mathcal{F}(M_i, M_\infty) + 2 \delta_i ( \mass(T_\infty) + \mass(\partial T_\infty) ).
\end{split}
\ee
If $\lim_{i\to\infty}\delta_{i}=0$ and
\be
\lim_{i\to\infty} d^Z_{F}(\varphi_{i\#} T_i, \varphi_{\infty\#}T_\infty)=0
\ee
there is a subsequence such that for almost every $t \in \R^k$,
the interval filling volumes of slices converge,
\be \label{SIF-cont-1}
\lim_{i\to\infty} \IFV(\Slice(M_i,\rho_i,t))
=\IFV( \Slice(M_\infty,\rho_{\infty}, t)).
\ee
\end{prop}

\begin{proof}
Since
\be
\IFV(\Slice(M_i, \rho_i, t)) = \fillvol(\partial (\Slice(M_i, \rho_i, t) \times I_\epsilon)),
\ee
Theorem~\ref{fillvol-cont} and Proposition~\ref{interval-flat} imply that inequality~(\ref{eq:SIF-cont-0}) holds for almost every $t \in \R^k$. The estimate (\ref{eq:SIF-cont-2}) on the integrated quantity then follows from Proposition~\ref{limit-slices-more}.
\end{proof}

\vspace{.4cm}
\subsection{Limits of Points}

In this section we prove two statements about
Cauchy and converging sequences of points. 
Recall also Definition~\ref{point-conv}
and Definition~\ref{point-Cauchy}.
In \cite{SorWen1}, the second author and Stefan Wenger prove that
certain points in Gromov-Hausdorff limits of sequences were
also in the intrinsic flat limit by bounding Gromov's filling volumes of
spheres about points converging to these points.   In \cite{Sormani-AA} 
the second author removes the assumption of a Gromov-Hausdorff limit,
and studies whether or not Cauchy sequences of points converge
to points in the metric completion of the intrinsic flat limit.
The technique there involved uniformly
bounding the intrinsic flat distance of spheres away from $0$
and is not precise enough to distinguish between points in the
intrinsic flat limit and its metric completion.

Here we use sliced filling volumes and filling volumes to determine
when there is a limit point in the intrinsic flat limit space and not
just in its metric completion.   We prove Theorem~\ref{SF_k-in-set} 
which assumes one has a Cauchy sequence
and determines when the Cauchy sequence has a limit
point in the limit space not just in the metric completion of the
limit space.  Before we state and prove this theorem, we 
discuss the difficulties arising in identifying points in the limit space.

Recall that given an integral current space $(X,d,T)$, then
any isometric embedding
$\varphi: X\to Z$ with $Z$ complete 
maps $X$ isometrically onto $\set(\varphi_\#T)$
and extends to $\varphi: \bar{X} \to Z$ which maps $X$
isometrically onto 
\be
\set(\varphi_\#T)=\{z\in Z: \, \liminf_{r\to 0} \frac{||\varphi_\#T||(B(z,r)))}{r^m} > 0\,\}.
\ee
In Lemma~\ref{le:EstBySlices}] we proved
\be
\SF_k(p,r) \le \SF_0(p,r)=\fillvol(\partial S(p,r)) \le \mass(S(p,r))
\ee
In work of the second author with Wenger, continuity of the
filling volume is applied to prove that for certain sequences of
spaces a point in the Gromov-Hausdorff
limit lies in the intrinsic flat limit.  In generally it may be tricky 
to use the filling volume in this way:

\begin{rmrk}
Let $M = (X, d, T)$ be the $m$ dimensional integral current space of Example~\ref{concentric-spheres}.  It has a point $p \in X$ which is the center of
the concentric spheres.   This $p \in X=\set(T)$ because 
$\mass(B(p,r))\le Cr^m$.  However $\SF_0(p,r) = 0$ for $\lm$-a.e. small $r>0$. 
This shows that although $\SF_k(p,r)$ for $k = 0,\dots, m-1$ provide lower bounds for $\mass(S(p,r))$, in general these lower bounds could be far from sharp and thus
not be able to identify when a point lies in $X$.
\end{rmrk}

The next theorem concerns Cauchy sequences of points
in the sense of Definition~\ref{point-Cauchy}.

\begin{thm} \label{SF_k-in-set} 
Suppose $M_i$ are integral current spaces of dimension $m$
with $M_i$ converge to $M_\infty$ in the intrinsic flat sense.
Let $k\in \{0,1,2...,m-1\}$. 
Suppose that 
 $p_i\in M_i$ are Cauchy. If there is a function $c: \R_+ \to \R_+$ such that
\be
\label{eq:LimPointUnLowBound}
\frac{1}{r} \int_0^r \SF_k(p_i,r) \, d\tau \geq c(r),
\ee
then
$p_i$ converge to a point $p_\infty\in \bar{M}_\infty$.  
If in addition
\be
\label{eq:cDensLowEst2}
\liminf_{r \downarrow 0} \frac{c(r)}{r^m} > 0,
\ee 
then $p_i$ converge to a point $p_\infty\in M_\infty$.  
\end{thm}

\begin{proof}
By Lemma~\ref{le:EstBySlices}, it suffices to prove the case $k = 0$.
If $M_i \Fto M$ and the points $p_i \in M_i$ are Cauchy, there is a complete separable metric space $Z$ and isometric embeddings $\phi_i:X_i \to Z$ such that
\be
\lim_{i \to \infty} d_F^Z({\phi_i }_\# T_i , {\phi_\infty}_\# T_\infty) = 0,
\ee
and there exists a $z_\infty \in Z$ such that $\delta_i := d_Z(\phi_i(p_i) , z_\infty) \to 0$ in $Z$ as $i \to \infty$.

Again, by using Theorem~\ref{fillvol-cont}, Proposition~\ref{limit-balls}, and Lemma~\ref{lem-ann}, we find

\be
\begin{split}
\mass({\phi_\infty}_\# T_\infty \llcorner B_r(z_\infty) ) & \ge \frac{1}{r} \int_0^r \mass({\phi_\infty}_\# T_\infty \llcorner B_\tau(z_\infty) ) d\tau \\
&\ge \frac{1}{r} \int_0^r \fillvol_\infty( \partial ({\phi_\infty}_\# T_\infty \llcorner B_\tau(z_\infty) ) )d\tau \\
&\ge c(r) - \left(\frac{1}{r} + 1 \right) d_F^Z({\phi_i}_\# T_i, {\phi_\infty}_\#T_\infty) \\
&\quad - \frac{1}{r} \int_0^r \|T\|_\infty \left( \rho_{p_\infty}^{-1}(\tau - \delta_i, \tau+ \delta_i) \right) d\tau \\
&\ge c(r) - \left(\frac{1}{r} + 1 \right) d_F^Z({\phi_i}_\# T_i, {\phi_\infty}_\#T_\infty) - \frac{2\delta_i} {r} \mass(T_\infty).
\end{split}
\ee
We take the limit $i \to \infty$ and conclude that
\be
\mass({\phi_\infty}_\# T_\infty \llcorner B_r(z_\infty) ) \geq c(r).
\ee

Therefore, $M_\infty$ is not the $\mathbf{0}$ space, and $z_\infty \in \overline{{\set(\phi_\infty}_\# T_\infty)}$. 
Moreover, if inequality~(\ref{eq:cDensLowEst2}) is satisfied, $z_\infty \in \set({\phi_\infty}_\# T_\infty)$. 
\end{proof}

\begin{example}
It is quite possible for a Cauchy sequence of points to have more
than one limit as can be seen simply by taking the constant
sequence of integral current spaces, $S^1$, and noting that
due to the isometries, any point may be set up as the limit of
a Cauchy sequence of points.  One may also use isometries
of $S^1$ to relocate a Cauchy sequence so that the images are no longer 
Cauchy in $Z$.   This is also true of converging sequences in
the theory of Gromov-Hausdorff convergence.
\end{example}

\begin{rmrk}
Note that in order to apply the first part of Theorem~\ref{SF_k-in-set} it is sufficient to find a function $c(r) :\R_+ \to \R_+$ and a constant $r_0 > 0$, such that (\ref{eq:LimPointUnLowBound}) is satisfied for $0< r < r_0$. 
In particular, (\ref{eq:LimPointUnLowBound}) holds if $\SF_k(p_i,r) > \tilde{c}(r)$ for $\lm$-a.e. $0 < r < r_0$. 
Finally, (\ref{eq:cDensLowEst2}) holds if there exists a constant $C$ such that $\SF_k(p_i,r) \geq C r^m$ for $0 < r < r_0$.
\end{rmrk}

\vspace{.4cm}
\subsection{Bolzano-Weierstrass Theorems}

When one has a sequence of compact metric spaces converging
in the Gromov-Hausdorff sense to a compact metric space, and
one has a sequence of points in those metric spaces, then a subsequence
converges to a point in the Gromov-Hausdorff limit.  This is the
Gromov-Hausdorff Bolzano-Weierstrass Theorem and is an immediate
consequence of Gromov's Embedding Theorem which provides a
common metric space which is compact.   The immediate
restatement of the Gromov-Hausdorff Bolzano-Weierstrass Theorem
is not true when the spaces converge in the intrinsic flat sense
instead of the Gromov-Hausdorff sense.    This
can be seen in Ilmanen's Example with disappearing tips.   The key difficulty lies in the fact that, unlike Gromov's Embedding Theorem,
Theorem~\ref{converge}
does not provide a compact common metric space.

Nevertheless we are able to prove the following two
Bolzano-Weierstrass Theorems by assuming the limit space is
compact and preventing the points in the sequence from disappearing.   These theorems require bounding the Gromov's Filling Volumes
and Sliced Filling Volumes of spheres.   A simpler Bolzano-Weierstrass Theorem is proven by the second author 
in \cite{Sormani-AA}.  It requires uniformly
bounding the intrinsic flat distance of spheres away from $0$
but only produces a subsequence which converges in the metric
completion of the intrinsic flat limit space.

\begin{thm}\label{B-W-fillvol}
Suppose $M^m_i=(X_i, d_i, T_i)$ are $m$-dimensional integral current spaces converging in the intrinsic flat sense to a limit integral current space 
$M^m_\infty=(X_\infty, d_\infty, T_\infty)$.
 Suppose there exists a sequence
$p_i \in M_i$ and a function $c:\R_+ \to \R_+$ such that
\be 
\frac{1}{r} \int_0^r \fillvol(\partial S(p_i,\tau )) \, d\tau \ge  c(r) > 0.
\ee 
Then there exists a subsequence $p_{i_j}$
which converges to $p_\infty\in \bar{M}^m_\infty$. 
In particular, $M_\infty^m$ is nonzero.

If in addition 
\be
\label{eq:cDensLowEst}
\liminf_{r \downarrow 0} \frac{c(r)}{r^m} > 0,
\ee 
then the subsequence converges to a point $p_\infty \in M_\infty^m$.
\end{thm}

\begin{proof}
By Theorem~\ref{converge} there are a complete metric space $Z$ and isometric embeddings $\phi_i:X_i \to Z$ such that $d_F^Z({\phi_i}_\# T_i, {\phi_\infty}_\# T_\infty) \to 0$. 
Set $z_i := \phi_i(p_i)$.

Note that by Proposition~\ref{limit-balls}
\be
\int_0^{r_0} d_F^Z({\phi_i}_\# T_i \llcorner B_r(z_i),{\phi_\infty}_\# T_\infty \llcorner B_r(z_i) ) dr
 \leq 2 d_{\mathcal{F}}( M_i , M_\infty ).
\ee
Hence, by Theorem \ref{fillvol-cont} and our hypothesis, 
\be
\label{eq:BW-mass-bound-limit}
\begin{split}
\mass({\phi_\infty}_\# T_\infty \llcorner B_r(z_i) ) &\geq  \liminf_{i \to \infty } \frac{1}{r} \int_0^r \mass ( {\phi_\infty}_\# T_\infty \llcorner B_\tau(z_i) ) d\tau  \\
& \geq \liminf_{i \to \infty } \frac{1}{r} \int_0^r \fillvol_\infty( \partial ( {\phi_\infty}_\# T_\infty \llcorner B_\tau(z_i) )  ) d\tau \\
&\geq c(r).
\end{split}
\ee
In particular, $M_\infty$ is not the $\mathbf{0}$ space.

We claim that the $z_i$ have a Cauchy subsequence. 

We argue by contradiction:
If not, the metric space $(\{z_i\}, d_Z)$ is complete, and therefore not totally bounded, so that there is a $\delta > 0$ such that for a subsequence (without relabeling) $d_Z(z_i, z_j) > 4 \delta$ for $i \neq j$. 
As the balls $B_\delta(z_i)$ are mutually disjoint, this would mean that 
\be
\lim_{i \to \infty} \mass({\phi_\infty}_\# T_\infty \llcorner B_\delta(z_i)) = 0,
\ee
which contradicts (\ref{eq:BW-mass-bound-limit}).

Consequently, the $z_i$ have a Cauchy subsequence. 
We could conclude now by applying Theorem \ref{SF_k-in-set}, or by mimicking its proof.
With the established notation, however, we can easily finish the proof in an alternative fashion.

Since $Z$ is complete, this subsequence, also denoted $z_i$, converges to a limit $z_\infty \in Z$. 
Since for every $\tau < r$, for $i$ large enough $B_\tau(z_i) \subset B_r(z_\infty)$, 
by (\ref{eq:BW-mass-bound-limit}), for every $\tau < r$,
\be
\mass({\phi_\infty}_\# T_\infty \llcorner B_r(z_\infty)) \geq 
\limsup_{i\to\infty} \mass({\phi_\infty}_\# T_\infty \llcorner B_\tau(z_i)) \geq c(\tau).
\ee
Consequently, $z_\infty \in \overline{\set({\phi_\infty}_\# T_\infty) }$, and if inequality~(\ref{eq:cDensLowEst}) holds, even 
$z_\infty \in \set({\phi_\infty}_\# T_\infty)$.

\end{proof}

This theorem is a special case of the following more general
Bolzano-Weierstrass Theorem. 
The generalization follows from the special case and Lemma \ref{le:EstBySlices}.

\begin{thm}\label{B-W}
Suppose $M^m_i=(X_i, d_i, T_i)$ are $m$ dimensional integral current spaces 
converging in the intrinsic flat sense to
a limit integral current space 
$M^m_\infty=(X_\infty, d_\infty, T_\infty)$. 
Suppose there exists $k\in \{0,1,...,(m-1)\}$, $r_0>0$ and a sequence
$p_i \in M_i$ and a function $c: \R_+ \to \R_+$ such that
\be \label{eqn-B-W}
\frac{1}{r} \int_0^r \SF_k(p_i,\tau) d \tau \geq  c(r) > 0.
\ee 
Then there exists a subsequence $p_{i_j}$
which converges to $p_\infty\in \bar{M}^m_\infty$. 
In particular, $M_\infty^m$ is nonzero.

If in addition
\be
\label{eq:cDensLowEst2}
\liminf_{r \downarrow 0} \frac{c(r)}{r^m} > 0,
\ee 
then in fact $p_\infty \in M_\infty^m$.
\end{thm}

\section{Compactness Theorems}

In this section we complete the proofs
of our main two compactness theorems:
Theorem~\ref{tetra-compactness-2} and
Theorem~\ref{SF_k-compactness-2}.    These theorems
were announced by the second author in \cite{Sormani-tetra}.   
Both of these
theorems prove that certain sequences of spaces have
subsequences which converge in both the intrinsic flat and
the Gromov-Hausdorff sense to the same space.   Theorem~\ref{tetra-compactness-2}
is the Tetrahedral Compactness Theorem concerning sequences of
Riemannian manifolds satisfying the tetrahedral property.   It
was partially stated in the introduction.  It is a consequence of
Theorem~\ref{SF_k-compactness-2} which applies to integral current
spaces which have uniform lower bounds on the sliced filling
volumes of the form $\SF_k(p,r)\ge C_{SF}r^m$.

In prior work of the second author with Wenger \cite{SorWen1} 
another pair of compactness theorems was proven
providing subsequences of manifolds which converge both in the intrinsic flat
and Gromov Hausdorff sense to the same limit.
One theorem concerned noncollapsing
sequences of Riemannian manifolds with nonnegative Ricci curvature
(extending Gromov's Ricci Compactness Theorem \cite{Gromov-metric}).
The other concerned sequences of Riemannian manifolds with
a uniform linear contractibility function and a uniform upper bound on volume
(extending Greene-Petersen's Compactness Theorem \cite{Greene-Petersen}).    The techniques used in the proof of the Contractibility Function
Compactness Theorem in \cite{SorWen1} involve
the continuity of the filling volumes of balls.   Here we use the
continuity of sliced filling volumes in a similar way.   

The proofs of the theorems in this section are very short because the build upon
the prior theorems proven in this previous sections of this paper.   Those theorems
have applications in other situations and so it was important to prove them
separately rather than hiding those results within the proofs of these theorems.

\vspace{.4cm}
\subsection{Sliced Filling Compactness Theorem}

\begin{thm} \label{SF_k-compactness-2}
Given a sequence of $m$ dimensional
integral current spaces $M_i=(X_i, d_i, T_i)$
with $\mass(M_i) \le V_0$, $\mass(\partial M_i) \le A_0$,
$\diam(M_i)\le D_0$
and a uniform constant $C_{SF}>0$ such that
for some $k\in 0..(m-1)$ we have
\be
\SF_k(p,r) \ge C_{SF} r^m
\ee
then a subsequence of the
$M_i$ converge in the Gromov-Hausdorff sense and the
Intrinsic Flat sense to a nonzero integral current space $M_\infty$.
\end{thm}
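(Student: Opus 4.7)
The plan is to verify the hypotheses of Theorem~\ref{GH-to-flat} so that a subsequence converges simultaneously in the Gromov-Hausdorff and intrinsic flat senses inside a common metric space, and then to apply Theorem~\ref{SF_k-in-set} to rule out the disappearance of any point. Equiboundedness is immediate from $\diam(M_i)\le D_0$. Equicompactness is established as in the proof of Theorem~\ref{SF_k-compactness}: for any $p\in X_i$ and $r$ at which $\SF_k(p,r)$ is defined, pick $q_1,\dots,q_k\in \partial B_p(r)$ with $\SF(p,r,q_1,\dots,q_k)\ge \tfrac{1}{2}C_{SF}r^m$, and then Lemma~\ref{filling-slices} (with each $\Lip(\rho_{q_j})\le 1$) yields
\begin{equation*}
\mass(S(p,r))\;\ge\;\SF(p,r,q_1,\dots,q_k)\;\ge\;\tfrac{1}{2}C_{SF}\,r^m,
\end{equation*}
so the number of disjoint $r$-balls in $X_i$ is at most $2V_0/(C_{SF}r^m)$, uniformly in $i$.

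With equiboundedness, equicompactness, and the given uniform upper bounds on mass and boundary mass, Theorem~\ref{GH-to-flat} supplies a subsequence (which I relabel), a complete separable metric space $Z$, and isometric embeddings $\varphi_i\colon\bar{X}_i\to Z$ and $\varphi_\infty\colon \bar{X}_\infty\to Z$ such that $(X_i,d_i)\GHto(Y,d_Y)$ inside $Z$ and $M_i\Fto M_\infty=(X_\infty,d_\infty,T_\infty)$ with $X_\infty\subseteq Y\subseteq Z$. Thus it suffices to prove $Y\subseteq X_\infty$; this simultaneously gives $X_\infty=Y$ (the two limits coincide) and $M_\infty\neq \mathbf{0}$ since $Y$ is nonempty as the Gromov-Hausdorff limit of nonempty spaces.

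To establish the reverse inclusion, fix $y\in Y$. The Gromov-Hausdorff convergence in the common $Z$ (Theorem~\ref{Gromov-Z}) furnishes a sequence $p_i\in X_i$ with $\varphi_i(p_i)\to y$ in $Z$, so $p_i$ is Cauchy in the sense of Definition~\ref{point-Cauchy} with respect to the same embeddings that realize the intrinsic flat convergence. The hypothesis $\SF_k(p_i,r)\ge C_{SF}r^m$ is exactly the uniform lower bound required by the second conclusion of Theorem~\ref{SF_k-in-set}, which therefore produces a limit point $p_\infty\in X_\infty$. Under the common embedding $\varphi_\infty(p_\infty)=\lim_i \varphi_i(p_i)=y$, so $y\in X_\infty$, completing the argument.

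The main obstacle is a subtle bookkeeping issue rather than a deep difficulty: Theorem~\ref{SF_k-in-set} requires that $p_i$ be Cauchy with respect to embeddings that witness the intrinsic flat convergence, whereas producing $p_i$ from $y\in Y$ only naturally uses embeddings that witness the Gromov-Hausdorff convergence. The key to circumventing this is invoking Theorem~\ref{GH-to-flat} rather than applying Wenger's compactness theorem and Gromov's compactness theorem separately, since the former delivers both convergences simultaneously in the same $Z$. Once that observation is made, everything else is a direct citation of earlier results in this section.
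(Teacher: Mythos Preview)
Your proof is correct and follows essentially the same route as the paper's: establish equicompactness from the uniform lower bound $\SF_k(p,r)\ge C_{SF}r^m$ via Lemma~\ref{filling-slices}, pass to a subsequence converging in both senses inside a common $Z$, and then use Theorem~\ref{SF_k-in-set} on Cauchy sequences coming from points of $Y$ to conclude $Y\subset X_\infty$. The only cosmetic difference is packaging: you invoke Theorem~\ref{GH-to-flat} directly to obtain both limits in a common $Z$, whereas the paper first cites Theorem~\ref{SF_k-compactness} for the Gromov-Hausdorff limit, then Gromov's embedding theorem for the common $Z$, and then applies Ambrosio-Kirchheim compactness in that $Z$ --- which is precisely what Theorem~\ref{GH-to-flat} encapsulates. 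Your explicit remark about needing the same $Z$ for both convergences (so that the Cauchy sequence from the GH side is Cauchy in the sense required by Theorem~\ref{SF_k-in-set}) is a point the paper leaves implicit.
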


\begin{proof}
By Theorem~\ref{SF_k-compactness}, we know a
subsequence $(X_i, d_i)$ has a Gromov-Hausdorff
limit $(Y, d_Y)$.  Thus by Gromov, there exists a 
common compact metric space $Z$ and isometric embeddings
$\varphi_i: X_i \to Z$, $\varphi: Y \to Z$, such
that $d_H^Z(\varphi(X_i), \varphi(Y)) \to 0$.   By Ambrosio-Kirchheim
Compactness Theorem, a subsequence of $\varphi_{1\#}T_i$
converges to $T_\infty \in \intcurr_m(Z)$.  Let
$M_\infty=(\set(T_\infty), d_Z, T_\infty)$.   

We need only show
$\varphi(Y)=\set(T_\infty)$.   
Let
$z_\infty \in Y$, and let $p_i\in X_i$ such that
$z_i=\varphi_i(p_i) \to z$.  By Theorem~\ref{SF_k-in-set},
we see that $z_\infty=\varphi(p_\infty)$.
\end{proof}

\vspace{.4cm}
\subsection{Tetrahedral Compactness Theorem}

\begin{thm}\label{tetra-compactness-2}
Given $r_0>0, \beta\in (0,1), C>0, V_0>0, A_0>0$.  If
a sequence of compact Riemannian manifolds, $M^m$, 
has $\vol(M^m) \le V_0$, 
$\diam(M^m) \le D_0$
and the $C, \beta$ (integral) tetrahedral
property for all balls of radius $\le r_0$, then a subsequence
converges in the Gromov-Hausdorff and Intrinsic Flat sense
to a nonzero integral current space.
\end{thm}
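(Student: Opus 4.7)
The plan is to combine Theorem~\ref{tetra-compactness} (which already gives the Gromov-Hausdorff half) with Ambrosio-Kirchheim compactness and the lower mass bounds coming from the tetrahedral property to produce a common intrinsic flat limit that agrees with the GH limit. Note first that Theorem~\ref{tetra-ball} applied to the hypothesis gives the uniform lower bound
\[
\SF_{m-1}(p,r) \;\ge\; C (2\beta)^{m-1} r^m \qquad \text{for all } p\in M_i,\ r\le r_0,
\]
so the hypotheses of Theorem~\ref{SF_k-compactness-2} are in force with $k = m-1$ and $C_{SF} = C(2\beta)^{m-1}$, provided we also supply the missing uniform upper bound on $\mass(\partial M_i)$; but since the $M_i$ are closed Riemannian manifolds we have $\partial M_i = 0$, so $A_0 = 0$ works automatically.

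First, I would invoke Theorem~\ref{tetra-compactness} to extract a subsequence $M_{i_j}$ whose underlying metric spaces $(X_{i_j}, d_{i_j})$ converge in the Gromov-Hausdorff sense to a compact metric space $(Y, d_Y)$. By Theorem~\ref{Gromov-Z} there exists a compact metric space $Z$ and isometric embeddings $\varphi_{i_j}:X_{i_j}\to Z$, $\varphi_\infty:Y\to Z$ with $d_H^Z(\varphi_{i_j}(X_{i_j}), \varphi_\infty(Y))\to 0$. The currents $\varphi_{i_j\#}T_{i_j} \in \intcurr_m(Z)$ have uniformly bounded mass ($\le V_0$) and zero boundary, and their supports lie in a common compact set, so Ambrosio-Kirchheim compactness (Theorem~\ref{AK-compact}) yields a further subsequence converging weakly to some $T_\infty \in \intcurr_m(Z)$, and by Wenger's equivalence of weak and flat convergence in this regime, the convergence is flat as well. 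Set $M_\infty = (\set(T_\infty), d_Z, T_\infty)$; by construction $M_{i_j} \Fto M_\infty$.

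The main obstacle, as always, is ruling out the disappearance of points: we must show $\varphi_\infty(Y) = \set(T_\infty)$, i.e.\ that the GH limit and the IF limit coincide as sets (and in particular that $M_\infty$ is not the $\bf 0$ space). For this, given any $y \in Y$, choose $p_{i_j} \in X_{i_j}$ with $\varphi_{i_j}(p_{i_j}) \to \varphi_\infty(y)$, so the $p_{i_j}$ form a Cauchy sequence in the sense of Definition~\ref{point-Cauchy}. Because each $p_{i_j}$ satisfies $\SF_{m-1}(p_{i_j}, r) \ge C(2\beta)^{m-1} r^m$ for all $r\le r_0$, Theorem~\ref{SF_k-in-set} applies with $k = m-1$ and $C_{SF} = C(2\beta)^{m-1}$, giving $p_{i_j}\to p_\infty \in \set(T_\infty)$ with $\varphi_\infty(p_\infty) = \lim \varphi_{i_j}(p_{i_j}) = \varphi_\infty(y)$. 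Since $\varphi_\infty$ is an isometric embedding, this identifies $y$ with a point of $\set(T_\infty)$; conversely, every point of $\set(T_\infty)$ is a limit of points from $X_{i_j}$ by Theorem~\ref{to-a-limit} and hence lies in $Y$. Thus $\varphi_\infty(Y) = \set(T_\infty)$, the two limits agree as metric spaces, and $M_\infty$ is a nonzero integral current space of dimension $m$.

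One technical point worth checking is that the $A_0$ hypothesis in the statement is vacuous in the stated Riemannian setting (closed manifolds have no boundary); if one wishes the theorem to cover manifolds with boundary then one additionally needs that $\bar B_p(r_0) \cap \partial M_i = \emptyset$ in Theorem~\ref{tetra-ball} — this can be arranged by localizing away from $\partial M_i$ or imposed as an additional hypothesis, but in either case the argument above is unaffected because the sliced filling lower bound is exactly what drives the point-existence conclusion of Theorem~\ref{SF_k-in-set}.
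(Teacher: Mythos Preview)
Your proposal is correct and follows essentially the same route as the paper: derive the uniform bound $\SF_{m-1}(p,r)\ge C(2\beta)^{m-1}r^m$ from Theorem~\ref{tetra-ball}, note that $\partial M_i=0$ so the $A_0$ hypothesis is vacuous, and then invoke Theorem~\ref{SF_k-compactness-2}. The paper's own proof is exactly these two lines (plus a remark that the diameter bound also follows from Theorem~\ref{tetra-compactness}); you have additionally unpacked the content of Theorem~\ref{SF_k-compactness-2} (Gromov's embedding into a common $Z$, Ambrosio--Kirchheim compactness, and Theorem~\ref{SF_k-in-set} to rule out disappearance of points), which is correct but redundant once that theorem is cited.
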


Here our manifolds do not have boundary.

\begin{proof}
The $C, \beta$ (integral) tetrahedral property implies
that there exists $C_{SF}>0$ such that
\be
\SF_{m-1}(p,r) \ge C_{SF} r^m.
\ee
Theorem~\ref{tetra-compactness} implies there
exists a uniform upper bound on diameter.
So we apply Theorem~\ref{SF_k-compactness-2}.
\end{proof}

\begin{rmrk}  \label{rmrk-scalar-cancellation}   
As a consequence of this theorem, we see that
there is no uniform tetrahedral property
on manifolds with positive scalar curvature even when the
volume of the balls are uniformly bounded below by that
of Euclidean balls.   In fact there exist a sequence of
such manifolds, $M_j^3$, whose intrinsic flat limit is $0$
described in \cite{SorWen1}.
\end{rmrk}

\section{Appendix: Gluing Integral Current Spaces}

Here we define how to glue two integral current spaces
along an isometric boundary to produce a new integral
current space.    This is applied to prove Theorem~\ref{fillvol-cont}.

\begin{thm}\label{thm-gluing}
Given two integral current spaces, $M_i=(X_i, d_i, T_i)$,
with $S_i, S'_i \in \intcurr_{m-1}(\bar{X}_i)$ such that
\be
\partial T_i = S_i + S'_i \textrm{ with } 
\spt(S_i) \subset \spt(\partial T_i)
\ee
and a current reversing distance preserving bijection 
\be
F: \spt(S_1) \to \spt(S_2) \textrm{ such that }  F_\# S_1\,=\,-\,S_2,
\ee
we define the glued integral current space
\be
M=M_1 \disjointunion_F M_2 =(X,d,T)
\ee
such that there are distance preserving maps
\be\label{d-f-1}
f_i: \bar{X}_i \to Y 
\ee
where $Y$ is the glued metric space:
\be
Y = \bar{X}_1 \disjointunion (\bar{X}_2 \setminus \spt(S_2)),
\ee
such that 
\be
f_2 \circ F =f_1 \textrm{ when restricted to } \spt(S_1),
\ee
and such that 
\be \label {adding-Ti}
T= f_{1\#}T_1 + f_{2\#}T_2 \textrm{ and } X=\set T \subset Y.
\ee
Since $f_i$ are distance preserving,
\be
\mass(M)\le \mass(M_1) + \mass(M_2).
\ee
\end{thm}

Note that it is possible that the glued integral current space, $X$,
is a proper subset of the glued metric space, $Y$.
In fact the glued integral current space could be the $0$
space (see Example~\ref{glued-0}).

Before we prove this theorem we apply it to
prove the following useful corollary which glues
integral current spaces together in order to provide an 
estimate on the filling volume.  This corollary is applied
to prove Theorem~\ref{fillvol-cont}.  See Definition~\ref{defn-filling-volume} for the definition of filling volume being applied here.

\begin{cor}\label{cor-glue-fillvol}
Given two integral current spaces, $M_i=(X_i, d_i, T_i)$,
with $S_{i}, S'_i \in \intcurr_{m-1}(\bar{X}_i)$ such that
\be
\partial T_i = S_i + S'_i
\ee
and a current reversing distance preserving map 
\be
F: \spt(S_1) \to \spt(S_2) \textrm{ such that }  F_\# S_1\,=\,-\,S_2.
\ee
If $S_2'=0$ then
\be
\fillvol(N) \le \mass(M_1) + \mass(M_2).
\ee
where $N=(\set(S_1'), d_1, S_1')$.
\end{cor}

\begin{proof}
By Theorem~\ref{thm-gluing} we have
\begin{eqnarray}
\partial T &=&  f_{1\#}\partial T_1 + f_{2\#}\partial T_2\\
&=& f_{1\#} S_1 + f_{1\#}S'_1 + f_{2\#}S_2 + f_{2\#} S'_2\\
&=& f_{2\#} F_\# S_1  + f_{2\#}S_2 + f_{1\#}S'_1+ f_{2\#} S'_2\\
&=&  f_{1\#}S'_1+ f_{2\#} S'_2.
\end{eqnarray}
Thus if $S_2'=0$ then $\partial T= f_{1\#}S'_1$ and so
by (\ref{d-f-1})
we have a current preserving isometry
\be
f_1: N=(\set(S_1'), d_1, S_1') \to 
\partial M=(\set (\partial T) , d, \partial T).
\ee
Thus
\be
\fillvol(N) = \fillvol(\partial M)
 \le \mass(T)\le \mass(M_1) + \mass(M_2).
\ee
\end{proof}

We now prove Theorem~\ref{thm-gluing}:

\begin{proof}

First we prove the glued metric space, $Y$, is well defined.
This is included for completeness of exposition and because
there are different methods used for gluing metric spaces. 

Let $d: Y\times Y \to [0, \infty)$ be symmetric such that
\be
d(x,y)= \left\{
        \begin{array}{ll}
            d_1(x,y) & \quad x,y\in \bar{X}_1 \\
            d_2(x,y) & \quad x,y\in \bar{X}_2 \setminus \spt(S_2)\\
            \inf\{d_1(x,w)+d_2(F(w),y):\, w\in \spt(S_1) \}& \quad 
            x\in \bar{X}_1,\,\, y\in \bar{X}_2\setminus \spt(S_2)
        \end{array}
    \right.
    \ee
Observe that $d(x,y)\ge 0$.   

Observe that $d(x,y)=0$
implies that $x=y$ if both $x,y\in \bar{X}_1$ or
both $x,y\in \bar{X}_2 \setminus \spt(S_2)$ since $d_1$
and $d_2$ are metrics.   The third case, where
$x\in \bar{X}_1$ and $y\in \bar{X}_2 \setminus \spt(S_2)$
cannot produce $d(x,y)=0$.   If it did occur then
we would have
$w_j \in \spt(S_1)$ such that
\be
0=d(x,y) =  \lim_{j\to \infty} \, d_1(x,w_j)+d_2(F(w_j),y)
\ee
so 
\be
\lim_{j\to \infty} \, d_2(F(w_j),y)=0
\ee
and $y$ is in the closure of the image of $F$, which means
$y \in \spt(S_2)$ which is a contradiction.

To see the triangle inequality observe that if $x_i \in \bar{X}_1$
and $y_i \in \bar{X}_2\setminus \spt(S_2)$ then 
\begin{eqnarray*}
d(x_1, x_3)+d(x_3,x_2)&=&d_1(x_1, x_3)+d_1(x_3,x_2)\ge
d_1(x_1,x_2)=d(x_1,x_2)\\
d(x_1,y_3)+ d(y_3, x_2)&=&\inf\{d_1(x_1,w)+d_2(F(w),y_3)+d_1(x_2,w')+d_2(F(w'),y_3):\, w,w'\in \spt(S_1)\}\\
&\ge & \inf\{d_1(x_1,w)+d_2(F(w),F(w'))+d_1(x_2,w'):\, w,w'\in \spt(S_1)\}\\
&= &
 \inf\{d_1(x_1,w)+d_1(w,w')+d_1(x_2,w'):\, w,w'\in \spt(S_1)\}\\
&\ge & d_1(x_1, x_2)=d(x_1,x_2)\\
d(y_1, y_3)+d(y_3,y_2)&=&d_2(y_1, y_3)+d_2(y_3,y_2)\ge
d_2(y_1,y_2)=d(y_1,y_2)\\
d(y_1,x_3)+ d(x_3, y_2)&=&\inf\{d_1(x_3,w)+d_2(F(w),y_1)+d_1(x_3,w')+d_2(F(w'),y_2):\, w,w'\in \spt(S_1)\}\\
&\ge&\inf\{d_1(w',w)+d_2(F(w),y_1)+d_2(F(w'),y_2):\, w,w'\in \spt(S_1)\}\\
&=&\inf\{d_2(F(w'),F(w))+d_2(F(w),y_1)+d_2(F(w'),y_2):\, w,w'\in \spt(S_1)\}\\
&\ge& d_2(y_1,y_2)= d(y_1,y_2)\\
d(x_1, x_3)+d(x_3,y_2) 
&=&\inf\{d_1(x_1,x_3)+d_1(x_3,w)+d_2(F(w),y_2):\, w\in \spt(S_1)\}\\
 &\ge&\inf\{d_1(x_1,w)+d_2(F(w),y_2):\, w\in \spt(S_1)\}\\
&=& d(x_1, y_2) \\
d(x_1, y_3)+d(y_3,y_2) 
&=&\inf\{d_1(x_1,w)+d_2(F(w),y_3) + d_2(y_3, y_2):\, w\in \spt(S_1)\}\\
&\ge&\inf\{d_1(x_1,w)+d_2(F(w), y_2):\, w\in \spt(S_1)\}\\
&=& d(x_1,y_2).
\end{eqnarray*}
Thus $d$ is a metric on $\bar{X}$.

By the definition of $d$ immediately we have
natural identification maps
\begin{eqnarray}
f_1: \bar{X}_1 \to \bar{X} &\textrm{ such that }& f_1(x)=x \\
f_2: \bar{X}_2\setminus \spt(S_2) \to X
&\textrm{ such that }& f_2(x)=x 
\end{eqnarray}
that are immediately distance preserving.   
We define
\be
f_2: \spt(S_2) \to Y \textrm{ such that } f_2(y)=f_1(F^{-1}(y)).
\ee
Then for $y_1,y_2\in \spt(S_2)$
and $y_3\in \bar{X}_2\setminus \spt(S_2)$ we have
\begin{eqnarray*}
d(f_2(y_1), f_2(y_2))&=&
d\left(f_1(F^{-1}(y_1)), f_1(F^{-1}(y_2))\right)\\
&=& d_1\left(F^{-1}(y_1), F^{-1}(y_2)\right)=d_2(y_1, y_2)\\
d(f_2(y_1), f_2(y_3))&=&
d\left(f_1(F^{-1}(y_1)), f_2(y_3)\right)\\
&=&  \inf\left\{
d_1\left(F^{-1}(y_1),w)+d_2(F(w),f_2(y_3)\right)
:\, w\in \spt(S_1)\right\}\\
&=&  \inf\left\{d_2(y_1,F(w))+d_2(F(w),f_2(y_3))
:\, w\in \spt(S_1)\right\}\\
&=& d_2(y_1,y_3).
\end{eqnarray*}
Thus $f_2: \bar{X}_2 \to Y$ is also distance preserving.

Since $f_i$ are distance preserving, they are Lipschitz, and
so $f_{i\#}T_i$
is well defined and 
$
\mass(f_{i\#}T_i)=\mass(T_i).
$
Defining $M=(X,d,T)$ as in (\ref{adding-Ti}), we have
\be
\mass(M)=\mass(T) \le 
\mass( f_{1\#}T_1) + \mass( f_{2\#}T_2 )
= \mass(T_1) + \mass(T_2).
\ee
\end{proof}

\begin{example}\label{glued-0}
Let $M_1=(X_1, d_1, T_1)$ where $T_1$ is a two dimensional
integral current in $X_1 \subset [0,1]^2$, endowed with the standard Euclidean metric $d_1(x,y)=|x-y|$.   
Let $M_2=(X_2,d_2,T_2)$ where $T_2=-T_1$ and $d_2=d_1$,
so $X_1=X_2$.  Then one can glue $M_1$ to
$M_2$ along their boundary.

Suppose that $\partial T_1$ is dense in $X_1$
so that $\set(T) \subset \spt(\partial T)$.  
If we glue $M_1$ to $M_2$ along their boundary
they are completely glued together with opposite orientation
and we obtain the $\bf{0}$ space.   More precisely,
we have
\be
Y=\spt(T)=\spt(\partial T) = \bar{X}_1
\ee
and $f_i: [0,1]\to [0,1]$ are identity maps.   So
\be
f_{1\#}T_1= f_{2\#}T_1= f_{2\#}(-T_2)=-f_{2\#}T_2.
\ee
Thus $T=f_{1\#}T_1+f_{2\#}T_2=0$.
\end{example}

\bibliographystyle{plain}
\bibliography{prop}

\begin{thebibliography}{10}

\bibitem{AK}
Luigi Ambrosio and Bernd Kirchheim.
\newblock Currents in metric spaces.
\newblock {\em Acta Math.}, 185(1):1--80, 2000.

\bibitem{Burago-Ivanov-Vol-Tori}
D.~Burago and S.~Ivanov.
\newblock On asymptotic volume of tori.
\newblock {\em Geom. Funct. Anal.}, 5(5):800--808, 1995.

\bibitem{BBI}
Dmitri Burago, Yuri Burago, and Sergei Ivanov.
\newblock {\em A course in metric geometry}, volume~33 of {\em Graduate Studies
  in Mathematics}.
\newblock American Mathematical Society, Providence, RI, 2001.

\bibitem{ChCo-PartI}
Jeff Cheeger and Tobias~H. Colding.
\newblock On the structure of spaces with {R}icci curvature bounded below. {I}.
\newblock {\em J. Differential Geom.}, 46(3):406--480, 1997.

\bibitem{ChCo-PartIII}
Jeff Cheeger and Tobias~H. Colding.
\newblock On the structure of spaces with {R}icci curvature bounded below.
  {III}.
\newblock {\em J. Differential Geom.}, 54(1):37--74, 2000.

\bibitem{Colding-volume}
Tobias~H. Colding.
\newblock Ricci curvature and volume convergence.
\newblock {\em Ann. of Math. (2)}, 145(3):477--501, 1997.

\bibitem{FF}
Herbert Federer and Wendell~H. Fleming.
\newblock Normal and integral currents.
\newblock {\em Ann. of Math. (2)}, 72:458--520, 1960.

\bibitem{Greene-Petersen}
Robert~E. Greene and Peter Petersen~V.
\newblock Little topology, big volume.
\newblock {\em Duke Math. J.}, 67(2):273--290, 1992.

\bibitem{Gromov-groups}
M.~Gromov.
\newblock Hyperbolic groups.
\newblock In {\em Essays in group theory}, volume~8 of {\em Math. Sci. Res.
  Inst. Publ.}, pages 75--263. Springer, New York, 1987.

\bibitem{Gromov-Filling}
Mikhael Gromov.
\newblock Filling {R}iemannian manifolds.
\newblock {\em J. Differential Geom.}, 18(1):1--147, 1983.

\bibitem{Gromov-metric}
Misha Gromov.
\newblock {\em Metric structures for {R}iemannian and non-{R}iemannian spaces},
  volume 152 of {\em Progress in Mathematics}.
\newblock Birkh\"auser Boston Inc., Boston, MA, 1999.
\newblock Based on the 1981 French original [ MR0682063 (85e:53051)], With
  appendices by M. Katz, P. Pansu and S. Semmes, Translated from the French by
  Sean Michael Bates.

\bibitem{Gromov-Plateau}
Misha Gromov.
\newblock Plateau-{S}tein manifolds.
\newblock {\em Cent. Eur. J. Math.}, 12(7):923--951, 2014.

\bibitem{HLS}
Lan-Hsuan Huang, Dan Lee, and Christina Sormani.
\newblock Stability of the positive mass theorem for graphical hypersurfaces of
  {E}uclidean space.
\newblock {\em to appear in Crelle's Journal, arXiv:1405.0640}, 2014.

\bibitem{Lakzian-diameter}
Sajjad Lakzian.
\newblock Diameter controls and smooth convergence away from singular sets.
\newblock {\em arXiv:1210.0957}, 2013.

\bibitem{Lakzian-Ricci-flow}
Sajjad Lakzian.
\newblock Continuity of {R}icci flow through neck pinch singularities.
\newblock {\em to appear in Geometriae Dedicata}, 2015.

\bibitem{Lakzian-Sormani}
Sajjad Lakzian and Christina Sormani.
\newblock Smooth convergence away from singular sets.
\newblock {\em Comm. Anal. Geom.}, 21(1):39--104, 2013.

\bibitem{LeeSormani2}
Dan~A. Lee and Christina Sormani.
\newblock Near-equality of the {P}enrose inequality for rotationally symmetric
  {R}iemannian manifolds.
\newblock {\em Ann. Henri Poincar\'e}, 13(7):1537--1556, 2012.

\bibitem{LeeSormani1}
Dan~A. Lee and Christina Sormani.
\newblock {S}tability of the positive mass theorem for rotationally symmetric
  riemannian manifolds.
\newblock {\em Journal fur die Riene und Angewandte Mathematik (Crelle's
  Journal)}, 686, 2014.

\bibitem{LeFlochSormani1}
Philippe~G. LeFloch and Christina Sormani.
\newblock The nonlinear stability of rotationally symmetric spaces with low
  regularity.
\newblock {\em J. Funct. Anal.}, 268(7):2005--2065, 2015.

\bibitem{Li-Perales}
Nan Li and Raquel Perales.
\newblock On the {S}ormani-{W}enger {I}ntrinsic {F}lat convergence of
  {A}lexandrov {S}paces.
\newblock {\em arXiv:1411.6854}, 2014.

\bibitem{Lin-Yang-GMT-text}
Fanghua Lin and Xiaoping Yang.
\newblock {\em Geometric measure theory---an introduction}, volume~1 of {\em
  Advanced Mathematics (Beijing/Boston)}.
\newblock Science Press, Beijing, 2002.

\bibitem{Morgan-text}
Frank Morgan.
\newblock {\em Geometric {M}easure {T}heory}.
\newblock Elsevier/Academic Press, Amsterdam, fourth edition, 2009.
\newblock A beginner's guide.

\bibitem{Munn-GH=IF}
Michael Munn.
\newblock Intrinsic flat convergence with bounded {R}icci curvature.
\newblock {\em arXiv:1405.3313}, 2014.

\bibitem{Perales-GH=IF}
Raquel Perales~Aguilar.
\newblock {\em Convergence of Manifolds and Metric Spaces with Boundary}.
\newblock 2015.
\newblock Thesis (Ph.D.)--Stony Brook.

\bibitem{Portegies-evalues}
JacobusW. Portegies.
\newblock Semicontinuity of eigenvalues under intrinsic flat convergence.
\newblock {\em to appear in Calculus of Variations and Partial Differential
  Equations (available on journal website)}, pages 1--42, 2015.

\bibitem{Royden-analysis}
H.~L. Royden.
\newblock {\em Real analysis}.
\newblock Macmillan Publishing Company, New York, third edition, 1988.

\bibitem{Sormani-tetra}
Christina Sormani.
\newblock The {T}etrahedral property and a new {G}romov-{H}ausdorff compactness
  theorem.
\newblock {\em C. R. Math. Acad. Sci. Paris}, 351(3-4):119--122, 2013.

\bibitem{Sormani-AA}
Christina Sormani.
\newblock Intrinsic {F}lat {A}rzela-{A}scoli {T}heorems.
\newblock {\em arXiv:1402.6066}, 2014.

\bibitem{SorWen1}
Christina Sormani and Stefan Wenger.
\newblock Weak convergence and cancellation, appendix by {R}aanan {S}chul and
  {S}tefan {W}enger.
\newblock {\em Calculus of Variations and Partial Differential Equations},
  38(1-2), 2010.

\bibitem{SorWen2}
Christina Sormani and Stefan Wenger.
\newblock Intrinsic flat convergence of manifolds and other integral current
  spaces.
\newblock {\em Journal of Differential Geometry}, 87, 2011.

\bibitem{Wenger-flat}
Stefan Wenger.
\newblock Flat convergence for integral currents in metric spaces.
\newblock {\em Calc. Var. Partial Differential Equations}, 28(2):139--160,
  2007.

\bibitem{Wenger-compactness}
Stefan Wenger.
\newblock Compactness for manifolds and integral currents with bounded diameter
  and volume.
\newblock {\em Calculus of Variations and Partial Differential Equations},
  40(3-4):423--448, 2011.

\bibitem{Whitney}
Hassler Whitney.
\newblock {\em Geometric {I}ntegration {T}heory}.
\newblock Princeton University Press, Princeton, N. J., 1957.

\end{thebibliography}
\end{document}